\def\bfB{\mathbf{B}}
\newcommand{\Irr}{\operatorname{Irr}}
\newcommand{\Hom}{\operatorname{Hom}}
\newcommand{\Mat}{\operatorname{M}}
\newcommand{\Mats}{\operatorname{S}}
\newcommand{\id}{\operatorname{id}}
\newcommand{\GL}{\operatorname{GL}}
\newcommand{\rl}{\operatorname{rl}}
\newcommand{\Ker}{\operatorname{Ker}}
\newcommand{\Vect}{\operatorname{span}}
\newcommand{\im}{\operatorname{Im}}
\newcommand{\tr}{\operatorname{tr}}
\newcommand{\Tr}{\operatorname{Tr}}
\newcommand{\Sp}{\operatorname{Sp}}
\newcommand{\rk}{\operatorname{rk}}
\newcommand{\Gal}{\operatorname{Gal}}
\newcommand{\botoplus}{\overset{\bot}{\oplus}}
\renewcommand{\setminus}{\smallsetminus}
\def\F{\mathbb{F}}
\def\K{\mathbb{K}}
\def\R{\mathbb{R}}
\def\C{\mathbb{C}}
\def\N{\mathbb{N}}
\def\H{\mathbb{H}}
\renewcommand{\L}{\mathbb{L}}
\def\calC{\mathcal{C}}
\def\calL{\mathcal{L}}
\def\calU{\mathcal{U}}
\def\lcro{\mathopen{[\![}}
\def\rcro{\mathclose{]\!]}}
\theoremstyle{definition}
\newtheorem{Def}{Definition}[section]
\theoremstyle{plain}
\newtheorem{theo}{Theorem}[section]
\newtheorem{prop}[theo]{Proposition}
\newtheorem{lemma}[theo]{Lemma}
\theoremstyle{plain}
\theoremstyle{remark}
\newtheorem{Rems}{Remarks}[section]
\newtheorem{Rem}[Rems]{Remark}
\title{Products of involutions in symplectic groups over general fields (II)}
\author{Cl\'ement de Seguins Pazzis\footnote{Universit\'e de Versailles Saint-Quentin-en-Yvelines, Laboratoire de Math\'ematiques
de Versailles, 45 avenue des Etats-Unis, 78035 Versailles cedex, France}
\footnote{e-mail address: clement.de-seguins-pazzis@ac-versailles.fr}}
\begin{document}

\thispagestyle{plain}

\maketitle

\begin{abstract}
Let $s$ be an $n$-dimensional symplectic form over a field $\F$ of characteristic other than $2$, with $n>2$.

In a previous article, we have proved that if $\F$ is infinite then every element of the symplectic group $\Sp(s)$ is the product of four involutions if $n$ is a multiple of $4$ and of five involutions otherwise.

Here, we adapt this result to all finite fields with characteristic not $2$, with the sole exception of the very special situation where $n=4$ and $|\F|=3$,
a special case which we study extensively.
\end{abstract}

\vskip 2mm
\noindent
\emph{AMS Classification:} 15A23; 15A21, 12E20.

\vskip 2mm
\noindent
\emph{Keywords:} Symplectic group, Decomposition, Involution, Finite fields, Galois theory

% relecture 4/4 terminée

\section{Introduction}

\subsection{The problem}

Throughout, $\N$ will denote the set of all non-negative integers (following French notation), and $\N^*$ the set of all positive ones.

An element of a group $G$ is called \textbf{$k$-reflectional} when it is the product of $k$ involutions of $G$
(i.e. elements $x$ of $G$ such that $x^2=1_G$).

Let $(V,s)$ be a symplectic space of dimension $n$ over an arbitrary field $\F$ with characteristic other than $2$
(i.e.\ $V$ is a vector space of dimension $n$ over $\F$, and $s$ is a symplectic form on $V$, that is a non-degenerate alternating bilinear form on $V$).
Recently, there has been renewed interest in the structure of the symplectic group $\Sp(s)$
with respect to the involutions and more precisely in the problem of decomposing an element of $\Sp(s)$ into a product of involutions with minimal number of factors.
Since there are only trivial involutions in $\Sp(s)$ (that is $\pm \id_V$) if $n=2$, we will assume $n \geq 4$ from now on.

Strikingly, good results have appeared only recently in the literature.
De La Cruz \cite{dLC} proved that if the underlying field is algebraically closed, then every element of $\Sp(s)$
is $4$-reflectional. It was proved in \cite{dSPinvol4Symp} (section 7.1 there) that this result is entirely optimal, for all
values of $n \geq 4$. Awa and de La Cruz \cite{Awa} later solved the case $n=4$ for the field of real numbers, and almost simultaneously
Ellers and Villa \cite{EllersVilla} generalized de La Cruz's result for all $n$ that are multiples of $4$ and all fields in which the polynomial $t^2+1$ has a root:
Unfortunately, their method is nothing short of a miracle and has no reasonable extension to other fields.

In the recent \cite{dSPinvol4Symp}, a breakthrough was made. Thanks to a new method called ``space-pullback", it was proved
that over any infinite field:
\begin{itemize}
\item every element of $\Sp(s)$ is $4$-reflectional if $n$ is a multiple of $4$;
\item every element of $\Sp(s)$ is $5$-reflectional if $n$ is not a multiple of $4$.
\end{itemize}
Whether the second result is optimal remains an open problem, whereas the adaptation of the first result to finite fields has been shown to fail
for $\F_3$ and $n=4$ (section 7.3 in \cite{dSPinvol4Symp}).
Interestingly, the algebraic properties of the underlying field played no part in the methods of \cite{dSPinvol4Symp}:
the key feature was the infiniteness of the field, which allowed one to make a careful choice of parameters.

The present article is devoted to the adaptation of the previous results to finite fields (still with characteristic other than $2$;
in sharp contrast, for fields with characteristic $2$ it is known that every element of a symplectic group is $2$-reflectional \cite{Gow}).
It turns out that new methods are required to handle such fields, and in particular the case of
$\F_3$ is very difficult. As said earlier it was proved that some elements of
$\Sp_4(\F_3)$ are not $4$-reflectional. Fortunately, this case is the only exception to the previous results, but
this failure profoundly complicates the strategy of proof for $\F_3$.

Fortunately, when dealing with symplectic forms over finite fields, what we lose in the variety of scalars is gained in the simplicity of the structure of the conjugacy classes in $\Sp(s)$. Indeed, in $\Sp(s)$ the conjugacy class of an element $u$ is not only encoded in the invariant factors of $u$ seen an a vector space endomorphism, but also in additional invariants which we call the \emph{Wall\footnote{We could as well have attributed these invariants to
Williamson and Springer \cite{Springer}.} invariants}:
\begin{itemize}
\item For each irreducible monic palindromial (see our precise definition in Section \ref{section:basicpol}) $p\in \F[t] \setminus \{t \pm 1\}$ and each integer $r \geq 1$, there is a Wall invariant attached to $u$ with respect to $(p,r)$ in the form of the isometry class of a nondegenerate \emph{skew-Hermitian form} over the residue field $\F[t]/(p)$ (equipped with the non-identity involution that takes the coset of $t$ to its inverse), and the rank of this skew-Hermitian form is the number of primary invariants of $u$ that equal $p^r$, i.e.\ the Jordan number of $u$ attached to the pair $(p,r)$;
\item For each $\eta=\pm 1$ and each even integer $r \geq 2$, there is a Wall invariant attached to $u$ with respect to $(t-\eta,r)$ in the form of the isometry class of a nondegenerate \emph{symmetric bilinear form} over $\F$ whose rank is the number of Jordan cells of size $r$ for $u$ with respect to the eigenvalue $\eta$.
\end{itemize}
Since here the field $\F$ is systematically taken finite, the skew-Hermitian Wall invariants are redundant with the plain Jordan numbers, as their isometry class is already encoded in their rank, which is simply a Jordan number of $u$. This is a consequence of the classical fact that nondegenerate Hermitian forms
over finite fields are classified by their rank. In contrast, Wall invariants with respect to pairs of the form $(t\pm 1,r)$ cannot be overlooked, but at least their complexity is limited by the classification of quadratic forms over $\F$, which is not difficult.

In contrast with \cite{dSPinvol4Symp}, in which we entirely avoided  Wall invariants, for finite fields we will need to care about them, but fortunately
they are far more simple than for infinite fields!

The main technique used in \cite{dSPinvol4Symp} is what we call the \emph{space-pullback technique}.
Here, apart from this technique, which will remain our main tool, we will use a collection of additional techniques:
\begin{itemize}
\item Algebraic model techniques, in which specific pairs $(s,u)$ are represented by using field extensions of $\F$;
\item Cyclic matrices fitting techniques, reminiscent of de La Cruz's approach from \cite{dLC};
\item And also purely geometric methods, where we look at the action of $u \in \Sp(s)$ on totally $s$-singular subspaces (these techniques are used
to deal with the case where $n$ is not a multiple of $4$).
\end{itemize}

\subsection{Main result, and structure of the article}

Our main result is the following theorem.

\begin{theo}[Main theorem]\label{theo:main}
Let $(V,s)$ be a symplectic space of dimension $n>2$ over a finite field $\F$.
\begin{enumerate}[(a)]
\item If $n =0 \mod 4$, then every element of $\Sp(s)$ is $4$-reflectional, unless $\dim V=4$ and $|\F|=3$.
\item If $n = 2 \mod 4$, then every element of $\Sp(s)$ is $5$-reflectional
\item If $n=4$ and $|\F|=3$, then every element of $\Sp(s)$ is $5$-reflectional.
\end{enumerate}
\end{theo}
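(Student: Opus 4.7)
The plan is to reduce, via the conjugacy class classification in $\Sp(s)$ by primary invariants together with the Wall invariants recalled in the introduction, to showing $k$-reflectionality on each canonical ``building block'' (with $k=4$ or $5$ according to the case). Any orthogonal direct sum of $u$-stable symplectic subspaces yields a block-diagonal decomposition of the form, and since a blockwise product of involutions is itself an involution, it is enough to treat the indecomposable summands of the Wall decomposition. These summands fall into three families: (i) pairs $(p,r)$ with $p$ an irreducible monic palindromial distinct from $t\pm 1$; (ii) pairs $(t\pm 1,r)$ with $r$ odd; and (iii) pairs $(t\pm 1,r)$ with $r$ even, further parameterized by a rank-$1$ nondegenerate symmetric bilinear form over $\F$.

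For family (i) the plan is to use an \emph{algebraic model}: such a block is a free module of rank one over the local ring $\F[t]/(p^r)$ equipped with a skew-Hermitian form relative to the involution induced by $t \mapsto t^{-1}$; since over a finite field skew-Hermitian forms of given rank are unique up to isometry, a single explicit construction suffices, which I would produce by a cyclic-matrix fitting argument in the spirit of de La Cruz \cite{dLC}. For families (ii) and (iii), I would combine the space-pullback technique from \cite{dSPinvol4Symp} with the classification of nondegenerate quadratic forms over finite fields: blocks whose quadratic discriminant is a square admit a hyperbolic decomposition amenable to the earlier infinite-field approach, while the non-square discriminant forces one to pair two single-cell blocks into an anisotropic $2$-dimensional plane on which a $4$-involution decomposition can be written down by direct computation.

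The principal obstacle is the exceptional case $|\F|=3$, $n=4$: in this regime the algebraic freedom is extremely limited and $4$-reflectionality provably fails on some conjugacy classes, so part (c) must be isolated. My plan for (c) is to enumerate the conjugacy classes of $\Sp_4(\F_3)$ — a finite group of modest order whose Wall-invariant data yields a short list — and to verify $5$-reflectionality on each class by explicit construction, then feed the result back into the block decomposition of part (a) to recover $4$-reflectionality in the remaining finite-field cases with $n\equiv 0 \pmod 4$. In case (b), where $n \equiv 2 \pmod 4$, a parity obstruction prevents an orthogonal decomposition of $V$ into four-dimensional blocks on which $4$ involutions suffice; here I would single out a $6$-dimensional (or residual $2$-dimensional) summand absorbing the defect and spend one additional involution there, producing the overall total of $5$. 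The main technical difficulty I anticipate is coordinating the block-level constructions so that the involution count stays uniformly bounded regardless of how many Wall-invariant pieces appear.
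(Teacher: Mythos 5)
Your proposal begins with the claim that ``it is enough to treat the indecomposable summands of the Wall decomposition,'' and this is precisely where the argument breaks down. If $u=u_1\botoplus u_2$ with each $u_i$ being $k$-reflectional, then indeed $u$ is $k$-reflectional; but the converse fails, and several families of indecomposable blocks simply cannot be $4$-reflectional on their own. For instance, any indecomposable cell of dimension $2$ sits in $\Sp_2(\F)$, where the only involutions are $\pm\id$, so it cannot be a nontrivial product of involutions at all; and indecomposable cells whose dimension is $\equiv 2 \pmod 4$ (this happens routinely for types I--V when the underlying degree is not a multiple of $4$, and always for type VI) leave you in a group where $4$-reflectionality is not the right target. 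The paper's proof of part~(a) does not prove blockwise $4$-reflectionality: it splits $u=u_r\botoplus u_e$ into a regular part and an exceptional (type~VI) part and then explicitly \emph{pairs} a type-VI cell with a cyclic cell of rank $2$ (Proposition~\ref{prop:rank2+typeVI}), or groups a regular part with a type-VI cell when $\rk u_r\not\equiv 0\pmod 4$ (Lemmas~\ref{lemma:nr+nesup12} and \ref{lemma:rkur=6}). The possibility of this pairing is the whole content of the argument, and it is not something the Wall decomposition hands you for free.

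For part~(b) your plan to ``single out a $6$-dimensional or residual $2$-dimensional summand absorbing the defect'' is too vague to check. The paper's mechanism is the \emph{plane-fixing technique}: given a cell, one must produce a symplectic involution $i$ such that $iu$ fixes an $s$-regular plane $P$ pointwise, after which one applies part~(a) to the restriction of $iu$ to $P^{\bot_s}$ (dimension $n-2\equiv 0\pmod 4$). Producing such a plane is the hard step, and it requires new algebraic-model arguments (Lemmas~\ref{lemma:typeItoIVfixplanen=1}, \ref{lemma:typeItoIVfixplanen>1}, \ref{lemma:typeVfixplane}) that have no analogue in your sketch. Your treatment of family~(i) also understates the difficulty: over a finite field the \emph{isometry} class of the skew-Hermitian Wall invariant is determined by its rank, so no invariant theory is needed, but you still have to produce a decomposition with the involution count capped at $4$ uniformly in the size of the cell; simply observing that a cyclic-matrix fitting argument exists is not enough, because the result of that fitting has to land on a $3$-reflectional element (via Proposition~\ref{prop:even3refl} or Nielsen's theorem), and arranging that requires the Galois-theoretic polynomial lemmas of Section~\ref{section:specificcells}. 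Finally, part~(c) is not an input to part~(a): the paper's proof of~(a) excludes the $(n,|\F|)=(4,3)$ case outright, and (c) is established separately by an exhaustive analysis of $\Sp_4(\F_3)$ relying on lemmas that are specific to $\F_3$ (e.g.\ Lemmas~\ref{lemma:F3(t^2-1)^2pas3} and \ref{lemma:(t2+1)^2F3}).
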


Result (c) is optimal, as shown in section 7.3 of \cite{dSPinvol4Symp}. In Section \ref{section:F3}, we will give a complete picture of $\Sp_4(\F_3)$ with respect to $k$-reflectionality,
i.e.\ for each conjugacy class in $\Sp_4(\F_3)$ we will find the least number of involutions that are necessary to decompose its elements into products of involutions.

In \cite{dSPinvol4Symp}, the main way to prove the equivalent of Theorem \ref{theo:main} for infinite fields was by induction on the dimension (by steps of four).
Yet such a strategy appears to be infeasible for finite fields, and much more profound and precise methods are needed. Unfortunately, at the present point of the article
it is not possible to give any sensible idea of our methods, and we have to review much material before we can finally sketch the new ideas.
So, let us immediately describe the overall structure of the article.

\begin{itemize}
\item The first four sections that follow the present introduction consist of reviews of known essential material on symplectic groups and decompositions into products of involutions. First of all, in Section \ref{section:conjugacyclasses}, we review the classification of conjugacy classes in symplectic groups, as
    well as the classification of conjugacy classes of indecomposable elements. Then, in Section \ref{section:review},
    we recall some results from \cite{dSPinvol4Symp} that give wide ranges of specific elements of $\Sp(s)$ that admit decompositions into a small number of involutions, as well as results that can easily be deduced from known results on such decompositions in general linear groups. In particular, we review Nielsen's theorem, in which the $2$-reflectional elements of $\Sp(s)$ are characterized, and we give two key examples of $3$-reflectional elements.
    In Section \ref{section:specificcells}, we obtain decompositions of specific cells thanks to techniques in general linear groups, as well
    as ideas from Galois theory.
    Finally, in Section \ref{section:spacepullback}, we review and expand on the ``space-pullback" method that was used in \cite{dSPinvol4Symp}
    to obtain the equivalent of Theorem \ref{theo:main} for infinite fields. Here, the new results deal with the way of obtaining very large subspaces on which
    the space-pullback method can be applied (Proposition \ref{prop:existpresquelagrangian}), and also the way of adapting the method to obtain very specific results (Lemmas \ref{lemma:adaptlemmairr} and \ref{lemma:adaptlemmasplit}).

\item After these preliminary technical parts, we will be ready to prove Theorem \ref{theo:main}. First of all, we will prove point (a) in
Section \ref{section:dimmultiple4}. The study is cut into two parts: given $u \in \Sp(s)$, different methods are required according as the total number of Jordan cells of odd size of $u$ for an eigenvalue in $\{\pm 1\}$ is a multiple of $4$ or not.

\item Point (c) will then be proved in Section \ref{section:F3}, in which we will actually give a full account of the structure of $\Sp_4(\F_3)$:
for each conjugacy class $\calC$ we will give the minimal length for a decomposition of its elements into a product of involutions.

\item The last section (Section \ref{section:dim2mod4}) is devoted to the proof of point (b) of Theorem \ref{theo:main}.
\end{itemize}

In a large part of the study, substantial shortcuts could be obtained by discarding fields with $3$ elements, but we have chosen to
keep the treatment as general as possible throughout.

\section{A review of conjugacy classes in symplectic groups}\label{section:conjugacyclasses}

\subsection{Basic considerations on polynomials}\label{section:basicpol}

Throughout, $t$ is an indeterminate for polynomials. We denote by $\F[t]$ the algebra of all polynomials in the indeterminate $t$ and
with coefficients in $\F$, and by $\Irr(\F)$ the set of all monic irreducible polynomials of $\F[t]$ that are distinct from $t$
(i.e.\ that have no zero root).
For any $p \in \F[t]$ of degree $d$ such that $p(0) \neq 0$, one defines the \textbf{reciprocal polynomial}
$$p^\sharp:=p(0)^{-1} t^d p(t^{-1})$$
of $p$, which is monic of degree $d$. One sees that $(p^\sharp)^\sharp=p$ whenever $p$ is monic.
Note that $p^\sharp$ is irreducible whenever $p$ is irreducible. One says that $p$ is a \textbf{palindromial} whenever $p=p^\sharp$.
Classically, every $p \in \Irr(\F) \setminus \{t\pm 1\}$ which is a palindromial has even degree and satisfies $p(0)=1$.

\subsection{Cyclic endomorphisms}\label{section:cyclic}

Remember that an endomorphism $u$ of a finite dimensional vector space $V$ is called \textbf{cyclic} whenever there exists a vector
$x \in V$ such that $\{u^k(x) \mid k \in \N\}$ spans $V$. In that case, $u$ is determined up to conjugation in the general linear group of $V$
by its minimal polynomial, which coincides with its characteristic polynomial. This justifies the following terminology:

\begin{Def}
Let $p \in \F[t]$ be a monic polynomial with degree $n$. A \textbf{$\calC(p)$-endomorphism} of a finite-dimensional vector space $V$
is a cyclic endomorphism of $V$ with minimal (i.e. characteristic) polynomial $p$.
\end{Def}

To speak more quickly, we will simply that an endomorphism is $\calC(p)$ to mean that it is a $\calC(p)$-endomorphism.

\subsection{The viewpoint of pairs}

\begin{Def}
An \textbf{s-pair} $(s,u)$ consists of a symplectic form $s$ on a finite-dimensional vector space $V$ over $\F$, and of an $s$-symplectic transformation $u$
(i.e.\ $\forall (x,y)\in V^2, \; s(u(x),u(y))=s(x,y)$). We say that $V$ is the underlying vector space of $(s,u)$, and that
its dimension is the \textbf{dimension} of $(s,u)$. An s-pair is called \textbf{trivial} when its dimension is zero.
\end{Def}

Two s-pairs $(s,u)$ and $(s',u')$, with underlying vector spaces $V$ and $V'$, are called \textbf{isometric} whenever there exists
a linear isometry $\varphi$ from $(V,s)$ to $(V',s')$ (that is, $\varphi : V \overset{\simeq}{\rightarrow} V'$ is a linear bijection and
$\forall (x,y)\in V^2, \; s'(\varphi(x),\varphi(y))=s(x,y)$) such that $u'=\varphi \circ u \circ \varphi^{-1}$.
This defines an equivalence relation on the collection of all s-pairs.

Let $(s,u)$ be an s-pair, with underlying vector space $V$. Assume that we have a splitting $V=V_1 \overset{\bot_s}{\oplus} V_2$ into $s$-orthogonal subspaces,
both stable under $u$. Then $V_1$ and $V_2$ are $s$-regular, and we denote by $s_1$ and $s_2$ the resulting symplectic forms on $V_1$ and $V_2$, yielding
s-pairs $(s_1,u_{V_1})$ and $(s_2,u_{V_2})$. We shall write $u=u_{V_1} \botoplus u_{V_2}$ without any reference to $s$
(in practice, the symplectic form under consideration is always obvious from the context). We say that $(s,u)$ is the (internal) \textbf{orthogonal sum}
of $(s_1,u_{V_1})$ and $(s_2,u_{V_2})$. The generalization to an internal orthogonal sum of more than two s-pairs is effortless.
We say that  $(s,u)$ is \textbf{indecomposable} when it is non-trivial and there is no decomposition $V=V_1 \botoplus V_2$ into non-trivial $s$-orthogonal subspaces that are stable under $u$.

Obviously, every s-pair splits as an orthogonal direct sum of finitely many indecomposable s-pairs (and potentially zero such pairs).

\subsection{Symplectic extensions}

Let $v$ be an automorphism of a finite-dimensional vector space $V$. Denote by $V^\star:=\Hom(V,\F)$ the
dual vector space of $V$, and by $v^t : \varphi \in V^\star \mapsto \varphi \circ v \in V^\star$ the transpose of $v$.
For a linear subspace $W$ of $V$, we denote by $W^\circ:=\{f \in V^\star : \; f_{|W}=0\}$ its dual-orthogonal in $V^\star$.

The (external) symplectic extension of $v$ is the s-pair $S(v)$ defined as follows with
underlying vector space $V \times V^\star$: the symplectic form is
$$((x,\varphi),(y,\psi)):=\varphi(y)-\psi(x)$$
and the automorphism is
$$u : (x,\varphi) \longmapsto \bigl(v(x),(v^{-1})^t(\varphi)\bigr).$$
In this article, we will need the internal viewpoint for these pairs.
So, let $(s,u)$ be an s-pair with underlying vector space $V$ and dimension $n$.
Remember that a \textbf{Lagrangian} of $s$ is a linear subspace $\calL$ of $V$ that is totally $s$-singular (that is
$\forall (x,y) \in V^2, \; s(x,y)=0$) and that has dimension $\frac{n}{2}\cdot$
Two Lagrangians $\calL$ and $\calL'$ are called transverse whenever $\calL \cap \calL'=\{0\}$, i.e.\ $V=\calL \oplus \calL'$.
Now, assume that we have two such Lagrangians that are stable under $u$; then $u$ is entirely determined by their data and by
the automorphism $v:=u_\calL$ of $\calL$ that $u$ induces: more specifically, the symplectic form $s$
induces a vector space isomorphism $\varphi : x \in \calL' \mapsto s(x,-) \in \calL^\star$, and one proves that
$u_{\calL'}=\varphi^{-1} \circ (u_\calL)^t \circ \varphi$ by using the fact that $u$ is $s$-symplectic.

Conversely, let $\calL$ and $\calL'$ be transverse Lagrangians of $(V,s)$, and let $v$ be a vector space automorphism of $\calL$.
Then, the unique automorphism of $V$ whose restriction to $\calL$ is $v$ and whose restriction to $\calL'$ is
$\varphi^{-1} \circ v^t \circ \varphi$ is $s$-symplectic: we call it the \textbf{symplectic extension of $v$ to $\calL'$}
and denote it by $s_{\calL'}(v)$.

In matrix terms, if we have a symplectic basis $\bfB$ of $V$ that is adapted to $V=\calL \oplus \calL'$, then
the symplectic extensions to $\calL'$ of the automorphisms of $\calL$ are the automorphisms that are represented in
$\bfB$ by the matrices of the form $A \oplus A^\sharp$ with $A \in \GL_d(\F)$ (for $d:=\dim \calL$), where for an arbitrary invertible matrix $M \in \GL_n(\F)$
we set
$$M^\sharp:=(M^T)^{-1}.$$

\begin{Rem}\label{remark:pseudoextension}
Let $\calL$ be a Lagrangian of $V$ and $\bfB$ be a symplectic basis whose first $n$ vectors span $\calL$.
Then every $u \in \Sp(s)$ that leaves $\calL$ invariant has its matrix in $\bfB$ of the form
$$\begin{bmatrix}
A & ? \\
0 & A^\sharp
\end{bmatrix},$$
and in particular if $u_{\calL}=\varepsilon \id$ for some $\varepsilon=\pm 1$ then
the above matrix takes the form
$$\begin{bmatrix}
\varepsilon I_n & ? \\
0 & \varepsilon I_n
\end{bmatrix}.$$
\end{Rem}

\begin{lemma}\label{lemma:splitsymplecticextension}
Let $(V,s)$ be a symplectic space and $\calL$ be a Lagrangian of it.
Assume that some automorphism $v$ of $\calL$ splits into $v=v_1 \oplus v_2$, and denote respectively by
$\calL_1$ and $\calL_2$ the corresponding linear subspaces of $\calL$.

Then every symplectic extension of $v$ is symplectically similar to the orthogonal direct sum of symplectic extensions of
$v_1$ and $v_2$.
\end{lemma}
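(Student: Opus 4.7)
The plan is to build an explicit $s$-orthogonal, $u$-stable decomposition $V=V_1\botoplus V_2$ with $\calL_i\subset V_i$ of dimension $2\dim\calL_i$, so that $u_{V_i}$ is itself a symplectic extension of $v_i$; the conclusion of the lemma follows at once.

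First, I fix a Lagrangian $\calL'$ of $(V,s)$ transverse to $\calL$ such that $u=s_{\calL'}(v)$, and I use the canonical isomorphism $\varphi:\calL'\overset{\simeq}{\rightarrow}\calL^\star$ defined by $\varphi(x):y\in\calL\mapsto s(x,y)$. Set
$$\calL'_i:=\varphi^{-1}(\calL_{3-i}^\circ)=\{x\in\calL':s(x,\calL_{3-i})=0\},\qquad i\in\{1,2\}.$$
Since $\calL_{3-i}^\circ$ has codimension $\dim\calL_{3-i}$ in $\calL^\star$, we have $\dim\calL'_i=\dim\calL_i$. Moreover, any vector in $\calL'_1\cap\calL'_2$ is $s$-orthogonal to $\calL=\calL_1+\calL_2$, hence lies in $\calL^\bot\cap\calL'=\calL\cap\calL'=\{0\}$; so $\calL'=\calL'_1\oplus\calL'_2$.

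Next, set $V_i:=\calL_i\oplus\calL'_i$. Dimensions add up to $\dim V$, and $s(V_1,V_2)=0$ because $\calL$ and $\calL'$ are totally $s$-singular (killing the pairings $\calL_1\times\calL_2$ and $\calL'_1\times\calL'_2$), while $\calL'_i$ is $s$-orthogonal to $\calL_{3-i}$ by construction. Therefore $V=V_1\botoplus V_2$, both $V_i$ are $s$-regular, and $\calL_i$ is a Lagrangian of $V_i$ with transverse Lagrangian $\calL'_i$.

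It remains to verify that $u$ stabilizes each $V_i$. Since $u_{|\calL}=v$ stabilizes $\calL_i$, we only need $u(\calL'_i)\subset\calL'_i$. This is where the explicit description of a symplectic extension enters: by definition, $u_{|\calL'}=\varphi^{-1}\circ v^t\circ\varphi$. If $x\in\calL'_i$ then $\varphi(x)\in\calL_{3-i}^\circ$, and since $v$ stabilizes $\calL_{3-i}$, $v^t(\varphi(x))=\varphi(x)\circ v$ still vanishes on $\calL_{3-i}$; hence $u(x)\in\varphi^{-1}(\calL_{3-i}^\circ)=\calL'_i$. Consequently $u_{V_i}$ is a symplectic automorphism of $(V_i,s_{|V_i})$ that stabilizes the Lagrangian $\calL_i$ and restricts to $v_i$ there, so by uniqueness of the extension to the transverse Lagrangian $\calL'_i$ it is the symplectic extension $s_{\calL'_i}(v_i)$. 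The only technical step is the invariance of $\calL'_i$ under $u$, which boils down to the elementary fact that the transpose of an endomorphism stabilizes the dual-orthogonal of any stable subspace.
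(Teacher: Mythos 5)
Your proof is correct and follows essentially the same approach as the paper: choose a transverse Lagrangian $\calL'$ with $u=s_{\calL'}(v)$, set $\calL'_i:=\varphi^{-1}(\calL_{3-i}^{\circ})$, and verify that $V_i=\calL_i\oplus\calL'_i$ gives an $s$-orthogonal, $u$-stable splitting with $u_{V_i}=s_{\calL'_i}(v_i)$. You spell out the dimension count, the orthogonality checks, and the $u$-invariance of $\calL'_i$ via the transpose, all of which the paper leaves terse but implicit.
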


\begin{proof}
Let $\calL'$ be a Lagrangian that is transverse to $\calL$.
Let us consider the isomorphism $\varphi : x \in \calL' \overset{\simeq}{\longmapsto} s(x,-) \in \calL^\star$.

Setting $\calL'_1:=\varphi^{-1}(\calL_2^{\circ})$ and $\calL'_2:=\varphi^{-1}(\calL_1^{\circ})$, we
see that $V=(\calL_1 \oplus \calL'_1) \botoplus_s (\calL_2 \oplus \calL'_2)$, and in particular
both $\calL_1 \oplus \calL'_1$ and $\calL_2 \oplus \calL'_2$ are $s$-regular.
Moreover, all of $\calL_1,\calL'_1,\calL_2,\calL'_2$ are stable under $s_{\calL'}(u)$; as
for all $i \in \{1,2\}$ the resulting endomorphism of $\calL_i \oplus \calL'_i$ is $s$-symplectic, it must equal $s_{\calL'_i}(v_i)$.
Hence $s_{\calL'}(u)=s_{\calL'_1}(v_1) \botoplus s_{\calL'_2}(v_2)$, which yields the claimed result.
\end{proof}

\subsection{Recognizing symplectic transformations}

The following result was proved in \cite{dSPinvol4Symp} (combine lemma 2.2 and corollary 2.7 there):

\begin{lemma}\label{lemma:recog}
Let $(s,u)$ be an s-pair, with underlying vector space $V$. Assume that we have a totally $s$-singular subspace $W$ of $V$
that is stable under $u$. Denote by $p$ the characteristic polynomial of $u_{W}$, by
$\overline{s}$ the symplectic form on $W^{\bot}/W$ induced by $s$, and by
$\overline{u}$ the $\overline{s}$-symplectic transformation of $W^{\bot}/W$ induced by $u$.
Assume finally that the characteristic polynomial of $\overline{u}$ is relatively prime with $pp^\sharp$.
Then there is a splitting $(s,u)=(s_0,u_0) \botoplus (s_1,u_1)$ in which:
\begin{enumerate}[(i)]
\item The underlying vector space $V_0$ of $(s_0,u_0)$ includes $W$ as a Lagrangian that is stable under $u_0$ with induced endomorphism equal to $u_W$.
\item The s-pair $(s_1,u_1)$ is isometric to $(\overline{s},\overline{u})$.
\end{enumerate}
If in addition $p$ is relatively prime with $p^\sharp$ then $u_0$ is a symplectic extension of $u_W$.
\end{lemma}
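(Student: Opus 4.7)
My plan is to build the desired splitting by primary decomposition of $V$ under $u$, then to verify the isotropy/regularity statements via a standard intertwiner argument.

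\medskip

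First I would identify the characteristic polynomials along the $u$-stable filtration $0 \subset W \subset W^\bot \subset V$. The data give that $u_W$ has characteristic polynomial $p$, and $\overline{u}$ on $W^\bot/W$ has some characteristic polynomial $q$, where by hypothesis $q$ is coprime with $p p^\sharp$. Using the non-degeneracy of $s$, the map $x \in V \mapsto s(x,-)_{|W}$ identifies $V/W^\bot$ with $W^\star$, under which $u$ corresponds to $(u_W)^{-t}$. A direct computation with $\det(tI-A^{-1}) = p(0)^{-1} t^d p(t^{-1})$ shows that $(u_W)^{-t}$ has characteristic polynomial $p^\sharp$. Hence $\chi_u = p \cdot q \cdot p^\sharp$, and $q$ is coprime with the remaining factor $pp^\sharp$.

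\medskip

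Next, since $q$ and $pp^\sharp$ are coprime, the kernel decomposition gives $V = V_0 \oplus V_1$ where $V_0 := \Ker (pp^\sharp)(u)$ has characteristic polynomial $pp^\sharp$ and $V_1 := \Ker q(u)$ has characteristic polynomial $q$, both $u$-stable. I would then argue that this decomposition is $s$-orthogonal. Consider $\Phi : V_1 \to V_0^\star$, $y \mapsto s(-,y)_{|V_0}$. For $x \in V_0$ and $y \in V_1$, $s(x,uy) = s(u^{-1}x,y)$, so $\Phi$ intertwines $u_{|V_1}$ with the endomorphism $f \mapsto f \circ (u_{|V_0})^{-1}$ of $V_0^\star$, whose characteristic polynomial is $(pp^\sharp)^\sharp = p^\sharp p = pp^\sharp$. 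Applying $\Phi$ to $q(u_{|V_1}) = 0$ gives $q(R) \circ \Phi = 0$, and $q(R)$ is invertible by coprimality, so $\Phi = 0$. Therefore $V = V_0 \botoplus V_1$ as an orthogonal splitting; both summands are $s$-regular.

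\medskip

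Now $W \subset \Ker p(u) \subset V_0$, and since $\dim V_0 = \deg(pp^\sharp) = 2 \deg p = 2 \dim W$ while $W$ is totally isotropic, we have $W = W^{\bot_{s_0}}$, so $W$ is a Lagrangian of $V_0$ that is $u_0$-stable with induced endomorphism $u_W$; this proves (i). Because $V_0 \perp V_1$, the full orthogonal of $W$ in $V$ is $W^\bot = W \oplus V_1$, so the projection $W^\bot \to V_1$ descends to a linear isomorphism $W^\bot/W \overset{\simeq}{\to} V_1$; it intertwines $\overline{u}$ with $u_{|V_1}$ and, because $V_1$ is $s$-orthogonal to $W$, carries $\overline{s}$ to $s_{|V_1}$. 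This gives (ii).

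\medskip

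For the final assertion, assume moreover $p$ coprime with $p^\sharp$. Inside $V_0$ I would apply kernel decomposition once again to write $V_0 = W \oplus V_0''$ with $V_0'' := \Ker p^\sharp(u)$, both $u$-stable of dimension $\deg p$. The same intertwiner trick, now applied to $y \mapsto s(-,y)_{|V_0''}$ on $V_0''$, gives an intertwiner between $u_{|V_0''}$ (characteristic polynomial $p^\sharp$) and an endomorphism of $(V_0'')^\star$ with characteristic polynomial $(p^\sharp)^\sharp = p$; coprimality forces this intertwiner to vanish, so $V_0''$ is totally $s$-isotropic. Hence $V_0''$ is a $u_0$-stable Lagrangian of $V_0$ transverse to $W$, which by definition means $u_0 = s_{V_0''}(u_W)$ is a symplectic extension of $u_W$. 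The main technical point throughout is the intertwiner/coprime-characteristic-polynomial argument, which is applied twice with the two different coprimality hypotheses; everything else is a dimension count and bookkeeping along the filtration.
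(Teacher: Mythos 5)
Your proof is correct. Note that the paper does not actually prove this lemma itself: it is stated as a known result, with the proof delegated to the cited reference (``combine lemma 2.2 and corollary 2.7 there''). So there is no internal argument in this paper to compare against; what you have written is a self-contained reconstruction.

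On the merits, the argument is sound. The identification of $\chi_u = p\,q\,p^\sharp$ along the $u$-stable filtration $0 \subset W \subset W^\bot \subset V$, the primary decomposition $V = \Ker(pp^\sharp)(u) \oplus \Ker q(u)$, and the vanishing-intertwiner trick (a map intertwining endomorphisms with coprime characteristic polynomials must vanish) to force $s$-orthogonality of the two summands, are exactly the right ingredients. The dimension count $\dim V_0 = 2\deg p = 2\dim W$ then immediately exhibits $W$ as a Lagrangian of the $s$-regular summand $V_0$, and the projection $W^\bot \to V_1$ along $W$ gives the isometry with $(\overline{s},\overline{u})$. The final symplectic-extension claim is handled by a second application of the same two ideas inside $V_0$ under the extra hypothesis $\gcd(p,p^\sharp)=1$: the primary decomposition $V_0 = \Ker p(u_0) \oplus \Ker p^\sharp(u_0)$ identifies $W$ with the first summand by a dimension count, and the intertwiner argument shows the second summand is totally $s$-singular, hence a $u_0$-stable transverse Lagrangian. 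One small point you leave implicit is that $W = \Ker p(u_0)$ rather than merely $W \subseteq \Ker p(u_0)$, but this follows at once from $\dim W = \deg p = \dim \Ker p(u_0)$; you do gesture at the dimension count, so this is a presentational nit rather than a gap.
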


\subsection{A classification of indecomposable s-pairs}

Using a part of Wall's classification of conjugacy classes in symplectic groups \cite{Wall}, one can
retrieve the structure of indecomposable s-pairs.
Here, it will be convenient to sort these s-pairs into six types, and for the sake of clarity we
draw a table of these six types.

\begin{table}[H]
\caption{The classification of indecomposable s-pairs $(s,u)$}
\label{figure1}
\begin{center}
\begin{tabular}{| c || c | c |}
\hline
Type & Type of $u$ as an endomorphism & Associated data  \\
\hline
\hline
I & $\calC(p^{2n})$-automorphism & $p \in \Irr(\F) \setminus \{t\pm 1\}$ palindromial \\
  &                                           &  $n \in \N^*$ \\
\hline
II & $\calC(p^{2n+1})$-automorphism & $p \in \Irr(\F) \setminus \{t\pm 1\}$ palindromial \\
  &                                           &  $n \in \N$ \\
  \hline
III & symplectic extension of a   & $q \in \Irr(\F)$ with $q \neq q^\sharp$ \\
  & $\calC(q^{2n})$-automorphism    &  $n \in \N^*$ \\
\hline
IV & symplectic extension of a   & $q \in \Irr(\F)$ with $q \neq q^\sharp$  \\
  &  $\calC(q^{2n+1})$-automorphism   &  $n \in \N$ \\
\hline
V & $\calC((t-\eta)^{2n})$-automorphism  & $\eta=\pm 1$, $n \in \N^*$  \\
\hline
VI & symplectic extension of a & $\eta =\pm 1$, $n \in \N$ \\
   & $\calC((t-\eta)^{2n+1})$-automorphism &   \\
   \hline
\end{tabular}
 \end{center}
\end{table}

Note that the indecomposable s-pairs of type VI are the only ones with a non-cyclic component $u$.

In \cite{dSPinvol4Symp}, the terminology was slightly different as it was not needed there to differentiate between the indecomposable cells of type I and II
nor between the indecomposable cells of type III and IV; here the distinction will be relevant for dimensions that are not multiples of $4$.

\subsection{Wall invariants}

In the present work, we will not need the full power of Wall's classification of conjugacy classes in symplectic groups, but it is necessary
to state some partial results.

To every s-pair $(s,u)$ we attach three kinds of invariants:
\begin{itemize}
\item The \textbf{Jordan numbers} of $u$: for each $p \in \Irr(\F)$ and each integer $r \geq 1$,
we denote by $n_{p,r}(u)$ the number of primary invariants of $u$ that equal $p^r$;
this number equals the dimension, over the residue field $\F[t]/(p)$,
of the cokernel $V_{p,r}$ of the mapping $\Ker p(u)^{r+1}/\Ker p(u)^{r} \hookrightarrow \Ker p(u)^{r}/\Ker p(u)^{r-1}$
induced by $p(u)$. If $p \neq p^\sharp$ then it turns out that $n_{p,r}(u)=n_{p^\sharp,r}(u)$
(because $u$, being $s$-symplectic, is similar to $u^{-1}$ in the general linear group of its underlying vector space).

\item The \textbf{quadratic Wall invariants} of $u$: For each $\eta \in \{\pm 1\}$ and each even integer $r=2k+2 \geq 2$,
the quadratic Wall invariant $(s,u)_{t-\eta,r}$ is the nondegenerate symmetric bilinear form induced by
$$(x,y) \mapsto \frac{1}{2}\,s\bigl(x,(u-u^{-1})(u+u^{-1}-2\eta\id)^k(y)\bigr)$$
on the quotient vector space $V_{t-\eta,r}$.

\item The \textbf{Hermitian Wall invariants} of $u$: For each \emph{palindromial} $p$ in $\Irr(\F) \setminus \{t\pm 1\}$ and each integer $r \geq 1$,
we attach to $(s,u)$ a nondegenerate skew-Hermitian form $(s,u)_{p,r}$ over $\L:=\F[t]/(p)$ (for the non-identity involution that takes the class of $t$ to its inverse) on the $\L$-vector space $V_{p,r}$. We will not give the precise construction of this skew-Hermitian form and simply
refer the interested reader to section 1.4 of \cite{dSP2invol}.
\end{itemize}

\begin{theo}[Wall's theorem]\label{theo:Wall}
Let $\F$ be a field of characteristic other than $2$.
Two s-pairs $(s,u)$ and $(s',u')$ over $\F$ are isometric if and only if all the following three conditions are fulfilled:
\begin{enumerate}[(i)]
\item The endomorphisms $u$ and $u'$ are similar (as endomorphisms of vector spaces over $\F$), or equivalently
$n_{p,r}(u)=n_{p,r}(u')$ for all $p \in \Irr(\F)$ and all $r \in \N^*$.
\item For every $\eta =\pm 1$ and every even integer $r\geq 2$, the quadratic Wall invariants $(s,u)_{t-\eta,r}$ and $(s',u')_{t-\eta,r}$
are equivalent (as symmetric bilinear forms over $\F$).
\item For every palindromial $p \in \Irr(\F) \setminus \{t \pm 1\}$ and every integer $r \geq 1$, the (nondegenerate)
skew-Hermitian forms $(s,u)_{p,r}$ and $(s',u')_{p,r}$ are equivalent over the field $\L:=\F[t]/(p)$.
\end{enumerate}
\end{theo}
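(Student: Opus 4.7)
The plan is to establish the easy direction first, then tackle sufficiency by means of an $s$-orthogonal primary decomposition followed by a fine analysis of each palindromial component. For necessity, one checks directly that each of the three families of invariants is preserved by isometry: this is immediate from the functorial definition of the spaces $V_{p,r}$ and of the Wall forms, since a linear isometry $\varphi$ intertwining $u$ and $u'$ induces, for every pair $(p,r)$, an isomorphism $V_{p,r} \simeq V'_{p,r}$ under which the Wall form on one side is pulled back from the Wall form on the other.

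For sufficiency, the first step is an $s$-orthogonal primary decomposition of $V$. Exploiting the identity $s(P(u)x,y)=s(x,P(u^{-1})y)$ valid for every polynomial $P$, together with the fact that $p(u^{-1})$ acts invertibly on $\Ker p(u)^\infty$ when $p$ is non-palindromial (because the minimal polynomial of $u^{-1}$ on that subspace is a power of $p^\sharp$, which is coprime to $p$), one shows that $\Ker p(u)^\infty$ is totally $s$-singular whenever $p$ is non-palindromial, and that primary components $\Ker p(u)^\infty$ and $\Ker q(u)^\infty$ are $s$-orthogonal whenever $q \notin \{p,p^\sharp\}$. Consequently, for each palindromial $p \in \Irr(\F)$ (including $p=t\pm 1$), the primary component $W_p := \Ker p(u)^\infty$ is $s$-regular, while for each non-palindromial $p$ the subspaces $\Ker p(u)^\infty$ and $\Ker p^\sharp(u)^\infty$ are transverse totally $s$-singular subspaces of the $s$-regular space $\Ker(pp^\sharp)(u)^\infty$. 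Applying Lemma \ref{lemma:recog}, the restriction of $(s,u)$ to $\Ker (pp^\sharp)(u)^\infty$ is a symplectic extension of $u|_{\Ker p(u)^\infty}$, and its isometry class is therefore entirely determined by the similarity class of that endomorphism, i.e.\ by the Jordan numbers $n_{p,r}(u)$. This reduces the proof to classifying each palindromial primary s-pair $(s|_{W_p}, u|_{W_p})$ up to isometry in terms of the Wall invariants.

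Fix now a palindromial $p \in \Irr(\F)$. The filtration $\{0\} \subset \Ker p(u) \subset \Ker p(u)^2 \subset \cdots$ of $W_p$ has successive quotients that are modules over $\L := \F[t]/(p)$, and the Wall form $(s,u)_{p,r}$ lives on the cokernel $V_{p,r}$ of the map induced by $p(u)$ from $\Ker p(u)^{r+1}/\Ker p(u)^r$ into $\Ker p(u)^r/\Ker p(u)^{r-1}$. The central technical step is then to construct, by descending induction on $r$, an $s$-orthogonal direct sum decomposition $W_p = \bigoplus_{r\geq 1} W_{p,r}$ in which each $W_{p,r}$ is $u$-stable, has $u|_{W_{p,r}}$ equivalent to a direct sum of Jordan blocks of the single size $r$ attached to $p$, and moreover the form naturally induced on $V_{p,r}$ from $W_{p,r}$ matches $(s,u)_{p,r}$. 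This is a symplectic analogue of the cyclic decomposition of a finitely generated torsion module over a PID, with the additional constraint that the ``cyclic generators'' be chosen compatibly with $s$; a Gram--Schmidt-type procedure on the nondegenerate Wall form $(s,u)_{p,r}$ lifts a suitable basis of $V_{p,r}$ to mutually $s$-orthogonal cyclic (or, for type VI pieces, almost-cyclic) lifts in $W_p$.

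Once $W_p$ is split into its isotypic pieces $W_{p,r}$, each such piece is determined up to isometry by the single Wall form $(s,u)_{p,r}$: for $p$ of degree $>1$ this is the classification of nondegenerate skew-Hermitian forms over $\L$, and for $p=t-\eta$ with $r$ even it reduces via condition (ii) to the classification of nondegenerate symmetric bilinear forms over $\F$; for $p=t-\eta$ with $r$ odd the indecomposable summands belong to type VI in Table \ref{figure1} and are pairwise isometric, so condition (i) alone already pins down $W_{p,r}$. Combining these classifications over all palindromial $p$ and all $r$ with the non-palindromial contribution yields the desired isometry between $(s,u)$ and $(s',u')$. The main obstacle is the descending induction constructing the orthogonal splitting of $W_p$: the bookkeeping of orthogonality between cyclic lifts of different Jordan sizes is delicate, and particularly so when $p = t \pm 1$ and $r$ is odd, where the indecomposables of type VI are not cyclic and one must work with slightly larger ``building blocks'' than in the other cases.
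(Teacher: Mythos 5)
The paper does not prove Theorem \ref{theo:Wall}: it is reviewed material, cited to \cite{Wall} (and \cite{dSP2invol} for the construction of the skew-Hermitian invariants), and the text surrounding it only recalls the definitions before specializing to finite fields. There is therefore no paper-internal proof to compare against, and what you have produced is a blind reconstruction of a result the authors treat as known.

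Your sketch follows the classical Wall--Williamson--Springer route, and the reduction steps are correct. The $s$-orthogonality of distinct primary components (and the self-$s$-orthogonality of $\Ker p(u)^\infty$ for non-palindromial $p$) follows exactly as you say from the adjunction identity $s(P(u)x,y)=s(x,P(u^{-1})y)$ combined with coprimality of $p$ and $q^\sharp$; the $s$-regularity of $\Ker p(u)^\infty$ for palindromial $p$ and of $\Ker(pp^\sharp)(u)^\infty$ for non-palindromial $p$ follows; treating the $(p,p^\sharp)$-piece as a symplectic extension, whose isometry class is determined by the similarity class of $u|_{\Ker p(u)^\infty}$, hence by the Jordan numbers, is correct (symplectic extensions of similar automorphisms are isometric, via $\psi\oplus(\psi^t)^{-1}$); and the observation that type VI cells with fixed $\eta$ and block size are pairwise isometric, so condition (i) alone controls the $p=t-\eta$, odd-$r$ slices, is also correct.

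The genuine gap is exactly where you flag ``the main obstacle,'' and it is where all the real content of the theorem sits: the construction of the $s$-orthogonal isotypic splitting $W_p=\bigoplus_{r\geq 1}W_{p,r}$ and the assertion that each single-size piece is classified up to isometry by its Wall form. Saying ``a Gram--Schmidt-type procedure lifts a suitable basis of $V_{p,r}$ to mutually $s$-orthogonal cyclic lifts'' is a fair description of what needs to happen, but it is not a proof. One must produce, level by level in the $p(u)$-adic filtration, lifts that are simultaneously $s$-orthogonal to one another and to the lower-size pieces already constructed (the descending induction must explain how to correct a candidate lift by elements of higher $p(u)$-depth to kill cross-pairings), and then separately establish that two single-size pieces with equivalent Wall forms admit an $s$-isometry intertwining the $u$-actions. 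This is the technical heart of \cite{Wall} and occupies most of that paper. As a blind reconstruction, yours is a correct and well-organized outline of the standard architecture, but it sketches rather than establishes the central step, so it does not by itself constitute a proof of the theorem.
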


Over a finite field equipped with a non-identity involution, nondegenerate Hermitian forms are classified by their rank,
and hence so are skew-Hermitian forms (they are in natural correspondence with Hermitian forms, by multiplying Hermitian forms with a fixed
non-zero skew-Hermitian element of the field). And here the rank of $(s,u)_{p,r}$ is precisely the dimension of $V_{p,r}$
as a vector space over $\F[t]/(p)$, i.e.\ it is the Jordan number $n_{p,r}(u)$. Hence, over finite fields the Hermitian Wall invariants are irrelevant
for the classification of conjugacy classes, and only the quadratic Wall invariants need to be considered in addition to the Jordan numbers:

\begin{theo}[Wall's theorem for finite fields]
Let $(s,u)$ and $(s',u')$ be two s-pairs over a finite field $\F$ of characteristic other than $2$. For $(s,u)$ and $(s',u')$ to be isometric, it is then necessary and sufficient that both the following conditions hold:
\begin{enumerate}[(i)]
\item One has $n_{p,r}(u)=n_{p,r}(u')$ for all $p \in \Irr(\F)$ and all $r \geq 1$.
\item For every $\eta =\pm 1$ and every even $r \geq 2$, the quadratic Wall invariants $(s,u)_{t-\eta,r}$ and $(s',u')_{t-\eta,r}$
are equivalent as symmetric bilinear forms over~$\F$.
\end{enumerate}
\end{theo}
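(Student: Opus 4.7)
The plan is to deduce this result directly from the general Wall's theorem (Theorem \ref{theo:Wall}) together with the classification of nondegenerate Hermitian (equivalently, skew-Hermitian) forms over finite fields. The necessity of conditions (i) and (ii) is already contained in Wall's general theorem, so only sufficiency requires a genuine argument, and there the only thing to verify is that condition (iii) of Theorem \ref{theo:Wall} (equivalence of the skew-Hermitian Wall invariants $(s,u)_{p,r}$ and $(s',u')_{p,r}$ for every palindromial $p \in \Irr(\F)\setminus\{t\pm 1\}$ and every $r \geq 1$) is automatically implied by condition (i) in the finite-field setting.

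To establish this implication, fix such a pair $(p,r)$ and set $\L:=\F[t]/(p)$. Because $p$ is a palindromial distinct from $t\pm 1$, it has even degree and the assignment sending the class of $t$ to its inverse is a non-identity $\F$-involution of $\L$; its fixed field $\L_0$ is the unique subfield of $\L$ of index $2$. The forms $(s,u)_{p,r}$ and $(s',u')_{p,r}$ live on the $\L$-vector spaces $V_{p,r}$ and $V'_{p,r}$, whose $\L$-dimensions are the Jordan numbers $n_{p,r}(u)$ and $n_{p,r}(u')$ by the very definition of the Jordan numbers given in the preliminary material. Thus if condition (i) holds the two skew-Hermitian forms have equal ranks. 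The classical classification of nondegenerate Hermitian forms over a finite field with non-identity involution asserts that such forms are determined up to isometry by their rank alone; multiplying by any fixed non-zero skew-Hermitian element of $\L$ yields a bijection with the nondegenerate skew-Hermitian forms, so the same holds for them. Hence $(s,u)_{p,r}$ and $(s',u')_{p,r}$ are equivalent, i.e. condition (iii) of Theorem \ref{theo:Wall} is satisfied.

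The only delicate point is the appeal to the Hermitian classification, which requires checking that a skew-Hermitian form exists at all on $\L$ and that the translation between Hermitian and skew-Hermitian forms is well behaved. The former is immediate since $\L$ is a non-trivial quadratic extension of $\L_0$ so the trace form produces non-zero skew-Hermitian scalars, and the latter is the standard scaling argument; neither presents real obstacles. Once these verifications are in place, conditions (i), (ii), (iii) of Theorem \ref{theo:Wall} are all fulfilled and that theorem directly yields the desired isometry.
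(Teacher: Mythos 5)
Your argument is correct and matches the paper's own reasoning exactly: the paper deduces this statement from the general Wall theorem by observing that nondegenerate Hermitian (hence, after scaling by a fixed non-zero skew-Hermitian scalar, skew-Hermitian) forms over a finite field are classified by rank, and that the rank of $(s,u)_{p,r}$ is precisely the Jordan number $n_{p,r}(u)$. The only difference is that you spell out the existence of a non-zero skew-Hermitian element, which the paper leaves implicit.
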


In the present work, we will try avoiding situations where the second condition is needed to identify an isometry of s-pairs.

\section{A review of known results on decompositions into involutions}\label{section:review}

\subsection{Basic considerations}

We start with very basic yet crucial remarks.

\begin{Rems}
\begin{enumerate}[(i)]
\item Let $u$ be a $k$-reflectional element of a symplectic group. Then $u$ is also $l$-reflectional in the said symplectic group for all $l \geq k$.

\item Let $(s,u)$ and $(s',u')$ be isometric s-pairs. If $u$ is $k$-reflectional in $\Sp(s)$ then $u'$ is
$k$-reflectional in $\Sp(s')$.

\item Let $(s,u)$ be an s-pair, and $k$ be a positive integer.
Assume that we have a splitting $(s,u) \simeq (s_1,u_1) \botoplus (s_2,u_2)$ in which
$u_i$ is $k$-reflectional in $\Sp(s_i)$ for all $i \in \{1,2\}$. Then $u$ is $k$-reflectional in $\Sp(s)$.
\end{enumerate}
\end{Rems}

The following terminology will also be quite useful.

\begin{Def}
Let $u_1$ and $u_2$ be two elements of a symplectic group $\Sp(s)$.
We say that $u_1$ is \textbf{i-adjacent} to $u_2$ whenever there exists an involution $i \in \Sp(s)$
such that $u_1=i u_2$.
\end{Def}

\begin{Rem}
If $u_1$ is i-adjacent to a $k$-reflectional element $u_2$, then $u_1$ is $(k+1)$-reflectional.
\end{Rem}

\begin{Rem}
Assume that we have a decomposition $u=u_1 \botoplus u_2$, and $u_1$ and $u_2$ are respectively i-adjacent to
transformations $u'_1$ and $u'_2$ (in the corresponding symplectic groups). Then
$u$ is i-adjacent to $u'_1 \botoplus u'_2$.
\end{Rem}

\subsection{Symplectic extensions}

Let $(V,s)$ be a symplectic space and $(\calL,\calL')$ be a pair of transverse Lagrangians of it.
The mapping
$$s_{\calL'} : v \in \GL(\calL) \mapsto s_{\calL'}(v) \in \Sp(s)$$
is easily seen to be a group homomorphism. This yields the following result, which was also featured in \cite{dSP2invol}:

\begin{prop}\label{prop:sympextensionrefl}
Let $(s,u)$ be an s-pair, and let $k \in \N^*$. If $(s,u)$ is a symplectic extension of an automorphism
that is $k$-reflectional (in the corresponding general linear group), then $u$ is $k$-reflectional in $\Sp(s)$.
\end{prop}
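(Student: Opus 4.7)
The plan is to exploit the group homomorphism $s_{\calL'}$ directly. Start by picking any pair of transverse Lagrangians $\calL, \calL'$ of $(V,s)$ such that $(s,u)$ arises as $u = s_{\calL'}(v)$ for some $v \in \GL(\calL)$; such a pair exists by hypothesis (up to replacing $(s,u)$ by an isometric copy, which preserves $k$-reflectionality by the basic remark above). Write $v = j_1 j_2 \cdots j_k$ as a product of $k$ involutions in $\GL(\calL)$.

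Next, apply the homomorphism $s_{\calL'}$ stated in the paragraph preceding the proposition: since it is a group homomorphism, it respects the product, giving
\[
u = s_{\calL'}(v) = s_{\calL'}(j_1)\, s_{\calL'}(j_2) \cdots s_{\calL'}(j_k)
\]
in $\Sp(s)$. The only thing left to verify is that each factor $s_{\calL'}(j_\ell)$ is an involution in $\Sp(s)$. But again using that $s_{\calL'}$ is a homomorphism, we have $s_{\calL'}(j_\ell)^2 = s_{\calL'}(j_\ell^2) = s_{\calL'}(\id_\calL) = \id_V$, which is exactly the definition of an involution given in the introduction (even the identity is allowed as an ``involution" under that convention).

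There is essentially no hard step here: the entire proof rides on the multiplicativity of $v \mapsto s_{\calL'}(v)$. The only mild subtlety worth flagging is that the hypothesis of the proposition refers to an abstract s-pair $(s,u)$ ``being a symplectic extension", so one must first realize $(s,u)$ concretely through a choice of transverse Lagrangians in order to invoke the homomorphism $s_{\calL'}$; the invariance of $k$-reflectionality under isometry of s-pairs handles this cleanly. In matrix terms, one can also simply note that if $v = j_1 \cdots j_k$ is a decomposition into involutions in $\GL_d(\F)$, then the block-diagonal matrices $j_\ell \oplus j_\ell^\sharp$ are involutions in the symplectic group (since $(j_\ell^\sharp)^2 = (j_\ell^2)^\sharp = I_d$) and their product is $v \oplus v^\sharp$, which represents $u$ in the chosen symplectic basis.
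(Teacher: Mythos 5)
Your proof is correct and is exactly the argument the paper intends: the paper states immediately before the proposition that $s_{\calL'}:\GL(\calL)\to\Sp(s)$ is a group homomorphism and presents the proposition as an immediate consequence, which is precisely how you argue (a homomorphism sends involutions to involutions and respects products). Nothing further is needed.
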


\subsection{The characterization of $2$-reflectional elements}\label{section:Nielsen}

It is critical now that we state Nielsen's characterization of products of two involutions in symplectic groups.
This result is proved in the recent \cite{dSP2invol} (Nielsen never published the result, although it was stated in B\"unger's PhD thesis
\cite{Bungerthesis}). Note that the implication (ii) $\Rightarrow$ (i) is a special case of Proposition \ref{prop:sympextensionrefl}.

\begin{theo}[Nielsen's theorem]
Let $(s,u)$ be an s-pair. The following conditions are equivalent:
\begin{enumerate}[(i)]
\item $u$ is the product of two involutions in $\Sp(s)$;
\item $u$ is a symplectic extension of an automorphism that is the product of two involutions (in the corresponding general linear group);
\item $u$ is a symplectic extension of an automorphism that is similar to its inverse;
\item $u$ is a symplectic extension of an automorphism whose invariant factors are palindromials;
\item All the Jordan numbers of $u$ are even, and all the Wall invariants of $(s,u)$ are hyperbolic.
\end{enumerate}
\end{theo}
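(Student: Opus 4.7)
I organize the argument as the cycle $(iv)\Leftrightarrow(iii)\Leftrightarrow(ii)\Rightarrow(i)\Rightarrow(v)\Rightarrow(iv)$. The equivalence of (ii), (iii), and (iv) concerns only the \emph{base automorphism} of a symplectic extension of $u$, and follows from the classical Wonenburger--Djokovi\'c theorem: in characteristic not $2$, a linear automorphism is a product of two involutions if and only if it is similar to its inverse if and only if all its invariant factors are palindromials. The implication (ii)$\Rightarrow$(i) is precisely Proposition \ref{prop:sympextensionrefl}. The real content is therefore the proof of (i)$\Rightarrow$(v) and (v)$\Rightarrow$(iv).

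For (i)$\Rightarrow$(v), I start from $u = \sigma \tau$ with $\sigma, \tau$ involutions of $\Sp(s)$. A direct computation gives $\sigma u \sigma^{-1} = u^{-1}$, so $\sigma$ conjugates $u$ to $u^{-1}$. Consequently, for every $p \in \Irr(\F)$ and every $j \geq 1$, $\sigma$ sends $\Ker p(u)^j$ onto $\Ker p^\sharp(u)^j$ and descends to an $\F$-linear map between the subquotients entering the definition of the Wall invariants. In the palindromial case $p = p^\sharp$, $\sigma$ induces an involution $\bar\sigma$ of $V_{p,r}$ that, by unwinding Wall's formulas, transforms $(s,u)_{p,r}$ into its opposite; since in characteristic not $2$ a nondegenerate symmetric or skew-Hermitian form isometric to its opposite is hyperbolic, this proves the hyperbolicity of $(s,u)_{p,r}$ and forces its rank $n_{p,r}(u)$ to be even. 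The quadratic Wall invariants $(s,u)_{t-\eta,r}$ are handled by the same technique. For the remaining Jordan numbers --- those attached to non-palindromial primes, and to $(t-\eta,r)$ for odd $r$ --- I invoke the canonical splitting of $(s,u)$ as an orthogonal direct sum of indecomposable s-pairs: summands of type III, IV, or VI, each of which would contribute an odd amount to a Jordan number not already controlled by the previous analysis, cannot be globally preserved by the adjoint action of $\sigma$ without being bireflectional on their own, which they are not (by the classification of indecomposable types in Table \ref{figure1}). They must therefore appear in pairs, so the relevant Jordan numbers are even.

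For (v)$\Rightarrow$(iv), I use Wall's theorem (Theorem \ref{theo:Wall}) as a recognition principle. Assuming (v), I build a model s-pair $(s',u')$ with the same Jordan numbers and Wall invariants as $(s,u)$, in which $u'$ is manifestly a symplectic extension of an automorphism with palindromial invariant factors. The construction proceeds summand by summand: for each palindromial $p \in \Irr(\F)$ and each $r$ with $n_{p,r}(u) = 2m$, take $m$ copies of the symplectic extension of a $\calC(p^r)$-automorphism; for each unordered pair $\{p, p^\sharp\}$ of non-palindromial primes and each $r$, take $n_{p,r}(u)$ copies of the symplectic extension of a $\calC(p^r)$-automorphism. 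Lemma \ref{lemma:splitsymplecticextension} assembles these into a single symplectic extension $u'$ whose base automorphism has palindromial invariant factors. A routine verification shows that any symplectic extension of a palindromial-base automorphism automatically has even Jordan numbers and hyperbolic Wall invariants, matching the prescribed data. Wall's theorem then yields an isometry $(s,u) \simeq (s',u')$, and pulling back the Lagrangian structure of $u'$ witnesses (iv) for $u$.

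The main technical obstacle is (i)$\Rightarrow$(v): the identification of $\bar\sigma$ as a form-reversing map on each Wall invariant demands unwinding the precise sign conventions in Wall's explicit construction, while the indecomposable-summand pairing argument for Jordan numbers attached to non-palindromial primes (and to $(t-\eta,r)$ with $r$ odd) requires a careful orbit analysis of $\sigma$ acting on a maximally-fine decomposition of $(s,u)$. The (v)$\Rightarrow$(iv) step is conceptually clean but relies on an essentially complete command of Wall's classification.
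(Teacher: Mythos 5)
The paper does not actually prove Nielsen's theorem; it states it and cites \cite{dSP2invol} (de Seguins Pazzis, \emph{Products of two involutions in orthogonal and symplectic groups}, Geom.\ Dedicata 2024), noting only that (ii) $\Rightarrow$ (i) is a special case of Proposition \ref{prop:sympextensionrefl}. There is therefore no in-paper proof to compare your attempt against, and your proposal has to stand on its own merits.

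Your logical architecture $(iv)\Leftrightarrow(iii)\Leftrightarrow(ii)\Rightarrow(i)\Rightarrow(v)\Rightarrow(iv)$ is reasonable, and the core idea for $(i)\Rightarrow(v)$ --- that $\sigma u\sigma^{-1}=u^{-1}$ forces $\sigma$ to descend to a form-reversing map on each Wall invariant space --- is the right lever. However, the justification you give for hyperbolicity is incorrect as stated: you invoke \emph{``a nondegenerate symmetric or skew-Hermitian form isometric to its opposite is hyperbolic''}, which is false in general (a form with $b\simeq -b$ merely has $2[b]=0$ in the Witt group; for instance over a finite field every nondegenerate symmetric form of even rank satisfies $b\simeq -b$, while only those with hyperbolic discriminant are hyperbolic). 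The correct argument uses that $\bar\sigma$ is specifically an \emph{involution} with $\bar\sigma^{*}b=-b$: its $\pm1$-eigenspaces are each totally isotropic and together fill the space, so they are Lagrangians and $b$ is hyperbolic. You have the ingredients in hand but did not assemble them this way, and since the weaker statement you actually appeal to fails, the step as written does not close. In the skew-Hermitian case there is an additional subtlety you do not address: $\bar\sigma$ is only $\F$-linear and conjugate-semilinear over $\L=\F[t]/(p)$ (since it swaps the action of $\lambda$ and $\lambda^{-1}$), so the eigenspace argument must be adapted before concluding hyperbolicity over $\L$.

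The second genuine gap is the treatment of the remaining Jordan numbers, namely $n_{q,r}(u)$ for non-palindromial $q\neq q^\sharp$. Your appeal to ``orbit analysis of $\sigma$ on a maximally-fine decomposition'' and the claim that indecomposable summands of types III/IV ``must appear in pairs'' is not substantiated: $\sigma$ does not in general permute the summands of a chosen orthogonal decomposition into indecomposables, since that decomposition is highly non-canonical. What one can say cleanly is that $\sigma$ and $\tau=\sigma u$ both swap the $q$-primary and $q^\sharp$-primary components of $V$, so $u$ restricted to $V_q\oplus V_{q^\sharp}$ is itself $2$-reflectional in the corresponding symplectic group, and one then needs a separate argument on this block to conclude $n_{q,r}$ is even; the informal ``pairing'' language does not substitute for that. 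Finally, the ``routine verification'' in $(v)\Rightarrow(iv)$ that a symplectic extension has hyperbolic Wall invariants is itself essentially the implication $(ii)\Rightarrow(v)$ and deserves a direct argument (e.g.\ the two Lagrangians of the extension descend to a pair of transverse Lagrangians in each $V_{p,r}$) rather than being labelled routine, else the proof risks circularity.
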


For finite fields, the hyperbolicity of a Hermitian Wall invariant is equivalent to having its underlying vector space of even dimension.
Hence, for such fields condition (v) is limited to the statement that all the Jordan numbers of $u$ are even and that all the
\emph{quadratic} Wall invariants of $(s,u)$ are hyperbolic. In particular, if $u$ has no eigenvalue in $\{\pm 1\}$, or if it has only Jordan cells
of odd size for those eigenvalues, then it is $2$-reflectional in $\Sp(s)$ if and only if all its Jordan numbers are even.

Now, here are some useful examples of s-pairs that satisfy the conditions in this theorem:

\begin{itemize}
\item If $u$ is an indecomposable cell of type VI, then it is $2$-reflectional (e.g.\ by characterization (iv)).
\item If $\F$ is finite then for every s-pair $(s,u)$ such that $u^2=-\id$ and $\rk u$ is a multiple of $4$, the transformation
$u$ is $2$-reflectional in $\Sp(s)$. Indeed, in that case either $t^2+1$ is irreducible over $\F$, in which case
$u$ has a sole non-zero Jordan number, namely $n_{t^2+1,1}(u)$, and it equals $\frac{\rk u}{2}$;
or $t^2+1$ splits over $\F$, with roots $\pm \imath$, in which case $u$ has exactly two non-zero Jordan numbers, both equal to
$\frac{\rk u}{2}\cdot$
Note that this uses the finiteness of $\F$ in a critical way, as is demonstrated by the case of the field $\R$ of real numbers:
in that case indeed, Hermitian forms over $\R[t]/(t^2+1) \simeq \C$ are not classified solely by their rank, so the corresponding Hermitian Wall invariants
need a close examination.
\item In contrast (and for the very same reasons), if we have an s-pair $(s,u)$ such that $u^2=-\id$ and $\rk u$ is not a multiple of $4$, then $u$ is not $2$-reflectional in $\Sp(s)$.
\end{itemize}

\subsection{Cyclic adaptation lemma}

Let us start from a classical lemma on cyclic automorphisms of a vector space:

\begin{lemma}[Proposition 3.4 in \cite{dSP3invol}]\label{lemma:cyclicfitplain}
Let $q \in \F[t]$ be a monic polynomial of degree $d>0$ such that $q(0) \neq 0$.
Let $u$ be a $\calC(q)$-automorphism of a vector space $V$, and
let $r \in \F[t]$ be monic of degree $d$.
If either $r(0)=-q(0)$, or $r(0)=q(0)$ with $d$ odd, then there exists an involution $i \in \GL(V)$ such that $iu$ is $\C(r)$.
\end{lemma}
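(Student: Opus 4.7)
The plan is as follows. Since $u$ is cyclic with characteristic polynomial $q$, one may work in a basis $(e_1,\ldots,e_d)$ in which the matrix of $u$ is the companion matrix of $q=t^d+q_{d-1}t^{d-1}+\cdots+q_0$. I treat the case $r(0)=-q(0)$ by an explicit construction and then reduce the case $r(0)=q(0)$ with $d$ odd to it.

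For the first case, I parametrize a family of involutions of determinant $-1$ by setting $i(e_k):=e_k+a_k e_d$ for $1\le k\le d-1$ and $i(e_d):=-e_d$, with $(a_1,\ldots,a_{d-1})$ ranging over $\F^{d-1}$. A quick check shows that $i^2=\id$ and $\det(i)=-1$, so $\det(iu)=(-1)^{d+1}q_0$, which automatically yields the constant term $-q_0=r(0)$ for the characteristic polynomial of $iu$. In the basis $(e_1,\ldots,e_d)$, the matrix $M_{iu}$ of $iu$ coincides with that of $u$ except in the last row, which reads $(a_2,a_3,\ldots,a_{d-1},-1,\gamma)$ where $\gamma:=q_{d-1}-\sum_{k=1}^{d-1}q_{k-1}a_k$.

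Two things now remain: first, that by suitable choice of $(a_1,\ldots,a_{d-1})$ the characteristic polynomial of $iu$ can be made to equal any prescribed $r$ with $r(0)=-q(0)$; and second, that $iu$ is cyclic. For the first point, expanding $\det(tI-M_{iu})$ along the $(d-1)$-th column (which has only two non-zero entries) yields an affine map from the parameters to the tuple of non-constant coefficients of the characteristic polynomial, with a triangular linear part having non-zero diagonal entries, hence invertible. For the second point, I introduce the vectors $f_k:=-q_0 e_k-q_1 e_{k+1}-\cdots-q_{d-1-k}e_{d-1}$ for $1\le k\le d-1$ and prove, by induction on $k$, that $(iu)^k(e_d)$ is congruent to $f_k$ modulo the span of the earlier iterates of $e_d$; the key identity is $M_{iu}(f_k)=f_{k+1}+\delta_k e_d$ for a suitable scalar $\delta_k$. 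As $q_0\neq 0$, the family $(e_d,f_1,\ldots,f_{d-1})$ is a basis of $V$, and it follows that $e_d$ is a cyclic vector for $iu$, so $iu$ is $\calC(r)$.

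For the case $r(0)=q(0)$ with $d$ odd, I substitute $-u$ for $u$. The characteristic polynomial of $-u$ equals $(-1)^d q(-t)$, whose constant term is $-q_0$ precisely when $d$ is odd. Applying the previous case to $-u$, one obtains an involution $i'$ such that $i'(-u)$ is $\calC(r)$; then $i:=-i'$ is an involution of $\GL(V)$, and $iu=(-i')u=i'(-u)$, so $iu$ is $\calC(r)$. The main obstacle throughout is guaranteeing the characteristic polynomial and the cyclicity of $iu$ simultaneously in the first case; the specific shape of the involutions has been tailored precisely so that both properties fall out of the same computation.
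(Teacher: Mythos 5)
The paper does not prove this lemma itself; it cites it as Proposition~3.4 of \cite{dSP3invol}, so there is no in-text proof to compare against. Your proof is self-contained and, after checking the details, correct.

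The construction is sound: with $u$ put in companion form and $i(e_k)=e_k+a_ke_d$ ($k<d$), $i(e_d)=-e_d$, one has $i^2=\id$, $\det i=-1$, and hence the constant term of $\chi_{iu}$ is forced to $-q_0$. The cyclicity argument is clean: with $f_k=-q_0e_k-\cdots-q_{d-1-k}e_{d-1}$ the identity $(iu)(f_k)=f_{k+1}+\delta_ke_d$ holds, the induction gives $(iu)^k(e_d)\equiv f_k$ modulo the span of earlier iterates, and $(e_d,f_1,\dots,f_{d-1})$ is a basis because $q_0\neq 0$, so $e_d$ is a cyclic vector for $iu$. The reduction of the case $r(0)=q(0)$, $d$ odd, to the first case via $-u$ and $i:=-i'$ is correct.

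The one place that is asserted rather than argued is the ``triangular linear part with nonzero diagonal'' claim. It is true, but the cofactor expansion along column $d-1$ alone does not make the triangularity visible; one still has to expand the resulting $(d-1)\times(d-1)$ minor containing the parameters. If one does so, the coefficient of $a_m$ in $\chi_{iu}(t)$ turns out to be $q_{m-1}t^{d-1}+q_{m-2}t^{d-2}+\cdots+q_0t^{d-m}$, so that, ordering the target coordinates as $(r_{d-1},r_{d-2},\dots,r_1)$, the linear part of the map $(a_1,\dots,a_{d-1})\mapsto(r_{d-1},\dots,r_1)$ is upper triangular with every diagonal entry equal to $q_0\neq 0$. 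With that computation supplied, the step is fully justified, and the proof goes through. An alternative you could consider: since you already prove cyclicity directly, the recurrence $(iu)(f_k)=f_{k+1}+\delta_k e_d$ together with $(iu)(f_{d-1})=q_0e_d$ yields the characteristic polynomial in the basis $(e_d,f_1,\dots,f_{d-1})$, and the affine dependence on $(a_1,\dots,a_{d-1})$ can be read off the scalars $\gamma,\delta_1,\dots,\delta_{d-2}$, which is slightly less determinant-heavy.
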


From there, we readily deduce:

\begin{lemma}\label{lemma:cyclicfit}
Let $(V,s)$ be a symplectic space, and $\calL$ and $\calL'$ be two transverse Lagrangians of it.
Let $u$ be a $\calC(q)$-automorphism of $\calL$, and let
$r \in \F[t]$ be a monic polynomial with the same degree as $q$.

If either $r(0)=-q(0)$, or $r(0)=q(0)$ with $d$ odd, then $s_{\calL'}(u)$ is i-adjacent to $s_{\calL'}(v)$
for some $\calC(r)$-automorphism $v$ of $\calL$.
\end{lemma}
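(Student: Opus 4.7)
The plan is to reduce the statement immediately to Lemma \ref{lemma:cyclicfitplain} by exploiting the fact that the symplectic extension map $s_{\calL'} : \GL(\calL) \to \Sp(s)$ is a group homomorphism.

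First I would apply Lemma \ref{lemma:cyclicfitplain} to the cyclic automorphism $u$ of $\calL$ and the target polynomial $r$: since the hypotheses on $r(0)$ and $\deg r$ are exactly those required, we obtain an involution $j \in \GL(\calL)$ such that the product $v := j u$ is a $\calC(r)$-automorphism of $\calL$. In particular $v$ is an automorphism of $\calL$, so $s_{\calL'}(v)$ is well defined.

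Next I would compute $u v^{-1}$. Since $j^{-1}=j$, one finds $v^{-1} = u^{-1} j$, and therefore
$$u v^{-1} = u\, u^{-1} j = j,$$
which is an involution in $\GL(\calL)$. Using that $s_{\calL'}$ is a group homomorphism, this yields
$$s_{\calL'}(u)\, s_{\calL'}(v)^{-1} = s_{\calL'}(u v^{-1}) = s_{\calL'}(j),$$
and $s_{\calL'}(j)$ is an involution in $\Sp(s)$ because $s_{\calL'}$ maps involutions of $\GL(\calL)$ to involutions of $\Sp(s)$. Rearranging,
$$s_{\calL'}(u) = s_{\calL'}(j)\cdot s_{\calL'}(v),$$
so $s_{\calL'}(u)$ is i-adjacent to $s_{\calL'}(v)$ with $v$ a $\calC(r)$-automorphism of $\calL$, as desired.

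There is no real obstacle here: the lemma is essentially just the symplectic-extension translation of Lemma \ref{lemma:cyclicfitplain}, and the only point worth articulating is that both steps — the existence of the involution $j$ fitting the cyclic characteristic polynomial, and the passage from $\calL$ to $V$ through $s_{\calL'}$ — preserve the relevant structure (cyclicity on the $\calL$ side, involutiveness on the $\Sp(s)$ side).
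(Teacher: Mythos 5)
Your proof is correct and is exactly the route the paper intends: the paper states Lemma~\ref{lemma:cyclicfit} with only the remark ``From there, we readily deduce,'' leaving precisely the two observations you make — apply Lemma~\ref{lemma:cyclicfitplain} to get an involution $j$ on $\calL$ with $ju$ a $\calC(r)$-automorphism, then push through the group homomorphism $s_{\calL'}$ to turn $j$ into a symplectic involution exhibiting the i-adjacency.
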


\subsection{Some useful $3$-reflectional elements}

We will need two important results which allow one to recognize $3$-reflectional elements.
The first one is easily obtained by combining Proposition \ref{prop:sympextensionrefl} with the classical result
that states that every cyclic automorphism with determinant $\pm 1$ is $3$-reflectional in the corresponding general linear group
(see e.g.\ proposition 3.7 of \cite{dSP3invol}):

\begin{prop}\label{prop:sympextension3refl}
Let $(s,u)$ be an s-pair. Assume that $u$ is a symplectic extension of a cyclic automorphism whose minimal polynomial $p$ satisfies $p(0)=\pm 1$.
Then $u$ is $3$-reflectional in $\Sp(s)$.
\end{prop}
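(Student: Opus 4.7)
The proof is a direct concatenation of the two ingredients mentioned in the statement, so the plan is essentially to check that the hypotheses line up.

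First I would observe that since $u$ is, by assumption, a symplectic extension of a cyclic automorphism $v$ of a Lagrangian $\calL$, we may write $u = s_{\calL'}(v)$ for a Lagrangian $\calL'$ transverse to $\calL$. Because $v$ is cyclic, its minimal polynomial $p$ equals its characteristic polynomial, so the determinant of $v$ is $(-1)^{\deg p}\,p(0)$. Under the hypothesis $p(0)=\pm 1$, this gives $\det v \in \{\pm 1\}$.

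Next, I would invoke the classical fact (proposition 3.7 of \cite{dSP3invol}) that a cyclic automorphism of a finite-dimensional vector space whose determinant is $\pm 1$ decomposes as a product of three involutions in the ambient general linear group. Applying this to $v \in \GL(\calL)$ yields that $v$ is $3$-reflectional in $\GL(\calL)$.

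Finally, by Proposition \ref{prop:sympextensionrefl} applied with $k=3$, the symplectic extension $u=s_{\calL'}(v)$ is $3$-reflectional in $\Sp(s)$, which concludes the proof. There is no real obstacle here beyond unwinding the definition of ``symplectic extension of a cyclic automorphism'' and verifying the determinant computation; all the substantial work is outsourced to the two results being combined.
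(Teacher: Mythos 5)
Your proposal is correct and follows exactly the argument the paper gives (in the sentence immediately preceding the proposition): it combines Proposition~\ref{prop:sympextensionrefl} with the fact, proposition~3.7 of~\cite{dSP3invol}, that a cyclic automorphism of determinant $\pm 1$ is $3$-reflectional in the general linear group. The determinant computation $\det v = (-1)^{\deg p}\,p(0) = \pm 1$ is the only verification needed, and you carried it out correctly.
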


The second result is more difficult. It was proved in \cite{dSPinvol4Symp} as an application of the space-pullback technique.
It will also play a crucial part in the present work.

\begin{prop}[Proposition 3.6 in \cite{dSPinvol4Symp}]\label{prop:even3refl}
Let $(s,u)$ be an s-pair. Assume that $u$ is a symplectic extension of a cyclic automorphism $v$ whose minimal polynomial
is even and relatively prime with its reciprocal polynomial. Then $u$ is $3$-reflectional in $\Sp(s)$.
\end{prop}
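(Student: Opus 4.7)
The plan is to find a single involution $I\in\Sp(s)$ such that $Iu$ is $2$-reflectional in $\Sp(s)$; Nielsen's theorem then yields $u=I\cdot(Iu)$ as a product of three involutions. The construction of $I$ must exploit both hypotheses on $p$: its evenness and its coprimality with $p^\sharp$.

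Writing $p(t)=P(t^2)$ with $m:=\deg P$ and using the cyclicity of $v$, I would first split $\calL=\calL_0\oplus v\calL_0$ by setting $\calL_0:=\F[v^2]\cdot x$ for a cyclic vector $x$; both pieces are totally $s$-singular, $v^2$-invariant, and $v$ interchanges them. Next, I would dualize in $\calL'$, choosing $\calL'=\calL'_0\oplus\calL'_1$ in duality with the $\calL_i$, so that $V$ splits orthogonally as $V_0\botoplus V_1$ with $V_i:=\calL_i\oplus\calL'_i$ both $s$-regular. A first candidate $I\in\Sp(s)$ is then the symplectic involution that is $+\id$ on $V_0$ and $-\id$ on $V_1$; by construction it restricts on $\calL$ to the sign-change involution $\iota:=\id_{\calL_0}\oplus(-\id_{v\calL_0})$, and $Iu$ is the symplectic extension $s_{\calL'}(\iota v)$ of the automorphism $\iota v$ of $\calL$ satisfying $\iota v\iota=-v$ and $(\iota v)^2=-v^2$.

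The heart of the argument is then to verify Nielsen's criterion for $Iu$: condition (v) demands that all Jordan numbers of $Iu$ be even and all Wall invariants hyperbolic. This is where the coprimality $\gcd(p,p^\sharp)=1$ enters decisively. Since $t\pm 1$ are both self-reciprocal, the coprimality rules out $\pm 1$ as eigenvalues of $v$, so $Iu$ has no Jordan blocks for eigenvalues $\pm 1$ and its quadratic Wall invariants vanish. Moreover, each irreducible factor $q$ of $p$ satisfies $q\ne q^\sharp$, so the primary invariants of $u$ (and hence of $Iu$) come in paired non-palindromial classes $\{q,q^\sharp\}$, which renders the Hermitian Wall invariants vacuous and should force the Jordan multiplicities to match in pairs.

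The main obstacle is the combinatorial check that after multiplying by $I$ the Jordan numbers really do become even: the naive candidate $I$ above makes $Iu$ a symplectic extension of a cyclic automorphism $\iota v$, whose single invariant factor on $\calL$ and its reciprocal on $\calL'$ unfortunately give odd multiplicities for the primary invariants. Overcoming this is where the full strength of the space-pullback technique is required: one must replace the naive $I$ by a more refined symplectic involution, constructed from a carefully chosen $u$-invariant totally $s$-singular subspace $W\subset V$ and a pullback via Lemma \ref{lemma:recog}, so that the resulting $(s,Iu)$ decomposes orthogonally into pieces each realising the even-multiplicity and hyperbolicity conditions. The Bezout identity $AP+BP^\flat=1$ afforded by coprimality is what secures the existence of such a $W$ inside $V$, since it allows one to project onto the $P$-primary and $P^\flat$-primary parts of $u$ simultaneously while remaining inside $\Sp(s)$.
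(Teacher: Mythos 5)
Your argument identifies the right obstruction but does not overcome it. The naive involution $I$ (equal to $\pm\id$ on the halves of the splitting $\calL=\calL_0\oplus v\calL_0$ and on the dual pieces in $\calL'$) indeed makes $Iu=s_{\calL'}(\iota v)$, and $\iota v$ is cyclic with minimal polynomial $P(-t^2)$, where $p=P(t^2)$. Under the hypothesis $\gcd(p,p^\sharp)=1$ this polynomial is \emph{not} a palindromial, so $Iu$ is a symplectic extension of a cyclic automorphism whose invariant factor is not palindromial, and Nielsen's criterion fails exactly as you acknowledge: each irreducible factor of $P(-t^2)P^\sharp(-t^2)$ contributes an odd Jordan number to $Iu$. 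At that point you declare the naive $I$ must be ``replaced by a more refined symplectic involution'' obtained by the space-pullback technique and Lemma \ref{lemma:recog}, without constructing the subspace $W$, the auxiliary symplectic form $b$ on $W$, or the residual involution, and without verifying that the output satisfies Nielsen's criterion. That missing construction \emph{is} the proof; what you have written stops precisely where it would have to begin.

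The sketch of the intended repair also conflates two distinct tools. Lemma \ref{lemma:recog} requires a totally $s$-singular subspace $W$ that is \emph{stable} under $u$, together with a coprimality condition on characteristic polynomials; it is a recognition/splitting lemma and produces no involution at all. The space-pullback technique, by contrast, requires $u(W)\cap W=\{0\}$ and $s_{u,W}$ nondegenerate --- the \emph{opposite} of $u$-invariance --- and in our setting neither $\calL$ nor $\calL_0$ nor $\calL_1$ qualifies ($u(\calL)=\calL$, while $s_{u,\calL_0}$ vanishes identically because $u(\calL_0)\subset\calL$). The invoked ``Bezout identity $AP+BP^\flat=1$'' likewise does not point to a construction: $u$ is cyclic with minimal polynomial $pp^\sharp$, and its primary decomposition runs over the irreducible factors of $pp^\sharp$, not over $p$ and $p^\sharp$, so ``projecting onto the $P$-primary part of $u$'' inside $\Sp(s)$ has no meaning here. (For completeness: the paper does not prove this proposition but quotes it from \cite{dSPinvol4Symp}, stating only that it is proved there via the space-pullback technique, so there is no in-text proof to compare yours against.)
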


Note that in Proposition \ref{prop:even3refl}, $v$ need not be a product of involutions in the corresponding general linear group (as we might have $\det v \neq \pm 1$!).

\section{On specific indecomposable cells}\label{section:specificcells}

Here, we will illustrate the use of previous results by tackling some types of indecomposable cells. The results here will be used only for the case of $\F_3$
in dimension $4$, but the proofs are interesting in themselves.

\subsection{A lemma on polynomials}

We start with a lemma on polynomials. Remember that $\chi(\F) \neq 2$.

\begin{lemma}\label{lemma:polynomp(0)-1}
Let $n \geq 2$ be an integer. Assume that $\F$ is finite.
Then there exists a monic irreducible polynomial $p\in \F[t]$ of degree $n$ such that $p(0)=-1$.
\end{lemma}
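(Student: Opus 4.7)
My plan is to use a counting argument on Frobenius orbits in $\F_{q^n}^*$, where $q := |\F|$ (necessarily odd since $\chi(\F)\neq 2$).

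The key observation: if $\alpha\in\F_{q^n}^*$ has degree exactly $n$ over $\F$, then its minimal polynomial $p_\alpha$ is monic irreducible of degree $n$ over $\F$, and its constant term equals $(-1)^n N(\alpha)$, where $N := N_{\F_{q^n}/\F}$ is the norm. So monic irreducible polynomials of degree $n$ with $p(0)=-1$ correspond bijectively to Frobenius orbits of cardinality $n$ consisting of elements $\alpha\in\F_{q^n}^*$ with $\F(\alpha)=\F_{q^n}$ and $N(\alpha)=(-1)^{n+1}$. Thus it suffices to exhibit one such element.

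First I would note that the norm map $N:\F_{q^n}^*\to\F^*$ is surjective (for instance because $\F_{q^n}^*$ is cyclic and $N(\gamma)=\gamma^{(q^n-1)/(q-1)}$ has order $q-1$ for a generator $\gamma$). Hence the fiber $N^{-1}((-1)^{n+1})$ has cardinality exactly
\[
\frac{q^n-1}{q-1} \;=\; 1+q+q^2+\cdots+q^{n-1}.
\]

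Next I would bound from above the number of "bad" elements in this fiber, namely those that do not generate $\F_{q^n}$ over $\F$. Every such element lies in some $\F_{q^d}$ with $d\mid n$ and $d<n$, so the number of bad elements is at most
\[
\Bigl|\bigcup_{d\mid n,\,d<n}\F_{q^d}\Bigr|\;\leq\;\sum_{d\mid n,\,d<n} q^d \;\leq\; q+q^2+\cdots+q^{n-1},
\]
the last inequality holding because every proper divisor of $n$ belongs to $\{1,2,\ldots,n-1\}$. Subtracting gives at least
\[
(1+q+q^2+\cdots+q^{n-1})-(q+q^2+\cdots+q^{n-1}) \;=\; 1
\]
remaining "good" element $\alpha\in N^{-1}((-1)^{n+1})$ with $\F(\alpha)=\F_{q^n}$. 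Its minimal polynomial is then a monic irreducible polynomial of degree $n$ with constant term $(-1)^n\cdot(-1)^{n+1}=-1$, as required.

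There is no real obstacle beyond correctly identifying the right norm condition and verifying that the comparison between the two counts works uniformly. The argument is clean precisely because the crude bound on elements lying in proper subfields (simply summing $q^d$ over proper divisors) is already strictly smaller than the size of an entire norm fiber, so no subtler sieving (e.g.\ Möbius inversion, or tracking how many bad elements actually have the right norm) is needed.
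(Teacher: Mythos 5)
Your proof is correct, but it takes a genuinely different route from the paper's. Both arguments reduce the problem to the same norm condition: a degree-$n$ element $\alpha$ of $\F_{q^n}^\times$ has minimal polynomial with constant term $-1$ if and only if $N_{\F_{q^n}/\F}(\alpha) = (-1)^{n+1}$. From there you diverge. The paper's proof is \emph{constructive}: it picks a generator $\zeta$ of $\F_{q^n}^\times$, sets $z := \zeta^k$ with $k := (q-1)/2$ for $n$ even and $k := q-1$ for $n$ odd (chosen precisely so that $N(z) = \zeta^{k(q^n-1)/(q-1)} = (-1)^{n+1}$), and then checks directly that $z$ generates $\F_{q^n}$ by showing the Galois conjugates $z^{q^l}$, $0 \le l \le n-1$, are pairwise distinct, using the elementary inequality $kq^l < q^n - 1$. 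You instead argue \emph{by counting}: the norm fiber $N^{-1}((-1)^{n+1})$ has exactly $1 + q + \cdots + q^{n-1}$ elements (since $N$ is a surjective homomorphism of cyclic groups), and the non-generators of $\F_{q^n}$ over $\F$ are crudely bounded by $q + q^2 + \cdots + q^{n-1}$ because proper divisors of $n$ lie in $\{1,\dots,n-1\}$, leaving at least one good element. Your argument is shorter and avoids any explicit verification of the degree, at the price of being non-constructive; the paper's hands you the element and the polynomial explicitly. Both are clean and correct.
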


\begin{proof}
We choose a finite extension $\L$ of degree $n$ of $\F$, and a generator $\zeta$ of the group $\L^\times$ of units.
Denote by $q$ the cardinality of $\F$ and set $k:=\frac{q-1}{2}$ if $n$ is even, $k:=q-1$ if $n$ is odd, and $z:=\zeta^k$;
in any case, we denote by $p$ the minimal polynomial of $z$ over $\F$.
Since $k q^l<q^n-1$ for all $l \in \lcro 0,n-1\rcro$, the elements
$z,z^q,\dots,z^{q^{n-1}}$, which are the images of $z$ under the elements of $\Gal(\L/\F)$, are pairwise distinct, and hence $p$ has degree $n$.
For the constant coefficient, we compute
$$p(0)=(-1)^{n}\prod_{k=0}^{n-1} z^{q^k}=(-1)^n z^{\frac{q^{n}-1}{q-1}}.$$
If $n$ is even we have $z^{\frac{q^{n}-1}{q-1}}=\zeta^{\frac{q^n-1}{2}}=-1$. If $n$ is odd
we have $z^{\frac{q^{n}-1}{q-1}}=\zeta^{q^n-1}=1$. In any case we deduce that $p(0)=-1$.
\end{proof}

\subsection{Tackling cells of type I, III or V}

\begin{lemma}\label{lemma:typeI-III-V}
Assume that $\F$ is finite.
Let $(s,u)$ be an s-pair with dimension a multiple of $4$. Assume that it is an indecomposable cell of type I, III or V.
Then $u$ is $4$-reflectional in $\Sp(s)$.
\end{lemma}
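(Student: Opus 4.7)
The strategy is to realize $u$ as $\mathrm{i}$-adjacent to a $3$-reflectional element of $\Sp(s)$, so that $u$ itself is $4$-reflectional.

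\emph{Type III.} I would write $u = s_{\calL'}(v)$ for a $\calC(q^{2n})$-automorphism $v$ of the Lagrangian $\calL$, and set $d := 2n \deg q$, $c := -q(0)^{2n}$, and $r(t) := t^d + c$. Then $r$ is monic of degree $d = \deg v$ with $r(0) = c = -q^{2n}(0)$, so Lemma \ref{lemma:cyclicfit} produces an involution in $\GL(\calL)$ whose symplectic extension $i \in \Sp(s)$ is an involution with $iu = s_{\calL'}(v')$ for some $\calC(r)$-automorphism $v'$. If $c \in \{\pm 1\}$, Proposition \ref{prop:sympextension3refl} applies directly, showing $iu$ is $3$-reflectional. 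Otherwise $r$ has only even-degree monomials (since $d$ is even), and $r^\sharp = t^d + c^{-1}$; the scalar $r - r^\sharp = c - c^{-1}$ is nonzero, so $\gcd(r, r^\sharp) = 1$, and Proposition \ref{prop:even3refl} applies. Either way, $u$ is $4$-reflectional.

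\emph{Types I and V.} Here $u$ is cyclic on $V$, so $u$-invariant subspaces form a chain and $u$ itself is not a symplectic extension. The plan is to find an involution $i \in \Sp(s)$ such that $iu$ is cyclic with minimal polynomial of the form $r \cdot r^\sharp$, where $r$ is monic of degree $\dim V / 2$ with $\gcd(r, r^\sharp) = 1$. The adjoint identity $(q(\phi))^* = q(\phi^{-1}) = q(0)\,\phi^{-\deg q}\,q^\sharp(\phi)$ for $\phi := iu \in \Sp(s)$ then implies, after a short dimension count, that $\ker r(iu)$ and $\ker r^\sharp(iu)$ are complementary $iu$-invariant Lagrangians; thus $iu$ is a symplectic extension of a $\calC(r)$-automorphism. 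I would select $r$ irreducible with $r(0) = -1$, supplied by Lemma \ref{lemma:polynomp(0)-1} since $\dim V / 2 \geq 2$, and conclude that $iu$ is $3$-reflectional by Proposition \ref{prop:sympextension3refl}.

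\emph{Main obstacle.} For Types I and V the crux is the actual construction of the symplectic involution $i$ realizing the prescribed minimal polynomial $r \cdot r^\sharp$ for $iu$. Unlike Type III, where Lemma \ref{lemma:cyclicfit} directly produces $i$ from the symplectic-extension structure, here $u$ is merely cyclic and one cannot appeal to that lemma. The construction proceeds via an explicit algebraic model: $V$ is realized as $\F[t]/(p^{2n})$ in Type I (equipped with the involution $\tau : t \mapsto t^{-1}$ and a $\tau$-skew $\F$-linear form defining $s$), or as $\F[t]/((t-\eta)^{2n})$ in Type V, with $u$ being multiplication by $t$. The involution $i$ is then produced from the algebra structure (e.g.\ via a Galois-type symmetry compatible with $\tau$), and one checks by direct computation that the minimal polynomial of $iu$ is the desired $r \cdot r^\sharp$, with Lemma \ref{lemma:polynomp(0)-1} ensuring the target $r$ can be chosen irreducible of the required degree and constant term.
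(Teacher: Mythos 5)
Your Type III argument is correct and genuinely close to the paper's method. But for Types I and V you have identified the right target (make $iu$ a symplectic extension of a cyclic automorphism with char.\ polynomial $r$ coprime to $r^\sharp$ and $r(0)=-1$) without actually producing the involution $i$, and you say so yourself. The hand-wave toward a ``Galois-type symmetry compatible with $\tau$'' on the local ring $\F[t,t^{-1}]/(p^{2n})$ is not a construction: this ring is not a field, so there is no Galois group to invoke, and no explicit involution is exhibited, nor is the claimed minimal polynomial of $iu$ verified. This is a genuine gap, not a routine detail.

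The idea you are missing, and which lets the paper treat all three types at once, is the observation that in each case $u$ is cyclic with minimal polynomial $p^{2n}$ for a palindromial $p$ of even degree with $p(0)=1$ (take $p$ to be the irreducible palindromial for Type I, $p=qq^\sharp$ for Type III, and $p=(t-\eta)^2$ for Type V, the last being where the hypothesis that $\dim V$ is a multiple of $4$ enters). Then $\calL:=\Ker p(u)^n=\im p(u)^n$ is automatically a $u$-invariant \emph{Lagrangian}, and $u|_\calL$ is cyclic with minimal polynomial $p^n$. So you apply the plain cyclic adaptation lemma (Lemma~\ref{lemma:cyclicfitplain}) inside $\calL$ to get an involution $i\in\GL(\calL)$ with $iu|_\calL$ cyclic of minimal polynomial $q$, where $q$ is irreducible of degree $n\deg p$ with $q(0)=-1$ supplied by Lemma~\ref{lemma:polynomp(0)-1}. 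Extend $i$ to the symplectic involution $s_{\calL'}(i)$ for a transverse Lagrangian $\calL'$; then $s_{\calL'}(i)u$ still stabilizes $\calL$ with induced map $iu|_\calL$, and since $q$ is coprime with $q^\sharp$ (an irreducible palindromial would have $q(0)=1$), Lemma~\ref{lemma:recog} upgrades $s_{\calL'}(i)u$ to a symplectic extension of a $\calC(q)$-automorphism, which is $3$-reflectional. This bypasses entirely your obstacle of constructing an involution of the whole space from scratch: the involution lives naturally on a Lagrangian that $u$ already stabilizes.
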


\begin{proof}
There is a palindromial $p$ of even degree (not necessarily irreducible) such that $p(0)=1$,
and an integer $n \geq 1$ such that $u$ is $\calC(p^{2n})$.
Set $\calL:=\Ker p(u)^n=\im p(u)^n$, and note that $\calL^{\bot_s}=(\im p(u)^n)^{\bot_s}=\Ker p(u)^n=\calL$.
Hence $\calL$ is a Lagrangian. Let us take a transverse Lagrangian $\calL'$ of $\calL$.
The endomorphism $v$ of $\calL$ induced by $u$ is $\calC(p^n)$.

By Lemma \ref{lemma:polynomp(0)-1}, we can find an irreducible polynomial $q$ of degree $n \deg(p)$ such that $q(0)=-1$.
By the cyclic adaptation lemma (Lemma \ref{lemma:cyclicfit}), we can choose an involution $i$ of the vector space $\calL$ such that $iv$ is
$\calC(q)$. Hence $s_{\calL'}(i)u$ stabilizes $\calL$ and the induced endomorphism is $iv$.
Note that $q$ is irreducible with even degree and $q(0)=-1$, so it is relatively prime with $q^\sharp$.
By Lemma \ref{lemma:recog}, we deduce that $s_{\calL'}(i)u$ is a symplectic extension of $iv$.
Hence $s_{\calL'}(i)u$ is $3$-reflectional by Proposition \ref{prop:even3refl}. Hence, $u$ is $4$-reflectional in $\Sp(s)$.
\end{proof}

\begin{lemma}\label{lemma:from(t2+1)2tot^4+1}
Assume that $|\F|=3$.
Let $(s,u)$ be an s-pair in which $u$ is $\calC\left((t^2+1)^2\right)$.
Then $u$ is i-adjacent to a $\calC(t^4+1)$-symplectic transformation.
\end{lemma}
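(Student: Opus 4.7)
The plan is to construct an involution $i \in \Sp(s)$ for which $iu$ is $\calC(t^4+1)$; then the relation $u = i \cdot (iu)$ will display the desired i-adjacency.

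First I want to single out a $u$-stable Lagrangian on which $u$ restricts to a $\calC(t^2+1)$-automorphism. Set $\calL := \Ker(u^2+\id)$. Since $u$ is $\calC((t^2+1)^2)$ on the $4$-dimensional space $V$, cyclicity forces $\Ker(u^2+\id)=\im(u^2+\id)$, of dimension $2$. Since $t^2+1$ is a palindromial and $u$ is $s$-symplectic, one also has $\bigl(\im(u^2+\id)\bigr)^{\bot}=\Ker(u^2+\id)$; therefore $\calL$ equals its own $s$-orthogonal and is a Lagrangian. It is stable under $u$, and $v_0 := u|_\calL$ is annihilated by the irreducible $t^2+1$ on the $2$-dimensional space $\calL$, so $v_0$ is $\calC(t^2+1)$.

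The arithmetic input specific to $\F_3$ is the factorization
\[
t^4+1 = (t^2-t-1)(t^2+t-1),
\]
which follows from $(t^2-1)^2-t^2 = t^4-3t^2+1$. Both factors are irreducible in $\F_3[t]$, and $(t^2-t-1)^\sharp = t^2+t-1$, so they are reciprocal palindromials and mutually coprime.

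To build $i$, I pick any Lagrangian $\calL'$ transverse to $\calL$ and apply the plain cyclic fit lemma (Lemma \ref{lemma:cyclicfitplain}) to $v_0$, with $q = t^2+1$ and $r = t^2-t-1$: the hypothesis $r(0) = -q(0) = -1$ is met, so there is an involution $i_0 \in \GL(\calL)$ such that $i_0 v_0$ is $\calC(t^2-t-1)$. I then extend $i_0$ to the symplectic involution $i := s_{\calL'}(i_0) \in \Sp(s)$. The product $iu$ preserves $\calL$ with $(iu)|_\calL = i_0 v_0$, whose characteristic polynomial $p := t^2-t-1$ is irreducible and coprime with $p^\sharp = t^2+t-1$. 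I then invoke the recognition lemma (Lemma \ref{lemma:recog}) with $W := \calL$: the quotient $W^{\bot}/W$ is trivial, so its induced characteristic polynomial is vacuously coprime with $pp^\sharp$, and the coprimality of $p$ with $p^\sharp$ lets the lemma conclude that $iu$ is a symplectic extension of $i_0 v_0$. Hence the matrix of $iu$ in a suitable symplectic basis is block-diagonal with companion matrices of $p$ and $p^\sharp$, giving minimal polynomial $pp^\sharp = t^4+1$ of degree $4 = \dim V$; thus $iu$ is $\calC(t^4+1)$, and the i-adjacency follows. The main obstacle is spotting the factorization of $t^4+1$ over $\F_3$ and choosing $t^2-t-1$ as the target of the cyclic fit so that the hypothesis $r(0)=-q(0)$ holds and the recognition lemma subsequently fires; after these choices the argument assembles cleanly from earlier lemmas.
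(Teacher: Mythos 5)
Your proof is correct and follows essentially the same route as the paper's: the paper derives the lemma from the proof of Lemma~\ref{lemma:typeI-III-V} by taking $q:=t^2-t-1$, which is exactly the Lagrangian $\calL=\Ker(u^2+\id)$, cyclic-fit to $t^2-t-1$, and recognition-lemma argument you give. The only cosmetic difference is that you cite Lemma~\ref{lemma:cyclicfitplain} directly and spell the steps out explicitly rather than appealing to the earlier proof, and your passing description of $t^2-t-1$ and $t^2+t-1$ as ``reciprocal palindromials'' should read ``reciprocal to each other'' (neither is a palindromial); neither point affects the argument.
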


\begin{proof}
This is readily deduced from the proof of Lemma \ref{lemma:typeI-III-V} by taking $q:=t^2-t-1$, which is irreducible over $\F$ with degree $2$.
With the notation from the proof, the characteristic polynomial of $s_{\calL'}(i)u$ is $qq^\sharp=t^4+1$, which has only simple irreducible factors,
and hence $t^4+1$ is also the minimal polynomial of $s_{\calL'}(i)u$, whereas $s_{\calL'}(i)$ is an involution in $\Sp(s)$.
\end{proof}

\subsection{Tackling cells of type II}

\begin{lemma}\label{lemma:typeII}
Assume that $\F$ is finite.
Let $(s,u)$ be an indecomposable s-pair of type II, with dimension a multiple of $4$.
Then $u$ is $4$-reflectional in $\Sp(s)$.
\end{lemma}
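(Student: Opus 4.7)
Since $u$ is $\calC(p^{2n+1})$ with $p \in \Irr(\F) \setminus \{t \pm 1\}$ palindromial, and since $\dim V = (2n+1)\deg p$ is a multiple of $4$ with $2n+1$ odd, we have $\deg p \equiv 0 \pmod 4$; write $\deg p = 4m$. My plan is to mimic Lemma \ref{lemma:typeI-III-V} by producing an involution $i \in \Sp(s)$ making $iu$ three-reflectional, so that $u = i \cdot (iu)$ is four-reflectional. The difficulty, in contrast with types I, III, V, is that type II cells admit no $u$-invariant Lagrangian: the flag $(\Ker p(u)^k)_{0 \le k \le 2n+1}$ consists of subspaces of dimensions $4mk$, whereas a Lagrangian has dimension $(2n+1) \cdot 2m$, never of that form.

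I would instead work with $W := \Ker p(u)^n$, a totally $s$-singular $u$-invariant subspace of dimension $4mn$ on which $u$ acts as $\calC(p^n)$, with $W^\bot/W$ of dimension $4m$ and the induced $\overline{u}$ being $\calC(p)$. First extend $W$ to a Lagrangian $\calL$ of $V$ by adjoining the preimage of a Lagrangian of $W^\bot/W$ (which exists since $\dim W^\bot/W = 4m$ is even), and pick a transverse Lagrangian $\calL'$. For $n \geq 1$, use Lemma \ref{lemma:polynomp(0)-1} to choose an irreducible $q \in \F[t]$ of degree $4mn$ with $q(0) = -1$, and the plain cyclic adaptation Lemma \ref{lemma:cyclicfitplain} to find an involution $j_W \in \GL(W)$ such that $j_W \cdot u|_W$ is $\calC(q)$. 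Extend $j_W$ to an involution $j \in \GL(\calL)$ acting as the identity on a well-chosen complement of $W$ in $\calL$, and set $i := s_{\calL'}(j) \in \Sp(s)$. Then $i$ preserves $W$ and $(iu)|_W = \calC(q)$.

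Next I apply Lemma \ref{lemma:recog} to $(s, iu)$ with the totally singular subspace $W$. The characteristic polynomial of $(iu)|_W$ is $q$, which is coprime with $q^\sharp$ because $q(0) = -1$ forces $q$ to not be a palindromial while both polynomials are irreducible of the same degree. The characteristic polynomial of the induced $\overline{iu}$ on $W^\bot/W$ must then be coprime with $q q^\sharp$: for $n \geq 2$ this holds automatically by comparison of degrees ($4m < 4mn$), while for $n = 1$ the complement of $W$ in $\calL$ is chosen so that $\overline{iu}$ remains $\calC(p)$, which differs from both $q$ and $q^\sharp$ since $p(0) = 1$ whereas $q(0) = q^\sharp(0) = -1$. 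The lemma then provides a splitting $(s, iu) = (s_0, (iu)_0) \botoplus (s_1, (iu)_1)$ in which $(iu)_0$ is a symplectic extension of $\calC(q)$ --- hence $3$-reflectional by Proposition \ref{prop:even3refl}, since $q$ is of even degree and coprime with $q^\sharp$ --- while $(iu)_1$ is isometric to the residual s-pair on $W^\bot/W$, itself a type II cell with $n' = 0$ on a $4m$-dimensional space.

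The main obstacle I anticipate is the base case $n = 0$, which is also required to deal with the $(iu)_1$ piece arising from the inductive reduction. Here $u = \calC(p)$ with $p$ palindromial irreducible of degree $4m$, $V$ is $4m$-dimensional, and the above reduction collapses since $W = 0$. To establish $3$-reflectionality in this case I would pass to the algebraic model $V \simeq \L$ with $\L := \F[t]/(p)$, view $V$ as a two-dimensional vector space over the fixed subfield $\K$ of the involution $\sigma : \overline{t} \mapsto \overline{t}^{-1}$ of $\L$, and construct an involution $i \in \Sp(s)$ --- built from a $\K$-semilinear map associated with the order-two element of $\Gal(\K/\F)$, together with a suitable $\K$-linear correction --- so that $iu$ has Jordan structure of the form $\calC(r)^{\oplus 2}$ for some irreducible palindromial $r$ over $\F$ of degree $2m$. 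By Nielsen's theorem such an $iu$ is $2$-reflectional, whence $u$ would be $3$-reflectional; this closes the induction and yields the $4$-reflectionality of $u$ for all $n \geq 0$.
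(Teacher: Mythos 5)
There is a genuine gap, concentrated in the base case and propagating through the inductive step.

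Your strategy requires the $n=0$ case (a $\calC(p)$-symplectic transformation with $p$ an irreducible palindromial of degree $4m$) to be \emph{$3$-reflectional}, both as the base of the induction and to handle the residual piece $(iu)_1$ on $W^\bot/W$ in the inductive step. But this is false in general: over $\F_3$ with $m=1$, the paper's Table 2 shows that a $\calC(p)$-transformation with $p \in \{t^4+t^3+t^2+t+1,\ t^4-t^3+t^2-t+1\}$ (the only monic irreducible palindromials of degree $4$ over $\F_3$) has reflectional length exactly $4$, and indeed the Trace Trick (Lemma~\ref{lemma:trace}) forbids $3$-reflectionality since $\tr u = \mp 1 \neq 0$. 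So whatever $\K$-semilinear construction you intend for the base case, it cannot produce an involution $i$ with $iu \simeq \calC(r)^{\oplus 2}$, because that would make $u$ $3$-reflectional. The correct conclusion for $n=0$ is only $4$-reflectionality, which the paper obtains by factoring $u' : x \mapsto \lambda x$ on $\L = \F[t]/(p)$ as a product of two ``quarter turns'' $r_1 : x \mapsto \imath x^\bullet$ and $r_2 : x \mapsto -\lambda\imath x^\bullet$, each $2$-reflectional because $d \equiv 0 \bmod 4$.

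The inductive step has a parallel defect. Your involution $i = s_{\calL'}(j)$ is constructed from $j$ acting on $\calL$ as $j_W$ on $W$ and as the \emph{identity} on a complement of $W$ in $\calL$, which forces the induced involution on $W^\bot/W$ to be the identity (a symplectic involution fixing a Lagrangian of $W^\bot/W$ pointwise must be the identity). Consequently the residual $(iu)_1$ is still $\calC(p)$, and you need it to be $3$-reflectional for $iu$ to be $3$-reflectional — which, as above, fails. The paper's proof avoids this by making the involution act nontrivially on all three diagonal blocks of the triangular matrix for $u$: it takes $\widetilde{S} = S' \oplus S \oplus (S')^\sharp$, where the middle block $S \in \Sp_d(\F)$ is a symplectic involution extracted from the $n=0$ analysis so that $SB$ is $3$-reflectional. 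This extra $S$ on the middle block is precisely what your construction is missing. Finally, a smaller point: you cite Proposition~\ref{prop:even3refl} for the $3$-reflectionality of the symplectic extension of $\calC(q)$, but that proposition needs $q$ to be an \emph{even polynomial} (only even-degree terms), not merely of even degree; the applicable result here, given $q(0)=-1$, is Proposition~\ref{prop:sympextension3refl}.
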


The proof is partly inspired by Ellers and Villa's argument from \cite{EllersVilla}.

\begin{proof}
We write the minimal polynomial of $u$ as $p^{2n+1}$ where $p \in \Irr(\F)$ is a palindromial of even degree $d$. Note that $d$ is a multiple of $4$
because so is $(2n+1)d$.

We start with the case $n=0$.

Consider the splitting field $\L:=\F[t]/(p)$ equipped with the involution $x \mapsto x^\bullet$
that takes the coset $\lambda$ of $t$ to its inverse, and with the norm $N : x \in \L \mapsto xx^\bullet$.

Choose an arbitrary nonzero skew-Hermitian (i.e.\ skew-selfadjoint) element $h$ of $\L$ (say, $h=\lambda-\lambda^{-1}$),
and consider the symplectic form
$$s' : (x,y) \mapsto \Tr_{\L/\F} (h x^\bullet y)$$
on the $\F$-vector space $\L^2$.
Since $\lambda^\bullet \lambda=1$, one sees that $u' : x \mapsto \lambda x$
is $s'$-symplectic. Since $u'$ is $\calC(p)$,
we deduce that $(s,u)$ is isometric to $(s',u')$ (once more, there is no relevant Wall invariant here because $p$
has no root in $\{-1,1\}$ and the field $\F$ is finite). It will suffice to prove that $u'$ is $4$-reflectional in $\Sp(s')$.

Now, denote by $\K$ the subfield of all Hermitian (i.e.\ selfadjoint) elements of $\L$.
Since $d$ is a multiple of $4$, the degree of $\K$ over $\F$ is even, and it follows (because $\K$ is finite) that
there is an element $\imath \in \K$ such that $\imath^2=-1$. Note that $N(\imath)=\imath^2=-1$.
Consider then $r_1 : x \mapsto \imath x^\bullet$ and $r_2 : x \mapsto -\lambda \imath x^\bullet$, which are $\F$-linear (and even $\K$-linear).
Using $N(-\imath\lambda)=-1=N(\imath)$ and the obvious fact that $x \mapsto x^\bullet$ is $s'$-skewsymplectic, we gather that both
$r_1$ and $r_2$ are $s'$-symplectic. Clearly $u'=r_2 r_1$, and $r_1^2=-\id=r_2^2$.

To conclude, it suffices to prove that each one of $r_1$ and $r_2$ is $2$-reflectional in $\Sp(s')$.
But this follows immediately from one of the examples from Section \ref{section:Nielsen}, because $d$ is a multiple of $4$.
Hence, $u'$ is $4$-reflectional in $\Sp(s')$, and we conclude that $u$ is $4$-reflectional in $\Sp(s)$.

Now, we turn to the case $n>0$.
Then $\Ker p^n(u)=\im p^{n+1}(u)=(\Ker p^{n+1}(u))^{\bot_s}$. In particular $\Ker p^n(u)$ is totally $s$-singular.
Let us take a basis $(e_1,\dots,e_N)$ of $\Ker p^n(u)$ (with $N:=nd$), and then extend it to a basis $(e_1,\dots,e_N,f_1,\dots,f_d)$ of
$\Ker p^{n+1}(u)$ such that $(f_1,\dots,f_d)$ is $s$-symplectic. Note that $\Vect(f_1,\dots,f_d)$ is $s$-regular.
Finally, we extend $(e_1,\dots,e_N)$ into a symplectic basis $(e_1,\dots,e_N,g_1,\dots,g_N)$ of $\Vect(f_1,\dots,f_d)^{\bot_s}$.
Hence $\bfB:=(e_1,\dots,e_N,f_1,\dots,f_d,g_1,\dots,g_N)$ is a basis of $V$ and the matrix of $u$ in that basis looks as follows:
$$M=\begin{bmatrix}
A & ? & ? \\
0 & B & ? \\
0 & 0 & A^\sharp
\end{bmatrix}$$
where $A \in \GL_{nd}(\F)$ is cyclic with minimal polynomial $p^n$, and $B\in \Sp_d(\F)$ is cyclic with minimal polynomial $p$.
By the first case we have studied in the above, we find a symplectic involution $S \in \Sp_d(\F)$ such that $SB$ is $3$-reflectional in
$\Sp_d(\F)$. Once more, we use Lemma \ref{lemma:polynomp(0)-1} to find some $q \in \Irr(\F)$ of degree $dn$ such that $q(0)=-1$.
By Lemma \ref{lemma:cyclicfitplain}, we can find an involutory matrix $S' \in \GL_{nd}(\F)$ such that $S'A$ is cyclic with minimal polynomial $q$.
Denote by $i_1$ the endomorphism of $V$ represented in $\bfB$ by
$$\widetilde{S}:=\begin{bmatrix}
S' & 0 & 0 \\
0 & S & 0 \\
0 & 0 & (S')^\sharp
\end{bmatrix}.$$
With the way $\bfB$ has been chosen, it is clear that $i_1$ is $s$-symplectic, and obviously $i_1$ is an involution.
Then, $i_1s$ is represented in $\bfB$ by
$$\begin{bmatrix}
S'A & ? & ? \\
0 & SB & ? \\
0 & 0 & (S'A)^\sharp
\end{bmatrix}.$$
Now,  the characteristic polynomial of $SB$ must be relatively prime with $q$ because its degree is less than or equal to $nd$, whereas $q$
is irreducible and is not a palindromial. Moreover, $q$ is relatively prime with $q^\sharp$.
Hence, by Lemma \ref{lemma:recog} we gather that $i_1u$ splits into the orthogonal direct sum $u_1 \botoplus u_2$, where
$u_1$ is a symplectic extension of a $\calC(q)$-automorphism, and
$u_2$ is $3$-reflectional in the corresponding symplectic group. Finally, by Proposition \ref{prop:sympextension3refl}, $u_1$
is $3$-reflectional in the corresponding symplectic group. Hence $i_1 u$ is $3$-reflectional in $\Sp(s)$, and we conclude that $u$
is $4$-reflectional in $\Sp(s)$.
\end{proof}

\section{The space-pullback technique, recalled and refined}\label{section:spacepullback}

\subsection{A review of the space-pullback technique}

We will recall this technique, which was the key to the main theorem of \cite{dSPinvol4Symp}
(see sections 3.1 and 3.2 there for more details).
The idea is to look at the action on totally $s$-singular subspaces.
So, assume that we have an s-pair $(s,u)$, with underlying vector space $V$,
and a totally $s$-singular subspace $W$ such that $u(W) \cap W=\{0\}$.
Then, we look at the restricted bilinear form
$$s_{u,W} : (x,y) \in W^2 \longmapsto s(x,u(y)),$$
and we assume that it is \emph{nondegenerate.}
Note that its respective symmetric and skew-symmetric parts are
$$(x,y) \in W^2 \longmapsto \frac{1}{2}\, s\bigl(x,(u-u^{-1})(y)\bigr) \quad \text{and} \quad
(x,y) \in W^2 \longmapsto \frac{1}{2}\, s\bigl(x,(u+u^{-1})(y)\bigr).$$
Under the condition of nondegeneracy, the subspace $W \oplus u(W)$ is $s$-regular.
Then we take an arbitrary symplectic form $b$ on $W$.
It turns out that:
\begin{itemize}
\item[(i)] There is a unique symplectic involution $i$ of $W \oplus u(W)$
such that $i(W)=u(W)$ and
$$\forall (x,y)\in W^2, \; b(x,y)=s(x,i(y)).$$
\item[(ii)] There is also a unique endomorphism $v$ of $W$ such that
$$\forall (x,y)\in W^2, \; s_{u,W}(x,y)=b(x,v(y)).$$
\end{itemize}
And then $i(u(x))=v(x)$ for all $x \in W$. This justifies the term ``space-pullback", as $i$ pulls $u(W)$ back into $W$.
Now, we extend $i$ to a symplectic involution $\widetilde{i}$ of $V$ by choosing a symplectic involution of
$(W \oplus u(W))^{\bot_s}$, which will be called the \textbf{residual involution} induced by $\widetilde{i}$, and denoted by $i'$.

The composite symplectic transformation $\widetilde{i} \circ u$ leaves $W$ invariant, and it induces a symplectic transformation of
$W^{\bot_s}/W$. The analysis of this transformation is crucial. Denoting by $\pi$ the canonical projection of $u(W) \oplus (W \oplus u(W))^{\bot_s}$ onto $(W \oplus u(W))^{\bot_s}$, we call the mapping $x \mapsto \pi(u(x))$
(which is well defined on $(W \oplus u(W))^{\bot_s}$) the \textbf{residual endomorphism} of $(W \oplus u(W))^{\bot_s}$ induced by $u$.
If we denote this endomorphism by $u'$, then the symplectic transformation of $W^{\bot_s}/W$ induced by $iu$
is symplectically similar to the symplectic transformation $i'u'$ of $(W \oplus u(W))^{\bot_s}$, and more precisely
the canonical mapping $\varphi : (W \oplus u(W))^{\bot_s} \overset{\simeq}{\longrightarrow} W^{\bot_s}/W$ is such that
$\varphi \circ (i'u') \circ \varphi^{-1}$ is the symplectic transformation of $W^{\bot_s}/W$ induced by $iu$.
Hence, the type of the former is entirely determined by the effect of $\widetilde{i}$ on $(W \oplus u(W))^{\bot_s}$
and by the bilinear form induced by $s_u$ on $(W \oplus u(W))^{\bot_s}$.

In order to make the technique fruitful, we will need careful choices of $W$, $b$ and of a symplectic involution of
$(W \oplus u(W))^{\bot_s}$ (to be the residual involution of the involution $i \in \Sp(s)$ that is constructed in the space-pullback).
In practice, most of the time we do not define $b$ directly, rather we choose a vector space automorphism $v$ of $W$ such that
$(x,y) \mapsto s_{u,W}(x,v^{-1}(y))$ is symplectic, and we try to force the conjugacy class of $v$ in $\GL(W)$.
Note that if $s_{u,W}$ is symmetric, asking that $(x,y) \mapsto s_{u,W}(x,v^{-1}(y))$ be symplectic amounts to asking that
$v$ be $s_{u,W}$-skew-symmetric. And in all but one case (more precisely Lemma \ref{lemma:pairF3t^2+1}), we will try to have $s_{u,W}$ symmetric.

In Section \ref{section:adaptspacepullback}, we will obtain interesting results on the choice of the conjugacy class of $v$. And in Section \ref{section:lagrangianchoice}, we will
discuss how to choose $W$ wisely.

\subsection{Adapting the symplectic form for space-pullbacking}\label{section:adaptspacepullback}

Since a complete understanding of the situation is not required, we will limit ourselves to a very specific yet critical situation.

\begin{lemma}\label{lemma:adaptlemmairr}
Assume that $\F$ is finite. Let $p \in \Irr(\F)$ be even.
Then there exists a pair $(b,v)$ consisting of:
\begin{enumerate}[(i)]
\item A nonhyperbolic nondegenerate symmetric bilinear form $b$ on a vector space $V$;
\item A $b$-skew-selfadjoint $\calC(p)$-endomorphism $v$ of $V$.
\end{enumerate}
\end{lemma}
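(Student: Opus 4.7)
The plan is to produce $(b, v)$ by an algebraic model on the residue field $\L := \F[t]/(p)$, viewed as an $\F$-vector space. Let $\lambda$ denote the coset of $t$, so that $v : x \mapsto \lambda x$ is a $\calC(p)$-endomorphism of $\L$. Since $p$ is even and monic irreducible with $p(0) \neq 0$, the element $-\lambda$ is a second root of $p$, distinct from $\lambda$ because $\chi(\F) \neq 2$. This yields a well-defined $\F$-automorphism $\sigma$ of $\L$ with $\sigma(\lambda) = -\lambda$ and $\sigma^2 = \id \neq \sigma$. Setting $\K := \L^\sigma$, we have $[\L:\K] = 2$ and $m := [\K:\F] = (\deg p)/2$.

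Next I would consider
$$b : (x, y) \in \L \times \L \;\longmapsto\; \Tr_{\L/\F}(\sigma(y)\, x) \in \F.$$
Elementary verifications yield that $b$ is symmetric (from the $\sigma$-invariance of $\Tr_{\L/\F}$), that $b$ is nondegenerate (from the nondegeneracy of the standard trace form on $\L/\F$), and that $v$ is $b$-skew-selfadjoint (via $\sigma(\lambda y) = -\lambda\, \sigma(y)$).

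The crux is to show $b$ is nonhyperbolic. Since over a finite field of odd characteristic two nondegenerate symmetric bilinear forms of the same even dimension are equivalent iff their discriminants agree in $\F^*/\F^{*2}$, and a hyperbolic form in dimension $2m$ has discriminant $(-1)^m$, it suffices to compute the discriminant of $b$ and show it does not equal $(-1)^m$. Pick an $\F$-basis $(f_1, \dots, f_m)$ of $\K$, so that $(f_1, \dots, f_m, \lambda f_1, \dots, \lambda f_m)$ is an $\F$-basis of $\L$. Splitting $\Tr_{\L/\F} = \Tr_{\K/\F} \circ \Tr_{\L/\K}$ with $\Tr_{\L/\K}(z) = z + \sigma(z)$, one checks that the Gram matrix of $b$ in this basis is block-diagonal with blocks $2T$ and $2T_\mu$, where $T(x, y) := \Tr_{\K/\F}(xy)$ is the trace form of $\K/\F$ and $T_\mu(x, y) := \Tr_{\K/\F}(\mu\, xy)$ with $\mu := -\lambda^2 \in \K$. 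Since $T_\mu(x, y) = T(\mu x, y)$, one finds $\det(T_\mu) = N_{\K/\F}(\mu) \cdot \det(T)$, whence the discriminant of $b$ equals $N_{\K/\F}(\mu)$ in $\F^*/\F^{*2}$.

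To conclude, note that $\lambda^2 = -\mu$ lies in $\K$ but is a nonsquare in $\K^*$ (else $\lambda \in \K$, contradicting $\sigma(\lambda) = -\lambda \neq \lambda$), and that $N_{\K/\F}$ preserves square-classes: picking a generator $g$ of the cyclic group $\K^*$, one computes $N_{\K/\F}(g) = g^{(q^m-1)/(q-1)}$ (with $q := |\F|$), which generates $\F^*$, and the parity claim follows at once. Hence $N_{\K/\F}(-\mu)$ is a nonsquare in $\F^*$, and $N_{\K/\F}(\mu) = (-1)^m N_{\K/\F}(-\mu) \not\equiv (-1)^m \pmod{\F^{*2}}$. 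The discriminant of $b$ is therefore not the hyperbolic value, and $b$ is nonhyperbolic. The main obstacle is precisely this final discriminant chase across the tower $\F \subset \K \subset \L$; the symmetry, nondegeneracy, and skew-selfadjointness checks are purely formal.
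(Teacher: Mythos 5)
Your proof is correct. The setup coincides with the paper's: the same algebraic model $\L = \F[t]/(p)$, the same involution $\sigma : \lambda \mapsto -\lambda$ with fixed field $\K$, the same bilinear form $b(x,y) = \Tr_{\L/\F}(\sigma(y)\,x)$ and multiplication map $v = m_\lambda$; and the symmetry, nondegeneracy, and skew-selfadjointness checks agree. The difference is entirely in the nonhyperbolicity argument. The paper computes the discriminant of $b$ in the monomial basis $(1,\lambda,\dots,\lambda^{d-1})$, obtaining $(\det M)^\bullet(\det M)$ for a Vandermonde-type matrix $M$, then shows by Galois-theoretic bookkeeping that $\det M$ lies in the unique quadratic subextension of $\L/\F$ but not in $\F$, with $(\det M)^\bullet = (-1)^{d/2}\det M$, so that the discriminant $(-1)^{d/2}(\det M)^2$ cannot equal the hyperbolic value $(-1)^{d/2}\alpha^2$ with $\alpha\in\F$. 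You instead work in the $\K$-adapted basis $(f_1,\dots,f_m,\lambda f_1,\dots,\lambda f_m)$, under which the Gram matrix block-diagonalizes as $2T \oplus 2T_\mu$ with $\mu = -\lambda^2$, so the discriminant is $N_{\K/\F}(\mu)$ modulo squares; the conclusion then follows from the surjectivity and square-class-preserving property of the finite-field norm together with the observation that $\lambda^2$ is a nonsquare in $\K^*$. Your route is arguably more transparent, as it isolates the discriminant as an explicit norm rather than chasing $\det M$ through the subfield lattice; both arguments are complete and correct.
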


\begin{proof}
Denote by $d$ the degree of $p$.

We consider the field $\L:=\F[t]/(p)$ equipped with the non-identity involution $x \mapsto x^\bullet$
that takes the coset $\lambda$ of $t$ to its opposite (it is well defined because $p$ is even),
and the $\F$-linear mapping $v : x \mapsto \lambda x$.
We also consider the subfield $\K:=\{x \in \L : \; x^\bullet=x\}$, of which $\L$ is a quadratic extension.
The bilinear form $b: (x,y) \mapsto \Tr_{\L/\F}(x^\bullet y)$
is nondegenerate and symmetric, and $v$ is $b$-skew-selfadjoint. It remains to investigate the type of $b$.

Let us write $d:=\deg p$ and $\Gal(\L/\F)=\{\sigma_1,\dots,\sigma_d\}$. Since $\L$ is a Galois extension of $\F$ (remember that $\L$ is finite) and
$\L=\F[\lambda]$, it is classical that the Gram matrix of $b$ in the basis $(1,\lambda,\dots,\lambda^{d-1})$
equals $(M^\bullet)^T M$, where $M=(\sigma_i(\lambda^{j-1}))_{1 \leq i,j \leq d}$, and hence
one of the determinants of $b$ is $(\det M)^\bullet (\det M)$.

Since $\L$ is finite with even degree over $\F$, there is a unique quadratic extension $\mathbb{M}$ of $\F$
such that $\mathbb{M} \subseteq \L$. As the discriminant of $p$ over $\F$ equals
$\pm (\det M)^2$, we deduce that $\det M \in \mathbb{M}$.
When applying an element of $\Gal(\L/\F)$ entry-wise, the rows of $M$ are permuted, and in particular
$(\det M)^\bullet=\pm \det M$.
The group $\Gal(\L/\F)$ is cyclic and we take a generator $\varphi$ of it.
The morphism $\varphi$ acts as a $d$-cycle on the roots of $p$, and its $\frac{d}{2}$-th power $x \mapsto x^\bullet$ acts as the product of $\frac{d}{2}$-commuting transpositions on them;
hence the signature of the latter permutation equals $(-1)^{d/2}$, so that $(\det M)^\bullet=(-1)^{d/2} \det M$.
The same line of reasoning shows that $\det M \not\in \F$ because $\varphi(\det M)=\det M^{\varphi}=(-1)^{d-1} \det M \neq \det M$.
Hence, for some $\beta \in \mathbb{M} \setminus \F$, the scalar $(-1)^{d/2} \beta^2$ is a determinant of $b$ .

If $b$ were hyperbolic, then $(-1)^{d/2} \beta^2=(-1)^{d/2}\alpha^2$ for some $\alpha \in \F$, leading to $\beta=\pm \alpha$
and contradicting the fact that $\beta \not\in \F$. Hence $b$ is nonhyperbolic.
\end{proof}

\begin{lemma}\label{lemma:adaptlemmasplit}
Let $\lambda \in \F \setminus \{0,1,-1\}$ and $n \geq 1$.
Then there exists a pair $(b,v)$ consisting of:
\begin{enumerate}[(i)]
\item A hyperbolic bilinear form $b$ on a vector space $V$ of dimension $2n$;
\item A $b$-skew-selfadjoint endomorphism $v$ of $V$ that is diagonalizable with eigenvalues $\pm \lambda$, both
with multiplicity $n$.
\end{enumerate}
\end{lemma}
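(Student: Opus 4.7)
The plan is to build $v$ and $b$ simultaneously by starting from the eigenspace decomposition that $v$ must have. Let $W_1$ and $W_2$ be two copies of $\F^n$ and set $V:=W_1 \oplus W_2$, so $\dim V=2n$. Define the endomorphism $v$ by $v_{|W_1}:=\lambda \id_{W_1}$ and $v_{|W_2}:=-\lambda \id_{W_2}$. This forces $v$ to be diagonalizable over $\F$ with eigenvalues $\pm \lambda$, each of multiplicity $n$ (here $\lambda \neq 0$ and $\lambda \neq -\lambda$ since $\chi(\F) \neq 2$).

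Next I would derive the only symmetric bilinear forms $b$ for which $v$ is $b$-skew-selfadjoint. For $x,y \in W_1$, the condition $b(vx,y)=-b(x,vy)$ reads $\lambda\, b(x,y)=-\lambda\, b(x,y)$, which (since $2\lambda \neq 0$) forces $b$ to vanish on $W_1 \times W_1$. The same argument, applied on $W_2$, forces $b$ to vanish on $W_2 \times W_2$. In contrast, for $x \in W_1$ and $y \in W_2$ we get $\lambda\, b(x,y)=-b(x,-\lambda y)=\lambda\, b(x,y)$, which is automatic; and symmetrically for pairs in $W_2 \times W_1$. So the skew-selfadjointness condition is precisely equivalent to $W_1$ and $W_2$ being totally isotropic for $b$.

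Having reduced the problem to choosing any symmetric form $b$ making both $W_1$ and $W_2$ totally isotropic and still nondegenerate on $V$, I pick bases $(e_1,\dots,e_n)$ of $W_1$ and $(f_1,\dots,f_n)$ of $W_2$ and declare $b(e_i,f_j)=b(f_j,e_i):=\delta_{ij}$, $b(e_i,e_j):=0$ and $b(f_i,f_j):=0$. The Gram matrix of $b$ in the basis $(e_1,\dots,e_n,f_1,\dots,f_n)$ is then
$$\begin{bmatrix} 0 & I_n \\ I_n & 0 \end{bmatrix},$$
which is symmetric, nondegenerate and hyperbolic, with Lagrangian $W_1$ (and also $W_2$). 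This gives the claimed pair $(b,v)$.

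There is no real obstacle here: once one observes that the skew-selfadjointness condition decouples into ``$W_1$ and $W_2$ are totally isotropic'' plus an automatic cross-relation, the construction reduces to writing down any hyperbolic pairing of a basis of $W_1$ with a basis of $W_2$. The hypotheses $\lambda \neq 0$ and $\chi(\F) \neq 2$ are used in the ``totally isotropic'' step, while the assumption $\lambda \neq \pm 1$ plays no role in this lemma itself and is included for the downstream applications of the space-pullback technique.
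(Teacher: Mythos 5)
Your proof is correct and arrives at essentially the same construction as the paper: in coordinates, your pair $(b,v)$ is exactly the pair $\bigl((X,Y)\mapsto X^TSY,\ X\mapsto AX\bigr)$ with $A=\lambda I_n\oplus(-\lambda I_n)$ and $S=\begin{bsmallmatrix}0&I_n\\ I_n&0\end{bsmallmatrix}$ that the paper exhibits directly. The only difference is presentational: the paper simply checks that $SA$ is alternating, whereas you derive the same Gram matrix by analysing what skew-selfadjointness forces on the eigenspaces of $v$.
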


\begin{proof}
It suffices to consider the matrices
$$A:=\begin{bmatrix}
\lambda I_n & 0_n \\
0_n & -\lambda I_n
\end{bmatrix} \quad \text{and} \quad
S:=\begin{bmatrix}
0_n & I_n \\
I_n & 0_n
\end{bmatrix}$$
of $\Mat_{2n}(\F)$. We note that $S$ represents a hyperbolic bilinear form, that $A$ is diagonalizable with eigenvalues $\pm \lambda$, both
with multiplicity $n$, and that $SA$ is alternating. Hence, it suffices to take for $b$ the bilinear form $(X,Y) \mapsto X^T SY$ on $\F^{2n}$, and for
$v$ the endomorphism $X \mapsto AX$ of $\F^{2n}$.
\end{proof}

\subsection{Finding large subspaces for space-pullbacking}\label{section:lagrangianchoice}

Here, we inquire about the existence of large totally $s$-singular subspaces $W$ for which $s_{u,W}$ is symmetric and nondegenerate.

First of all, we recall the following result from \cite{dSPinvol4Symp}:

\begin{prop}[Corollary 3.3 in \cite{dSPinvol4Symp}]\label{prop:existlagrangian}
Let $(s,u)$ be an s-pair such that $u$ has no Jordan cell of odd size for an eigenvalue in $\{\pm 1\}$.
Then there exists an $s$-Lagrangian $\mathcal{L}$ such that $u(\mathcal{L}) \cap \mathcal{L}=\{0\}$
and the bilinear form $s_{u,\mathcal{L}} : (x,y) \mapsto s(x,u(y))$ is symmetric and nondegenerate.
\end{prop}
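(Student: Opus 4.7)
The plan is first to reformulate the conditions algebraically, then to reduce to indecomposable cells, and finally to construct $W$ in every remaining case through a uniform cyclic algebraic model. Since $s$ is alternating and $u$ is $s$-symplectic, the operator $u+u^{-1}$ is $s$-selfadjoint while $u-u^{-1}$ is $s$-skew-selfadjoint. Hence, for any subspace $W$, the bilinear form $s_{u,W}$ decomposes as
\[
s_{u,W}(x,y) = \tfrac12\, s\bigl(x,(u-u^{-1})(y)\bigr) + \tfrac12\, s\bigl(x,(u+u^{-1})(y)\bigr),
\]
with the first summand symmetric and the second skew-symmetric on $W\times W$. Setting $v:=u+u^{-1}$, the symmetry of $s_{u,W}$ thus amounts to $v(W)\subseteq W^{\bot_s}$; for a Lagrangian $W$ this is the same as $W$ being $v$-invariant. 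And the nondegeneracy of $s_{u,W}$ is equivalent to $u(W)\cap W=\{0\}$: if $y\in W$ satisfies $s(x,u(y))=0$ for all $x\in W$, then $u(y)\in W^{\bot_s}=W$, so $u(y)\in u(W)\cap W$. I therefore seek a Lagrangian $W$ which is $v$-invariant and transverse to $u(W)$.

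Both of these properties pass to $s$-orthogonal direct sums, and the hypothesis on Jordan cells passes to indecomposable summands, so it suffices to handle an indecomposable cell. By Table~\ref{figure1} the hypothesis excludes exactly the cells of type VI, leaving types I--V. In all five cases $u$ is cyclic and its minimal polynomial $P$ is palindromic with $P(0)\neq 0$ (namely $P=p^k$ in types I, II, V, or $P=q^k(q^\sharp)^k$ in types III, IV). Fixing a cyclic vector $e$, I will identify $V\simeq R:=\F[t]/(P)$ with $u$ acting as multiplication by $t$. Since $P$ is palindromic, the involution $\sigma:t\mapsto t^{-1}$ of $\F[t,t^{-1}]$ descends to a ring involution of $R$, and the candidate Lagrangian will be
\[
W:=R^\sigma=\{r\in R: \sigma(r)=r\}.
\]
As a preliminary, a direct calculation shows that every $u$-invariant $\F$-bilinear form on $R$ can be written as $s(x,y)=\lambda(x\sigma(y))$ for the unique $\F$-linear $\lambda:R\to\F$ given by $\lambda(r):=s(r,1)$, and that $s$ is alternating if and only if $\lambda$ vanishes on $R^\sigma$.

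It then remains to verify three properties. First, $\dim_\F R^\sigma=\tfrac12\dim_\F R$: in types I--IV the element $t-t^{-1}$ is a unit of $R$ lying in the $(-1)$-eigenspace of $\sigma$, making $R$ a free module of rank two over $R^\sigma$; in type V I will compute the trace of $\sigma$ directly in the basis $(1,t-\eta,(t-\eta)^2,\dots)$ and check that it vanishes. Second, $W$ is a Lagrangian because for $x,y\in R^\sigma$ one has $s(x,y)=\lambda(xy)$ with $xy\in R^\sigma$, and $W$ is $v$-invariant because $v=t+t^{-1}\in R^\sigma$. Finally, for the transversality $W\cap u(W)=\{0\}$, if $r=tr'$ with $r,r'\in R^\sigma$, then $r=\sigma(r)=t^{-1}r'$, whence $(t^2-1)r'=0$; in types I--IV neither $+1$ nor $-1$ is a root of $P$, so $t^2-1$ is a unit of $R$ and $r'=0$. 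The main obstacle is the type V case, where $t-\eta$ is the uniformizer and $t^2-1$ is only $(t-\eta)$ times a unit; there $r'$ is forced into the socle $\F\cdot(t-\eta)^{2n-1}$, and I will have to verify by an explicit computation in $R$ that $\sigma((t-\eta)^{2n-1})=-(t-\eta)^{2n-1}$, so that $R^\sigma$ meets the socle trivially and $r'=0$ after all.
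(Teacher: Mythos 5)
The paper does not actually give a proof of this statement: it is quoted as Corollary~3.3 of the companion paper \cite{dSPinvol4Symp}, so there is no internal proof to compare against. Evaluated on its own merits, your argument is correct and complete.

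Your reduction of the two geometric conditions (symmetry of $s_{u,W}$ for a Lagrangian $W$ is $(u+u^{-1})$-invariance of $W$; nondegeneracy of $s_{u,W}$ for a Lagrangian $W$ is transversality $u(W)\cap W=\{0\}$, by tracking the right radical and using $W^{\bot_s}=W$) is sound, and both conditions pass cleanly to $\bot_s$-sums, so the reduction to indecomposable cells is legitimate. For types I--V the transformation is cyclic with palindromic minimal polynomial $P$ (in types III/IV this is $q^k(q^\sharp)^k$), so the model $R=\F[t]/(P)$ with $u$ as multiplication by $t$ and the descent of $t\mapsto t^{-1}$ are valid. The identity $s(x,y)=\lambda(x\,\sigma(y))$ with $\lambda(r)=s(r,1)$ follows from $s(t\cdot,t\cdot)=s$ and linearity, and the characterisation ``$s$ alternating $\iff\lambda|_{R^\sigma}=0$'' is correct after polarisation (using $\mathrm{char}\,\F\neq 2$). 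The dimension count $\dim R^\sigma=\tfrac12\dim R$ works in types I--IV via the unit $t-t^{-1}\in R^{-\sigma}$, and your separate trace computation $\mathrm{tr}(\sigma)=\sum_{k=0}^{2n-1}(-1)^k=0$ in the basis $((t-\eta)^k)_k$ handles type V. Total isotropy of $R^\sigma$ and its $v$-stability are immediate, and for transversality the equation $(t^2-1)r'=0$ forces $r'=0$ in types I--IV (where $t^2-1$ is a unit) and forces $r'$ into the socle $\F(t-\eta)^{2n-1}$ in type V, which lies in $R^{-\sigma}$ since $\sigma((t-\eta)^{2n-1})=-(t-\eta)^{2n-1}$ mod $(t-\eta)^{2n}$, so again $r'=0$. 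This algebraic-model construction is very much in the spirit of how the present paper handles other explicit cells (Proposition~\ref{prop:existpresquelagrangian}, Lemmas~\ref{lemma:typeItoIVfixplanen=1} and~\ref{lemma:typeItoIVfixplanen>1}), and it has the merit of treating types I--V uniformly, with type V requiring only the two extra pointwise checks you flagged. The only stylistic remark is that the phrase ``$u$-invariant bilinear form'' should be made precise as ``bilinear form $b$ with $b(tx,ty)=b(x,y)$'', but your subsequent use makes the intent unambiguous.
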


Now, we turn to the case of indecomposable cells of type VI. In \cite{dSPinvol4Symp}, we entirely refrained from tackling them directly,
but here it will be necessary to do so.

\begin{prop}\label{prop:existpresquelagrangian}
Let $(s,u)$ be an s-pair which is an indecomposable cell of type $VI$ with dimension $4n+2$ (where $n \geq 1$) and eigenvalue $\eta$.
Then there exists a $2n$-dimensional totally $s$-singular subspace $W$ such that:
\begin{enumerate}[(i)]
\item $s_{u,W}$ is a hyperbolic bilinear form;
\item $u(W) \cap W=\{0\}$;
\item When denoting by $\pi$ the orthogonal projection onto $(W +u(W))^{\bot_s}$, the endomorphism
$x \mapsto \pi(u(x))$ of $(W +u(W))^{\bot_s}$ equals $\eta\id$.
\end{enumerate}
\end{prop}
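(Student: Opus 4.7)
The plan is to construct the subspace $W$ explicitly by leveraging the symplectic-extension structure of an indecomposable cell of type VI. Since $(s,u)$ is such a cell, we may realise $V=\calL\oplus\calL'$ as the sum of transverse $u$-stable Lagrangians, with $u_{|\calL}$ cyclic of minimal polynomial $(t-\eta)^{2n+1}$, and $u_{|\calL'}$ fixed by the symplectic-extension formula. Fix a Jordan basis $(e_0,\dots,e_{2n})$ of $\calL$ with $u(e_i)=\eta e_i+e_{i+1}$ (and $e_{2n+1}=0$), and a basis $(f_0,\dots,f_{2n})$ of $\calL'$ with $s(e_i,f_j)=\delta_{ij}$; the action of $u$ on $\calL'$ (a conjugate of $(v^{-1})^t$) can be written down explicitly in this basis.

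The natural first candidate is the ``interleaved'' subspace
$$W_0:=\Vect(e_0,e_2,\dots,e_{2n-2},f_1,f_3,\dots,f_{2n-1}),$$
of dimension $2n$. A direct computation shows that $W_0$ is totally $s$-singular, that $W_0+u(W_0)=\Vect(e_0,\dots,e_{2n-1},f_0,\dots,f_{2n-1})$ has dimension $4n$ (so $W_0\cap u(W_0)=\{0\}$, giving (ii)), and that its $s$-orthogonal is the hyperbolic plane $\Vect(e_{2n},f_{2n})$; since $(u-\eta)(e_{2n})=0$ and $(u-\eta)(f_{2n})\in\Vect(f_0,\dots,f_{2n-1})\subseteq W_0+u(W_0)$, condition (iii) is already satisfied. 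For $n=1$ the form $s_{u,W_0}$ is moreover symmetric and hyperbolic (matrix $\begin{pmatrix}0&-1\\-1&0\end{pmatrix}$), so $W_0$ is already the desired subspace.

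For $n\geq 2$, however, a calculation yields that $s_{u,W_0}$ has a non-zero skew-symmetric part concentrated in the pairs $(e_{2i},f_{2j+1})$ with $i<j$, so condition (i) fails. Using that $W_0$ is totally $s$-singular, the symmetry of $s_{u,W}$ is equivalent to $(u+u^{-1}-2\eta\,\id)(W)\subseteq W^{\bot_s}$, i.e.\ to $u^{-1}(u-\eta)^2(W)\subseteq W^{\bot_s}$; equivalently $(u-\eta)^2(W)\subseteq u(W)^{\bot_s}$. The second step in the construction is therefore to deform $W_0$ by replacing each of its basis vectors by a suitable ``triangular'' combination drawn from across the Jordan filtration, so as to cancel the obstruction to symmetry while preserving both the dimension count $\dim(W+u(W))=4n$ and the control on $(W+u(W))^{\bot_s}$ giving (iii). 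Because $(u-\eta)^2$ strictly decreases Jordan depth by two, the required correction is lower-triangular with respect to the Jordan filtration, so the defining linear system for the correction coefficients is lower-triangular and solvable over any characteristic $\neq 2$ field.

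The main obstacle is this simultaneous juggling of (i), (ii), (iii): the subspaces contained in $\ker(u-\eta)^2$ (where the symmetry condition holds automatically) are $u$-invariant, violating (ii), while subspaces like $W_0$ that satisfy (ii) and (iii) fail symmetry whenever $n\geq 2$. The deformation argument is what bridges this gap, and the heart of the proof is to verify that after the correction the resulting symmetric form $s_{u,W}$ remains non-degenerate and admits a totally isotropic subspace of dimension $n$ (giving hyperbolicity); a natural candidate is the image in $W$ of the $\calL$-part, whose isotropy for $s_{u,W}$ follows because $u$ preserves $\calL$, which is $s$-isotropic. Finally, condition (iii) is preserved under the correction because the corrections live in $W_0+u(W_0)$, which modulo the deformation is essentially unchanged.
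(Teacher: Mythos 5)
Your setup is reasonable and your diagnosis is correct (and different from the paper's approach, which works entirely inside an algebraic model $R=\F[t,t^{-1}]/((t-1)^{2n+1})$ and reads $W$ off from the Hermitian/skew-Hermitian decomposition, rather than from a Jordan basis). Your computation for $n=1$, and the reformulation of symmetry as $(u-\eta)^2(W)\subseteq u(W)^{\bot_s}$, are both accurate. But the core of the argument for $n\geq 2$ is missing.

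Concretely: you never write down the deformation. You assert that replacing the basis vectors of $W_0$ by ``suitable triangular combinations'' yields a lower-triangular, solvable linear system for correction coefficients, and that this deformation fixes symmetry while preserving everything else. That is precisely the theorem being claimed, not a proof of it. There are at least four things that must be checked and are not: that the deformed $W$ is still \emph{totally $s$-singular} (a deformation in $V$ has no reason a priori to stay inside a given maximal isotropic flag); that $\dim(W+u(W))=4n$ still holds (you cite it as ``preserved'' without an argument); that $s_{u,W}$, once made symmetric, is \emph{nondegenerate} (nondegeneracy can easily be lost under a deformation even if symmetry is gained, and it is a prerequisite for hyperbolicity); and that condition (iii) survives --- your justification that ``the corrections live in $W_0+u(W_0)$'' is circular, since whether $W+u(W)=W_0+u(W_0)$ after the deformation is exactly what needs to be verified, and the orthogonal projection $\pi$ changes if that subspace changes.

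Also, a remark on your suggestion of an isotropic $n$-dimensional subspace for hyperbolicity: you propose the ``$\calL$-part'' of $W$, justified because $\calL$ is $u$-stable and totally singular. This argument works for the \emph{undeformed} $W_0$ (where $W_0\cap\calL=\Vect(e_0,\dots,e_{2n-2})$ has dimension $n$), but after a correction by terms mixing $e$'s and $f$'s the intersection $W\cap\calL$ need not have dimension $n$, so the existence of the isotropic subspace has to be re-established for the deformed $W$. In short, the plan is sensible, but the actual proof --- the construction of the corrected $W$ and the simultaneous verification of (i), (ii), (iii), nondegeneracy and hyperbolicity --- is not carried out, and it is precisely this juggling that constitutes the entire difficulty of the statement.
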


One thing must be pointed out. In the situation of Proposition \ref{prop:existpresquelagrangian}, there can be no Lagrangian $\calL$ such that $s_{u,\calL}$
is symmetric and nondegenerate: indeed, it can be proved that the existence of such a Lagrangian would yield that the invariant factors of $u$
are palindromials of even degree, which is not the case here.
So, the result we have just stated is essentially optimal for indecomposable cells of type VI.

\begin{proof}
Multiplying $u$ by $\eta$ if necessary, we can assume that $\eta=1$.

We use an algebraic model for $(s,u)$. Consider the ring $\F[t,t^{-1}]$ of all Laurent polynomials with one indeterminate $t$ and coefficients in $\F$, the quotient ring
$$R:=\F[t,t^{-1}]/\bigl((t-1)^{2n+1}\bigr)$$
and the involution $x \mapsto x^\bullet$ of $R$ that takes the coset $\lambda$ of $t$ to its inverse (it is well defined because the ideal generated by
$(t-1)^{2n+1}$ is invariant under the involution of $\F[t,t^{-1}]$ that takes $t$ to $t^{-1}$).
The eigenspaces of this involution are denoted by
$$H:=\{x \in R : \; x^\bullet=x\} \quad \text{(the set of all Hermitian elements)}$$
and
$$S:=\{x \in R : \; x^\bullet=-x\} \quad \text{(the set of all skew-Hermitian elements).}$$
One sees that $H$ is the subalgebra generated by
$\lambda+\lambda^\bullet$, and it is naturally isomorphic to $\F[t]/(t-2)^{n+1}$ through the isomorphism 
$$\varphi : H \overset{\simeq}{\longrightarrow} \F[t]/(t-2)^{n+1}$$
that takes $\lambda+\lambda^{-1}$ to the coset of $t$ mod $(t-2)^{n+1}$. In particular, $\dim_\F H=n+1$, and hence $\dim_\H S=n$.

Note that $xy \in S$ for all $x \in H$ and $y \in S$.
For $x \in R$, we denote by
$$x_h:=\frac{x+x^\bullet}{2} \quad \text{and} \quad x_s:=\frac{x-x^\bullet}{2}$$
its Hermitian and skew-Hermitian part, respectively.
It will be useful to note that the kernel of
$$\kappa : x \in R \mapsto (\lambda-\lambda^{-1})\,x$$
is the $1$-dimensional $\F$-linear subspace spanned by
the Hermitian element
$$\omega:=(\lambda+\lambda^{-1}-2)^n;$$
Moreover $\kappa(H)=S$: indeed $\kappa(H)\subset S$, whereas $\dim H=\dim S+1$ and $\dim (\Ker \kappa)=1$.

Next, we define $e$ as the $\F$-linear form on $\F[t]/(t-2)^{n+1}$ that takes the coset of
$(t-2)^k$ to $0$ for all $k \in \lcro 0,n-1\rcro$, and that takes the coset of $(t-2)^n$ to $1$.
One checks that $(x,y) \mapsto e(xy)$ is a nondegenerate bilinear form on the $\F$-vector space $\F[t]/(t-2)^{n+1}$.

It follows that $f : (x,y) \mapsto e(\varphi((xy)_h))$ is a symmetric bilinear form on the $\F$-vector space $R$,
and we shall now see that it is nondegenerate.
\begin{itemize}
\item First of all, note that $f(x,y)=0$ for all $(x,y) \in H \times S$.
\item Let $x \in H$ and assume that $f(x,y)=0$ for all $y \in H$. Then, for all $y \in H$ we see that
$e(\varphi(x)\varphi(y))=0$ (because $xy \in H$ and hence $(xy)_h=xy$),
and since $e$ is nondegenerate it follows that $\varphi(x)=0$ and hence $x=0$.
\item Let $x \in S$, and assume that $f(x,y)=0$ for all $y \in S$. Since $\kappa(H)=S$, we have $x=(\lambda-\lambda^{-1})\, x_1$
for some $x_1 \in H$, we deduce that $e(\varphi((\lambda-\lambda^{-1})^2 x_1y_1))=0$ for all $y_1 \in H$, and hence
$(\lambda-\lambda^{-1})^2 x_1=0$. Hence $x \in \Ker \kappa$. But as noted in the above this shows that $x \in H$, whence $x=0$ because $x \in S$.
\end{itemize}
From the above three points, it is easily concluded that $f$ is nondegenerate.
Note also, for the remainder of the proof, that $S=H^{\bot_f}$ as a consequence of the above.

Next, on the $\F$-vector space $R^2$, we consider the symplectic form
$$s' : ((x,y),(x',y')) \longmapsto f(x,y')-f(x',y)$$
and the endomorphism
$$u' : (x,y) \mapsto (\lambda x,\lambda^{-1}y).$$
It is clear that $(s',u')$ is an s-pair and that $u'$ has exactly two Jordan cells attached to the eigenvalue $1$, both with size $2n+1$
(consider the induced endomorphisms on the subspaces $R \times \{0\}$ and $\{0\} \times R$).
Hence $(s,u)$ is isometric to $(s',u')$, and in the remainder of the proof we will simply assume that $(s,u)=(s',u')$.

Note that $H \times S$ is totally $s$-singular.
Now, take
$$H_0:=\{x \in H : \; f(x,1_R)=0\}=\{1_R\}^{\bot_f} \cap H \quad \text{and} \quad
W:=H_0 \times S.$$
We will prove that $W$ has the required properties.
We readily have that $W \subseteq H \times S$ is totally $s$-singular, and it is clear that $\dim W=2n$. Next, we prove that
$s_{u,W}$ is hyperbolic. And first of all we prove that it is symmetric. This amounts to proving that $u+u^{-1}$ maps $W$
into its $s$-orthogonal. Setting $\alpha:=\lambda+\lambda^{-1}$, we see that
$(u+u^{-1})(x,y)=(\alpha x,\alpha y) \in H \times S$ for all $(x,y)\in W$, and hence both $W$ and $(u+u^{-1})(W)$ are included in the totally $s$-singular
subspace $H \times S$, which yields that $s_{u,W}$ is symmetric.

Next, let $(x,y)$ belong to the radical of $s_{u,W}$.
\begin{itemize}
\item For all $z \in S$, we write $s((0,z),u(x,y))=0$. We deduce that $\lambda x \in S^{\bot_f}=H$,
Hence $\lambda^\bullet x^\bullet=\lambda x$, leading to $(\lambda-\lambda^{-1})\, x=0$.
But then $x=\theta \omega$ for some $\theta \in \F$, and $f(1_R,x)=\theta$ by the definition of $e$. Since $x \in H_0$, this leads to $\theta=0$
and hence to $x=0$.
\item For all $z \in H_0$, we write $s((z,0),u(x,y))=0$. It follows that $\lambda^\bullet y \in (H_0)^{\bot_f}$,
and hence $\lambda^\bullet y=x_1+x_2$ where $x_1 \in H \cap (H_0)^{\bot_f}=\F 1_R$ and $x_2 \in S$.
Applying the involution and summing, we get $(\lambda^\bullet-\lambda)\, y=2 x_1$.
Since $\lambda^\bullet-\lambda$ is not invertible in $R$, this equality yields $x_1=0$.
Then $(\lambda-\lambda^{-1})\, y=0$, and we conclude that $y \in \F \omega \cap S=\{0\}$.
\end{itemize}
Hence $(x,y)=(0,0)$, and we conclude that $s_{u,W}$ is nondegenerate.

Noting that $\dim S=n=\frac{\dim W}{2}$ and that $\{0\} \times S$ is obviously totally isotropic for $s_{u,W}$, we gather that $s_{u,W}$ is hyperbolic.

Next, we prove that $u(W) \cap W=\{0\}$, which is equivalent to proving that $H_0 \cap \lambda H_0=\{0\}$ and $S \cap \lambda S=\{0\}$.
\begin{itemize}
\item Let $x \in H_0 \cap \lambda H_0$. Then $\lambda^\bullet x \in H$ so $\lambda^\bullet x=(\lambda^\bullet x)^\bullet=\lambda x$. Once more this leads to $x \in \F \omega$ and then to $x=0$ because $H_0 \cap \F \omega=\{0\}$.

\item Let $x \in S \cap \lambda S$. Then  $\lambda^\bullet x \in S$ so $\lambda^\bullet x=-(\lambda^\bullet x)^\bullet=\lambda x$, and as before we get $x=0$ directly
because $S \cap \F \omega=\{0\}$.
\end{itemize}

It remains to prove statement (c).
To do so, it suffices to prove that the bilinear mapping
$$(x,y)\in (R^2)^2 \longmapsto s(x,(u-\id)(y))$$
vanishes everywhere on $((W+u(W))^{\bot_s})^2$.
Yet obviously
$$(W+u(W))^{\bot_s}=(S^{\bot_f} \cap \lambda S^{\bot_f}) \times \left(H_0^{\bot_f} \cap \lambda^{-1} H_0^{\bot_f}\right)$$
and so it will suffice to see that $e(\varphi(((\lambda-1)xy)_h)=0$ for all $x \in S^{\bot_f} \cap \lambda S^{\bot_f}$
and $y \in R$. But this is easy: let indeed $x \in  S^{\bot_f} \cap \lambda S^{\bot_f}=H \cap \lambda H$.
Then $\lambda^\bullet x \in H$ and $x \in H$, whence $\lambda^\bullet x=\lambda x$. Once more this leads to
$x \in \F \omega$ and hence to $(\lambda-1)x=0$; we conclude that $e(\varphi((\lambda-1)xy)_h)=0$ for all $y \in R$.
This concludes the proof.
\end{proof}

\section{Dimensions that are multiples of $4$}\label{section:dimmultiple4}

Here, we let $(s,u)$ be an s-pair whose dimension $n$ is a multiple of $4$.
Our aim is to prove that $u$ is $4$-reflectional in $\Sp(s)$ unless $n=4$ and $|\F|=3$.

The basic idea is to split $u$ into indecomposable cells and then to regroup all the cells that are not of type VI on one side,
and all those of type VI on the other side:
this yields a splitting
$$u=u_r \botoplus u_e$$
with a corresponding splitting $V=V_r \oplus V_e$ of the underlying vector space $V$, in which:
\begin{itemize}
\item $u_r$ has no Jordan cell of odd size for an eigenvalue in $\{\pm 1\}$;
\item $u_e$ is triangularizable and has all its eigenvalues in $\{\pm 1\}$, and \emph{all} its
Jordan cells of odd size.
\end{itemize}
We will say that $u_r$ is a \textbf{regular} part of $u$, and that $u_e$ is an \textbf{exceptional} part of $u$.

\subsection{Basic results on polynomials over finite fields}

\begin{lemma}\label{lemma:evennotpalindromial}
Assume that $\F$ is finite, and let $p \in \F[t]$ be irreducible, monic, and distinct from $t^2+1$.
Then $p$ cannot be both even and a palindromial.
\end{lemma}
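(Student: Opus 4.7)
I will argue by contradiction, supposing that $p$ is irreducible, monic, even, and a palindromial. Because $p$ is monic with $p(-t)=p(t)$, the degree $d:=\deg p$ is even and there is a unique monic $P\in\F[t]$ of degree $e:=d/2$ with $p(t)=P(t^2)$. The polynomial $P$ must be irreducible (else $p$ factors), and a direct coefficient comparison (or the identity $p^\sharp(t)=P^\sharp(t^2)$) shows that $p$ is a palindromial if and only if $P$ is. So the question reduces to proving that no irreducible monic palindromial $P$ of degree $e\geq 1$ over $\F$ makes $P(t^2)$ irreducible, except for $P=t+1$. The case $e=1$ is immediate: the monic palindromial polynomials of degree one are $t\pm 1$, and $P=t-1$ gives $p=t^2-1$ (reducible) while $P=t+1$ gives the excluded polynomial $t^2+1$. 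I may therefore assume $e\geq 2$.

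Fix a root $\mu$ of $P$ in $\overline{\F}$, set $q_0:=|\F|$, $K:=\F(\mu)=\F_{q_0^e}$, and let $n$ be the multiplicative order of $\mu$ in $K^{\times}$. Since $P$ is a palindromial, $\mu^{-1}$ is a Galois conjugate of $\mu$, so $\mu^{-1}=\mu^{q_0^i}$ for some $i\in\lcro 0,e-1\rcro$. Hence $n\mid q_0^i+1$ and $q_0^{2i}\equiv 1\pmod{n}$. The order of $q_0$ modulo $n$ is exactly $e$ (since $K$ is the smallest field over $\F$ containing $\mu$), so $e\mid 2i$. If $e\mid i$, then $q_0^i\equiv 1\pmod n$, which combined with $q_0^i\equiv -1\pmod n$ yields $n\mid 2$, forcing $\mu\in\{\pm 1\}\subseteq\F$ and thus $e=1$, a contradiction. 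So $e\nmid i$ while $e\mid 2i$: this forces $e$ to be even and $i=e/2$. In particular $4\mid d$, and $n$ divides $M+1$ where $M:=q_0^{e/2}$.

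The critical step is to observe that $\mu$ is a square in $K^{\times}$. Indeed $|K^{\times}|=q_0^e-1=M^2-1=(M-1)(M+1)$, and the subgroup of squares has index $2$, i.e.\ order $(M^2-1)/2$. Since $q_0$ is odd, $M$ is odd, so $(M-1)/2\in\Z$ and
$$\frac{M^2-1}{2}=\frac{M-1}{2}\cdot(M+1)$$
is divisible by $n$ because $n\mid M+1$. Hence $\mu=\tau^2$ for some $\tau\in K$, and $\tau$ is then a root of $P(t^2)$ lying in a field of degree $e<2e=\deg P(t^2)$ over $\F$. Therefore $P(t^2)$ has a proper factor over $\F$, contradicting the irreducibility of $p=P(t^2)$. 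I expect the only genuine subtlety to be the combinatorial deduction that $i=e/2$ (and hence $4\mid d$) from the palindromial and irreducibility hypotheses; once this is secured, the oddness of $q_0$ makes $(M-1)/2$ an integer and the divisibility $n\mid(M^2-1)/2$ is automatic.
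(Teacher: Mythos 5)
Your proof is correct, and it takes a genuinely different route from the paper's.

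The paper's argument is very short: since $p$ is even and a palindromial, the splitting field $\L=\F[t]/(p)$ admits two Galois automorphisms of order two, namely $\sigma_1:x\mapsto -x$ and $\sigma_2:x\mapsto x^{-1}$ (both non-trivial because $\deg p\geq 2$). Because $\Gal(\L/\F)$ is cyclic, it has at most one element of order two, so $\sigma_1=\sigma_2$, whence $x^2=-1$ and $p=t^2+1$. That is the whole proof.

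Your approach instead uses the evenness to write $p=P(t^2)$ with $P$ irreducible and palindromial, and then attacks $P$ directly. You identify the Frobenius power $q_0^{e/2}$ that inverts the root $\mu$, deduce that the multiplicative order $n$ of $\mu$ divides $q_0^{e/2}+1$, and observe that this forces $\mu$ to be a square in $\F_{q_0^e}$ (since $n$ divides $(M-1)(M+1)/2$ when $M=q_0^{e/2}$ is odd). The existence of a square root $\tau\in\F_{q_0^e}$ of $\mu$ then makes $P(t^2)$ reducible. Your argument is longer and more computational, but it has the advantage of exhibiting explicitly the factor responsible for reducibility and of establishing as a byproduct that any hypothetical counterexample would have degree divisible by $4$. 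Both proofs ultimately rest on the cyclicity of the Galois group: you use it implicitly when you write $\mu^{-1}=\mu^{q_0^i}$ and single out $i=e/2$, while the paper uses it directly via uniqueness of the order-two element. The paper's version is cleaner and fits the overall style of the article; yours is a valid alternative that would also be appropriate in a more elementary exposition.
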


\begin{proof}
Assume on the contrary that $p$ is both even and a palindromial, and consider the splitting field $\L:=\F[t]/(p)$
(this is a splitting field because $\F$ is finite, so every finite extension of it is a Galois one).
The Galois group $\Gal(\L/\F)$ is cyclic, so it has at most one element of order $2$. Yet, we have the Galois automorphism
$\sigma_1 \in \Gal(\L/\F)$ that takes the coset $x$ of $t$ to its opposite (because $p$ is even), and the Galois automorphism
$\sigma_2 \in \Gal(\L/\F)$ that takes $x$ to $x^{-1}$ (because $p$ is a palindromial). Note that $\sigma_1 \neq \id$ and $\sigma_2 \neq \id$
(otherwise $x \in \{0,1,-1\}$, which is not allowed).
And $(\sigma_1)^2(x)=x=(\sigma_2)^2(x)$. Hence, $(\sigma_1)^2=(\sigma_2)^2=\id$, to the effect that $\sigma_1=\sigma_2$.
It would follow that $x^2=-1$, and as $\deg(p) \geq 2$ this would lead to $p=t^2+1$.
\end{proof}

\begin{lemma}\label{lemma:evenirrpolynomial}
Let $n \geq 1$ be a positive integer. Assume that $\F$ is finite.
Then there exists a polynomial $p \in \Irr(\F)$ that is even and has degree $2n$.
If in addition $|\F|>3$ then $p$ can be taken to be different from $t^2+1$.
\end{lemma}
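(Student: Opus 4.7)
The plan is to construct $p$ in the form $f(t^2)$ for a well-chosen $f \in \Irr(\F)$ of degree $n$, modeled after the strategy of Lemma~\ref{lemma:polynomp(0)-1}. Let $\L$ denote the degree-$n$ extension of $\F$ (so $|\L|=|\F|^n$), pick a generator $\delta$ of the cyclic group $\L^\times$, and take $f \in \Irr(\F)$ to be the minimal polynomial of $\delta$ over $\F$. I claim that $p(t):=f(t^2)$ meets all the requirements.

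The first key step is to verify that $f$ has degree exactly $n$ and that $\delta$ is a non-square in $\L$. The degree claim is immediate: since $\delta$ generates $\L^\times$, it cannot lie in any proper subfield of $\L$. For the non-squareness, note that $|\L^\times|=|\F|^n-1$ is even (because $|\F|$ is odd), and in a cyclic group of even order no generator is a square.

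The second key step is to exhibit a square root $\alpha$ of $\delta$ in $\F_{|\F|^{2n}}$ lying outside $\L$, and to identify its minimal polynomial. Since $(|\F|^n+1)/2$ is a positive integer, $|\F|^n-1$ divides $(|\F|^{2n}-1)/2$, so every element of $\L^\times$ is a square in $\F_{|\F|^{2n}}^\times$; hence such an $\alpha$ exists, and $\alpha\notin \L$ precisely because $\delta$ is not a square in $\L$. Consequently $\F(\alpha)\supsetneq \F(\alpha^2)=\L$ and $[\F(\alpha):\F]=2n$. The minimal polynomial of $\alpha$ thus has degree $2n$ and must divide the monic degree-$2n$ polynomial $f(t^2)$ (which vanishes at $\alpha$), so the two coincide. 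Therefore $p:=f(t^2)$ is irreducible, even, and of degree $2n$.

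For the refinement when $|\F|>3$: if $n\geq 2$ then $\deg p=2n\geq 4$ already excludes $p=t^2+1$. For $n=1$, one has $p(t)=t^2-\delta$ with $\delta$ a generator of $\F^\times$, and $p=t^2+1$ would force $\delta=-1$; but $\delta$ has order $|\F|-1\geq 4$, so it cannot equal $-1$. The only slightly delicate point in the whole argument is the extension-degree computation $[\F(\alpha):\F]=2n$, but it reduces immediately to the non-squareness of $\delta$ in $\L$, which has already been established.
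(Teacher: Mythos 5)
Your proof is correct and takes essentially the same approach as the paper: both arguments pick a generator of the multiplicative group of the degree-$n$ extension $\K$ of $\F$, take a square root of it in the degree-$2n$ extension, show that this square root generates the full degree-$2n$ extension (using that the generator is a non-square in $\K$, equivalently that $|\K^\times|$ is even), and conclude that $p(t)=f(t^2)$ is the minimal polynomial of the square root, hence irreducible, even, and of degree $2n$. The treatment of the case $|\F|>3$ is likewise the same.
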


Note that if $p \neq t^2+1$ then Lemma \ref{lemma:evennotpalindromial} shows that $p$ cannot be a palindromial.

\begin{proof}
We can take a finite extension $\L$ of $\F$ of degree $2n$. There is a unique non-identity involution $\sigma$ of $\L$, and we
consider the subfield $\K:=\{x \in \L :\; \sigma(x)=x\}$.
Its group $\K^\times$ of units is cyclic with even order, and we can pick a generator $\omega$ of it.
Then we know that $\L$ contains a square root $z$ of $\omega$, and $z \not\in \K$ (because $|\K^\times|$ is even).
Hence $\F[z]$ contains every power of $\omega$, and hence it includes $\K$; and since $z \not\in \K$ and $[\L:\K]=2$ we deduce that
$\F[z]=\L$. We deduce that $z$ has degree $2n$ over $\F$.
If we denote by $q$ the minimal polynomial of $\omega$ over $\F$, we know that $q$ has degree $2$
and that $q(t^2)$, which is monic of degree $2n$, annihilates $z$. Hence $p:=q(t^2)$ is the minimal polynomial of $z$, to the effect
that it is irreducible. Obviously, it is even.

Finally, if $n=1$ and $|\F|>3$, we see that $\omega \neq -1$, to the effect that $p \neq t^2+1$.
\end{proof}

\subsection{Preliminary work in dimension $4$}

\begin{lemma}\label{lemma:(t^2+1)2}
Assume that $\F$ is finite and $|\F|>3$.
Let $(s,u)$ be an s-pair in which $u$ is $\calC\left((t^2+1)^2\right)$.
Then $u$ is $3$-reflectional in $\Sp(s)$.
\end{lemma}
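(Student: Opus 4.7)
The plan splits into two cases depending on whether $t^2+1$ is irreducible over $\F$.

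\textbf{Case 1: $t^2+1$ splits over $\F$}, i.e., $|\F| \equiv 1 \pmod 4$. Write $t^2+1 = (t-\imath)(t+\imath)$ with $\imath^2 = -1$ in $\F$. Since $u$ is cyclic over $\F$ with minimal polynomial $(t^2+1)^2 = (t-\imath)^2(t+\imath)^2$, the $s$-pair $(s,u)$ is an indecomposable cell of type III with parameters $q = t-\imath \in \Irr(\F)$ (satisfying $q \neq q^\sharp = t+\imath$) and $n=1$. Thus $u$ is a symplectic extension of a $\calC((t-\imath)^2)$-automorphism, whose minimal polynomial has constant coefficient $\imath^2 = -1 \in \{\pm 1\}$. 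Proposition \ref{prop:sympextension3refl} immediately yields that $u$ is $3$-reflectional.

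\textbf{Case 2: $t^2+1$ is irreducible over $\F$}, so $|\F| \equiv 3 \pmod 4$ and by hypothesis $|\F|>3$. By Lemma \ref{lemma:evenirrpolynomial}, I pick an even irreducible polynomial $p$ of degree $2$ with $p \neq t^2+1$; by Lemma \ref{lemma:evennotpalindromial}, this $p$ is not a palindromial, so $p$ and $p^\sharp$ are coprime. The goal is to produce an involution $i \in \Sp(s)$ such that $iu$ is $2$-reflectional.

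The strategy would be to apply the space-pullback technique: first, Proposition \ref{prop:existlagrangian} furnishes a Lagrangian $W$ with $u(W) \cap W = \{0\}$ on which $s_{u,W}$ is symmetric and nondegenerate (valid since $u$ has no eigenvalue in $\{\pm 1\}$); then one uses whichever of Lemmas \ref{lemma:adaptlemmairr} and \ref{lemma:adaptlemmasplit} matches the isometry class of $s_{u,W}$ to select an $s_{u,W}$-skew-selfadjoint endomorphism $v$ of $W$ whose minimal polynomial is $p$. The resulting space-pullback involution $i$ gives $(iu)|_W = v$, and Lemma \ref{lemma:recog}, applied with $p$ coprime to $p^\sharp$, identifies $iu$ as isometric to a symplectic extension of $\calC(p)$.

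The main obstacle is that this construction, combined only with Proposition \ref{prop:even3refl} on $iu$, would give $iu$ as $3$-reflectional and hence $u$ only as $4$-reflectional, which just reproduces Lemma \ref{lemma:typeI-III-V}. To improve to $3$-reflectionality of $u$, one needs $iu$ itself to be $2$-reflectional, which by Nielsen's theorem (condition (iv)) requires the invariant factors of $iu$ to be palindromials. In dimension $4$, the relevant target is invariant factors $(r,r)$ for some palindromial $r$ of degree $2$, equivalently $iu$ being a symplectic extension of a $\calC(r)$-automorphism which is itself $2$-reflectional in $\GL$ by Wonenburger's theorem. The delicate point is that cyclic adaptation via Lemma \ref{lemma:cyclicfit} starting from $v = \calC(t^2+1)$ only allows $r$ with $r(0) = -1$, whereas palindromiality forces $r(0)=1$; this tension is precisely what obstructs the argument over $\F_3$ (the content of Lemma \ref{lemma:from(t2+1)2tot^4+1}) but can be overcome for $|\F|>3$ by exploiting the freedom $p \neq t^2+1$ to realize a genuinely nonhyperbolic bilinear-form structure on $W$ (via Lemma \ref{lemma:adaptlemmairr}) and then reinterpreting the pullback so that $iu$ acquires palindromial invariant factors through the $p \cdot p^\sharp$ splitting rather than through a direct cyclic adaptation. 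The hard part is verifying that the off-diagonal block appearing in the matrix form of $iu$ relative to the Lagrangian decomposition vanishes up to a change of Lagrangian --- equivalently, that the Hermitian Wall invariant of $iu$ at $(r,1)$ is hyperbolic --- which for finite fields of order $>3$ is controlled by a rank count that the construction delivers automatically.
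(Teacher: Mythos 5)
Your Case 1 (where $t^2+1$ splits) is correct and self-contained: a cyclic automorphism with minimal polynomial $(t-\imath)^2(t+\imath)^2$ is indeed a symplectic extension of a $\calC\left((t-\imath)^2\right)$-automorphism (the generalized eigenspaces $\Ker(u-\imath)^2$ and $\Ker(u+\imath)^2$ are transverse $u$-stable Lagrangians because $\imath\cdot\imath\neq 1$), and since $(t-\imath)^2$ has constant term $-1$, Proposition \ref{prop:sympextension3refl} applies directly. That is a clean shortcut for this subcase.

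Case 2, however, has a genuine gap. Your explicit plan is to space-pullback so that $(iu)_W$ is $\calC(p)$ with $p$ irreducible, even, coprime to $p^\sharp$; by Lemma \ref{lemma:recog} the transformation $iu$ would then be a symplectic extension of $\calC(p)$. But such a transformation has $n_{p,1}(iu)=n_{p^\sharp,1}(iu)=1$, both \emph{odd}, so by Nielsen's theorem it is never $2$-reflectional — it is only $3$-reflectional (Proposition \ref{prop:even3refl}), which, as you note yourself, merely reproduces Lemma \ref{lemma:typeI-III-V}. Your proposed fix is not a fix: ``acquiring palindromial invariant factors through the $p\cdot p^\sharp$ splitting'' produces a single palindromial invariant factor $pp^\sharp$, but the criterion in Nielsen's theorem (condition (iv)) demands that $iu$ be a symplectic extension of an automorphism whose invariant factors are palindromials; here that automorphism is $\calC(p)$ with invariant factor $p$, which is \emph{not} a palindromial by your own choice. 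To get a $2$-reflectional $iu$ you would need $(iu)_W$ to have a palindromial minimal polynomial, but then Lemma \ref{lemma:recog} no longer forces $iu$ to be a symplectic extension (the off-diagonal block need not vanish), which is precisely the ``hard part'' you flag and do not resolve. There is also a smaller inaccuracy: Lemma \ref{lemma:adaptlemmasplit} produces a diagonalizable, hence split, endomorphism, not a $\calC(p)$ for your irreducible $p$, so the two adaptation lemmas do not both deliver the same target.

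The paper's proof takes an entirely different and much shorter route: it fixes $h\in\F\setminus\{0,2,-2\}$ (possible precisely because $|\F|>3$), writes down an explicit matrix $M\in\Sp_4(\F)$ with invariant factors $\left(t^2-ht+1,\,t^2-ht+1\right)$ (hence $2$-reflectional by Nielsen, since the roots avoid $\{\pm 1\}$ and the field is finite), an explicit involution $A\in\Sp_4(\F)$, and computes directly that $AM$ is $\calC\left((t^2+1)^2\right)$. Since over a finite field the conjugacy class of a symplectic transformation with no eigenvalue in $\{\pm 1\}$ is determined by its Jordan numbers, every $\calC\left((t^2+1)^2\right)$ transformation is conjugate to $AM$, hence $3$-reflectional. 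No case split on irreducibility of $t^2+1$ is needed. You should either prove Case 2 by a concrete construction along these lines, or abandon the space-pullback route, which as set up cannot produce a $2$-reflectional $iu$.
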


Here, the assumption that $|\F|>3$ is critical, and the result fails for $\F_3$, which will be seen in Section \ref{section:F3}.

\begin{proof}
We choose an arbitrary $h \in \F \setminus \{0,2,-2\}$. One checks that
the matrix
$$M:=\begin{bmatrix}
0 & -I_2 \\
I_2 & h I_2
\end{bmatrix}$$
belongs to $\Sp_4(\F)$; it has two invariant factors, all equal to
$t^2-h t+1$, a polynomial with no root in $\{-1,1\}$.

As the field is finite, we deduce from Nielsen's theorem that $M$ is $2$-reflectional in $\Sp_4(\F)$.
Let us consider the involution
$$A:=\begin{bmatrix}
0 & K \\
K^{-1} & 0
\end{bmatrix}\in \Sp_4(\F), \quad \text{where}\; K:=\begin{bmatrix}
0 & -1 \\
1 & 0
\end{bmatrix}.$$
One sees that
$$AM=\begin{bmatrix}
K & h K \\
0 & K
\end{bmatrix} \quad \text{and then} \quad
(AM)^2=\begin{bmatrix}
-I_2 & -2hI_2 \\
0 & -I_2
\end{bmatrix}.$$
Hence $(t^2+1)^2$ annihilates $AM$ but $t^2+1$ does not.

The minimal polynomial of $AM$ is a palindromial with no root in $\{-1,1\}$, so its degree is even, and we conclude that
it equals $(t^2+1)^2$.

Finally, since $\F$ is finite and $AM$ has no eigenvalue in $\{-1,1\}$, the conjugacy class of $AM$ in $\Sp_4(\F)$
is determined by the Jordan numbers of $AM$. Hence, every $\calC\left((t^2+1)^2\right)$-symplectic automorphism
is represented by $AM$ in a well-chosen symplectic basis. As $AM$ is $3$-reflectional in $\Sp_4(\F)$, we conclude that $u$ is $3$-reflectional in $\Sp(s)$.
\end{proof}

\subsection{The regular case}

Here, we will consider the easiest case: the one where $u$ has no Jordan cell of odd size for an eigenvalue in $\{-1,1\}$.
This case is dealt with thanks to the space-pullback method, but there are additional complexities due to the failure of
Lemma \ref{lemma:(t^2+1)2} for $\F_3$.

\begin{prop}\label{prop:nrmultiplede4}
Assume that $\F$ is finite.
Let $(s,u)$ be an s-pair whose dimension $n$ is a multiple of $4$. Assume that $u$ has no Jordan cell of odd size for an eigenvalue in $\{\pm 1\}$.
Then $u$ is $4$-reflectional in $\Sp(s)$ unless $n=4$ and $|\F|=3$.
\end{prop}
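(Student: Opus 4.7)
The strategy is to apply the space-pullback method of Section \ref{section:spacepullback} to a Lagrangian furnished by Proposition \ref{prop:existlagrangian}: under our hypothesis, there exists a Lagrangian $\calL$ of $(V,s)$ with $u(\calL) \cap \calL = \{0\}$ and $s_{u,\calL}$ symmetric and nondegenerate. Setting $W := \calL$, we obtain $V = W \oplus u(W)$, so the residual subspace $(W \oplus u(W))^{\bot_s}$ is trivial and no residual involution needs to be chosen. For any $s_{u,W}$-skew-selfadjoint automorphism $v$ of $W$ that we select, the space-pullback produces a symplectic involution $\widetilde{i} \in \Sp(s)$ such that $\widetilde{i}u$ stabilizes $W$ with induced endomorphism $v$. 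If the minimal polynomial $p$ of $v$ is coprime with $p^\sharp$, then Lemma \ref{lemma:recog} (with $W$ as the totally singular subspace and trivial quotient) further guarantees that $\widetilde{i}u$ is a symplectic extension of $v$. The plan is then to choose $v$ to be cyclic with even minimal polynomial $p$ coprime to $p^\sharp$, so that Proposition \ref{prop:even3refl} yields the $3$-reflectionality of $\widetilde{i}u$, whence the $4$-reflectionality of $u = \widetilde{i}(\widetilde{i}u)$.

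If $s_{u,W}$ is nonhyperbolic, I would apply Lemma \ref{lemma:evenirrpolynomial} to produce an even irreducible polynomial $p$ of degree $n/2$ distinct from $t^2+1$ (automatic when $n \geq 8$, and when $n=4$ using the hypothesis $|\F| > 3$). Lemma \ref{lemma:evennotpalindromial} then rules out $p$ being a palindromial, forcing $\gcd(p,p^\sharp)=1$; and Lemma \ref{lemma:adaptlemmairr} provides a model pair $(b_0,v_0)$ with $b_0$ nonhyperbolic nondegenerate symmetric and $v_0$ a $\calC(p)$-endomorphism that is $b_0$-skew-selfadjoint. Since all nondegenerate nonhyperbolic symmetric bilinear forms of dimension $n/2$ over a finite field are isometric, one transports $v_0$ to the desired $v$ on $(W, s_{u,W})$ and concludes.

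If $s_{u,W}$ is hyperbolic, a more elaborate construction is required. I would decompose $(W, s_{u,W})$ as an $s_{u,W}$-orthogonal sum of lower-dimensional nondegenerate subspaces carrying either hyperbolic or nonhyperbolic symmetric forms (with the types controlled by a discriminant computation), and then build $v$ summand-wise by applying Lemma \ref{lemma:adaptlemmairr} or Lemma \ref{lemma:adaptlemmasplit}. Choosing the minimal polynomials of the various summands pairwise distinct from one another and from each other's reciprocals, the resulting $v$ is cyclic with an even minimal polynomial $p$ coprime to $p^\sharp$, and Proposition \ref{prop:even3refl} again concludes. Alternatively, when the multiplicities permit, Lemma \ref{lemma:splitsymplecticextension} lets one split $\widetilde{i}u$ into smaller symplectic summands that are already $2$-reflectional by Nielsen's theorem (their Jordan numbers being even and their spectra avoiding $\{\pm 1\}$), yielding $3$-reflectionality of $u$.

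The main obstacle is the hyperbolic subcase in small dimensions with small fields: ensuring that the polynomials required for the various summands actually exist over $\F$ and have all the needed coprimality properties. The case $n = 4$, $|\F| = 3$ genuinely escapes the method, because the only even irreducible polynomial of degree $2$ over $\F_3$ is $t^2+1$, itself a palindromial; hence Lemma \ref{lemma:evennotpalindromial}'s dichotomy collapses and the core construction fails. This accounts for the exception noted in the statement and motivates the separate treatment of $\Sp_4(\F_3)$ in Section \ref{section:F3}.
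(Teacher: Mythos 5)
Your nonhyperbolic subcase matches the paper exactly: find an even irreducible $p \in \Irr(\F)$ of degree $n/2$ with $p \neq t^2+1$ (via Lemma \ref{lemma:evenirrpolynomial}, using $n>4$ or $|\F|>3$), so $p$ is not a palindromial and hence coprime to $p^\sharp$; Lemma \ref{lemma:adaptlemmairr} furnishes the skew-selfadjoint $\calC(p)$-automorphism; Lemma \ref{lemma:recog} upgrades the pullback to a symplectic extension; Proposition \ref{prop:even3refl} finishes. That part is fine.

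The hyperbolic subcase is where the proposal has a genuine gap. Your plan is to split $(W,s_{u,W})$ into $s_{u,W}$-orthogonal pieces, apply Lemma \ref{lemma:adaptlemmairr} or \ref{lemma:adaptlemmasplit} piecewise, and arrange the minimal polynomials of the summands to be pairwise distinct and distinct from each other's reciprocals, so that $v$ is cyclic with minimal polynomial coprime to its reciprocal. This program cannot be carried out over $\F_3$ when $n \geq 8$, and the obstruction is not confined to $n=4$. Over $\F_3$, the \emph{only} even irreducible quadratic is $t^2+1$, which is a palindromial; Lemma \ref{lemma:adaptlemmairr} always produces a nonhyperbolic form; and Lemma \ref{lemma:adaptlemmasplit} requires a scalar $\lambda \in \F\setminus\{0,\pm 1\}$, which does not exist in $\F_3$. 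So to assemble a hyperbolic $(W,s_{u,W})$ from admissible pieces over $\F_3$ you are essentially forced to repeat nonhyperbolic blocks built from $t^2+1$, which gives a non-cyclic $v$ annihilated by a palindromial; then Lemma \ref{lemma:recog} does not make $\widetilde{i}u$ a symplectic extension of $v$, Lemma \ref{lemma:splitsymplecticextension} is not available, and Proposition \ref{prop:even3refl} does not apply. The paper handles this by a four-way split: for $|\F|>3$ it uses Lemma \ref{lemma:adaptlemmasplit} with $\lambda$ a root of $t^2+1$ when $t^2+1$ splits (and then does \emph{not} go through a symplectic extension, but instead decomposes $u'$ into cells annihilated by $(t^2+1)^2$ and invokes Lemma \ref{lemma:(t^2+1)2}), or a $\lambda$ with $\lambda^4 \neq 1$ when $t^2+1$ is irreducible; and for $|\F|=3$ it introduces two further cases ($n \equiv 0$ and $n \equiv 4 \bmod 8$) that track the parities of the Jordan multiplicities of $u'$ and repeatedly invoke the re-splitting trick of Lemma \ref{lemma:from(t2+1)2tot^4+1} plus Lemma \ref{lemma:cyclicfit}. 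None of these cases proceed via the clean ``cyclic with coprime reciprocal, then Proposition \ref{prop:even3refl}'' route you envision. You do correctly identify ``small fields'' as the sticking point, but you underestimate its reach: it is not merely that $n=4$, $|\F|=3$ escapes; it is that the entire hyperbolic branch over $\F_3$ needs a substantially different argument, and even over $\F_5$ your coprimality bookkeeping runs into trouble ($(t^2+2)^\sharp = t^2+3$, so a sum of those two blocks gives a palindromial).
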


\begin{proof}
We discard the special case where $n=4$ and $|\F|=3$.

Denote by $V$ the underlying vector space of $(s,u)$.
To start with, Proposition \ref{prop:existlagrangian} yields a Lagrangian $\calL$ of $(V,s)$ such that $s_{u,\calL}$ is nondegenerate and symmetric, and
$\calL \cap u(\calL)=\{0\}$. We will use a space-pullback thanks to $\calL$, and it remains to make a careful choice of
a symplectic form on $\calL$, which amounts to choose an $s_{u,\calL}$-skew-selfadjoint automorphism $v$ of $\calL$
(then the symplectic form we take is $(x,y) \mapsto s_{u,\calL}(x,v^{-1}(y))$).
By such a space-pullback, $u$ will be seen to be i-adjacent to some $u' \in \Sp(s)$ that leaves $\calL$ invariant and for which
$u'_\calL=v$.

The favorable case is the one where $s_{u,\calL}$ is non-hyperbolic. In that case, we can use Lemma \ref{lemma:evenirrpolynomial}
to find an even polynomial $p \in \Irr(\F)$ with degree $\frac{n}{2}$ and $p \neq t^2+1$ (note how this uses the assumption that
$n>4$ or $|\F|>3$). Then $p$ is not a palindromial, so it is relatively prime with $p^\sharp$.
By Lemma \ref{lemma:adaptlemmairr} (using the fact that two nondegenerate and nonhyperbolic symmetric bilinear forms over a finite field -- of characteristic other than $2$ -- are equivalent whenever they have the same rank) we find a $\calC(p)$-automorphism $v$ of $\calL$ such that $v$ is $s_{u,\calL}$-skew-selfadjoint.
Hence, in using $\calL$ and $v$ for space-pullbacking, we find that $u$ is i-adjacent to some $u' \in \Sp(s)$ that leaves $\calL$ invariant and for which
$u'_\calL$ is $\calC(p)$. And then, since $p$ is relatively prime with $p^\sharp$ we deduce from Lemma \ref{lemma:recog} that
$u'$ is a symplectic extension of $u'_\calL$. And by Proposition \ref{prop:even3refl} the symplectic transformation $u'$ is $3$-reflectional in $\Sp(s)$.
Hence $u$ is $4$-reflectional in $\Sp(s)$.

In the remainder of the proof, we assume that $s_{u,\calL}$ is hyperbolic. In this unfavorable case, we need to split the discussion into as many as four subcases.

\vskip 3mm
\noindent \textbf{Case 1: $|\F|>3$ and $t^2+1$ splits over $\F$.} \\
We note that $\frac{n}{2}$ is even. By Lemma \ref{lemma:adaptlemmasplit}, we can apply the space-pullback technique
to obtain that $u$ is i-adjacent to some $u' \in \Sp(s)$ that leaves $\calL$ invariant and such that
$(u')_\calL$ is diagonalisable with characteristic polynomial $(t^2+1)^{n/4}$ (take $\lambda$ as a root of $t^2+1$).
It easily follows that $(t^2+1)^2$ annihilates $u'$. Hence, $u'$ is an orthogonal direct sum of indecomposable
cells with minimal polynomial $(t^2+1)^2$ and of an even number of indecomposable cells with minimal polynomial $t^2+1$.
Yet, by Nielsen's theorem, any orthogonal direct sum of an even number of indecomposable cells with minimal polynomial $t^2+1$
is $2$-reflectional. Hence, by Lemma \ref{lemma:(t^2+1)2}, we see that $u'$ is $3$-reflectional in $\Sp(s)$, and hence $u$ is $4$-reflectional in
$\Sp(s)$.

\vskip 3mm
\noindent \textbf{Case 2: $|\F|>3$ and $t^2+1$ is irreducible over $\F$.} \\
In particular $|\F|>5$, so we can choose $\lambda \in \F \setminus \{0\}$ such that $\lambda^4 \neq 1$.
Then by space-pullbacking, we find that $u$ is i-adjacent to a symplectic transformation $u'$ that leaves $\calL$ invariant and whose
induced automorphism of $\calL$ is a direct sum of $\calC\left((t-\lambda)(t+\lambda)\right)$-automorphisms.
As $(t-\lambda)(t+\lambda)$ is relatively prime with its reciprocal polynomial, we find that $u'$ is a symplectic extension of $(u')_{\calL}$.
And then we deduce that $u'$ is the orthogonal direct sum of symplectic extensions of $\calC(t^2-\lambda^2)$-automorphisms.
Hence, by Proposition \ref{prop:even3refl} we see that $u'$ is $3$-reflectional in $\Sp(s)$, and we conclude that $u$ is $4$-reflectional in $\Sp(s)$.

\vskip 3mm
\noindent \textbf{Case 3: $|\F|=3$ and $n$ is a multiple of $8$.} \\
We know from Lemma \ref{lemma:adaptlemmairr}
that there exists a pair $(b,w)$ that consists of a non-hyperbolic nondegenerate symmetric bilinear form $b$
of rank $2$ and of a $b$-skew-symmetric automorphism $w$ with minimal polynomial $t^2+1$.
By taking an orthogonal direct sum of $\frac{n}{4}$ copies of $(b,w)$, we see that the resulting pair $(\widetilde{b},\widetilde{v})$
is such that $\widetilde{b}$ is hyperbolic (because $\frac{n}{4}$ is even) and that $\widetilde{w}$ has minimal polynomial $t^2+1$.
Hence, by using an isometry, we find that there exists an $s_{u,\calL}$-skew-symmetric automorphism $w$ with minimal polynomial $t^2+1$.
Applying the space-pullback technique with $\calL$ and $v$, we deduce that $u$ is i-adjacent to some $u' \in \Sp(s)$
that leaves $\calL$ invariant and such that $u'_\calL$ has minimal polynomial $t^2+1$.
Hence $u'$ is clearly annihilated by $(t^2+1)^2$, and we deduce that it splits into the orthogonal direct sum of
$n_1$ indecomposable cells with minimal polynomial $(t^2+1)^2$, and of $n_2$
indecomposable cells with minimal polynomial $t^2+1$, for some integers $n_1 \geq 0$ and $n_2 \geq 0$ that satisfy $4n_1+2n_2=n$.
Hence $2n_1+n_2$ is a multiple of $4$, and we deduce that $n_2$ is even and that $n_1$ and $\frac{n_2}{2}$ have the same parity.
If $n_1$ is even then $u'$ directly satisfies the conditions of Nielsen's theorem. If $n_1$ is odd, we resplit $u'$ into
$(v_1 \botoplus v_2) \botoplus v_3$ where:
\begin{itemize}
\item $v_1$ is an indecomposable cell with minimal polynomial $(t^2+1)^2$;
\item $v_2$ is the orthogonal direct sum of two indecomposable cells with minimal polynomial $t^2+1$;
\item $v_3$ is the orthogonal direct sum of an even number of
indecomposable cells with minimal polynomial $(t^2+1)^2$ and of an even number of indecomposable cells with minimal polynomial $t^2+1$.
\end{itemize}
The difficulty here is that $v_1$ is not $3$-reflectional, and the trick
consists in using Lemma \ref{lemma:from(t2+1)2tot^4+1}: it shows indeed that $v_1$ is i-adjacent to some
$\calC(t^4+1)$-transformation $v'_1$ (with the same underlying space).
Now, the same can be said of $v_2$, thanks to Lemma \ref{lemma:cyclicfit}: indeed, as $\F$ is finite we see that
$v_2$ is actually a symplectic extension of a $\calC(t^2+1)$-automorphism, and hence $v_2$
is i-adjacent to a $\calC(t^2-t-1)$-automorphism $v'_2$, and just like in the end of the proof of Lemma \ref{lemma:from(t2+1)2tot^4+1}
we find that $v'_2$ is $\calC(t^4+1)$. Hence $u'$ is i-adjacent to $(v'_1 \botoplus v'_2)\botoplus v_3$,
and by Nielsen's theorem for finite fields the latter is $2$-reflectional. Hence $u$ is $4$-reflectional in $\Sp(s)$.

\vskip 3mm
\noindent
\textbf{Case 4: $|\F|=3$ and $n=8m+4$ for some $m \geq 1$.} \\
By Lemma \ref{lemma:evenirrpolynomial}, we can choose an \emph{even} $q \in \Irr(\F)$ with degree $4m$. Note that $q(0)=\pm 1$ because $|\F|=3$.
By Lemma \ref{lemma:evennotpalindromial}, $q$ is not a palindromial, and hence it is relatively prime with $q^\sharp$. And obviously $q$ and $q^\sharp$ are relatively prime with $t^2+1$.
By Lemma \ref{lemma:adaptlemmairr}, we can find pairs $(b_1,v_1)$ and $(b_2,v_2)$ in which
$b_1$ and $b_2$ are non-hyperbolic nondegenerate symmetric bilinear forms, $v_1$ is $b_1$-skew-selfadjoint and $\calC(t^2+1)$,
and $v_2$ is $b_2$-skew-selfadjoint and $\calC(q)$. Taking the orthogonal direct sum yields
a pair $(b,w)$ in which $b$ is a hyperbolic bilinear form and $w$ is a $b$-skew-selfadjoint $\calC((t^2+1)\,q)$-automorphism.
Hence, there exists an $s_{u,\calL}$-skew-selfadjoint $\calC((t^2+1)\,q)$-automorphism $v$ of $\calL$.
Applying the space-pullback technique, we deduce that $u$ is i-adjacent to some $u' \in \Sp(s)$ that leaves $\calL$ invariant and such that
$u'_\calL=v$. Then the characteristic polynomial of $u'$ equals $qq^\sharp (t^2+1)^2$. By decomposing $u'$ into indecomposable cells, we find a splitting
$u'=u'_1 \botoplus u'_2$ in which $u'_1$ is a symplectic extension of a $\calC(q)$-automorphism, and
$u'_2$ is either $\calC\left((t^2+1)^2\right)$ or the orthogonal direct sum of two $\calC(t^2+1)$-symplectic transformations.

Assume first that $u'_2$ has minimal polynomial $t^2+1$. Then by Nielsen's theorem it is $2$-reflectional, whereas Proposition \ref{prop:even3refl}
yields that $u'_1$ is $3$-reflectional. Hence $u'$ is $3$-reflectional, and we deduce that $u$ is $4$-reflectional.

Assume finally that $u'_2$ is $\calC\left((t^2+1)^2\right)$. Then by Lemma \ref{lemma:from(t2+1)2tot^4+1} it is i-adjacent to
some $\calC(t^4+1)$-symplectic transformation $u''_2$. Besides, by Lemma \ref{lemma:cyclicfit}
$u'_1$ is i-adjacent to a symplectic extension $u''_1$ of a $\calC\left((t-1)^{4m-3}(t-q(0))(t^2-t-1)\right)$-automorphism $v'$.
Using Lemma \ref{lemma:splitsymplecticextension}, we can further resplit $v'=j_1\botoplus j_2$
where $j_1$ is a symplectic extension of a $\calC\left((t-1)^{4m-3}(t-q(0))\right)$-automorphism and $j_2$ is
a symplectic extension of a $\calC(t^2-t-1)$-automorphism.
Hence $j_1$ is $2$-reflectional by Nielsen's theorem, whereas $j_2$ is $\calC(t^4+1)$.
Hence $u'$ is i-adjacent to $j_1 \botoplus (j_2\botoplus u''_2)$, and $j_2 \botoplus u''_2$ is $2$-reflectional by Nielsen's theorem.
Hence $j_1 \botoplus (j_2\botoplus u''_2)$ is $2$-reflectional in $\Sp(s)$, and we conclude that $u$ is $4$-reflectional in $\Sp(s)$.
\end{proof}

In the next two parts, we examine the situation where the dimension of a regular part is not a multiple of $4$.
This requires that we pair this regular part with a cell of type VI, and hence we will start by examining the
most simple situation, in which the regular part has dimension $2$.

\subsection{Pairing a cyclic cell of dimension $2$ with an indecomposable cell of type VI}\label{section:multiple4I}

We start with the case of finite fields with more than $3$ elements.

\begin{lemma}\label{lemma:cyclic2+typeVI}
Assume that $\F$ is finite and $|\F|>3$.
Let $(s,u)$ be an s-pair in which $u=u_1 \botoplus u_2$, where
$u_1$ is cyclic with minimal polynomial $p$ of degree $2$, and $u_2$ has exactly two Jordan cells, both of odd size $2n+1$,
for an eigenvalue in $\{\pm 1\}$.
Then:
\begin{enumerate}[(a)]
\item $u$ is $4$-reflectional in $\Sp(s)$;
\item If $n=0$ then $u$ is i-adjacent to a $3$-reflectional $\calC(p(t^2))$-symplectic transformation $u'$.
\end{enumerate}
\end{lemma}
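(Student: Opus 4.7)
The strategy is to establish part (b) first for the $4$-dimensional situation $u = u_1 \botoplus \eta\id_2$, and then to deduce part (a) by reducing the case $n \geq 1$ to $n = 0$ via a space-pullback applied to the type VI cell $u_2$. Throughout I write $p(t) = t^2 - at + 1$ for the palindromial minimal polynomial of $u_1 \in \Sp(V_1) = \SL(V_1)$, noting that $p(0) = \det u_1 = 1$.

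For part (b), I fix a symplectic basis $(e_1,f_1,e_2,f_2)$ of $V = V_1 \oplus V_2$ adapted to the splitting and take the symplectic swap involution $i$ defined by $e_1 \leftrightarrow e_2$ and $f_1 \leftrightarrow f_2$ (possibly twisted by a sign for $\eta = -1$). A direct block computation yields
\[(iu)^2 \;=\; (\eta u_1) \botoplus (\eta u_1),\]
from which $u' := iu$ is seen to be cyclic of degree $4$ with minimal polynomial the degree-$4$ even palindromial $p(t^2)$, producing the desired $\calC(p(t^2))$-symplectic transformation.

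To show $u'$ is $3$-reflectional I use the factorization $p(t^2) = (t^2 - \alpha)(t^2 - \beta)$ with $\alpha\beta = 1$ (from $p(0) = 1$); writing $q := t^2 - \alpha$ one gets $q^\sharp = t^2 - \beta$, so $p(t^2) = qq^\sharp$. Provided $p$ has distinct roots, i.e.\ $p \notin \{(t-1)^2,(t+1)^2\}$, the factors $q$ and $q^\sharp$ are coprime, so the primary decomposition $V = \Ker q(u') \oplus \Ker q^\sharp(u')$ splits $V$ into two transverse Lagrangians stable under $u'$ on each of which $u'$ acts cyclically; hence $u'$ is a symplectic extension of a $\calC(q)$-automorphism. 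Since $q = t^2 - \alpha$ is manifestly an even polynomial coprime to $q^\sharp$, Proposition \ref{prop:even3refl} delivers the $3$-reflectionality of $u'$. The two edge cases $p = (t \pm 1)^2$ (in which $u_1$ has a single Jordan cell of size $2$ for an eigenvalue in $\{\pm 1\}$) are to be handled by a separate explicit block-matrix construction of the kind already used in the proof of Lemma \ref{lemma:(t^2+1)2}.

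For part (a), the case $n = 0$ is immediate from (b). For $n \geq 1$, Proposition \ref{prop:existpresquelagrangian} applied to the type VI cell $u_2$ of dimension $4n+2$ and eigenvalue $\eta$ yields a $2n$-dimensional totally $s$-singular subspace $W \subset V_2$ with $s_{u,W}$ hyperbolic and with residual endomorphism equal to $\eta\id$ on the $2$-dimensional orthogonal complement of $W + u(W)$ inside $V_2$. Executing the space-pullback on $V$ using $W$, the residual endomorphism on the $4$-dimensional $(W+u(W))^{\bot_s}$ takes the form $u_1 \botoplus \eta\id_2$, which is precisely the input of part (b). Choosing the residual involution to be the one supplied by (b), and choosing the $s_{u,W}$-skew-selfadjoint automorphism $v$ of $W$ (via Lemmas \ref{lemma:adaptlemmairr} and \ref{lemma:adaptlemmasplit}) to be cyclic with an even minimal polynomial coprime to its reciprocal, Lemma \ref{lemma:recog} and Proposition \ref{prop:even3refl} together ensure that both orthogonal pieces of $\tilde i u$ are $3$-reflectional, whence $\tilde i u$ is $3$-reflectional and $u = \tilde i \cdot (\tilde i u)$ is $4$-reflectional. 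The principal obstacle is the $3$-reflectionality step of (b), whose factorization argument collapses in the edge cases $p = (t \pm 1)^2$; a secondary difficulty arises for very small fields (e.g.\ $|\F| = 5$ with $n = 1$), where no scalar $\lambda \neq 0$ with $\lambda^4 \neq 1$ is available in Lemma \ref{lemma:adaptlemmasplit} and the cyclic $v$ on $W$ must instead be built by combining irreducible non-hyperbolic blocks from Lemma \ref{lemma:adaptlemmairr}.
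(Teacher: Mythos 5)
The paper's own proof takes a more elementary, hands-on route than yours. For part (b) it composes two explicit symplectic involutions: a swap $\alpha$ (as you propose) followed by $\beta$ built from a factorization $C=A_1A_2$ of $u_1$ into two $\calC\bigl((t-\lambda)(t-\lambda^{-1})\bigr)$-cells (Lemma \ref{lemma:splitquadratic}), so that $\beta\alpha u$ is block-diagonal $A_2 \oplus A_1$ and $2$-reflectional by Nielsen's theorem. For part (a) it does not invoke any space-pullback or Proposition \ref{prop:existpresquelagrangian} at all: it writes $u_2 = s_{\calL'}(v)$ on transverse Lagrangians, factors $u_1=u'_1u''_1$ as above, introduces a small modification $v'$ of $v$, and exhibits $u$ as a product of the two $2$-reflectional transformations $u'_1\botoplus s_{\calL'}(v')$ and its cofactor, verified via Nielsen's and Wonenburger's theorems.

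Your approach is genuinely different and has a serious gap in part (b). You factor $p(t^2)=(t^2-\alpha)(t^2-\beta)$ with $\alpha\beta=1$ and set $q:=t^2-\alpha$, but $\alpha$ is a root of $p$ and need \emph{not} lie in $\F$. When $p$ is irreducible over $\F$ (which is entirely possible since the only constraint is that $p$ is monic of degree $2$ with $p(0)=1$), the polynomial $q$ has coefficients outside $\F$, and the primary decomposition $V=\Ker q(u')\oplus\Ker q^\sharp(u')$ is not defined over $\F$. Worse, there may be no $\F$-rational factorization $p(t^2)=qq^\sharp$ with $q$ coprime to $q^\sharp$ at all. Concretely, over $\F_7$ with $p=t^2+1$ one has $p(t^2)=t^4+1=(t^2+3t+1)(t^2-3t+1)$ with both factors irreducible palindromials, so $u'$ is \emph{not} a symplectic extension of any cyclic endomorphism, and Proposition \ref{prop:even3refl} cannot be invoked. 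Your "edge cases" $p=(t\pm1)^2$ cover only the repeated-root situation but completely miss the irreducible case, where the argument collapses for a different reason. The paper's factorization $C=A_1A_2$ (Lemma \ref{lemma:splitquadratic}) works for \emph{every} cyclic $C\in\SL_2(\F)$, irreducible or not, which is precisely the point of that lemma; your route cannot substitute for it.

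Your part (a) has a secondary but real issue as well. For $n=1$ the subspace $W$ from Proposition \ref{prop:existpresquelagrangian} has dimension $2$ with $s_{u,W}$ hyperbolic, and over $\F_5$ every $c\in\F_5^\times$ satisfies $c^4=1$, so no $s_{u,W}$-skew-selfadjoint $\calC(t^2-c^2)$-automorphism with $t^2-c^2$ coprime to its reciprocal exists. Your proposed fix of ``combining irreducible non-hyperbolic blocks from Lemma \ref{lemma:adaptlemmairr}'' cannot work here: there is room for only one block, and a single block from that lemma produces a non-hyperbolic form, which cannot be isometric to the hyperbolic $s_{u,W}$. The paper's proof of (a) sidesteps all of this by never leaving the algebraic Jordan-cell picture and never requiring a choice of scalar with $\lambda^4\neq 1$; it needs only $\lambda\neq\lambda^{-1}$, which $|\F|>3$ guarantees.
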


We start by recalling the following result from \cite{dSPinvol4Symp}:

\begin{lemma}[Lemma 5.2 in \cite{dSPinvol4Symp}]\label{lemma:splitquadratic}
Let $\lambda \in \F \setminus \{0,1,-1\}$.
Let $v$ be a cyclic automorphism of a vector space $P$ of dimension $2$, with determinant $1$.
Then $v$ is the product of two $\calC\left((t-\lambda)(t-\lambda^{-1})\right)$ endomorphisms $v_1$ and $v_2$ of $P$.
\end{lemma}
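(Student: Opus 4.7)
My plan is to construct $v_1$ and $v_2$ explicitly by exploiting the cyclicity of $v$. Since $v$ is a cyclic automorphism of the $2$-dimensional space $P$ with $\det v = 1$, its characteristic polynomial is $t^2 - \alpha t + 1$ where $\alpha := \tr v$. Picking any cyclic vector $x$ for $v$, the family $(x,v(x))$ is a basis of $P$ in which $v$ is represented by the companion matrix
$$M_\alpha := \begin{bmatrix} 0 & -1 \\ 1 & \alpha \end{bmatrix}.$$
Writing $\beta := \lambda + \lambda^{-1}$, a $2\times 2$ matrix lies in the conjugacy class $\calC\bigl((t-\lambda)(t-\lambda^{-1})\bigr)$ if and only if it has trace $\beta$, determinant $1$, and is not a scalar matrix (the scalar case being excluded automatically as soon as the two diagonal entries differ or an off-diagonal entry is nonzero, since $\lambda \neq \lambda^{-1}$).

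The key idea is to seek $v_1$ as a \emph{lower-triangular} matrix with diagonal entries $(\lambda^{-1}, \lambda)$:
$$M_1 := \begin{bmatrix} \lambda^{-1} & 0 \\ y & \lambda \end{bmatrix}, \qquad y \in \F.$$
By construction $M_1$ has trace $\beta$ and determinant $1$, and it is non-scalar because $\lambda \neq \lambda^{-1}$, so $v_1$ automatically lies in the target conjugacy class for every choice of $y$. The remaining parameter $y$ is then pinned down by forcing $v_2 := v_1^{-1}v$ into the target class as well. Since $\det v_2 = 1$ is automatic, only the trace condition $\tr v_2 = \beta$ is nontrivial, and a direct computation gives
$$M_2 := M_1^{-1} M_\alpha = \begin{bmatrix} 0 & -\lambda \\ \lambda^{-1} & y + \lambda^{-1}\alpha \end{bmatrix},$$
so the trace constraint reduces to the single linear equation $y = \beta - \lambda^{-1}\alpha$. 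With this choice, $M_2 = \begin{bmatrix} 0 & -\lambda \\ \lambda^{-1} & \beta \end{bmatrix}$, which is visibly non-scalar (both off-diagonal entries being $\pm\lambda^{\pm 1} \neq 0$) and hence also lies in $\calC\bigl((t-\lambda)(t-\lambda^{-1})\bigr)$.

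The proof concludes with the routine check $M_1 M_2 = M_\alpha$, boiling down to the identity $\lambda(\beta - y) = \alpha$. There is essentially no obstacle: the construction is entirely explicit, and the conceptual content is simply that the lower-triangular matrices in the target conjugacy class form a $1$-parameter affine family in which the linear constraint $\tr(v_1^{-1}v) = \beta$ has a unique solution. The hypothesis $\lambda \notin \{0,1,-1\}$ enters only through $\lambda \neq \lambda^{-1}$, which guarantees cyclicity of both $v_1$ and $v_2$; note that neither finiteness of $\F$ nor any further arithmetic information about $\F$ is required.
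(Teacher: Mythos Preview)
Your proof is correct. The paper does not supply its own proof of this lemma: it is merely recalled from \cite{dSPinvol4Symp} (Lemma 5.2 there), so there is nothing to compare against in the present text. Your explicit construction via the companion matrix of $v$ and a one-parameter family of lower-triangular candidates for $v_1$ is clean and complete; the only substantive point is that $\lambda\neq\lambda^{-1}$ forces any matrix with trace $\beta$ and determinant $1$ to be non-scalar, and you handle this. One minor remark: the final ``routine check $M_1M_2=M_\alpha$'' is redundant, since you defined $M_2:=M_1^{-1}M_\alpha$; but this is harmless.
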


\begin{proof}[Proof of Lemma \ref{lemma:cyclic2+typeVI}]
The assumption $|\F|>3$ allows us to find $\lambda \in \F \setminus \{0\}$ such that $\lambda \neq \lambda^{-1}$, and throughout the proof we fix such a scalar
$\lambda$. Replacing $u$ with $-u$ if necessary, we can assume that $1$ is the sole eigenvalue of $u_2$.

Denote by $V_1$ and $V_2$ the respective underlying vector spaces of $u_1$ and $u_2$.
For some transverse lagrangians $\calL$ and $\calL'$ of $V_2$, the transformation $u_2$ is the symplectic extension
$s_{\calL'}(v)$ for some automorphism $v$ of $\calL$ which is a Jordan cell of size $2n+1$ for the eigenvalue $1$.
Let us take a basis $(e_1,\dots,e_{2n+1})$ of $\calL$ in which $v(e_i)=e_i+e_{i-1}$ for all $i \in \lcro 1,2n+1\rcro$
(where we convene that $e_0=0$).

By Lemma \ref{lemma:splitquadratic}, the automorphism $u_1$ is the product of two $\calC\left((t-\lambda)(t-\lambda^{-1})\right)$-automorphisms $u'_1$ and $u''_1$ of $V_1$,
and hence both $u'_1$ and $u''_1$ are symplectic. Denote by $v'$ the automorphism of $\calL$ such that $v'(e_i)=e_i$ for all $i \in \lcro 1,2n+1\rcro \setminus \{n\}$,
and $v'(e_{n+1})=\lambda^{-1} e_{n+1}$; set then $u':=u'_1 \botoplus s_{\calL'}(v')$ and $u'':=(u')^{-1} u=u''_1 \botoplus s_{\calL'}((v')^{-1} v)$.
We shall see that both $u'$ and $u''$ are $2$-reflectional in $\Sp(s)$, which will prove point (a).

By Lemma \ref{lemma:splitsymplecticextension}, we see that $u'=u'_1 \botoplus w' \botoplus \id$ where $w'$ is a symplectic extension of $\lambda^{-1} \id$
(defined on a $1$-dimensional Lagrangian), and hence $w'$ is $\calC\left((t-\lambda)(t-\lambda^{-1})\right)$.
Hence $u'_1 \botoplus w'$ is diagonalizable with spectrum $\{\lambda,-\lambda\}$ and all its Jordan numbers are even. By Nielsen's theorem,
$u'_1 \botoplus w'$ is $2$-reflectional in the corresponding symplectic group, and hence $u'$ is $2$-reflectional in $\Sp(s)$.

Finally, we note that $(v')^{-1} v$ is obviously $\calC\left((t-1)^{2n}(t-\lambda)\right)$ (with cyclic vector $e_{2n+1}$),
and hence it is the direct sum of a $\calC(t-\lambda)$-automorphism $w_2$ and of a $\calC\left((t-1)^{2n}\right)$ cyclic automorphism $j_2$.
Lemma \ref{lemma:splitsymplecticextension} yields a splitting $u''=(u''_1 \botoplus w'_2) \botoplus j'_2$
where $w'_2$ and $j'_2$ are respective symplectic extensions of $w_2$ and $j_2$.
With the same line of reasoning as for $u'$, we use Nielsen's theorem to see that
$u''_1 \botoplus w'_2$ is $2$-reflectional (in the corresponding symplectic group), and so is $j'_2$
because Wonenburger's theorem \cite{Wonenburger} shows that $j_2$ is $2$-reflectional in the corresponding general linear group. Hence $u''$ is $2$-reflectional in $\Sp(s)$, which completes the proof of point (a).

It remains to prove point (b). So let us now assume that $n=0$.
In that case, we choose symplectic bases $(e_1,f_1)$ and $(e_2,f_2)$ in which the respective matrices of $u_1$ and $u_2$ are $C$ and $I_2$,
where $C$ is cyclic with minimal polynomial $p$.
Let us consider the endomorphism $\alpha$ of the underlying space $V$ of $(s,u)$ whose matrix in the basis $(e_1,f_1,e_2,f_2)$ equals
$\begin{bmatrix}
0 & I_2 \\
I_2 & 0
\end{bmatrix}$. Clearly, $\alpha \in \Sp(s)$ and $\alpha$ is an involution. The matrix of $\alpha u$ in the permuted basis
$(e_2,f_2,e_1,f_1)$ equals
$\begin{bmatrix}
0 & C \\
I_2 & 0
\end{bmatrix}$, and by Lemma 14 of \cite{dSP2LC} (in which the condition that $\alpha \neq 0$ and $\beta \neq 0$ is useless) it is cyclic with minimal polynomial $p(t^2)$.
Now, as in the above we can split $C=A_1 A_2$ where both $A_1$ and $A_2$ are cyclic with minimal polynomial $(t-\lambda)(t-\lambda^{-1})$.
Denote by $\beta$ the automorphism of $V$ whose matrix in the basis $(e_1,f_1,e_2,f_2)$ equals
$\begin{bmatrix}
0 & A_1^{-1} \\
A_1 & 0
\end{bmatrix}$. Again, one sees that $\beta$ belongs to $\Sp(s)$ (because $\det A_1=1$) and that it is an involution.
The matrix of $\beta (\alpha u)$ in $(e_1,f_1,e_2,f_2)$ then equals
$\begin{bmatrix}
A_2 & 0 \\
0 & A_1
\end{bmatrix}$, and as in the above we use Nielsen's theorem to see that it is $2$-reflectional. Hence $\alpha u$ is $3$-reflectional in $\Sp(s)$,
which proves point (b).
\end{proof}

With the same line of reasoning as in the start of the proof of point (b), we also obtain:

\begin{lemma}\label{lemma:cyclic2+typeVIbis}
Assume that $\F$ is finite.
Let $(s,u)$ be an s-pair in which $u=u_1 \botoplus u_2$, where
$u_1$ is cyclic with minimal polynomial $p$ of degree $2$, and $u_2$ is the identity on a $2$-dimensional space.
Then $u$ is i-adjacent to a symplectic transformation $u'$ with minimal polynomial $p(t^2)$.
\end{lemma}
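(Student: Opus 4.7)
The plan is to reuse verbatim the opening moves of the proof of Lemma~\ref{lemma:cyclic2+typeVI}(b); the key observation is that in that argument the specific nature of $u_2$ (a Jordan cell of size $1$ for the eigenvalue $1$, i.e.\ the identity on a $2$-dimensional symplectic space) is exactly what we still have here, and the rest of the reasoning never uses anything else about $u_2$.

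Concretely, I would denote by $V_1$ and $V_2$ the respective underlying spaces of $u_1$ and $u_2$, choose a symplectic basis $(e_1,f_1)$ of $V_1$ in which $u_1$ is represented by a cyclic matrix $C \in \Sp_2(\F)$ with minimal polynomial $p$, and choose any symplectic basis $(e_2,f_2)$ of $V_2$ (in which $u_2$ is represented by $I_2$). Relative to the combined basis $(e_1,f_1,e_2,f_2)$ of $V_1 \botoplus V_2$ the matrix of $u$ is $\begin{bmatrix} C & 0 \\ 0 & I_2 \end{bmatrix}$. I would then let $\alpha$ denote the endomorphism whose matrix in that basis is $\begin{bmatrix} 0 & I_2 \\ I_2 & 0 \end{bmatrix}$. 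Since $\alpha$ swaps the two $s$-orthogonal symplectic subspaces $V_1$ and $V_2$ through the identification of their respective symplectic bases, it is an isometry of $(V,s)$, and it is obviously an involution.

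The matrix of $\alpha u$ in $(e_1,f_1,e_2,f_2)$ is then $\begin{bmatrix} 0 & I_2 \\ C & 0 \end{bmatrix}$, hence in the permuted basis $(e_2,f_2,e_1,f_1)$ it is $\begin{bmatrix} 0 & C \\ I_2 & 0 \end{bmatrix}$. By Lemma 14 of \cite{dSP2LC}, which as already pointed out in the proof of Lemma~\ref{lemma:cyclic2+typeVI}(b) applies without the non-vanishing hypothesis on the diagonal blocks, this block matrix is cyclic with minimal polynomial $p(t^2)$. Setting $u':=\alpha u$ then yields an element of $\Sp(s)$ with minimal polynomial $p(t^2)$ to which $u$ is i-adjacent.

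There is essentially no obstacle here: the entire content is a straight transplant of the first paragraph of the proof of Lemma~\ref{lemma:cyclic2+typeVI}(b), and we do \emph{not} need to continue into the second paragraph of that argument (where $C$ was further factorised as $A_1 A_2$ and Nielsen's theorem was invoked), because the present statement only asks for i-adjacency to a transformation with prescribed minimal polynomial rather than for a $3$-reflectionality bound.
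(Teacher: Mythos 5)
Your argument is correct and coincides with the paper's own reasoning: the paper explicitly states that this lemma follows "with the same line of reasoning as in the start of the proof of point (b)" of Lemma~\ref{lemma:cyclic2+typeVI}, which is precisely the construction of the swap involution $\alpha$ and the appeal to Lemma 14 of \cite{dSP2LC} that you carry out.
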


We move forward with a result that holds regardlessly of the field $\F$ under consideration (but with $\chi(\F) \neq 2$, of course).

\begin{lemma}\label{lemma:(t+1)2+identity}
Let $(s,u)$ be an s-pair with a splitting $u=u_r \botoplus u_e$ where $u_r$ is $\calC((t+1)^2)$, and $u_e$
is the identity of a $2$-dimensional subspace. Then $u$ is $3$-reflectional in $\Sp(s)$.
\end{lemma}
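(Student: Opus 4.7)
The plan is to construct a symplectic involution $i \in \Sp(s)$ such that $v := iu$ is $2$-reflectional in $\Sp(s)$; then $u = i \cdot v$ will be a product of three involutions. A preliminary remark is that no ``split'' involution of the form $(\pm \id_{V_r}) \botoplus (\pm \id_{V_e})$ can play the role of $i$: each of the four candidates leaves $v$ with $n_{t \pm 1, 2}(v) = 1$, an odd Jordan number that fails the Jordan-number criterion of Nielsen's theorem. Hence the involution $i$ must genuinely mix the summands $V_r$ and $V_e$.

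To construct $i$, pick a Jordan basis $(a, b)$ of $V_r$ for $u_r$, normalized so that $s(a, b) = 1$, $u_r(a) = -a + b$, $u_r(b) = -b$, and pick a symplectic basis $(c, d)$ of $V_e$ with $s(c, d) = 1$. Set
$$V^+ := \Vect(a + c,\, b) \quad\text{and}\quad V^- := \Vect(c,\, b - d).$$
A direct check of $s$-values (using $V_r \botoplus V_e$) shows that both are $s$-regular $2$-planes, mutually $s$-orthogonal, and summing to $V$. Hence the involution $i$ that is $\id$ on $V^+$ and $-\id$ on $V^-$ belongs to $\Sp(s)$.

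The proof then hinges on two mechanical calculations in the basis $(a, c, b, d)$. First, verify that $(v + \id)^2 = 0$ with $\rk(v + \id) = 2$; this forces $v$ to have exactly two Jordan cells of size $2$ at eigenvalue $-1$, so in particular all Jordan numbers of $v$ are even. Second, compute the quadratic Wall invariant $(s, v)_{t+1, 2}$ on $V_{t+1, 2} \simeq V / \Ker(v + \id)$. The identity $(v + \id)^2 = 0$ gives $v^{-1} = -v - 2\id$ and hence $v - v^{-1} = 2(v + \id)$, simplifying the defining formula of the Wall invariant to $(x, y) \mapsto s(x, (v + \id) y)$. Using $a$ and $d$ as representatives in $V / \Ker(v + \id)$, the Gram matrix of this form turns out to be $\begin{pmatrix} 1 & 2 \\ 2 & 0 \end{pmatrix}$, whose determinant is $-4 = -(2^2)$; since $-\det$ is always a non-zero square in $\F^\times$, the form is hyperbolic over every field of characteristic other than $2$.

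By Nielsen's theorem (condition (v)), $v$ is then $2$-reflectional in $\Sp(s)$, whence $u = i \cdot v$ is $3$-reflectional, as required. The principal obstacle is locating the correct choice of $V^+$: it must simultaneously force $v$ to have even Jordan numbers \emph{and} a hyperbolic Wall invariant, and it is a fortunate feature of this particular $2$-dimensional form that its discriminant $-4$ is always $-1$ up to squares in $\F^\times$, giving a single uniform proof independent of the underlying field.
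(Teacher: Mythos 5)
Your construction is a genuinely different route from the paper's. The paper works backwards: it fixes an involution $J$ and a Nielsen-hyperbolic transvection $V$ depending on a parameter $\varepsilon$, verifies that $JV$ has the right Jordan type, and then adjusts $\varepsilon$ so that the Wall invariant $(s,u)_{t+1,2}$ matches the one given. Your proof goes forwards from $u$, constructing an explicit involution $i$ that mixes the two blocks and showing $v:=iu$ satisfies Nielsen's criterion. The pleasant feature you observe—that the Gram matrix of $(s,v)_{t+1,2}$ has determinant $-4$, automatically a minus-square—means no $\varepsilon$-tuning at all, which is a real simplification.

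However, there is a genuine (though repairable) gap in the first step. You ``pick a Jordan basis $(a,b)$ of $V_r$ normalized so that $s(a,b)=1$, $u_r(a)=-a+b$, $u_r(b)=-b$''. Such a normalization is \emph{not} always achievable. If $b$ spans $\Ker(u_r+\id)$ and $s(a,b)=1$, then $u_r(a)=-a+\gamma b$ with $\gamma$ well-defined modulo squares; indeed one computes that $(s_r,u_r)_{t+1,2}\simeq\langle\gamma\rangle$, so the square class of $\gamma$ is exactly the Wall invariant of the $\calC((t+1)^2)$ summand, and over any finite field there are two classes. Your normalization $\gamma=1$ silently restricts to the conjugacy class with trivial Wall invariant, so as written the proof covers only one of the two s-pairs in the statement.

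The repair is immediate and preserves your argument verbatim: keep $s(a,b)=1$ but allow $u_r(a)=-a+\gamma b$ for arbitrary $\gamma\in\F^\times$. Redoing your calculation then gives $(v+\id)(a)=\gamma b-2c$, $(v+\id)(d)=2b$, and the Gram matrix of $(s,v)_{t+1,2}$ on the representatives $a,d$ becomes $\bigl(\begin{smallmatrix}\gamma & 2\\ 2 & 0\end{smallmatrix}\bigr)$, whose determinant is still $-4$ regardless of $\gamma$. So the form is hyperbolic in all cases and Nielsen's criterion applies. With that correction your proof is complete, and it is arguably cleaner than the paper's because no parameter needs to be tuned to match the Wall invariant.
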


\begin{proof}
Let $\varepsilon \in \F \setminus \{0\}$.

We will work backwards. Denote by $s'$ the standard symplectic form on $\F^4$, and put $D:=\begin{bmatrix}
-1 & 1 \\
0 & 1
\end{bmatrix}$ and
$\Delta:=\begin{bmatrix}
0 & 1 \\
1 & 0
\end{bmatrix}$.
Note that $\Delta$ is hyperbolic as a symmetric matrix.
The matrices
$$J:=\begin{bmatrix}
D & 0 \\
0 & D^\sharp
\end{bmatrix} \quad \text{and} \quad
V:=\begin{bmatrix}
I_2 & \varepsilon \Delta \\
0 & I_2
\end{bmatrix}$$
belong to $\Sp_4(\F)$. The first one is involutory because $D^2=I_2$, and the second one is $2$-reflectional by Nielsen's theorem, because
its minimal polynomial is $(t-1)^2$ and its Wall invariant attached to $(t-1,2)$ has Gram matrix $-\varepsilon \Delta$, which is hyperbolic.
Hence the product $JV$ is $3$-reflectional. We find
$$JV=\begin{bmatrix}
D & \varepsilon D\Delta \\
0 & D^\sharp
\end{bmatrix}.$$
Obviously, $(t+1)^2(t-1)^2$ is the characteristic polynomial of $JV$. Moreover,
$$JV-I_4=\begin{bmatrix}
-2 & 1 & \varepsilon & -\varepsilon \\
0 & 0 & \varepsilon & 0 \\
0 & 0 & -2 & 0 \\
0 & 0 & 1 & 0
\end{bmatrix}.$$
Clearly $\rk(JV-I_4)=2$. Moreover
$$(JV)^2-I_4=\begin{bmatrix}
0_2 & \varepsilon(\Delta +D\Delta D^\sharp) \\
0_2 & 0_2
\end{bmatrix},$$
where $\Delta+D\Delta D^\sharp=\begin{bmatrix}
-2 & 0 \\
0 & 0
\end{bmatrix}$ has rank $1$.
Hence the symplectic automorphism $u'$ of $\F^4$ that is represented by $JV$ in the standard basis
is the direct sum of the identity on a $2$-dimensional space and of a $\calC((t+1)^2)$-automorphism.

In order to conclude, it will suffice to adjust $\varepsilon$ so that $(s',u')$ and $(s,u)$ have equivalent Wall invariants attached to $(t+1,2)$.
Yet the Wall invariant $(s,u')_{t+1,2}$ is equivalent to the regular part of the symmetric bilinear form $s_{u'}$,
and this regular part has rank $1$. Ditto for $u$.
To identify $(s,u')_{t+1,2}$ up to equivalence, it suffices to compute a non-zero value of the quadratic form attached to $s_{u'}$.
Denoting by $e_1,\dots,e_4$ the standard basis of $\F^4$,
we see that $s_{u'}(e_3,e_3)=s(e_3,\varepsilon e_1+\varepsilon e_2-e_3+e_4)=\varepsilon s(e_3,e_1)=-\varepsilon$, so
we can always choose $\varepsilon$ so that the value $-\varepsilon$ is reached by the quadratic form which is attached to the Wall invariant $(s,u)_{t+1,2}$.
With that choice, $(s',u') \simeq (s,u)$, which proves that $u$ is $3$-reflectional in $\Sp(s)$.
\end{proof}

\begin{lemma}\label{lemma:pairF3eigenvalue}
Assume that $|\F|=3$.
Let $(s,u)$ be an s-pair with a splitting $u=u_r \botoplus u_e$ where $u_r$ is cyclic of rank $2$ with an eigenvalue in $\{\pm 1\}$
and $u_e$ is an indecomposable cell of type VI, with rank $4n+2\geq 2$.
Then $u$ is $4$-reflectional in $\Sp(s)$.
\end{lemma}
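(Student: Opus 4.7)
\emph{Strategy.} The plan is to apply the space-pullback technique to $(s,u)$ on $V = V_r \botoplus V_e$, using a totally $s$-singular subspace $W \subset V_e$ produced by Proposition~\ref{prop:existpresquelagrangian} applied to the indecomposable type~VI cell $u_e$, and to arrange the pullback data so that the modified transformation $\widetilde{i}u$ splits as the orthogonal direct sum of two $3$-reflectional pieces.

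\emph{Setup and the residual involution.} Proposition~\ref{prop:existpresquelagrangian} furnishes a totally $s$-singular $W \subset V_e$ of dimension $2n$ with $s_{u_e,W}$ hyperbolic, $u_e(W) \cap W = \{0\}$, and residual endomorphism $\eta_e\id$ on the $2$-dimensional $s$-regular complement $R$ of $W+u_e(W)$ in $V_e$. Since $V_r \perp_s V_e$, the $s$-orthogonal of $W+u(W)$ in $V$ is $V_r \botoplus R$, and the residual endomorphism on this $4$-dimensional subspace equals $u_r \botoplus \eta_e\id_R$. Pick the residual involution $i'$ on $V_r \botoplus R$ to be $(-\id_{V_r}) \botoplus \id_R$ if $\eta_r = \eta_e$, or $i' = \id$ if $\eta_r \neq \eta_e$. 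Up to an overall rescaling by $\pm 1$ (which preserves reflection length), $i'(u_r \botoplus \eta_e\id_R)$ then reads as a $\calC((t+1)^2)$-transformation orthogonally summed with $\id_2$, and is therefore $3$-reflectional by Lemma~\ref{lemma:(t+1)2+identity}.

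\emph{Choice of $v$, splitting and conclusion.} For $v$ on $W$, pick a polynomial $p$ that is a product of even irreducible polynomials of $\Irr(\F)$ distinct from $t^2+1$, of total degree $2n$, mutually coprime with one another and with their reciprocals (available for $n \geq 2$ by Lemmas~\ref{lemma:evenirrpolynomial} and~\ref{lemma:evennotpalindromial}). Then take $v$ cyclic with minimal polynomial $p$, realized as an $s_{u,W}$-skew-selfadjoint endomorphism of $(W, s_{u,W})$ by assembling the non-hyperbolic cyclic pieces of Lemma~\ref{lemma:adaptlemmairr} into an orthogonal direct sum whose discriminant matches the hyperbolic class of $(W, s_{u,W})$, as in the construction of Proposition~\ref{prop:nrmultiplede4} (Cases~3 and~4). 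Since $p$ has no root in $\F_3$ whereas the characteristic polynomial of $i'u'$ has only $\pm 1$ as roots, $pp^\sharp$ is coprime with the latter. Lemma~\ref{lemma:recog} then decomposes $\widetilde{i}u$ orthogonally as $u_0 \botoplus u_1$, with $u_0$ a symplectic extension of $v$ and $u_1$ isometric to $i'u'$. Proposition~\ref{prop:even3refl} makes $u_0$ $3$-reflectional, and $u_1$ is $3$-reflectional by the previous paragraph; hence $\widetilde{i}u$ is $3$-reflectional and $u = \widetilde{i}\cdot(\widetilde{i}u)$ is $4$-reflectional.

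\emph{Boundary cases and main obstacle.} This argument is complete for $n \geq 2$. For $n = 0$, the subspace $W$ is trivial, the pullback reduces to setting $\widetilde{i} = i'$ on $V = V_r \botoplus V_e$, and Lemma~\ref{lemma:(t+1)2+identity} immediately gives that $\widetilde{i}u$ is $3$-reflectional. The delicate case is $n = 1$ over $\F_3$: the only even irreducible polynomial of degree $2$ is the palindromial $t^2+1$, so the above choice of $p$ is unavailable, and moreover on a $2$-dimensional hyperbolic space no skew-selfadjoint cyclic automorphism has minimal polynomial $t^2+1$ — the sole candidate is $\mathrm{diag}(1,-1)$, whose minimal polynomial $t^2-1$ shares its roots $\pm 1$ with the characteristic polynomial of $i'u'$ and blocks Lemma~\ref{lemma:recog}. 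This is where the proof must work harder, for instance by enlarging the totally $s$-singular subspace to include a cleverly chosen $2$-dimensional piece mixing $V_r$ and $V_e$, or by factoring $u_e = j_1j_2$ via Nielsen's theorem and designing an adapted $j_1$ for which $u_r \botoplus j_1$ admits an explicit $3$-reflectional decomposition.
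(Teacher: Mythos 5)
Your proposal takes a genuinely different route from the paper and it has unacknowledged gaps beyond the $n=1$ case you flagged.

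\emph{Route comparison.} The paper does not use the space-pullback technique here at all. It applies the cyclic adaptation lemma (Lemma~\ref{lemma:cyclicfit}) directly to $u_e$: since $u_e$ is a symplectic extension of a $\calC((t-\eta_e)^{2n+1})$-automorphism with $2n+1$ odd, it is i-adjacent to a symplectic extension $w$ of a $\calC\bigl((t-1)(t^{2n}+1)\bigr)$-automorphism. Then Lemma~\ref{lemma:splitsymplecticextension} resplits $w=w_1\botoplus w_2$ with $w_1=\id$ on a plane and $w_2$ a symplectic extension of a $\calC(t^{2n}+1)$-automorphism; $w_2$ is $2$-reflectional by Nielsen's theorem (palindromial invariant factor), while $u_r\botoplus w_1$ is $3$-reflectional by Lemma~\ref{lemma:(t+1)2+identity}. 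This works uniformly for all $n\geq 0$ with no case distinctions and no space-pullback. It is both shorter and more robust than your scheme.

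\emph{The gap.} Your difficulty is not confined to $n=1$. You need a cyclic $v$ on $(W,s_{u,W})$ that is $s_{u,W}$-skew-selfadjoint with even irreducible factors distinct from $t^2+1$ and of total degree $2n$, and you need the resulting form to be \emph{hyperbolic}. But over $\F_3$ the only even irreducible polynomial of degree $2$ is $t^2+1$, so each admissible factor has degree at least $4$; and Lemma~\ref{lemma:adaptlemmairr} produces a \emph{non-hyperbolic} form for each such factor. Hence the orthogonal sum is hyperbolic only if the number of factors is even, which forces $2n\geq 8$. For $n=2$ (one degree-$4$ piece) and $n=3$ (one degree-$6$ piece), the form you get is necessarily non-hyperbolic and cannot match $(W,s_{u,W})$. (Lemma~\ref{lemma:adaptlemmasplit} does not help either, since over $\F_3$ there is no $\lambda\in\F\setminus\{0,1,-1\}$.) Moreover, any $v$-invariant symmetric form for which $v$ is skew-selfadjoint, with $v$ cyclic of irreducible even minimal polynomial, is unique up to isometry (they are all trace forms twisted by a Hermitian scalar, and norms of $\L^\times$ over the fixed field are surjective), so there is no alternative hyperbolic realization to fall back on. Thus your argument is not just ``delicate'' for $n=1$; it also fails outright for $n=2$ and $n=3$, and your closing suggestions do not repair this. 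The paper's cyclic-adaptation argument is the way to patch (and replace) the whole thing.
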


\begin{proof}
We use Lemma \ref{lemma:cyclicfit} to work on $u_e$. First of all, we lose no generality in assuming that $-1$ is the eigenvalue of $u_r$
because the problem is unchanged in replacing $u$ with $-u$.

Let us set $p:=t^{2n}+1$ if $n>0$, and $p:=1$ otherwise. Since $2n+1$ is odd, Lemma \ref{lemma:cyclicfit} shows that $u_e$ is i-adjacent to
a symplectic extension $w$ of a $\calC((t-1)\,p)$-automorphism.
Hence $u$ is i-adjacent to $u_r \botoplus w$.
Thanks to Lemma \ref{lemma:splitsymplecticextension}, we can resplit $w =w_1 \botoplus w_2$ where $w_1=\id_P$ for some $2$-dimensional
subspace $P$, and $w_2$ is a symplectic extension of a $\calC(p)$-automorphism.
Since $p$ is a palindromial, Nielsen's theorem shows that $w_2$ is $2$-reflectional, whereas $u_r \botoplus w_1$ is $3$-reflectional by Lemma \ref{lemma:(t+1)2+identity}. Hence $u_r  \botoplus w$ is $3$-reflectional in $\Sp(s)$, and we conclude that $u$ is $4$-reflectional in $\Sp(s)$.
\end{proof}

One case remains to be tackled.

\begin{lemma}\label{lemma:pairF3t^2+1}
Assume that $|\F|=3$.
Let $(s,u)$ be an s-pair with a splitting $u=u_r \botoplus u_e$ where $u_r$ is $\calC(t^2+1)$ and $u_e$
is an indecomposable cell of type VI with rank $4n-2\geq 2$.
Then $u$ is $4$-reflectional in $\Sp(s)$.
\end{lemma}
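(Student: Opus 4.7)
The plan is to apply the space-pullback technique to a subspace $W$ on which $s_{u,W}$ is nondegenerate but \emph{not} symmetric---the unique such situation promised at the end of Section~\ref{section:spacepullback}.

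\emph{Base case $n = 1$.} Then $V = V_r \botoplus V_e$ has both summands of dimension~$2$, and $u_e = \eta\id_{V_e}$. Fix an anti-isometry $\phi : V_r \to V_e$ (i.e.\ $s_e(\phi(x),\phi(y)) = -s_r(x,y)$) and form the \emph{graph Lagrangian} $\calL := \{v + \phi(v) : v \in V_r\}$. One verifies that $\calL$ is a totally $s$-singular Lagrangian of dimension~$2$, that $\calL \cap u(\calL) = \{0\}$ (because $u_r$ has no eigenvalue $\eta$), and that $s_{u,\calL}$ is nondegenerate but non-symmetric (the asymmetry coming from $u_r + u_r^{-1} = 0$ on $V_r$ versus $u_e + u_e^{-1} = 2\eta\id$ on $V_e$). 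A direct calculation in a basis of $\calL$ then yields a symplectic form $b$ on $\calL$ for which the space-pullback automorphism $v$ of $\calL$ is a $\calC(t^2 - \eta t - 1)$-automorphism. Since $t^2 - \eta t - 1 \in \Irr(\F_3)$ is coprime with its reciprocal $t^2 + \eta t - 1$, Lemma~\ref{lemma:recog} yields that the resulting $\widetilde i\, u$ is a symplectic extension of $v$; as $v$ is cyclic with $(t^2 - \eta t - 1)(0) = -1$, Proposition~\ref{prop:sympextension3refl} implies $\widetilde i\, u$ is $3$-reflectional, whence $u$ is $4$-reflectional.

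\emph{Inductive reduction for $n \geq 2$.} Apply Proposition~\ref{prop:existpresquelagrangian} to the type-VI cell $u_e$ to obtain a totally $s$-singular $W_e \subset V_e$ of dimension $2(n-1)$ with $s_{u,W_e}$ hyperbolic, $u_e(W_e) \cap W_e = \{0\}$, and residual endomorphism equal to $\eta\id$ on the $2$-dimensional $V_e^{\mathrm{res}} := (W_e + u_e(W_e))^{\bot_{s_e}}$. Apply space-pullback with $W := W_e$: the residual subspace $V_r \oplus V_e^{\mathrm{res}}$ has dimension~$4$ and carries the residual endomorphism $u_r \botoplus \eta\id$---precisely the $n=1$ configuration. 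Select the residual involution $i'$ so that $i'(u_r \botoplus \eta\id)$ is $3$-reflectional, available by the base case; and choose $b$ on $W_e$ so that $v_{W_e}$ is a cyclic $b$-skew-selfadjoint automorphism of $W_e$ with minimal polynomial $p$ that is even, irreducible over $\F_3$ of degree $2(n-1)$, and $p \neq t^2+1$. By Lemmas~\ref{lemma:evenirrpolynomial} and~\ref{lemma:evennotpalindromial} such $p$ exists (for suitable $n$) and is coprime with $p^\sharp$. The characteristic polynomial $t^4+1$ of $i'(u_r \botoplus \eta\id)$ avoids $p$ and $p^\sharp$, so Lemma~\ref{lemma:recog} decomposes $\widetilde i\, u$ as an orthogonal sum $u_0 \botoplus u_1$, where $u_0$ is a symplectic extension of $v_{W_e}$ (hence $3$-reflectional by Proposition~\ref{prop:even3refl}) and $u_1 \simeq i'(u_r \botoplus \eta\id)$ ($3$-reflectional by construction). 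Thus $\widetilde i\, u$ is $3$-reflectional and $u$ is $4$-reflectional.

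The hard part is the case $n = 2$: the only even irreducible of degree~$2$ over $\F_3$ is $t^2+1$, which is a palindromial, so the clean reduction above breaks down. That case will require an ad hoc treatment in dimension~$8$, presumably via a larger Lagrangian $\calL$ that couples $V_r$ to components of $V_e$ beyond $V_e^{\mathrm{res}}$, producing a non-block-diagonal $v$ on $\calL$ with the correct global cyclic structure. A subtler point even for the generic inductive step is arranging for the skew-selfadjoint cyclic $v_{W_e}$ to exist on the \emph{hyperbolic} form $b$ (Lemma~\ref{lemma:adaptlemmairr} supplies only the non-hyperbolic realization); this will likely proceed by summing two non-hyperbolic models and rearranging.
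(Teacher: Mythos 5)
Your base case $n=1$ is essentially the paper's own argument: your ``graph Lagrangian'' $\calL = \{v + \phi(v)\}$ is exactly the paper's $W = \Vect(e_1+f_1, e_2-f_2)$ written intrinsically (the map $e_1 \mapsto f_1$, $e_2 \mapsto -f_2$ is an anti-isometry), the space-pullback automorphism has matrix $\begin{bmatrix} -1 & -1 \\ 1 & -1 \end{bmatrix}$ with characteristic polynomial $t^2 - t - 1$ coprime to its reciprocal, and the conclusion follows from Lemma~\ref{lemma:recog} and Proposition~\ref{prop:sympextension3refl}. That part is correct and equivalent.

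Your treatment of $n \geq 2$ is a genuinely different strategy, but it is incomplete, and the two difficulties you flag at the end are not peripheral: they are precisely the reasons this route does not close. First, when $n = 2$ the required even irreducible $p$ over $\F_3$ of degree $2(n-1) = 2$ can only be $t^2+1$, which is a palindromial, so $p$ and $p^\sharp$ are not coprime and Lemma~\ref{lemma:recog} does not apply; you propose an ``ad hoc treatment'' but do not supply one. Second, and more seriously, for every $n \geq 2$ you need a \emph{cyclic} $\calC(p)$-endomorphism of $W_e$ that is $s_{u,W_e}$-skew-selfadjoint, where $s_{u,W_e}$ is \emph{hyperbolic} (this is what Proposition~\ref{prop:existpresquelagrangian} delivers). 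Lemma~\ref{lemma:adaptlemmairr} produces the skew-selfadjoint cyclic model only over a \emph{non-hyperbolic} form, and your suggested fix of ``summing two non-hyperbolic models'' destroys cyclicity: the orthogonal sum of two $\calC(q)$-endomorphisms with the same irreducible $q$ is no longer cyclic, so Proposition~\ref{prop:even3refl} (which requires a cyclic automorphism) would not apply directly, and you would need a further splitting argument that you have not given. Until both points are resolved, the inductive step is a gap.

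For comparison, the paper avoids space-pullback entirely for $n \geq 2$: it applies the Cyclic Adaptation Lemma (Lemma~\ref{lemma:cyclicfit}) directly to the type-VI cell $u_e$ to replace it, up to one involution, by a symplectic extension of a $\calC\bigl((t-1)(t^2-t-1)(t+1)^{2n-4}\bigr)$-automorphism. This resplits via Lemma~\ref{lemma:splitsymplecticextension} into a rank-$2$ identity block, a $\calC(t^4+1)$-block, and a symplectic extension of a $\calC((t+1)^{2n-4})$-automorphism. The identity block is paired with $u_r$, handled by the $n=1$ case, and turned into a $\calC(t^4+1)$-cell; the two $\calC(t^4+1)$-cells together with the remaining extension are then $2$-reflectional by Nielsen's theorem. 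This works uniformly for all $n \geq 2$ and sidesteps both of your obstructions, since it never requires a hyperbolic skew-selfadjoint cyclic model nor a degree-$2$ non-palindromial even irreducible.
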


\begin{proof}
We start with the case $n=1$.
Denote by $V_r$ and $V_e$ the respective underlying spaces of $u_e$ and $u_r$.

Replacing $u$ with $-u$ if necessary, we can assume that the eigenvalue of $u_e$ is $1$.
Because the field $\F$ is finite, the symplectic transformations of a $2$-dimensional symplectic space
that have minimal polynomial $t^2+1$ form a conjugacy class. Hence there is
a symplectic basis $(e_1,e_2)$ of $V_r$ such that $u(e_1)=e_2$ and $u(e_2)=-e_1$.

Let us also take a symplectic basis $(f_1,f_2)$ of $V_e$.
We apply the space-pullback technique with the subspace $W:=\Vect(e_1+f_1,e_2-f_2)$, which is easily seen to be a Lagrangian of $V$.
First of all, we compute that
$$\begin{cases}
s_{u,W}(e_1+f_1,e_1+f_1)& =s(e_1+f_1,e_2+f_1)=s(e_1,e_2)=1 \\
s_{u,W}(e_2-f_2,e_2-f_2)& =s(e_2-f_2,-e_1-f_2)=s(e_2,-e_1)=1 \\
s_{u,W}(e_1+f_1,e_2-f_2)& =s(e_1+f_1,-e_1-f_2)=s(f_1,-f_2)=-1 \\
s_{u,W}(e_2-f_2,e_1+f_1)& =s(e_2-f_2,e_2+f_1)=s(-f_2,f_1)=1.
\end{cases}$$
Note that $s_{u,W}$ is non-symmetric, a rare instance where we use the space-pullback technique with such a bilinear form!

Now, let us take the symplectic form $b$ on $W$ for which $(e_1+f_1,e_2-f_2)$ is a symplectic basis.
Then the endomorphism $v$ of $W$ such that $\forall (x,y)\in W^2, \; s(x,u(y))=b(x,v(y))$
has its matrix in $(e_1+f_1,e_2-f_2)$ equal to
$A=\begin{bmatrix}
-1 & -1 \\
1 & -1
\end{bmatrix}$,
which is cyclic with characteristic polynomial $t^2+2t+2=t^2-t-1$.
Applying the space-pullback technique, we see that $u$ is i-adjacent to some $u' \in \Sp(s)$
that stabilizes $W$ with $u'_W=v$. Since $t^2-t-1$ is relatively prime with its reciprocal polynomial $t^2+t-1$,
we deduce from Lemma \ref{lemma:recog} that $u'$ is a symplectic extension of $v$.
And then Proposition \ref{prop:sympextension3refl} shows that $u'$ is $3$-reflectional in $\Sp(s)$. Therefore $u$ is $4$-reflectional in $\Sp(s)$.

Next, we assume that $n \geq 2$. Here, we will use the Cyclic Adaptation Lemma to
break $u_e$ up. This lemma yields that $u_e$ is i-adjacent to a symplectic extension of
a $\calC((t-1)(t^2-t-1)(t+1)^{2n-4})$-automorphism, which can be resplit into
an orthogonal direct sum $w_1 \botoplus w_2 \botoplus w_3$ where $w_1$ is the identity on a $2$-dimensional space, $w_2$
has characteristic polynomial $t^4+1$, and $w_3$ is a symplectic extension of a $\calC\left((t+1)^{2n-4}\right)$-automorphism.
Hence $u$ is i-adjacent to $(u_r \botoplus w_1) \botoplus w_2 \botoplus w_3$.
In the first part of the proof, we have shown that $u_r \botoplus w_1$ is i-adjacent to a $\calC(t^4+1)$-automorphism $w'_1$,
and hence $u$ is i-adjacent to $(w'_1 \botoplus w_2) \botoplus w_3$. And by Nielsen's theorem both
$w'_1 \botoplus w_2$ and $w_3$ are $2$-reflectional. Hence $u$ is $4$-reflectional.
\end{proof}

We are now able to conclude the present section:

\begin{prop}\label{prop:rank2+typeVI}
Assume that $\F$ is finite. Let $(s,u)$ be an s-pair that splits into the orthogonal direct sum of
a $2$-dimensional s-pair and of an indecomposable s-pair of type VI.
Then $u$ is $4$-reflectional in $\Sp(s)$.
\end{prop}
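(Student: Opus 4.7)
The plan is to exhaust the possibilities for the $2$-dimensional block $u_1$ in the splitting $u=u_1 \botoplus u_2$ (where $u_2$ is the type VI cell) by a short case analysis, and reduce each case to one of the lemmas already proved in this section. Recall that $u_2$, being an indecomposable cell of type VI with parameters $\eta \in \{\pm 1\}$ and $n \in \N$, has exactly two Jordan cells of the same odd size $2n+1$ for the eigenvalue $\eta$, with total rank $4n+2$. As for $u_1$, it belongs to $\Sp_2(\F)=\SL_2(\F)$, so either $u_1=\pm\id$ or $u_1$ is cyclic of rank $2$ with characteristic polynomial of the form $t^2-\tau t+1$.

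First I would dispatch the degenerate case $u_1=\eta_1 \id$ with $\eta_1=\pm 1$. In that case, $u_1$ is itself an indecomposable cell of type VI (take $n=0$ in the classification: $\eta_1\id$ on $\F^2$ is precisely the symplectic extension of $\eta_1\id$ on a $1$-dimensional Lagrangian). Hence both $u_1$ and $u_2$ are indecomposable cells of type VI, and the example listed just after Nielsen's theorem asserts that every such cell is $2$-reflectional in its own symplectic group. By the basic remarks on orthogonal direct sums, $u$ is then $2$-reflectional in $\Sp(s)$, and a fortiori $4$-reflectional.

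In the remaining case, $u_1$ is cyclic of rank $2$. If $|\F|>3$, the hypotheses of Lemma \ref{lemma:cyclic2+typeVI}(a) are met verbatim, which yields the conclusion. If $|\F|=3$, then since $\det u_1=1$ the characteristic polynomial of $u_1$ is $t^2-\tau t+1$ with $\tau\in\F_3$, and a direct inspection of the three possible values of $\tau$ shows that this polynomial is one of $(t-1)^2$, $(t+1)^2$, or $t^2+1$. In the first two cases, $u_1$ is cyclic of rank $2$ with eigenvalue $\pm 1$, and Lemma \ref{lemma:pairF3eigenvalue} yields that $u$ is $4$-reflectional. In the last case, $u_1$ is $\calC(t^2+1)$, and Lemma \ref{lemma:pairF3t^2+1} concludes.

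The main point to be careful about is not to forget the degenerate case $u_1=\pm\id$, which lies outside the hypothesis of Lemma \ref{lemma:cyclic2+typeVI} (the latter requires $u_1$ cyclic of degree $2$); and to verify that the three-case enumeration of characteristic polynomials over $\F_3$ is exhaustive. Apart from that, the proof is a simple dispatch: all the genuinely delicate arguments — in particular the use of the space-pullback technique with a non-symmetric bilinear form in Lemma \ref{lemma:pairF3t^2+1} — have already been performed upstream.
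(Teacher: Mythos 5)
Your proof is correct and follows essentially the same case dispatch as the paper's: cyclic $u_1$ is handled by Lemma~\ref{lemma:cyclic2+typeVI} for $|\F|>3$ and by Lemmas~\ref{lemma:pairF3eigenvalue} and~\ref{lemma:pairF3t^2+1} for $|\F|=3$, while non-cyclic $u_1=\pm\id$ is itself an indecomposable cell of type VI, making $u$ a $2$-reflectional orthogonal sum of two such cells. Your explicit enumeration of the three possible characteristic polynomials over $\F_3$ is accurate, and the observation that $u_1$ cyclic forces $u_1\neq\pm\id$ (so the two cases are genuinely disjoint) is the same distinction the paper makes.
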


\begin{proof}
Take the first summand: if it is cyclic, then the result is obtained by Lemma \ref{lemma:cyclic2+typeVI} if
$|\F|>3$, and by combining Lemmas \ref{lemma:pairF3eigenvalue} and \ref{lemma:pairF3t^2+1} otherwise;
if it is not cyclic, then it is indecomposable of type VI, and then we readily deduce from Nielsen's theorem that $u$ is $2$-reflectional in $\Sp(s)$.
\end{proof}

\subsection{A favorable case where $\rk u_r+\rk u_e \geq 12$}

We will obtain the following lemma thanks to the space-pullback technique:

\begin{lemma}\label{lemma:nr+nesup12}
Let $(s,u)$ be an s-pair with a regular/exceptional decomposition $u=u_r \botoplus u_e$
in which $\rk u_r \geq 6$, $\rk u \geq 12$, $\rk u_r$ is not a multiple of $4$, and
$u_e$ is a cell of type VI.
Then $u$ is $4$-reflectional in $\Sp(s)$.
\end{lemma}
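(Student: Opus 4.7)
The plan is to apply the space-pullback technique of Section \ref{section:spacepullback} to an even-dimensional totally $s$-singular subspace $W$ obtained by combining the Lagrangian supplied for $u_r$ by Proposition \ref{prop:existlagrangian} with the near-Lagrangian supplied for $u_e$ by Proposition \ref{prop:existpresquelagrangian}, and then to absorb the residual 4-dimensional piece via Proposition \ref{prop:rank2+typeVI}. Write $\rk u_r = 4k+2$ (so $k \geq 1$) and $\rk u_e = 4n+2$ with $u_e$ a type-VI cell of eigenvalue $\eta \in \{\pm 1\}$; the hypothesis $\rk u \geq 12$ amounts to $k+n \geq 2$. Proposition \ref{prop:existlagrangian} provides a Lagrangian $\calL_r \subset V_r$ of odd dimension $2k+1$ with $u_r(\calL_r) \cap \calL_r = \{0\}$ and $s_{u_r, \calL_r}$ symmetric and nondegenerate. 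Because such a form on a space of dimension $\geq 3$ over a finite field represents every nonzero value, we can truncate $\calL_r$ to the $s_{u_r, \calL_r}$-orthogonal of a non-isotropic vector of appropriate norm to obtain a $2k$-dimensional subspace $W_r$ on which the restricted form is \emph{nonhyperbolic}. Proposition \ref{prop:existpresquelagrangian} furnishes $W_e \subset V_e$ of dimension $2n$ with $s_{u_e, W_e}$ hyperbolic, $u_e(W_e) \cap W_e = \{0\}$, and residual endomorphism $\eta \id$ on $R_e := (W_e + u_e(W_e))^{\bot_{s_e}}$. The space $W := W_r \oplus W_e$ is totally $s$-singular of even dimension $2(k+n) \geq 4$, and $s_{u,W}$ is symmetric, nondegenerate, and nonhyperbolic.

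By Lemma \ref{lemma:evenirrpolynomial} I pick $p \in \Irr(\F)$ even of degree $2(k+n) \geq 4$, which is automatically distinct from $t^2+1$ and, by Lemma \ref{lemma:evennotpalindromial}, is not a palindromial, so that $p$ and $p^\sharp$ are distinct and coprime. Via Lemma \ref{lemma:adaptlemmairr} combined with Witt's extension theorem for nondegenerate symmetric bilinear forms over finite fields, I realize on $(W, s_{u,W})$ an $s_{u,W}$-skew-selfadjoint $\calC(p)$-automorphism $v$. The residual space $R := (W+u(W))^{\bot_s}$ decomposes as $R_r \oplus R_e$ (each $2$-dimensional), with the residual endomorphism $u'_{\mathrm{res}}$ restricting to some $u'_{\mathrm{res},r} \in \Sp(R_r)$ on $R_r$ and to $\eta \id$ on $R_e$. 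Thus $(R, u'_{\mathrm{res}})$ is the orthogonal direct sum of a $2$-dimensional s-pair and an indecomposable type-VI cell (with $n=0$). By Proposition \ref{prop:rank2+typeVI}, $u'_{\mathrm{res}}$ is $4$-reflectional in $\Sp(R)$: I fix a factorization $u'_{\mathrm{res}} = \iota_1 \iota_2 \iota_3 \iota_4$ into symplectic involutions and take $i' := \iota_1$ as the residual involution.

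Executing the space-pullback with $(W, v, i')$ yields an involution $\widetilde i \in \Sp(s)$ such that $u' := \widetilde i u$ leaves $W$ invariant with $u'|_W = v$, and such that the transformation of $W^{\bot_s}/W$ induced by $u'$ is symplectically similar to $i' u'_{\mathrm{res}} = \iota_2 \iota_3 \iota_4$, which is $3$-reflectional in $\Sp(R)$. The coprime hypotheses of Lemma \ref{lemma:recog} hold because the characteristic polynomial of $i' u'_{\mathrm{res}}$ is a palindromial of degree $4$, so its irreducible factors are either of degree at most $2$ or are irreducible palindromials of degree $4$---in either case distinct from the non-palindromial irreducibles $p$ and $p^\sharp$ of degree $\geq 4$. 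Lemma \ref{lemma:recog} therefore produces a splitting $u' = u'_0 \botoplus u'_1$ in which $u'_0$ is a symplectic extension of $v$ (hence $3$-reflectional by Proposition \ref{prop:even3refl}) and $u'_1 \simeq i' u'_{\mathrm{res}}$ is $3$-reflectional by construction. Consequently $u'$ is $3$-reflectional and $u = \widetilde i u'$ is $4$-reflectional in $\Sp(s)$. The main obstacle I anticipate is the simultaneous orchestration of these ingredients: making $s_{u,W}$ nonhyperbolic is needed for Lemma \ref{lemma:adaptlemmairr} to supply a $v$ with irreducible even minimal polynomial, and the $4$-dimensional residual piece can be absorbed only thanks to the rigidity of the $\eta \id$ action on $R_e$ guaranteed by Proposition \ref{prop:existpresquelagrangian}, which is exactly what brings Proposition \ref{prop:rank2+typeVI} into play.
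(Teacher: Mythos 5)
Your proposal is correct and follows essentially the same route as the paper's proof: restrict the Lagrangian from Proposition \ref{prop:existlagrangian} to a hyperplane on which $s_{u_r,\cdot}$ is nonhyperbolic, combine with the near-Lagrangian of Proposition \ref{prop:existpresquelagrangian}, adapt the form on $W$ via Lemmas \ref{lemma:evenirrpolynomial} and \ref{lemma:adaptlemmairr}, absorb the $4$-dimensional residual via Proposition \ref{prop:rank2+typeVI}, and conclude by Lemma \ref{lemma:recog} together with Proposition \ref{prop:even3refl}. The only point left implicit (in both your proof and the paper's) is that when $\rk u_e=2$ one must take the trivial subspace $W_e=\{0\}$ in place of invoking Proposition \ref{prop:existpresquelagrangian}, which needs $n\geq 1$; this is harmless since then the residual on $V_e$ is directly $\eta\id$.
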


\begin{proof}
Denote by $V_r$ and $V_e$ the respective underlying spaces of $u_r$ and $u_e$ and by
$4n_r+2$ and $4n_e+2$ their respective dimensions.

First of all, we use Proposition \ref{prop:existlagrangian} to obtain a Lagrangian $\calL$ of $V_r$ such that
$s_{u,\calL}$ is symmetric and nondegenerate, and $\calL \cap u(\calL)=\{0\}$.
Since $\F$ is finite, we can find a linear hyperplane $H_r$ of $\calL$ such that $s_{u,H_r}$ is regular and non-hyperbolic.

Note that $H_r$ has dimension $2n_r$ and that $u(H_r) \cap H_r=\{0\}$.
Next, by Lemma \ref{prop:existpresquelagrangian}, we can find a totally $s$-singular subspace $H_e$ of $V_e$ such that $s_{u,H_e}$ is hyperbolic,
$H_e \cap u(H_e)=\{0\}$, $\dim H_e=2n_e$, and the bilinear form $(x,y) \mapsto s(x,(u-\id)(y))$ is totally singular on $(H_e +u(H_e))^{\bot_s} \cap V_e$.

We will apply the space-pullback technique with $W:=H_r \botoplus H_e$, which is totally $s$-singular of dimension $2(n_r+n_e)$,
and such that $s_{u,W}$ is regular and non-hyperbolic.
We also note that the bilinear form
$(x,y) \mapsto s(x,u(y))$ on $(W +u(W))^{\bot_s}$ is the orthogonal direct sum of:
\begin{itemize}
\item The bilinear form $(x,y) \mapsto s(x,u(y))$ on $(H_r +u(H_r))^{\bot_s}\cap V_r$, which is defined on a $2$-dimensional space;
\item The bilinear form $(x,y) \mapsto s(x,u(y))$ on $(H_e +u(H_e))^{\bot_s}\cap V_e$, which is defined on a $2$-dimensional space
and coincides on this space with $(x,y) \mapsto s(x,y)$.
\end{itemize}
In applying the space-pullback technique to $u$ and $W$, it follows that the residual
symplectic automorphism $[u]$ of $(W \oplus u(W))^{\bot_s}$
that is attached to $u$ and $W$ is the orthogonal direct sum of two symplectic
transformations of rank $2$, one of which is the identity: hence Proposition \ref{prop:rank2+typeVI} yields
symplectic involutions $i_1,i_2,i_3,i_4$ of $(W \oplus u(W))^{\bot_s}$ such that $[u]=i_1i_2i_3i_4$.

Next, by Lemma \ref{lemma:evenirrpolynomial} we can choose an \emph{even} polynomial $p \in \Irr(\F)$ with degree $2(n_r+n_e)$.
And then Lemma \ref{lemma:adaptlemmairr} yields a symplectic form $b$ on $W$ such that the endomorphism
$v$ of $W$ that satisfies $\forall (x,y)\in W^2, \; b(x,v(y))=s(x,u(y))$ is $\calC(p)$.
In particular, $p$ is not a palindromial (by Lemma \ref{lemma:evennotpalindromial}) but its degree is at least $4$.
As the characteristic polynomial of  $i_1 [u]$ is a palindromial of degree $4$, it is relatively prime with both $p$ and $p^\sharp$.
Hence by combining the space-pullback technique (using $i_1$ as the residual involution),
we obtain that $u$ is i-adjacent to some $u'\in \Sp(s)$ that splits into an orthogonal direct sum  $u'_1 \botoplus u'_2$ where $u'_1$ is a symplectic extension of
$v$, and $u'_2$ is symplectically similar to $i_1[u]$. By the latter, $u'_2$ is $3$-reflectional, whereas Proposition \ref{prop:even3refl}
shows that $u'_1$ is $3$-reflectional. Hence $u'$ is $3$-reflectional in $\Sp(s)$, and we conclude that $u$ is $4$-reflectional in $\Sp(s)$.
\end{proof}

\subsection{The case where $\rk u_r=6$ and $\rk u_e=2$}

We arrive at one of the most difficult situations so far.

\begin{lemma}\label{lemma:rkur=6}
Let $(s,u)$ be an s-pair with a regular/exceptional decomposition $u=u_r \botoplus u_e$
in which $\rk u_r=6$ and $\rk u_e=2$. Then $u$ is $4$-reflectional in $\Sp(s)$.
\end{lemma}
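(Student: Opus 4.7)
After replacing $u$ with $-u$ if needed, we may assume $u_e=\id_{V_e}$. The strategy is to split the regular part as $u_r = u_r^{(1)} \botoplus u_r^{(2)}$ with $\dim u_r^{(1)} = 2$ and $\dim u_r^{(2)} = 4$, then pair $u_r^{(1)}$ with $u_e$ into a $4$-dimensional block and treat $u_r^{(2)}$ independently.

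Once such a splitting is available, the block $u_r^{(1)} \botoplus u_e$ is $4$-reflectional by the pairing results of Section~\ref{section:multiple4I}: Lemma~\ref{lemma:cyclic2+typeVI}(a) for $|\F|>3$, Lemma~\ref{lemma:pairF3eigenvalue} for $|\F|=3$ with $u_r^{(1)} = \calC((t-\eta)^2)$, and Lemma~\ref{lemma:pairF3t^2+1} for $|\F|=3$ with $u_r^{(1)} = \calC(t^2+1)$; note that the only remaining $2$-dimensional indecomposable regular cell (the Type~IV symplectic extension of $\calC(t-\lambda)$) requires $\lambda \in \F \setminus \{0,\pm 1\}$ and thus does not occur over $\F_3$. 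The block $u_r^{(2)}$ is $4$-reflectional by Proposition~\ref{prop:nrmultiplede4} when $|\F|>3$; when $|\F|=3$ one verifies $4$-reflectionality of $u_r^{(2)}$ class-by-class, invoking Lemma~\ref{lemma:from(t2+1)2tot^4+1} combined with Proposition~\ref{prop:sympextension3refl} (for $\calC((t^2+1)^2)$), Proposition~\ref{prop:sympextension3refl} directly (for the Type~IV $\calC(t^4+1)$-cell), Lemma~\ref{lemma:typeI-III-V} (for $\calC((t-\eta)^4)$), Lemma~\ref{lemma:typeII} (for a Type~II cell $\calC(p)$ with $\deg p = 4$), and Nielsen's theorem (for the remaining decomposable classes, whose Jordan numbers are then even).

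If $u_r$ is already decomposable we are done; otherwise $u_r$ is indecomposable and cyclic of dimension $6$, of Type~II, IV, or V. For Type~IV, $u_r$ is literally a symplectic extension of a cyclic automorphism of a $3$-dimensional Lagrangian, and Lemma~\ref{lemma:cyclicfit} (with target polynomial of degree $3$ factoring as $r_1 r_2$ with $\deg r_1 = 1$, $\deg r_2 = 2$, coprime) combined with Lemma~\ref{lemma:splitsymplecticextension} yields a $2$-dimensional orthogonal summand after a single $i$-adjacency. For Type~II, I would adapt the matrix construction of the proof of Lemma~\ref{lemma:typeII}, using the filtration $\ker p^k(u_r)$ to present $u_r$ in block upper-triangular form and applying a tailored symplectic involution to peel off a $2$-dimensional summand.

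The main obstacle is the Type~V case $u_r = \calC((t-\eta)^6)$, which is \emph{not} a symplectic extension (its sole invariant factor has degree equal to the dimension), so the preceding argument is unavailable. Here I would apply the space-pullback technique directly to $u_r$: Proposition~\ref{prop:existlagrangian} still applies (the sole Jordan cell has even size), yielding a Lagrangian $\calL_r \subset V_r$ with $\calL_r \cap u_r(\calL_r) = \{0\}$ and $s_{u_r,\calL_r}$ nondegenerate and symmetric; then I would construct a cyclic $s_{u_r,\calL_r}$-skew-selfadjoint endomorphism $v$ of $\calL_r$ with reducible minimal polynomial $r$ of degree $3$, coprime to $r^\sharp$, so that the space-pullback and Lemma~\ref{lemma:recog} give an $i$-adjacency from $u_r$ to a symplectic extension of $v$, which splits via Lemma~\ref{lemma:splitsymplecticextension} into a $2$- and a $4$-dimensional orthogonal summand. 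The delicate point is the existence of such a skew-selfadjoint $v$ with a prescribed minimal polynomial: this requires an argument analogous to Lemma~\ref{lemma:adaptlemmairr} but adapted to non-even polynomials over a $3$-dimensional symmetric bilinear space.
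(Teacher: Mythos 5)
Your strategy of splitting $u$ into two orthogonal $4$-dimensional blocks cannot work over $\F_3$, because a $4$-dimensional \emph{regular} s-pair over $\F_3$ need not be $4$-reflectional, and in some cases no choice of pairing avoids the obstruction. Take $\F=\F_3$, $u_e=\id_{V_e}$, and $u_r=\calC(t^2+1)\botoplus\calC((t-1)^2)\botoplus\calC((t+1)^2)$ with inequivalent Wall invariants $(s,u)_{t-1,2}\not\simeq(s,u)_{t+1,2}$ (such an s-pair exists by Wall's theorem, and it is regular). Whichever $2$-dimensional cell $u_r^{(1)}$ you pair with $u_e$, the remaining block $u_r^{(2)}$ has minimal polynomial $(t^2+1)(t\pm 1)^2$ or $(t^2-1)^2$ with $(s,u)_{t-1,2}\not\simeq(s,u)_{t+1,2}$; by Table~\ref{figure2}, every such block has reflectional length $5$, hence is not $4$-reflectional. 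Your claim that the remaining decomposable $4$-dimensional regular classes over $\F_3$ ``have even Jordan numbers'' is false: a sum of two distinct $2$-dimensional indecomposable cells has Jordan numbers all equal to $1$. So no orthogonal $4+4$ decomposition of this $u$ has both factors $4$-reflectional, even though $u$ is $4$-reflectional --- which a purely ``split and conquer'' argument cannot detect.

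Your Type~V fallback has a separate, more basic flaw: you propose to space-pullback along a Lagrangian $\calL_r$ of $V_r$, which has dimension $3$. The technique requires a symplectic form on the chosen totally singular subspace, hence an even-dimensional subspace; correspondingly, no invertible $s_{u_r,\calL_r}$-skew-selfadjoint automorphism of a $3$-dimensional space exists (a $3\times 3$ skew-symmetric matrix is singular, so $v$ would be singular). The paper avoids both pitfalls by pulling back along a $2$-dimensional totally singular subspace $H$ chosen \emph{inside} a Lagrangian $\calL$ of $V_r$. The crucial observation is that the residual automorphism of $(H\oplus u(H))^{\bot_s}\cap V_r$ is \emph{cyclic} of rank $2$, so after the pullback the residual is the orthogonal sum of a cyclic rank-$2$ piece (with characteristic polynomial $q$) and the identity on $V_e$; one then chooses the $s_{u,H}$-skew-selfadjoint automorphism to be $\calC(t^2+1)$ and case-splits on $q$, calling on Lemmas~\ref{lemma:cyclic2+typeVI}, \ref{lemma:cyclic2+typeVIbis}, \ref{lemma:(t+1)2+identity} and~\ref{lemma:from(t2+1)2tot^4+1}. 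This global pullback, which never separates $V_r$ from $V_e$, is precisely what dodges the Wall-invariant obstruction your orthogonal decomposition runs into.
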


\begin{proof}
Without loss of generality, we can assume that $u_e=\id$.
Denote by $V_r$ and $V_e$ the respective underlying vector spaces of $u_e$ and $u_r$.
By Proposition \ref{prop:existlagrangian}, we can find a Lagrangian $\calL$ of $V_r$ such that
$s_{u,\calL}$ is symmetric and nondegenerate, and $\calL \cap u(\calL)=\{0\}$. Then, since $\F$ is finite we can choose
a linear hyperplane $H$ of $\calL$ such that $s_{u,H}$ is hyperbolic if $t^2+1$ splits over $\F$,
and non-hyperbolic otherwise.
It is critical here to acknowledge that the resulting symplectic automorphism of $(H \oplus u(H))^{\bot_s} \cap V_r$
is cyclic. Indeed, let us take a non-zero vector $x$ of $\calL$ that lies in the $s_{u,\calL}$-orthogonal complement of
$H$. In particular $s(x,u(x)) \neq 0$. We also note that $x \in (H \oplus u(H))^{\bot_s}$; moreover, since $s(x,u(x)) \neq 0$ the $s$-orthogonal projection of
$u(x)$ onto $(H \oplus u(H))^{\bot_s}$ is not a scalar multiple of $x$, yielding that the said symplectic automorphism maps $x$ to
a vector that is not a scalar multiple of $x$. This proves the claimed statement.
Hence, in applying the space-pullback technique to $H$ and $u$, we find that the resulting symplectic automorphism $[u]$
is the direct sum of a cyclic symplectic automorphism of rank $2$, whose characteristic polynomial we denote by $q$, and of the identity of a
$2$-dimensional space.

Next, by combining Lemmas \ref{lemma:adaptlemmairr} and \ref{lemma:adaptlemmasplit},
we can find an $s_{u,H}$-skew-selfadjoint $\calC(t^2+1)$-automorphism $v$.

Now, we split the proof into three cases.

\vskip 3mm
\noindent \textbf{Case 1: $|\F|>3$ and $q \neq (t+1)^2$.} \\
Then, by point (b) of Lemma \ref{lemma:cyclic2+typeVI}, the resulting symplectic automorphism $[u]$
is i-adjacent to a $3$-reflectional symplectic $\calC(q(t^2))$-automorphism
$v'$, and we note that $q(t^2)$ is relatively prime with $t^2+1$.
By applying the space-pullback technique, we deduce that  $u$ is i-adjacent to some $u' \in \Sp(s)$
which splits into $u'_1 \botoplus u'_2$, where:
\begin{itemize}
\item either $u'_1$ is $\calC((t^2+1)^2)$ or it has invariant factors $t^2+1$ and $t^2+1$;
\item $u'_2$ is symplectically similar to $v'$.
\end{itemize}
In any case, $u'_1$ is $3$-reflectional (by Lemma \ref{lemma:(t^2+1)2} in the first case, and by Nielsen's theorem in the second one).
Since $v'$ is $3$-reflectional, we conclude that $u'$ is $3$-reflectional.

\vskip 3mm
\noindent \textbf{Case 2: $q=(t+1)^2$ or $q=(t-1)^2$.} \\
We can choose a symplectic involution $i_1$ of $(H \oplus u(H))^{\bot_s}$ such that $i_1[u]$
is the direct sum of a $\calC((t+1)^2)$-automorphism and of the identity of a $2$-dimensional space.
Applying the space-pullback technique with the subspace $W$, the symplectic form $(x,y) \mapsto s_{u,W}(x,v^{-1}(y))$
and the residual involution $i_1$, by Lemma \ref{lemma:recog} we find that $u$ is i-adjacent to some $u' \in \Sp(s)$ which splits into
$u'_1 \botoplus u'_2$ where:
\begin{itemize}
\item either $u'_1$ is $\calC((t^2+1)^2)$ or its invariant factors are $t^2+1$ and $t^2+1$;
\item $u'_2$ is the direct sum of a $\calC((t+1)^2)$-cell and of the identity of a $2$-dimensional space;
\end{itemize}
Note that $u'_2$ is $3$-reflectional by Lemma \ref{lemma:(t+1)2+identity}.
If $u'_1$ has invariant factors $t^2+1$ and $t^2+1$, then it is $2$-reflectional by Nielsen's theorem, and hence $u'$ is $3$-reflectional.
If $u'_1$ is $\calC((t^2+1)^2)$, then Lemma \ref{lemma:from(t2+1)2tot^4+1} applied to $u'_2$ yields that
$u'$ is i-adjacent to the orthogonal direct sum $u''$ of two $\calC((t^2+1)^2)$-cells;
then Nielsen's theorem shows that $u''$ is $2$-reflectional in $\Sp(s)$. In any case we conclude that $u$ is $4$-reflectional in $\Sp(s)$.

\vskip 3mm
\noindent \textbf{Case 3: $|\F|=3$ and $q=t^2+1$.} \\
In that case, we see from Lemma \ref{lemma:cyclic2+typeVIbis} that $[u]$ is i-adjacent to a $\calC(t^4+1)$-automorphism $w$.
By applying the space-pullback technique once more, we find that $u$ is i-adjacent to some $u' \in \Sp(s)$ which splits into
$u'_1 \botoplus u'_2$ where:
\begin{itemize}
\item either $u'_1$ is $\calC((t^2+1)^2)$ or its invariant factors are $t^2+1$ and $t^2+1$;
\item $u'_2$ is $\calC(t^4+1)$.
\end{itemize}
By Wall's theorem, $u'_2$ is a symplectic extension of a $\calC(t^2-t-1)$-automorphism, and we deduce from Proposition
\ref{prop:sympextension3refl} that it is $3$-reflectional. Besides, if $u'_1$ is not indecomposable then it is $2$-reflectional and hence $u'$ is $3$-reflectional.

Assume finally that $u'_1$ is $\calC((t^2+1)^2)$. Then, by Lemma \ref{lemma:from(t2+1)2tot^4+1}, we find that
$u'_1$ is i-adjacent to a $\calC(t^4+1)$-symplectic transformation. And we deduce that $u'$ is i-adjacent to $v'_1 \botoplus u'_2$,
which is $2$-reflectional by Nielsen's theorem.

In any case we have shown that $u$ is $4$-reflectional.
\end{proof}

\subsection{Conclusion}

We are finally ready to complete the proof of point (a) in Theorem \ref{theo:main}.
So, let us start from an s-pair $(s,u)$ whose dimension $n$ is a multiple of $4$,
discarding the special case where $n=4$ and $|\F|=3$.
Let us consider a regular-exceptional decomposition
$$u=u_r \botoplus u_e$$
of it, with the respective ranks of $u_r$ and $u_e$ denote by $n_r$ and $n_e$.
Note that $u_e$ is $2$-reflectional by Nielsen's theorem.

\vskip 3mm
\noindent
\textbf{Case 1: $n_r$ is a multiple of $4$.} \\
If $n_r=0$, then $u$ is $2$-reflectional by Nielsen's theorem.

Assume now that $n_r>0$.
In most cases, we already know that $u_r$ is $4$-reflectional:
\begin{itemize}
\item If $n_r>4$ or $|\F|>3$, then this is given by Proposition \ref{prop:nrmultiplede4};
\item If $u_r$ is an indecomposable cell of type I, II, III or V, then this is given by
Lemmas \ref{lemma:typeI-III-V} and \ref{lemma:typeII}.
\item If $u_r$ is a symplectic extension of a $\calC(t^2-t-1)$-automorphism, then
Proposition \ref{prop:sympextension3refl} shows that it is $3$-reflectional.
\end{itemize}
As $u_e$ is $2$-reflectional, we obtain that in all those cases $u$ is $4$-reflectional.

Let us consider the remaining case. Then $|\F|=3$, $n_r=4$ and $u_r=v_1 \botoplus v_2$ where $v_1$ and $v_2$ are cyclic automorphisms of $2$-dimensional vector spaces.
Then $n_e>0$, and since $n_e$ is a multiple of $4$ we gather that $u_e$ is the sum of several indecomposables cells of type VI. Hence
we can split $u_e=w_1 \botoplus w_2 \botoplus w_3$ where $w_1$ and $w_2$ are cells of type VI and $w_3$ is an orthogonal direct sum of cells of type VI.
Hence $u = (v_1 \botoplus w_1) \botoplus (v_2 \botoplus w_2) \botoplus w_3$, and by Proposition \ref{prop:rank2+typeVI}
each one of $v_1 \botoplus w_1$ and $v_2 \botoplus w_2$ is $4$-reflectional. As $w_3$ is $2$-reflectional, we conclude that $u$ is $4$-reflectional.

\vskip 3mm
\noindent
\textbf{Case 2: $n_r$ is not a multiple of $4$.} \\
We can split $u_e=w_1 \botoplus w_2$ where $w_1$ is an indecomposable cell of type VI, and $w_2$ is an orthogonal direct sum of such cells.
Hence, it will suffice to see that $u_r \botoplus w_1$ is $4$-reflectional.
If $n_r=2$ this follows directly from Proposition \ref{prop:rank2+typeVI}.
Assume now that $n_r \geq 6$. If $n_r=6$ and $n_e=2$, the result follows from Lemma \ref{lemma:rkur=6}.
Otherwise, $n_r+n_e \geq 12$ and the result follows directly from Lemma \ref{lemma:nr+nesup12}.

Hence, the proof of point (a) in Theorem \ref{theo:main} is now completed.

\section{A complete analysis of $\Sp_4(\F_3)$}\label{section:F3}

In this section, we take $\F=\F_3$, we fix a $4$-dimensional symplectic space $(V,s)$ over $\F_3$ and
we describe, for each element $u$ in $\Sp(s)$, the least integer $k$ such that $u$ is $k$-reflectional, which we will call the \textbf{reflectional length} of $u$
and denote by $\rl(u)$. Note that if $u$ has reflectional length $k>0$, then so does $-u$ unless $u=-\id$.

One of our aims is the following result, but we will go much further than Theorem \ref{theo:main}, since we will determine the reflectional length of every element of $\Sp(s)$
in terms of its conjugacy class (see Section \ref{section:table} for a table of the full results).

\begin{theo}\label{theo:F3}
Let $s$ be a $4$-dimensional symplectic form over $\F_3$. Then every element of $\Sp(s)$ is $5$-reflectional.
\end{theo}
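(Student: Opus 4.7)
The plan is to bound $\rl(u)\leq 5$ for every $u\in\Sp_4(\F_3)$ by a case analysis on the conjugacy class of $u$. Starting from the regular-exceptional decomposition $u=u_r\botoplus u_e$, when $\dim u_r<4$ the arguments at the end of Section~\ref{section:dimmultiple4} (Proposition~\ref{prop:rank2+typeVI} when $\dim u_r=2$, Nielsen's theorem applied to the orthogonal sum of type~VI cells when $\dim u_r=0$) go through verbatim and give $\rl(u)\leq 4$.

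Thus the heart of the matter is $\dim u_r=4$, i.e.\ $u$ has no Jordan cell of odd size for an eigenvalue in $\{\pm 1\}$. Over $\F_3$ the only even irreducible of degree~$2$ is $t^2+1$, while $t^4+1=(t^2-t-1)(t^2+t-1)$ splits as a product of two reciprocal irreducibles. Using Table~\ref{figure1}, the possible indecomposable cells of dimension~$4$ on the regular side are $\calC((t^2+1)^2)$ (Type~I), $\calC(p)$ for an irreducible palindromial $p$ of degree~$4$ (Type~II), symplectic extensions of $\calC(q)$ for a non-palindromial irreducible $q$ of degree~$2$ (Type~IV), and $\calC((t\pm 1)^4)$ (Type~V). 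Each of these is $\leq 4$-reflectional: Lemma~\ref{lemma:typeII} handles Type~II, Lemma~\ref{lemma:typeI-III-V} handles Type~V, Proposition~\ref{prop:sympextension3refl} handles Type~IV (since $q(0)=\pm 1$), and for the Type~I cell Lemma~\ref{lemma:from(t2+1)2tot^4+1} makes it i-adjacent to a $\calC(t^4+1)$-transformation which, thanks to the factorisation $t^4+1=(t^2-t-1)(t^2+t-1)$, is a symplectic extension of a $\calC(t^2-t-1)$-automorphism, hence $3$-reflectional by Proposition~\ref{prop:sympextension3refl}.

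The remaining classes in $\dim u_r=4$ are decomposable: orthogonal sums of two $2$-dimensional cells drawn from $\{\calC(t^2+1),\calC((t-1)^2),\calC((t+1)^2)\}$, distinguished further by their quadratic Wall invariants at $(t\pm 1,2)$. Nielsen's theorem directly disposes of the sub-cases in which all Jordan numbers are even and the Wall invariants are hyperbolic; for the other sub-cases---precisely those that fall outside the conclusion of Theorem~\ref{theo:main}(a)---I would produce, class by class, an explicit involution $i\in\Sp(s)$ such that $iu$ falls into one of the $\leq 4$-reflectional classes listed above, so that $u=i\cdot(iu)$ is $5$-reflectional.

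The main obstacle is this last step: for each obstructed conjugacy class, both the Jordan structure and the quadratic Wall invariant of $iu$ must be matched with those of a previously-handled class. This corresponds to the finer case-by-case computation carried out in the remainder of Section~\ref{section:F3}, whose final output is a complete table of $\rl$ on $\Sp_4(\F_3)$; Theorem~\ref{theo:F3} then follows by reading off the maximum value, which is~$5$.
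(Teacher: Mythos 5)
Your overall strategy---reduce to $\dim u_r$, dispose of the $\dim u_r<4$ cases via Proposition~\ref{prop:rank2+typeVI} and Nielsen, handle each indecomposable $4$-dimensional cell on the regular side, and then reduce the remaining decomposable classes by i-adjacency to a previously handled $\leq 4$-reflectional class---is exactly the shape of the paper's argument, and the parts you actually execute (the $\dim u_r<4$ cases, and the indecomposable cells of types~I, II, IV, V, including the nice observation that $t^4+1$ factors over $\F_3$ as $(t^2-t-1)(t^2+t-1)$, so a $\calC(t^4+1)$-transformation is a symplectic extension covered by Proposition~\ref{prop:sympextension3refl}) are correct.

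The gap is that the final step is not carried out but only promised. For a blind proof this is the whole ball game: the classes you defer are precisely the ones the paper has to handle by hand, namely those with minimal polynomial $(t^2+1)(t-\eta)^2$ and those with minimal polynomial $(t^2-1)^2$ and $(s,u)_{t-1,2}\not\simeq (s,u)_{t+1,2}$ (and, implicitly, the $(t-\eta)^2\oplus(t-\eta)^2$ classes with non-hyperbolic Wall invariant, which are not $2$-reflectional by Nielsen and hence also lie in your ``other sub-cases''). For each one you must actually exhibit an involution $i\in\Sp(s)$ and then \emph{verify} that $iu$ lands in a class you have already shown to be $\leq 4$-reflectional; there is no a priori reason such an $i$ exists. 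In the paper this is done by an explicit $4\times 4$ block computation producing $iu$ with characteristic polynomial $(t^2+1)^2$ in the $(t^2+1)(t-\eta)^2$ case, by cyclic fitting on the Lagrangian $\Ker(u^2-\id)$ in the $(t^2-1)^2$ case, and by an explicit decomposition into two hyperbolic symmetric $2\times 2$ matrices (Lemma~\ref{lemma:(t-1)^2not4}) in the remaining case. Without these computations your argument establishes the bound only for the cases that you treat explicitly, and the theorem is exactly about the cases you leave out; you flag this yourself as ``the main obstacle,'' so the proposal as written is a correct outline, not a proof.
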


Of course, the elements of reflectional length $1$ are $-\id$ and the non-trivial involutions.
By Nielsen's theorem, the elements of reflectional length $2$ are:
\begin{itemize}
\item The quarter turns, i.e.\ the $u \in \Sp(s)$ such that $u^2=-\id$;
\item The $u \in \Sp(s)$ with minimal polynomial $(t-\eta)^2$ and with hyperbolic Wall invariant (attached to $(t-\eta,2)$), for $\eta=\pm 1$.
\end{itemize}

Before we examine the remaining elements, let us recall some useful results from \cite{dSPinvol4Symp} (see section 7 therein).

\begin{lemma}[Trace Trick]\label{lemma:trace}
Let $u \in \Sp(s)$ be $3$-reflectional but not $2$-reflectional.
Then $\tr u=0$.
\end{lemma}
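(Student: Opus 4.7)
The plan is to write $u=i_1 i_2 i_3$ with involutions $i_k \in \Sp(s)$ and, after a preliminary reduction, to run a case analysis on the possible characteristic polynomials of $u$.

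First, I would argue that each $i_k$ must be non-central. Indeed, if $i_k = \pm \id_V$ for some $k$, then $u$ equals $\pm$ the product of the two remaining involutions; absorbing the sign into one of them (since $-j$ is an involution whenever $j$ is) produces a $2$-involution decomposition of $u$, contradicting the hypothesis. For each non-central involution $i_k$, the $s$-orthogonal decomposition $V = V_+^{(k)} \botoplus V_-^{(k)}$ into $\pm 1$-eigenspaces consists of two non-trivial $s$-regular subspaces of the $4$-dimensional space $V$, forcing $\dim V_\pm^{(k)}=2$; in particular $\tr i_k = 0$.

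Second, since $u \in \Sp(s)$ is conjugate in $\GL(V)$ to $u^{-1}$, its characteristic polynomial is palindromic: $\chi_u(t) = t^4 - a t^3 + b t^2 - a t + 1$ for some $a,b \in \F_3$, and $\tr u = a$. The goal becomes showing $a = 0$.

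The main step is a case analysis on the possible $\chi_u$ with $a \neq 0$. The symmetry $u \mapsto -u$ (replace $i_1$ by $-i_1$) preserves both ``$3$-reflectional'' and ``not $2$-reflectional'' and sends $a$ to $-a$, so it suffices to treat $a = 1$; the candidates are $\chi_u \in \{(t-1)^4,\,\Phi_{10}(t),\,(t^2+1)(t+1)^2\}$. For each, I would enumerate the $\Sp(s)$-conjugacy classes sharing that characteristic polynomial via Wall's theorem (Jordan structure together with the quadratic Wall invariants at $t-1$), and show that every such class either (a) consists of $2$-reflectional elements (ruled out by hypothesis, via Nielsen's theorem) or (b) contains no $3$-reflectional element (also ruled out).

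The main obstacle is part (b): ruling out $3$-reflectionality. For $\chi_u = \Phi_{10}(t)$, the element $u$ is an indecomposable cell of type II, and the obstruction must come from its algebraic model as an $\F_3[t]/(\Phi_{10})$-module carrying a skew-Hermitian form: a decomposition $u = v i$ with $v$ $2$-reflectional and $i$ an involution would force $\chi_v$ to be a norm-type palindromic quartic whose factorization is incompatible with the cyclic $\Phi_{10}$-structure of $u$. For $\chi_u=(t-1)^4$ the delicate subcase is Jordan type $(2,2)$ with non-hyperbolic $(t-1,2)$-Wall invariant, where I expect a discriminant-type obstruction, using that non-hyperbolicity is detected by a square-class invariant that is not killed by multiplying by an involution and a $2$-reflectional element. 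The case $(t^2+1)(t+1)^2$ should yield to a parallel analysis combining the cyclic factor structure with the Wall invariant at $t+1$.
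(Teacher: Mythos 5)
Your preliminary reductions are correct: each $i_k$ must be non-central and hence of trace $0$; $\chi_u$ is palindromic of the form $t^4-at^3+bt^2-at+1$ with $a=\tr u$; the symmetry $u\leftrightarrow -u$ reduces to $a=1$; and the three monic palindromic quartics over $\F_3$ with $a=1$ are indeed $(t-1)^4$, $\Phi_{10}=t^4-t^3+t^2-t+1$ and $(t^2+1)(t+1)^2$. However, the core of your argument---showing that every conjugacy class with one of these characteristic polynomials is either $2$-reflectional or not $3$-reflectional---is only sketched, and this is exactly the hard part. For $\chi_u=(t-1)^4$, beyond the indecomposable cell and the Jordan type $(2,2)$ you discuss, you have also overlooked the type $(2,1,1)$ (invariant factors $(t-1)^2,\,t-1,\,t-1$), which has $\tr u=1$, is not $2$-reflectional (its Wall invariant at $(t-1,2)$ has odd rank $1$), and must therefore be shown not to be $3$-reflectional. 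Your heuristics for ruling out $3$-reflectionality (``a discriminant-type obstruction'', ``incompatible with the cyclic $\Phi_{10}$-structure'') would each require an argument of the kind carried out in Lemmas \ref{lemma:F3(t^2-1)^2pas3} and \ref{lemma:(t2+1)^2F3}; and you must beware circularity, since the paper's own classification of the $3$-reflectional classes (Proposition \ref{prop:F3refllength3}) is proved \emph{using} the Trace Trick.

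More importantly, the name of the lemma points to a short identity that bypasses all case analysis. Write $u=iv$ with $i:=i_1$ an involution and $v:=i_2i_3$ $2$-reflectional. Since $u$ is not $2$-reflectional, $i$ is non-central so $\tr i=0$, and $v$ cannot be an involution (in particular $v\neq\pm\id$); by Nielsen's theorem $v=s_{\calL'}(w)$ with $\chi_w=t^2+\alpha t+1$ for some $\alpha\in\F$, whence $v^2+\alpha v+\id=0$, i.e.\ $v^{-1}=-v-\alpha\id$. Then
\[
\tr(u^{-1})=\tr(v^{-1}i)=\tr\bigl((-v-\alpha\id)\,i\bigr)=-\tr(vi)-\alpha\,\tr(i)=-\tr(iv)=-\tr(u),
\]
while $\tr u=\tr u^{-1}$ because $u$ is similar to $u^{-1}$. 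Hence $2\tr u=0$, so $\tr u=0$. Your approach, even if completed, would reprove the classification of $3$-reflectional classes from scratch, i.e.\ the very thing the Trace Trick is designed to shortcut.
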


\begin{lemma}[Commutation Lemma]\label{lemma:commute}
Let $a$ and $b$ be automorphisms of a vector space that are annihilated by monic polynomials $p$ and $q$
of degree $2$ such that $p(0)q(0) \neq 0$. Then, for $u:=ab$, the elements $a$ and $b$
commute with $u+p(0)q(0)u^{-1}$.
\end{lemma}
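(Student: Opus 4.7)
The plan is to make the expression $u + p(0)q(0)\,u^{-1}$ completely explicit by using the quadratic relations satisfied by $a$ and $b$, and then just check the commutation directly.

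Write $p(t)=t^2-\alpha t+\beta$ and $q(t)=t^2-\gamma t+\delta$, so that $\beta=p(0)$ and $\delta=q(0)$. The hypothesis $\beta\delta\neq 0$ ensures $a$ and $b$ are invertible, and from $a^2=\alpha a-\beta\,\id$ and $b^2=\gamma b-\delta\,\id$ I obtain the key identities
\[
\beta\, a^{-1}=\alpha\,\id-a \qquad\text{and}\qquad \delta\, b^{-1}=\gamma\,\id-b.
\]
Multiplying these together (in the right order, since $u^{-1}=b^{-1}a^{-1}$) gives
\[
\beta\delta\, u^{-1}=(\gamma\,\id-b)(\alpha\,\id-a)=\alpha\gamma\,\id-\alpha b-\gamma a+ba,
\]
hence
\[
u+\beta\delta\, u^{-1}=ab+ba-\alpha b-\gamma a+\alpha\gamma\,\id.
\]

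Now to check that $a$ commutes with the right-hand side, note that $a$ trivially commutes with $\alpha\gamma\,\id$ and with $\gamma a$, so it suffices to check that $a$ commutes with $X:=ab+ba-\alpha b$. Using $a^2=\alpha a-\beta\,\id$, I compute
\[
aX=a^2b+aba-\alpha ab=(\alpha a-\beta\,\id)b+aba-\alpha ab=aba-\beta b,
\]
\[
Xa=aba+ba^2-\alpha ba=aba+b(\alpha a-\beta\,\id)-\alpha ba=aba-\beta b,
\]
so $aX=Xa$. The argument for $b$ is entirely symmetric: $b$ commutes with $\alpha\gamma\,\id$ and with $\alpha b$, and for $Y:=ab+ba-\gamma a$ I use $b^2=\gamma b-\delta\,\id$ to get $bY=bab-\delta a=Yb$. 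This yields the lemma.

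There is no real obstacle here; the only point of care is keeping the order straight (we have $u=ab$, not $ba$, so $u^{-1}=b^{-1}a^{-1}$ and the product $(\gamma\,\id-b)(\alpha\,\id-a)$ must be expanded in that order), and this is precisely what produces the symmetric-looking correction term $ab+ba-\alpha b-\gamma a+\alpha\gamma\,\id$ whose commutation with either factor is automatic from the quadratic relations.
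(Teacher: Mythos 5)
Your proof is correct, and the expansion $u+p(0)q(0)\,u^{-1}=ab+ba-\gamma a-\alpha b+\alpha\gamma\,\mathrm{id}$ followed by the direct commutation check using $a^2=\alpha a-\beta\,\mathrm{id}$ and $b^2=\gamma b-\delta\,\mathrm{id}$ is the natural argument; the paper recalls this lemma from its predecessor without reproving it, so there is no alternative proof here to compare against, but this is exactly the computation one expects.
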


\begin{lemma}[Reformulation of lemma 7.7 in \cite{dSPinvol4Symp}]\label{lemma:F3(t2-1)de3a4}
Let $u \in \Sp(s)$ have minimal polynomial $(t^2-1)^2$, with Wall invariants such that $(s,u)_{t-1,2} \simeq -(s,u)_{t+1,2}$.
If $u$ is not $3$-reflectional in $\Sp(s)$ then it is not $4$-reflectional either.
\end{lemma}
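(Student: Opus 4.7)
The plan is to prove the contrapositive: if $u$ is $4$-reflectional in $\Sp(s)$, then it is $3$-reflectional. Write $u = ab$ with $a, b \in \Sp(s)$ each $2$-reflectional. If either $a$ or $b$ is an involution, then $u$ is a product of one involution and a $2$-reflectional element, hence $3$-reflectional. Therefore we may assume both $a, b$ are non-involutory $2$-reflectional elements of $\Sp_4(\F_3)$. Enumerating those via Nielsen's theorem (using $\dim V = 4$), each is either a quarter turn ($x^2 = -\id$) or cyclic with minimal polynomial $(t-\eta)^2$, Jordan type $(2,2)$, and hyperbolic Wall invariant. In both cases the element admits a degree-$2$ annihilator with constant term $1$.

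Apply the Commutation Lemma with $p(0)q(0) = 1$: both $a$ and $b$ commute with $w := u + u^{-1}$. From the minimal polynomial $(t^2-1)^2 = t^4 + t^2 + 1$ of $u$ in $\F_3[t]$, we have $u^2 + u^{-2} = -\id$, so $w^2 = \id$. Since $u$ does not satisfy any polynomial of degree $\leq 2$, we get $w \neq \pm \id$, so $w$ is a non-trivial symplectic involution. A direct computation on each primary component $\ker(u-\eta)^2$ (using $u^{-1} = -\eta\,\id - u$, derived from $(u-\eta)^2 = 0$ over $\F_3$) shows $w = -\eta\,\id$ there, so the $w$-eigenspace decomposition coincides with the $u$-primary decomposition $V = \ker(u-1)^2 \oplus \ker(u+1)^2$ into $s$-orthogonal, $2$-dimensional, $s$-regular subspaces preserved by both $a$ and $b$.

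Restricting to each primary component yields factorizations $u_\eta = a_\eta b_\eta$ in $\Sp_2(\F_3) = \SL_2(\F_3)$, where $u_\eta := u|_{\ker(u-\eta)^2}$ is cyclic with minimal polynomial $(t-\eta)^2$. In an adapted symplectic basis $(e_1,e_2,f_1,f_2)$ with $u(e_2) = e_2 + \alpha e_1$ on $\ker(u-1)^2$ and $u(f_2) = -f_2 + \beta f_1$ on $\ker(u+1)^2$, one computes that the $1$-dimensional Wall invariants $(s,u)_{t\mp 1,2}$ have discriminants $\mp\alpha$ and $\mp\beta$ (up to squares). The hypothesis $(s,u)_{t-1,2} \simeq -(s,u)_{t+1,2}$ then translates to the explicit relation $\beta = -\alpha$ in $\F_3^\times$.

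The main obstacle is the ensuing finite case analysis on the pair of types of $a, b$ (both III, both V, or mixed, with all relevant sign choices). In each case, the $2$-reflectionality of $a, b$ in $\Sp_4(\F_3)$, combined with the Wall-invariant relation $\beta = -\alpha$, should pin down the possible restricted factorizations $a_\eta, b_\eta$ in $\SL_2(\F_3)$; one must then construct an explicit symplectic involution $j \in \Sp(s)$ such that $ju$ satisfies Nielsen's criteria for $2$-reflectionality (all Jordan numbers even, all Wall invariants hyperbolic), yielding the decomposition $u = j \cdot (ju)$ as a product of three involutions. The tractability of this step rests on the smallness of $\SL_2(\F_3)$ (order $24$) together with the rigidity imposed by the Wall-invariant hypothesis, which rules out precisely the combinatorial possibilities that would produce a $4$-reflectional but not $3$-reflectional decomposition.
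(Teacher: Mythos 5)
The paper itself does not prove this lemma---it cites it directly from lemma~7.7 of \cite{dSPinvol4Symp}---so there is no internal proof to compare against. Evaluating your attempt on its own terms, the opening reductions are sound: the reduction to both factors non-involutory, the enumeration of non-involutory $2$-reflectional elements of $\Sp_4(\F_3)$, the verification that $w = u + u^{-1}$ is an involution commuting with $a$ and $b$ by the Commutation Lemma, the identification of the $w$-eigenspaces with the $u$-primary components, and the computation reducing the Wall-invariant hypothesis to $\beta = -\alpha$ are all correct.

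However, your proof has a genuine gap: the final ``finite case analysis'' is the entire mathematical content, and it is not carried out --- you acknowledge this yourself with the phrases ``should pin down'' and ``one must then construct.'' Without it, nothing is proved. More importantly, your anticipated conclusion of that case analysis is the \emph{wrong} one. You expect to construct an explicit involution $j$ with $ju$ satisfying Nielsen's criteria, which would exhibit $u$ as $3$-reflectional. But Lemma~\ref{lemma:F3(t^2-1)^2pas3} of the present paper shows that, under the standing hypothesis $(s,u)_{t-1,2}\simeq -(s,u)_{t+1,2}$, such a $u$ is \emph{never} $3$-reflectional, so no such $j$ can exist. If your reduction and case analysis are correct, what they must actually deliver is a \emph{contradiction} from the assumed factorization $u = ab$ with both $a,b$ non-involutory $2$-reflectional, i.e.\ a direct proof that $u$ is not $4$-reflectional (from which the stated conditional follows vacuously). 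As written, the plan aims for an outcome it cannot reach; and since the crucial case analysis --- the only part that could actually produce the needed contradiction --- is left as a sketch, the proposal does not constitute a proof. A minor additional slip: describing the $(t-\eta)^2$-type $2$-reflectional elements as ``cyclic with minimal polynomial $(t-\eta)^2$'' in dimension~$4$ is inconsistent --- an element with two Jordan blocks of size~$2$ is not cyclic; you mean a symplectic extension of a $\calC((t-\eta)^2)$-automorphism.
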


\begin{lemma}[Proposition 7.6 from \cite{dSPinvol4Symp}]\label{lemma:F3rappel}
Let $u \in \Sp(s)$ have minimal polynomial $(t^2+1)(t-\eta)^2$ for some $\eta=\pm 1$.
Then $u$ is not $4$-reflectional in $\Sp(s)$.
\end{lemma}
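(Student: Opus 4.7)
The element $u$ is cyclic of degree~$4$ with $\tr u = 2\eta \neq 0$ in $\F_3$. Since $n_{t^2+1,1}(u) = 1$ is odd, Nielsen's theorem shows $u$ is not $2$-reflectional, and the Trace Trick (Lemma~\ref{lemma:trace}) then shows $u$ is not $3$-reflectional either. To prove that $u$ is not $4$-reflectional, I would argue by contradiction: assume a decomposition $u = i_1 i_2 i_3 i_4$ in $\Sp(s)$, and set $a := i_1 i_2$, $b := i_3 i_4$, both $2$-reflectional.

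The first step is to pin down the minimal polynomials of $a$ and $b$. Neither is $\pm\id$ (otherwise $u$ or $-u$ would be $2$-reflectional), neither is cyclic (a cyclic $2$-reflectional element in dimension~$4$ would necessarily have an odd Jordan number), and a short case-check on the palindromials of degree~$3$ in $\F_3[t]$ rules out that possibility as well (every candidate forces an odd Jordan number, contradicting Nielsen). Consequently, the minimal polynomials $p$ of $a$ and $q$ of $b$ are palindromials of degree exactly~$2$, with $p(0), q(0) \in \{\pm 1\}$.

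Applying the Commutation Lemma (Lemma~\ref{lemma:commute}) to $u = ab$, both $a$ and $b$ commute with $w := u + p(0)q(0)\,u^{-1}$. Consider the primary decomposition $V = V_1 \botoplus V_2$ for $u$, with $V_1 := \Ker(u^2+1)$ and $V_2 := \Ker(u-\eta)^2$, and write $N := u_{V_2} - \eta\,\id$. Using $u_{V_1}^{-1} = -u_{V_1}$ and $u_{V_2}^{-1} = \eta\,\id - N$, a direct computation shows that $w$ equals $0 \oplus 2\eta\,\id_{V_2}$ when $p(0)q(0)=1$, and $2u_{V_1} \oplus 2N$ when $p(0)q(0)=-1$. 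In both cases $V_1$ and $V_2$ are the primary components of $w$ in $\GL(V)$, so any element centralizing $w$ preserves the splitting. Hence $a$ and $b$ are block-diagonal: $a = a_1 \oplus a_2$, $b = b_1 \oplus b_2$, with $a_i, b_i \in \Sp(s|_{V_i}) = \SL_2(\F_3)$.

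The problem thus reduces to deriving a contradiction from the equations $u_1 = a_1 b_1$ (with $u_1$ of minimal polynomial $t^2+1$) and $u_2 = a_2 b_2$ (with $u_2$ a Jordan cell for $\eta$ carrying a prescribed Wall invariant) inside $\SL_2(\F_3)$. This last step is the main obstacle. Although $\SL_2(\F_3)$ is small, the real constraint is that $a = a_1 \oplus a_2$ and $b = b_1 \oplus b_2$ must each be $2$-reflectional in $\Sp_4(\F_3)$: Nielsen's theorem forces the Jordan numbers of $a_1$ and $a_2$ to add to even numbers at every position, and all quadratic Wall invariants of $(s,a)$ and $(s,b)$ to be hyperbolic. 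I would enumerate the admissible conjugacy classes of $a$ and $b$ (indexed by minimal polynomial in $\{t^2+1, (t-1)^2, (t+1)^2, t^2-1\}$, with an extra Wall parameter in the two Jordan-cell cases) and check each against the two equations on $V_1$ and $V_2$, carefully tracking the quadratic Wall invariant $(s,u)_{t-\eta,2}$ on $V_2$, to produce the contradiction in every case.
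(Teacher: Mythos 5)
The paper does not actually prove this lemma; it simply recalls it from Proposition 7.6 of \cite{dSPinvol4Symp}, so there is no in-source proof to compare against. I will therefore assess your proposal on its own terms.

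The preparatory steps are essentially correct. Using the Trace Trick to rule out reflectional length $3$, then forcing the minimal polynomials of the two $2$-reflectional factors $a$ and $b$ to have degree exactly $2$ via Nielsen's parity constraint (plus the parity check on degree-$3$ palindromials), and then invoking the Commutation Lemma to get $a$ and $b$ to respect the primary decomposition $V = V_1 \botoplus V_2$ of $u$, is a sensible reduction. Two small tidy-ups are in order. First, you need $a$ and $b$ to be non-involutions, not merely distinct from $\pm\id$: if either were an involution, $u$ would be $3$-reflectional, which the Trace Trick already forbids, and this observation is what excludes $t^2-1$ from the list of candidates. Second, the three degree-$2$ palindromials that remain over $\F_3$, namely $t^2+1$, $(t-1)^2$ and $(t+1)^2$, all have constant term $1$, so the dichotomy on $p(0)q(0)$ collapses and one always has $w=u+u^{-1}$; no harm done, but the case split is vacuous.

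The genuine gap is that you stop exactly where the content of the lemma lies. After reducing to simultaneous equations $u_1 = a_1 b_1$ and $u_2 = a_2 b_2$ in $\SL_2(\F_3)$ subject to global Nielsen constraints on $a$ and $b$ (even Jordan numbers and hyperbolic quadratic Wall invariants, both tracked across the two blocks), you describe the remaining enumeration as ``the main obstacle'' and do not carry it out. This is not a formality. A few sub-cases die quickly -- for instance if both $a$ and $b$ have minimal polynomial $t^2+1$ then $a_2,b_2$ lie in the Sylow $2$-subgroup of $\SL_2(\F_3)$ and so does $u_2 = a_2b_2$, yet $u_2 = \eta\id + N$ has order $3$ or $6$ -- but other sub-cases do not fall to order considerations alone. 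When, say, $a$ and $b$ are both unipotent, there exist genuine factorizations of both $u_1$ and $u_2$ block-wise, and the contradiction can only come from the hyperbolicity requirement on the Wall invariants of $a$ and $b$ set against the fixed (and possibly either) class of $(s,u)_{t-\eta,2}$, which must be computed. Until that case analysis is written out and shown to exclude every admissible $(a,b)$, the argument is a plausible plan of attack rather than a proof that $u$ is not $4$-reflectional.
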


\subsection{Elements with reflectional length $3$}

\begin{lemma}\label{lemma:F3(t^2-1)^2pas3}
Let $u \in \Sp(s)$ have minimal polynomial $(t^2-1)^2$
and be such that $(s,u)_{t-1,2} \simeq -(s,u)_{t+1,2}$.
Then $u$ is not $3$-reflectional in $\Sp(s)$.
\end{lemma}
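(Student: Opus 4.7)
The plan is to argue by contradiction. Suppose $u = i_1 i_2 i_3$ with involutions $i_1, i_2, i_3 \in \Sp(s)$, and set $j := i_1 u = i_2 i_3$; then $j$ is $2$-reflectional. A short enumeration using Nielsen's theorem in dimension $4$ shows that the conjugacy class of $j$ in $\Sp_4(\F_3)$ must be one of: $\pm\id$, a middle involution, a quarter turn (satisfying $j^2 = -\id$), or an element of the form $\eta\,\id + A$ with $\eta = \pm 1$, $A^2 = 0$, $\rk A = 2$, and hyperbolic quadratic Wall invariant at $(t-\eta,2)$. The first two possibilities are immediate: $j = \pm\id$ forces $u = \pm i_1$ to be an involution, and $j$ a middle involution forces $u = i_1 j$ to be $2$-reflectional, both contradicting the fact that $u$ has degree-four minimal polynomial and odd Jordan numbers $n_{t\pm 1, 2}(u) = 1$. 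Applying the same odd-Jordan-number obstruction to $\pm u$ also rules out $i_1 \in \{\pm\id\}$, so $i_1$ must itself be a middle involution.

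The key structural input is the canonical $u$-invariant splitting $V = V_+ \oplus V_-$ into the two-dimensional generalized eigenspaces $V_\pm := \Ker(u\mp\id)^2$; these are mutually $s$-orthogonal symplectic subspaces, and $u|_{V_\pm}$ is a single Jordan block $\pm\id + N_\pm$. In a symplectic basis $(e_1,e_2,e_3,e_4)$ adapted to this splitting and to the Jordan structure, the Gram matrix of $s$ is block-diagonal with blocks $S_\pm = \bigl(\begin{smallmatrix}0 & \gamma_\pm\\ -\gamma_\pm & 0\end{smallmatrix}\bigr)$, and evaluation of the defining formulas for the Wall invariants on the representatives $\bar e_2, \bar e_4$ translates the hypothesis $(s,u)_{t-1,2} \simeq -(s,u)_{t+1,2}$ into the scalar relation $\gamma_- = -\gamma_+$. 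Writing $i_1 = \bigl(\begin{smallmatrix}P & Q\\ R & S\end{smallmatrix}\bigr)$ in block form, I translate the defining polynomial identity of each remaining sub-case for $j$ into block equations.

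For the quarter-turn case, $j^2 = -\id$ is equivalent to $u i_1 u = -i_1$; the $(+,+)$ and $(-,-)$ block equations will force $P = S = 0$ (so $i_1$ swaps $V_+$ and $V_-$), the cross-block equations $u_+ Q u_- = -Q$ and $u_- R u_+ = -R$ will force $Q$ and $R$ to be upper triangular with equal diagonal entry $q_1$, and the symplectic identity $Q^\top S_+ Q = S_-$ will then collapse -- using $\gamma_- = -\gamma_+$ -- to $q_1^2 = -1$, which has no solution in $\F_3$. For the unipotent case with $(j-\id)^2 = 0$ and $\rk(j-\id) = 2$, the identity $(j-\id)^2 = 0$ reads $j^2 + j + \id = 0$ in characteristic $3$, equivalently $u i_1 u = -(u + i_1)$; the block equations will now force $P = \id$ and $S = -\id$, and combined with $i_1^2 = \id$ and symplecticity they will cut the off-diagonal blocks down to a one-parameter family $Q = R = \bigl(\begin{smallmatrix}0 & q_2\\ 0 & 0\end{smallmatrix}\bigr)$. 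A direct rank count will show $\rk(j-\id) = 2$ iff $q_2 = 0$, so for $q_2 \neq 0$ the odd Jordan number $n_{t-1,2}(j) = 1$ contradicts Nielsen; and for $q_2 = 0$ the involution $i_1$ reduces to the natural $\id_{V_+}\oplus (-\id_{V_-})$, whereupon a direct evaluation of the Wall form $\tfrac{1}{2}s(\cdot,(j - j^{-1})\cdot)$ on the representatives $e_2, e_4$ of $V/\Ker(j-\id)$ produces the scalar Gram matrix $-\gamma_+ I_2$ of discriminant $\gamma_+^2 = 1$, a square, so the form is non-hyperbolic -- again contradicting Nielsen. The case $(j+\id)^2 = 0$ is entirely analogous, with $P, S$ swapped and eigenvalue $-1$.

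The main obstacle will be the block-matrix bookkeeping in the last two cases, and in particular pinpointing where the Wall hypothesis enters: the relation $\gamma_- = -\gamma_+$ will be used exactly once in each of the two matrix computations, and it is precisely what arranges for the symplectic obstruction $q_1^2 = -1$ on the one hand and the square discriminant $\gamma_+^2 = 1$ on the other, so the argument visibly depends on the absence of $\sqrt{-1}$ in $\F_3$ (the analogous assertion over $\F_5$ fails, in agreement with Lemma~\ref{lemma:(t^2+1)2}).
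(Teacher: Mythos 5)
Your proposal is correct, but it takes a genuinely different route from the paper. You enumerate the possible conjugacy classes of $j:=i_1u$ via Nielsen's theorem (quarter turn, or $\eta\id+A$ with $A^2=0$, $\rk A=2$, hyperbolic Wall invariant, the trivial and involutory cases being immediate), fix a basis adapted to the $s$-orthogonal generalized eigenspace splitting $V=V_+\botoplus V_-$, translate the hypothesis into $\gamma_-=-\gamma_+$, and then for each remaining class of $j$ run a direct block-matrix computation: the quarter-turn case collapses (once $P=S=0$ is forced by the spectral disjointness of the diagonal blocks) to $q_1^2=-1$, which has no solution in $\F_3$; and the unipotent case, after the diagonal blocks are pinned to $P=\id$, $S=-\id$ and the off-diagonal blocks are cut to $Q=R=\bigl(\begin{smallmatrix}0&q_2\\0&0\end{smallmatrix}\bigr)$ by symplecticity and $\gamma_-=-\gamma_+$, fails the rank condition for $q_2\neq 0$ and the hyperbolicity of the Wall invariant for $q_2=0$. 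I checked the block equations and they go through as you describe. (Minor quibbles: $R=Q^{-1}$ has diagonal $q_1^{-1}$, not $q_1$, though only $Q$ matters; and $\gamma_-=-\gamma_+$ is actually used twice, not once, in the unipotent case.)

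The paper's proof is shorter and more structural: it first records that the regular part of the quadratic form $Q_u(x)=s(x,u(x))$ is hyperbolic, then writes $u=iv$ with $v$ $2$-reflectional, shows via Nielsen's theorem that $v$ is annihilated by a palindromial $t^2+\alpha t+1$, and invokes the Commutation Lemma to conclude that $i$ and $v$ commute with $u-u^{-1}$. This allows one to pass to the $2$-dimensional quotient $\overline{V}=V/\Ker(u-u^{-1})$, where $\overline{i},\overline{v},\overline{u}$ become orthogonal maps for the induced hyperbolic form; over $\F_3$ the orthogonal group of a hyperbolic plane has only two non-trivial symmetries, which forces $\overline{i}=\pm\overline{u}$ and $\overline{v}=\pm\id$, pinning down the Jordan structure of $v$ before a much smaller matrix computation delivers the contradiction. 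The trade-off is clear: the paper's route needs the Commutation Lemma but avoids the conjugacy-class case split and the larger block computation; your route is self-contained and makes very explicit exactly where the two $\F_3$-specific facts ($-1$ is a non-square, and $1,-1$ lie in distinct square classes) enter, but demands more bookkeeping. Both are valid, and your closing remark about the failure over $\F_5$ is an accurate and illuminating observation.
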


\begin{proof}
Throughout the proof, we consider the quadratic form
$$Q_u : x \in V \mapsto s(x,u(x)).$$
The symmetric bilinear form that is attached to $Q_u$ is
$$(x,y) \in V^2 \mapsto \frac{1}{2} s(x,(u-u^{-1})(y))=-s(x,(u-u^{-1})(y)),$$
and hence its regular part is equivalent to $-(s,u)_{t-1,2} \bot -(s,u)_{t+1,2}$.
It follows from our assumptions that the regular part of $Q_u$ is hyperbolic.

Assume now that there is an involution $i \in \Sp(s)$ such that $v:=iu$ is $2$-reflectional.
We note that $u$ has only one Jordan cell of size $2$ for the eigenvalue $1$, and hence $u$ is not $2$-reflectional in $\Sp(s)$.
Hence $v$ is not an involution; by Nielsen's theorem $v$ is a symplectic extension of an endomorphism $w$ of a $2$-dimensional vector space
which is similar to its inverse, and $w$ cannot be an involution whence the caracteristic polynomial of $w$ equals $t^2+\alpha t+1$
for some scalar $\alpha$; and we deduce from the Cayley-Hamilton theorem that $t^2+\alpha t+1$ annihilates $v$.
The Commutation Lemma then shows that both $i$ and $v$ commute with $u-u^{-1}$.

Hence $i$, $v$ and $u$ induce automorphisms of the quotient space $\overline{V}:=V/\Ker(u-u^{-1})$, which has dimension $2$.
We denote these authomorphisms by $\overline{i}$, $\overline{v}$ and $\overline{u}$, respectively. And we even obtain that
$\overline{i}$, $\overline{v}$ and $\overline{u}$ are orthogonal automorphisms of $\overline{V}$ for the quadratic form induced on $\overline{V}$ by
$Q_u$, which is hyperbolic.

Note that $\overline{u}$ is a non-trivial involution, and that $\overline{i}$ is an involution. If $\overline{i}$ were trivial, then
by Remark \ref{remark:pseudoextension} $i$ would have only one eigenvalue and hence it would be trivial, and then $u=iv=\pm v$ would be $2$-reflectional, which is not true. Hence $\overline{i}$ is a non-trivial involution of $\overline{V}$.

Yet, in a hyperbolic plane over $\F_3$, there are only two non-isotropic lines, and hence there are only
two non-trivial symmetries. Thus $\overline{i}=\pm \overline{u}$, leading to $\overline{v}=\pm \id$.
Multiplying $v$ with $-1$ if necessary, we can assume that $\overline{v}=\id$.
In that case, we see from Remark \ref{remark:pseudoextension} that
$\chi_v=(t-1)^4$, while $v(x)=x$ for all $x \in \Ker(u-u^{-1})$, and we deduce that the minimal polynomial of $v$ is $(t-1)^2$ and that $v$ has two Jordan cells of size $2$ for the eigenvalue $1$ (again, by Nielsen's theorem). Thus $\im(v-\id) \subseteq \Ker(u-u^{-1})$ and $\im(v-\id)$ has dimension $2$, leading to $\im(v-\id)=\Ker(u-u^{-1})$.

We will finish with a matrix computation. We can take a symplectic basis $\bfB=(e_1,e_2,f_1,f_2)$ in which the first two vectors
constitute a basis of $\Ker(u-u^{-1})$ with $i(e_1)=e_1$ and $i(e_2)=-e_2$, and $i(f_1)=f_1$ and $i(f_2)=-f_2$.
From $\overline{v}=\id$, we get that the respective matrices of $i$ and $v$ in $\bfB$ are
$$D_2:=\begin{bmatrix}
D & 0 \\
0 & D
\end{bmatrix} \quad \text{and} \quad
V=\begin{bmatrix}
I_2 & \Delta \\
0 & I_2
\end{bmatrix}$$
where $D=\begin{bmatrix}
1 & 0 \\
0 & -1
\end{bmatrix}$ and $\Delta \in \Mat_2(\F)$. Since $V \in \Sp_4(\F)$, the matrix $\Delta$ is symmetric.

Note that $-\Delta$ represents the regular part of the quadratic form $Q_v$, so it must be hyperbolic, and hence $\det(\Delta)=\det(-\Delta)=-1$.

Next, $u$ is represented by
$U:=\begin{bmatrix}
D & D\Delta \\
0 & D
\end{bmatrix}$ in a symplectic basis. In this basis, the Gram matrix of $Q_u$ is then equal to
$$\frac{1}{2} K_4 (U-U^{-1})=\frac{1}{2}\,K_4(D_2 V-V^{-1} D_2^{-1})=\frac{1}{2}\,K_4\,\begin{bmatrix}
0_2 & D\Delta+\Delta D \\
0_2 & 0_2
\end{bmatrix}=\begin{bmatrix}
0_2 & 0_2 \\
0_2 & -\frac{D \Delta+(D \Delta)^T}{2} \\
\end{bmatrix}.$$
Hence, the regular part of $Q_u$, which has rank $2$, has a Gram matrix that is the opposite of the symmetric part of $D \Delta$.

Writing
$\Delta=\begin{bmatrix}
a & b \\
b & c
\end{bmatrix}$, we find $D\Delta=\begin{bmatrix}
a & b \\
-b & -c
\end{bmatrix}$ and hence the opposite of the symmetric part of $D\Delta$ has determinant $-ac$. As the regular part of $Q_u$ is hyperbolic,
we deduce that $-ac=-1$. Yet $ac-b^2=\det \Delta=-1$, leading to $-ac=1-b^2 \in \{0,1\}$.
This contradiction shows that $u$ is not $3$-reflectional in $\Sp(s)$.
\end{proof}

We turn to the difficult case of cyclic automorphisms with minimal polynomial $(t^2+1)^2$.
The following lemma will be useful:

\begin{lemma}\label{lemma:involutionLagrangian}
Let $i$ be a symplectic involution in $\Sp(s)$, and
$\calL$ be a Lagrangian of $(V,s)$.
Then $\dim \calL-\dim (\calL \cap i(\calL))$ is even.
\end{lemma}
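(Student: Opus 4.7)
The plan is to introduce the bilinear form
$$B : (x,y) \in \calL \times \calL \longmapsto s(x,i(y)) \in \F$$
and show that its radical is exactly $\calL \cap i(\calL)$, so that $\calL/(\calL \cap i(\calL))$ inherits a nondegenerate alternating form and therefore has even dimension.

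First I would check that $B$ is alternating. The key computation is to use that $i$ is both an involution and $s$-symplectic: for $x,y \in \calL$,
$$B(y,x)=s(y,i(x))=s(i(y),i^2(x))=s(i(y),x)=-s(x,i(y))=-B(x,y),$$
so $B$ is skew-symmetric, and since $\chi(\F)\neq 2$ it is alternating.

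Next I would identify the radical. For $x \in \calL$, one has $x \in \operatorname{rad}(B)$ iff $s(x,i(y))=0$ for all $y \in \calL$, i.e.\ iff $x \in i(\calL)^{\bot_s}$. But $i(\calL)^{\bot_s}=i(\calL^{\bot_s})=i(\calL)$ because $i$ is symplectic and $\calL$ is Lagrangian; hence $\operatorname{rad}(B)=\calL \cap i(\calL)$.

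Finally, $B$ induces a nondegenerate alternating bilinear form on the quotient space $\calL/(\calL \cap i(\calL))$, and any space supporting such a form has even dimension. Therefore $\dim \calL - \dim(\calL \cap i(\calL))$ is even. There is no real obstacle here; the only subtlety is the invariance trick $s(y,i(x))=s(i(y),i^2(x))$ that turns the symplecticity of $i$ together with $i^2=\id$ into the skew-symmetry of $B$.
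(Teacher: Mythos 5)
Your proof is correct and follows essentially the same route as the paper: both consider the form $(x,y)\mapsto s(x,i(y))$ restricted to $\calL$, establish that it is alternating (you do this by the direct computation $s(y,i(x))=s(i(y),i^2(x))=s(i(y),x)$, while the paper phrases it as $i^\star=i^{-1}=i$ making $i$ self-adjoint), identify the radical as $\calL\cap i(\calL)$ via $i(\calL)^{\bot_s}=i(\calL^{\bot_s})=i(\calL)$, and conclude from the even rank of an alternating form.
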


\begin{proof}
Note first that $i^\star=i^{-1}=i$, whence $i$ is $s$-selfadjoint, and since $s$ is symplectic this yields that
$b : (x,y) \in V^2 \mapsto s(x,i(y))$ is alternating. Yet
the left-radical of the bilinear form $b_\calL : (x,y) \in \calL^2 \mapsto b(x,y)$ equals $\calL \cap i(\calL)^{\bot_s}=\calL \cap i(\calL^{\bot_s})=\calL \cap i(\calL)$ and hence
the rank of $b_\calL$ equals $\dim \calL-\dim(\calL \cap i(\calL))$. Since $b_\calL$ is alternating, this rank is even,
and the proof is complete.
\end{proof}

\begin{lemma}\label{lemma:(t2+1)^2F3}
Let $u \in \Sp(s)$ have minimal polynomial $(t^2+1)^2$. Then $u$ is not $3$-reflectional in $\Sp(s)$.
\end{lemma}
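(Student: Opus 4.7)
The plan is to argue by contradiction. Suppose $u=i_1 i_2 i_3$ with involutions $i_j\in\Sp(s)$, and set $v:=i_2 i_3$, a $2$-reflectional element. Since $n_{t^2+1,2}(u)=1$ is odd, Nielsen's theorem rules out $u$ being $2$-reflectional, so $v$ cannot be an involution (else $u=i_1 v$ would be a product of two involutions). A brief case analysis via Nielsen's theorem in dimension $4$ over $\F_3$ then shows that a non-involutory $2$-reflectional $v$ satisfies either
\begin{enumerate}
\item[(A)] $v^2=-\id$, or
\item[(B)] $(v-\eta\id)^2=0$ with $v\neq\eta\id$ for some $\eta\in\{\pm 1\}$, with $(s,v)_{t-\eta,2}$ hyperbolic.
\end{enumerate}
In either case $v$ is annihilated by a polynomial $q$ of degree $2$ with $q(0)=1$, whereas $i_1$ satisfies $p(t):=t^2-1$ with $p(0)=-1$; the Commutation Lemma (Lemma~\ref{lemma:commute}) gives that both $i_1$ and $v$ commute with $u+p(0)q(0)u^{-1}=u-u^{-1}$. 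A direct computation modulo $(t^2+1)^2$ over $\F_3$ shows $u-u^{-1}=u^3=:j$ with $j^2=-\id$.

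The structural step is to exploit the resulting $\F_9$-structure on $V$. Since $j$ has minimal polynomial $t^2+1$ (irreducible over $\F_3$), the algebra $\F_3[j]$ is isomorphic to $\F_9$, and $V$ becomes a $2$-dimensional $\F_9$-vector space via $\omega\cdot x:=jx$ with $\omega^2=-1$. All of $u,i_1,v$ are $\F_9$-linear, and the formula
\[h(x,y):=s(x,jy)-\omega\,s(x,y)\]
defines a nondegenerate hyperbolic Hermitian form on the $\F_9$-space $V$; the subgroup of $\Sp(s)$ centralising $j$ is exactly the unitary group $U(h)$, so $u,i_1,v\in U(h)$. From $u^4=u^2-\id$ and $u^3=j$ one deduces $u^2-\omega u-\id=0$, so the $\F_9$-minimal polynomial of $u$ is $(t+\omega)^2$: over $\F_9$, $u$ is a single Jordan block with eigenvalue $-\omega$.

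In Case~A, $v$ diagonalises over $\F_9$ as $\Diag(\omega,-\omega)$ on two mutually $h$-orthogonal lines $V_\omega\oplus V_{-\omega}$. If $i_1$ preserved $V_\omega$, then $uV_\omega=\omega V_\omega$ would exhibit $\omega$ as an $\F_9$-eigenvalue of $u$, contradicting that $-\omega$ is its unique eigenvalue; by $h$-orthogonality of the two lines, $i_1$ preserves $V_\omega$ iff it preserves $V_{-\omega}$, so $i_1$ preserves neither. Hence in the basis diagonalising $v$, $i_1$ has the form $\begin{pmatrix}a&b\\c&-a\end{pmatrix}$ with $bc\neq 0$ and $a^2+bc=1$. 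Matching the characteristic polynomial of $i_1 v$ with $(t+\omega)^2$ forces the coefficient of $t$ to equal $-2\omega=\omega$, yielding $a=-1$; the relation $a^2+bc=1$ then gives $bc=0$, a contradiction. Case~A thus reduces to a quick $2\times 2$ verification.

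Case~B is the main difficulty. Set $N:=v-\eta\id$ and $F:=\Ker N=\operatorname{im} N$, a $1$-dim $\F_9$-subspace of $V$; the unitarity of $v$ together with $N^2=0$ forces $F$ to be $h$-isotropic. The key computation is that the quadratic Wall invariant $(s,v)_{t-\eta,2}$ on $V/F$ is, up to sign, the ``coefficient of $\omega$'' part of the sesquilinear form $\tilde h(\bar x,\bar y):=h(x,Ny)$, which is a nondegenerate skew-Hermitian form on the $1$-dim $\F_9$-space $V/F$. Writing $\tilde h(\bar e,\bar e)=\omega\gamma$ for a generator $\bar e$ and some $\gamma\in\F_3^\times$, one computes that the Gram matrix of $(s,v)_{t-\eta,2}$ in the $\F_3$-basis $(\bar e,j\bar e)$ of $V/F$ is the scalar $-\gamma I_2$; its discriminant $\gamma^2=1$ is a nonzero square in $\F_3$, whereas the hyperbolic plane over $\F_3$ has discriminant $-1$. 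Thus $(s,v)_{t-\eta,2}$ is anisotropic, contradicting the assumed hyperbolicity. The obstacle is precisely this identification of the Wall invariant with a form arising from a $1$-dim Hermitian form over $\F_9/\F_3$---such forms are automatically anisotropic over $\F_3$, and this is what forces $v$ out of the $2$-reflectional locus.
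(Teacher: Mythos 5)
Your proof is correct, but it takes a genuinely different route in one of the two cases. The paper uses the same split (into $v^2=-\id$ and $(v-\eta\id)^2=0$) and also sets up the $\F_9$-Hermitian structure on $V$ arising from $w:=u-u^{-1}$ with $w^2=-\id$, but only for the $v^2=-\id$ case, where it analyses the permutation action of $i_1$, $v$, $u$ on the four $h$-isotropic lines. For the unipotent case the paper works instead with the Lagrangian $\calL=\Ker(u^2+\id)$, shows $i_1(\calL)\cap\calL=\{0\}$ via Lemma~\ref{lemma:involutionLagrangian}, and derives a contradiction from an $s_{v,\calL}$-isotropic vector $x$ whose image $u^{-1}(x)$ is forced into $\F x$. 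Your Case B handles the same situation by the $\F_9$-structure directly: you show that the commutation of $v$ with $j$ makes $N=v-\eta\id$ $h$-skew-Hermitian and forces the Wall invariant $(s,v)_{t-\eta,2}$ to have Gram matrix $-\gamma I_2$ with $\gamma\in\F_3^\times$, which is anisotropic over $\F_3$ and so contradicts the hyperbolicity that Nielsen's theorem imposes on a $2$-reflectional $v$. This is arguably cleaner and more uniform, since both cases now run through the same $\F_9$-Hermitian machinery, and it isolates the obstruction (a rank-one Hermitian form over $\F_9/\F_3$ yielding an anisotropic $\F_3$-form) explicitly. Your Case A is close in spirit to the paper's $v^2=-\id$ case but executed differently (a $2\times 2$ characteristic-polynomial match over $\F_9$ instead of counting isotropic lines) and is valid, with one small imprecision: the assertion that $i_1$ preserving $V_\omega$ would exhibit $\omega$ as an $\F_9$-eigenvalue of $u$ is not quite right, since the scalar by which $u$ acts on $V_\omega$ could equally be $-\omega$. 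The tidy argument is that by $h$-orthogonality $i_1$ would then also preserve $V_{-\omega}$, so $u=i_1v$ would be $\F_9$-diagonalisable, contradicting that its $\F_9$-minimal polynomial is $(t+\omega)^2$.
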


This result shows that the provision $|\F|>3$ in Lemma \ref{lemma:(t^2+1)2} was unavoidable.

\begin{proof}
We already know that $u$ is not $2$-reflectional. Assume that $u=i v$ where $i$ is an involution and $v$
is $2$-reflectional. Then $v$ is not $1$-reflectional, and just like in the proof of Lemma \ref{lemma:F3(t^2-1)^2pas3}
we deduce from Nielsen's theorem that $v^2+\alpha v+\id=0$ for some scalar $\alpha$.
Besides $i$ is nontrivial.

\noindent \textbf{Case 1:} $(v-\eta \id)^2=0$ for some $\eta=\pm 1$. \\
Note that $\calL:=\Ker(u^2+\id)=\im (u^2+\id)=\Ker(u^2+\id)^{\bot_s}$ is a Lagrangian of $V$
and $u$ induces a quarter turn of $\calL$.
If $\calL$ were stable under $i$ then it would also be stable under $v$, the endomorphism induced by $v$ would have determinant $1$
(being triangularizable with a sole eigenvalue, equal to $\pm 1$), the involution induced by $i$ would have determinant $1$
and hence it would be trivial; and finally $i=\pm \id$ because $i$ is $s$-symplectic.

So, $i(\calL) \neq \calL$, and by Lemma \ref{lemma:involutionLagrangian} this yields $i(\calL) \cap \calL=\{0\}$.
It follows that $u$ is obtained from $v$ by space-pullbacking with the Lagrangian $\calL$ and the involution $i$.
This leads us to investigate the bilinear form $s_{v,\calL}$. Noting that $v+v^{-1}=2\eta\id$, we
see that the skew-symmetric part of $s_v$ vanishes on every $s$-Lagrangian and in particular on $\calL$.
Hence $s_{v,\calL}$ is symmetric. Moreover $2s_{v,\calL}$ is a restriction of $(x,y) \mapsto s(v,(v-v^{-1})(y))$,
which has rank $2$ and with regular part isometric to the quadratic Wall invariant $(s,v)_{t-\eta,2}$.
Since $s_{v,\calL}$ is nondegenerate, we deduce that it is isometric to $\frac{1}{2}(s,v)_{t-\eta,2}$, which is hyperbolic because
$v$ is $2$-reflectional. Hence $s_{v,\calL}$ is hyperbolic. We can finally choose $x \in \calL \setminus \{0\}$ to be
$s_{v,\calL}$-isotropic. Then $s_{v,\calL}(x,u^{-1}(x))=s(x,i(x))=0$ because $i$ is $s$-selfadjoint.
It follows that $u^{-1}(x) \in \F x$, which contradicts the fact that $u$ has no eigenvalue.

\vskip 3mm
\noindent \textbf{Case 2:} $v^2=-\id$. \\
By Lemma \ref{lemma:commute}, it follows that $i$ and $v$ commute with $w:=u-u^{-1}$. Next, note that $w^2=(u+u^{-1})^2-4 \id=-\id$.
Now, we can naturally endow $V$ with a structure of vector space (of dimension $2$) over the field $\L:=\F[w]$ (which is isomorphic to $\F_9$).
The $s$-adjunction operator leaves $\L$ invariant since $w^\star=u^\star-u=-w=w^{-1}$, and it is the non-identity automorphism of the field $\L$.
The $\F$-bilinear form $(x,y) \in V^2 \mapsto s(x,w(y))$ is symmetric and nondegenerate.
For all $(x,y)\in V^2$, there is a unique $h(x,y) \in \L$ such that
$$\forall \lambda \in \L, \quad s(x,w(\lambda y))=\Tr_{\L/\F}(\lambda h(x,y)).$$
Obviously, $h$ is right-$\L$-linear and nondegenerate.
Moreover, for all $(x,y)\in V^2$ and all $\lambda \in \L$, we have
\begin{multline*}
s(y,w(\lambda x))=s(-w(y),\lambda x)=s(-\lambda^\star w(y),x)=s(x,w(\lambda^\star y)) \\
=\Tr_{\L/\F}(\lambda^\star h(x,y))=\Tr_{\L/\F}(\lambda h(x,y)^\star)
\end{multline*}
and hence $h(y,x)=h(x,y)^\star$. Hence $h$ is a Hermitian form on the $\L$-vector space $V$.

Next, since $i$ and $v$ commute with $w$ and since they are isometries for $s$, we gather that they belong to the unitary group $\calU(h)$ of $h$.
And now, $h$ is hyperbolic, as is any Hermitian form of even rank over a finite field. So, it has as many
isotropic lines as there are elements of norm $1$ in $\L$. Hence there are four such lines, and we denote their set by $X$.
The unitary group $\calU(h)$ then naturally acts on $X$. Next, $i$ is a non-trivial involution in $\calU(h)$ so its eigenspaces are non-isotropic lines, and hence
$i$ leaves no element of $X$ invariant. But $i^2=\id$ so $i$ induces a double-transposition of $X$.

Next, note that $(wv)^2=w^2v^2=\id$. Hence either $v=\pm w^{-1}$, in which case $v$ induces the identity of $X$,
or $v$ has two distinct eigenvalues in $\L$ (namely $w^{-1}$ and $-w^{-1}$), and the corresponding eigenspaces are $h$-orthogonal to one another; in the latter case,
we obtain, as we did for $i$, that $v$ induces a double-transposition of $X$.
In any case, we conclude that $u$ induces a permutation of $X$ that belongs to the Klein subgroup of $\mathfrak{S}(X)$,
i.e.\ it is either the identity or a double-transposition.

Note that the elements of $X$ are simply the Lagrangians for $s$ that are stable under $w$. In particular,
the Lagrangian $\calL:=\Ker(u^2+\id)$ belongs to $X$, and since $u(\calL)=\calL$ we deduce that $u$ induces the identity of $X$.
Hence, by taking $\calL' \in X \setminus \{\calL\}$, we find that $\calL'$ is a Lagrangian that is transverse to $\calL$
(because $\calL$ and $\calL'$ have trivial intersection as $\L$-linear subspaces) and stable under $u$.
And then $u=s_{\calL'}(u_{\calL})$ would be annihilated by $t^2+1$, which is false.
This final contradiction completes the proof.
\end{proof}

We are now ready to obtain the full classification of $3$-reflectional elements in $\Sp_4(\F_3)$.

\begin{prop}\label{prop:F3refllength3}
The elements of $\Sp(s)$ with reflectional length $3$ are:
\begin{itemize}
\item The elements with minimal polynomial $(t-\eta)(t+\eta)^2$ for some $\eta=\pm 1$.
\item The elements with minimal polynomial $(t^2-1)^2$ and equivalent Wall invariants attached to $(t-1,2)$ and $(t+1,2)$;
\item The elements with minimal polynomial $t^4+1$.
\end{itemize}
\end{prop}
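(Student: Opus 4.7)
The plan is to prove both directions of the classification: (I) each element of the three listed families has reflectional length exactly $3$; (II) no other conjugacy class outside the already classified $1$- and $2$-reflectional ones is $3$-reflectional. Non-$2$-reflectionality of each family is immediate from Nielsen's theorem via an odd Jordan number: $n_{t+\eta,2}(u)=1$ in family (i); $n_{t-1,2}(u)=n_{t+1,2}(u)=1$ in family (ii); and $n_{t^2-t-1,1}(u)=n_{t^2+t-1,1}(u)=1$ in family (iii), using the factorization $t^4+1=(t^2-t-1)(t^2+t-1)$ over $\F_3$.

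For family (iii), the above factorization into two irreducible non-palindromial reciprocal factors identifies $u$ as an indecomposable cell of type IV, i.e., a symplectic extension of a $\calC(t^2-t-1)$-automorphism. Since $t^2-t-1$ has constant term $-1$, Proposition \ref{prop:sympextension3refl} yields $3$-reflectionality. For family (i), the Jordan structure (the $\eta$-eigenspace has shape $(1,1)$, the $(-\eta)$-eigenspace has shape $(2)$, with each eigenspace even-dimensional for symplectic reasons) forces a splitting $u=\eta\,\id_{V_+}\botoplus u_-$ with $u_-$ a $\calC((t+\eta)^2)$-automorphism of a $2$-dimensional symplectic subspace; after replacing $u$ by $-u$ to reduce to $\eta=1$, Lemma \ref{lemma:(t+1)2+identity} applies verbatim.

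The main construction is for family (ii). Write $u=u_+\botoplus u_-$ on an orthogonal decomposition $V=V_+\botoplus V_-$ into $2$-dimensional symplectic subspaces, and pick symplectic bases $(e_\eta,f_\eta)$ of $V_\eta$ in which $u_\eta$ has matrix $\begin{pmatrix}\eta & \beta_\eta\\0 & \eta\end{pmatrix}$ for some $\beta_\eta\in\F_3^\times$. A direct computation shows that $(s,u)_{t-\eta,2}$ is represented by the scalar $-\beta_\eta$, so the equivalence hypothesis amounts to $\beta_+=\beta_-$. Define $i\in\Sp(s)$ by $i(e_\pm)=e_\mp$ and $i(f_\pm)=f_\mp$; this is a symplectic involution since $(e_+,f_+)$ and $(e_-,f_-)$ are symplectic bases of $s$-orthogonal pieces. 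In block form one has $iu=\begin{pmatrix}0 & u_-\\u_+ & 0\end{pmatrix}$, and a short multiplication yields $u_+u_-=u_-u_+=-\id_2$ precisely when $\beta_+=\beta_-$, hence $(iu)^2=-\id$. Thus $iu$ is a rank-$4$ quarter turn with $n_{t^2+1,1}(iu)=2$ (even) and skew-Hermitian Wall invariant over $\F_9$ automatically hyperbolic (rank $2$ over a finite field), so Nielsen's theorem gives that $iu$ is $2$-reflectional and therefore $u=i(iu)$ is $3$-reflectional.

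For direction (II), the Trace Trick (Lemma \ref{lemma:trace}) disposes of every non-$2$-reflectional element with nonzero trace. Enumerating indecomposable cells of dimension at most $4$ over $\F_3$ (types I, II, IV, V in dimension $4$; types II, V, VI in dimension $2$) and using that each $\pm 1$-eigenspace has even dimension, one checks that the non-$2$-reflectional trace-zero conjugacy classes have minimal polynomial in the short list $\{(t^2+1)^2,\; t^4+1,\; (t^2-1)^2,\; (t-\eta)(t+\eta)^2\}$: the case $(t^2+1)^2$ is excluded by Lemma \ref{lemma:(t2+1)^2F3}, the case $t^4+1$ is family (iii), the case $(t-\eta)(t+\eta)^2$ is family (i), and the case $(t^2-1)^2$ is either family (ii) or is excluded by Lemma \ref{lemma:F3(t^2-1)^2pas3}. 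The main technical obstacle is the sign- and basis-bookkeeping in the family (ii) construction: one must match the computed scalar $-\beta_\eta$ with the Wall form so that the equivalence hypothesis translates exactly into $\beta_+=\beta_-$, which is what causes the swap involution to collapse $iu$ into a quarter turn.
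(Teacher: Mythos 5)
Your proof is correct. For families (i) and (iii) and for the converse direction, your argument matches the paper's: family (i) is handled by Lemma \ref{lemma:(t+1)2+identity}, family (iii) by Proposition \ref{prop:sympextension3refl}, and the converse by combining the Trace Trick with Lemmas \ref{lemma:(t2+1)^2F3} and \ref{lemma:F3(t^2-1)^2pas3} (the paper reaches the same short list a bit more briskly by noting that the characteristic polynomial must be $t^4+at^2+1$ for some $a\in\F_3$, rather than enumerating cells).

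The genuinely different step is your treatment of family (ii). The paper takes the involution $i=\id_{V_+}\botoplus(-\id_{V_-})$, so that $iu=u_+\botoplus(-u_-)$ has minimal polynomial $(t-1)^2$, Jordan number $n_{t-1,2}=2$, and Wall invariant $(s,u)_{t-1,2}\bot\bigl(-(s,u)_{t+1,2}\bigr)$; the equivalence hypothesis is exactly what makes this orthogonal sum hyperbolic, and Nielsen's theorem concludes. You instead take the swap involution $i(e_\pm)=e_\mp$, $i(f_\pm)=f_\mp$, and compute that $(iu)^2=-\id$ if and only if $\beta_+=\beta_-$, landing on a quarter turn of rank $4$, for which Nielsen's theorem gives $2$-reflectionality by rank parity alone (no Wall-form inspection needed). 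Both routes use the same matching of the scalar $-\beta_\eta$ with the Wall class $(s,u)_{t-\eta,2}$; your quarter-turn endgame is arguably a little cleaner since it avoids having to argue about the Wall invariant of $iu$, at the price of a slightly more intricate construction of $i$. Either way the result is established.
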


\begin{proof}
We have seen in Lemma \ref{lemma:(t+1)2+identity}
that every element of $\Sp(s)$ with minimal polynomial $(t-1)(t+1)^2$ has reflectional length at most $3$.
And Nielsen's theorem shows that such an element cannot be $2$-reflectional.
Multiplying with $-1$, we deduce that every every element of $\Sp(s)$ with minimal polynomial $(t+1)(t-1)^2$ has reflectional length $3$.

Next, let $u \in \Sp(s)$ have minimal polynomial $(t^2-1)^2$ and
equivalent Wall invariants attached to $(t-1,2)$ and $(t+1,2)$.
Then $u=u_1 \botoplus u_2$ where $u_1$ and $u_2$ are respectively $\calC((t-1)^2)$ and $\calC((t+1)^2)$ symplectic transformations.
The quadratic Wall invariants of $u$ are non-hyperbolic, and hence $\rl(u) >2$ by Nielsen's theorem, yet $u$ is i-adjacent to $u':=u_1 \botoplus (-u_2)$,
for which the sole non-zero Jordan number is $n_{t-1,2}(u')$, which equals $2$, and the sole quadratic Wall invariant
is $(s,u')_{t-1,2} \simeq (s,u)_{t-1,2} \bot \left(-(s,u)_{t+1,2}\right)$, which is hyperbolic. Hence $u'$ is $2$-reflectional,
and we deduce that $\rl(u)=3$.

Finally, if $u$ has minimal polynomial $t^4+1$, then it is a symplectic extension of a $\calC(t^2-t-1)$-automorphism,
and hence $u$ is $3$-reflectional by Proposition \ref{prop:sympextension3refl}.
Once more, we already know from Nielsen's theorem that $u$ is not $2$-reflectional in $\Sp(s)$.

Conversely, let $u \in \Sp(s)$ be $3$-reflectional but not $2$-reflectional. By the Trace Trick, $\tr(u)=0$.
Hence the characteristic polynomial of $u$, which is a palindromial $q$ such that $q(0)=1$, equals $t^4+at^2+1$ for some $a \in \F_3$.
If $a=0$ then $u$ is of the third type cited in the proposition.
If $a=-1$ then either $u$ has minimal polynomial $(t^2+1)^2$, in which case Lemma \ref{lemma:(t2+1)^2F3} contradicts our assumptions, or
it has minimal polynomial $t^2+1$, in which case it is $2$-reflectional.

Assume finally that $a=1$, so that the characteristic polynomial of $u$ equals $(t^2-1)^2$. As $u$ is not $2$-reflectional,
the minimal polynomial of $u$ cannot be $t^2-1$, and hence it equals either $(t-1)(t+1)^2$ or $(t+1)(t-1)^2$ or $(t^2-1)^2$.
Assume finally that the minimal polynomial of $u$ is $(t^2-1)^2$.
Then by Lemma \ref{lemma:F3(t^2-1)^2pas3} we cannot have
$(s,u)_{t-1,2} \simeq -(s,u)_{t+1,2}$, and hence $(s,u)_{t-1,2} \simeq (s,u)_{t+1,2}$ because
the bilinear forms $(s,u)_{t-1,2}$ and $(s,u)_{t+1,2}$ are symmetric with rank $1$.
Thus $u$ is of the second stated type, which completes the proof.
\end{proof}

\subsection{Elements with reflectional length $4$}

Next, we determine the elements with reflectional length $4$.
From Lemmas \ref{lemma:typeI-III-V} and \ref{lemma:typeII}, we deduce that if $u \in \Sp(s)$
is indecomposable, then it is $4$-reflectional unless $u$ is $\calC(t^4+1)$.
But we have seen earlier that an element with reflectional length $3$ is indecomposable if and only if its minimal polynomial is $t^4+1$.
Hence, the indecomposable elements with reflectional length $4$ are exactly the indecomposable elements with minimal polynomial different from $t^4+1$.

Next, we consider the decomposable elements.
By Proposition \ref{prop:rank2+typeVI}, we know that every element of the form $v \botoplus \eta \id$, with $\eta=\pm 1$
and $v$ a symplectic transformation of rank $2$, is $4$-reflectional. And we also know from Proposition \ref{prop:F3refllength3} which of those are $3$-reflectional.

If an element $u$ has minimal polynomial $(t^2-1)^2$, then either its sole nontrivial quadratic Wall invariants are
equivalent, in which case $u$ is $3$-reflectional, or they are inequivalent.
In the latter case, it has been recalled as Lemma \ref{lemma:F3(t2-1)de3a4} (see also lemma 7.7 of \cite{dSPinvol4Symp}) that if $u$ is not $3$-reflectional then it is not $4$-reflectional either; yet we have just seen in Lemma \ref{lemma:F3(t^2-1)^2pas3} that $u$ is not $3$-reflectional.
We conclude that an element with minimal polynomial $(t^2-1)^2$ cannot have reflectional length $4$.

This leaves us with two remaining cases to consider:
\begin{itemize}
\item The symplectic transformations with minimal polynomial $(t^2+1)(t-\eta)^2$ for some $\eta=\pm 1$;
\item The symplectic transformations with invariant factors $(t-\eta)^2,(t-\eta)^2$ for some $\eta=\pm 1$, and a non-hyperbolic Wall invariant.
\end{itemize}
It has been recalled as Proposition \ref{lemma:F3rappel} (see proposition 7.6 in \cite{dSPinvol4Symp}) that the former are not $4$-reflectional.
The latter are tackled in the next lemma:

\begin{lemma}\label{lemma:(t-1)^2not4}
Let $u \in \Sp(s)$ have invariant factors $(t-\eta)^2$ and $(t-\eta)^2$ for some $\eta=\pm 1$, and non-hyperbolic Wall invariant attached to $(t-\eta,2)$.
Then $u$ is $4$-reflectional in $\Sp(b)$.
\end{lemma}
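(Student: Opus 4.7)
The plan is to find a symplectic involution $i \in \Sp(s)$ such that $iu$ falls into the second family of $3$-reflectional elements from Proposition \ref{prop:F3refllength3}, namely the elements with minimal polynomial $(t^2-1)^2$ whose quadratic Wall invariants at $(t-1,2)$ and $(t+1,2)$ are equivalent. Once such an $i$ is exhibited, the decomposition $u = i\cdot (iu)$ expresses $u$ as a product of four involutions, so $u$ is $4$-reflectional.

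First, reduce to a normal form. Multiplying $u$ by the involution $-\id$ (which preserves reflectional length), one may assume $\eta = 1$. By Wall's theorem for finite fields, and the fact that nondegenerate non-hyperbolic rank-$2$ symmetric bilinear forms over $\F_3$ form a single isometry class, the hypotheses determine $(s,u)$ up to isometry. Hence fix an orthogonal splitting $V = V_1 \botoplus V_2$ with symplectic bases $(e_j, f_j)$ of $V_j$ such that $u(e_j) = e_j$ and $u(f_j) = e_j + f_j$ for $j \in \{1,2\}$; the Lagrangian $\calL := \Vect(e_1, e_2)$ is pointwise fixed by $u$ and transverse to $\calL' := \Vect(f_1, f_2)$.

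Now construct $i$ as the symplectic extension to $\calL'$ of the involution $T$ of $\calL$ given, in the basis $(e_1, e_2)$, by the matrix $\begin{bmatrix} 1 & 0 \\ -1 & -1 \end{bmatrix}$; since $T^2 = \id$, $i$ is a symplectic involution of $\Sp(s)$. In the reordered symplectic basis $(e_1, e_2, f_1, f_2)$, $iu$ has the block upper-triangular form $\begin{bmatrix} T & T \\ 0 & T^{\sharp} \end{bmatrix}$, so using $T^2 = (T^\sharp)^2 = \id$ one computes
\[
(iu)^2 - \id = \begin{bmatrix} 0 & \id + T T^\sharp \\ 0 & 0 \end{bmatrix}.
\]
A direct matrix calculation gives $\id + T T^\sharp \neq 0$, whence the minimal polynomial of $iu$ is exactly $(t^2 - 1)^2$. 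To finish, evaluate the two $1$-dimensional quadratic Wall invariants of $iu$: choose a lift of a generator of each $V_{t \pm 1, 2}$ and compute $\frac{1}{2}\, s(x, (iu - (iu)^{-1})\, x)$; both outputs will land in the non-square class of $\F_3^\times/(\F_3^\times)^2$, so $(s, iu)_{t-1, 2}$ and $(s, iu)_{t+1, 2}$ are isometric as rank-$1$ symmetric bilinear forms. Proposition \ref{prop:F3refllength3} then yields $\rl(iu) \leq 3$, and therefore $\rl(u) \leq 4$.

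The main obstacle is the calibration of $T$. The most symmetric candidates -- for instance the involution swapping $V_1$ and $V_2$, or one of the form $e_j \mapsto \pm e_j$, $f_j \mapsto \mp f_j$ -- produce an $iu$ with minimal polynomial $(t^2-1)^2$ but with Wall invariants at $t-1$ and $t+1$ in \emph{different} classes of $\F_3^\times/(\F_3^\times)^2$. By Lemma \ref{lemma:F3(t^2-1)^2pas3} such an $iu$ is not $3$-reflectional, and so cannot be used to conclude. The real content of the proof is in finding a sufficiently asymmetric $T$ that reunites the two Wall contributions into the same square class; once the correct matrix is in hand, the remaining verifications are routine computation.
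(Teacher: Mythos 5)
Your approach is genuinely different from the paper's. The paper observes the one-parameter family $U_S = \begin{bmatrix} I_2 & S \\ 0 & I_2 \end{bmatrix}$ with $S$ symmetric satisfies $U_{S+T} = U_S U_T$, that $U_S$ is $2$-reflectional whenever $S$ is hyperbolic (by Nielsen's theorem, with Wall invariant $-S$), and that $u$ is represented by $U_S$ for non-hyperbolic invertible $S$; it then exhibits hyperbolic $S,T \in \Mats_2(\F_3)$ with $S+T$ invertible and non-hyperbolic, giving $u$ directly as a product of two $2$-reflectional elements. You instead aim to produce an involution $i$ such that $iu$ is $3$-reflectional, invoking the classification in Proposition \ref{prop:F3refllength3}. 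Both routes are legitimate; the paper's is more self-contained (only Nielsen's theorem plus elementary matrix manipulation), while yours exploits the already-established classification of reflectional length $3$ elements, which is proved earlier in the same section with no circularity.

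However, your proof has a genuine gap that you yourself flag: you assert that both quadratic Wall invariants of $iu$ land in the non-square class, and you call finding the right $T$ ``the real content of the proof,'' but you carry out neither the rank computation nor the Wall invariant evaluation. Checking $\id + TT^\sharp \neq 0$ is not by itself enough to conclude the minimal polynomial of $iu$ is $(t^2-1)^2$ (rank $1$ would give $(t-\eta)^2(t+\eta)$ instead); you need $\id + TT^\sharp$ invertible. For the record, your choice $T = \begin{bmatrix}1&0\\-1&-1\end{bmatrix}$ does work: one finds $T^\sharp = \begin{bmatrix}1&-1\\0&-1\end{bmatrix}$, hence $\id + TT^\sharp = \begin{bmatrix}-1&-1\\-1&0\end{bmatrix}$ over $\F_3$, which has determinant $-1$ and so is invertible, forcing $iu$ to be $\calC((t^2-1)^2)$; and evaluating $\tfrac{1}{2}\,s(x,(iu-(iu)^{-1})x)$ on a lift of a generator of each $V_{t\pm1,2}$ (for instance $f_1 - e_2$ and $e_1 + f_1 - f_2$ in your basis) gives the value $-1$ in both cases, so $(s,iu)_{t-1,2} \simeq (s,iu)_{t+1,2}$ as desired. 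With that verification supplied, your argument is complete.
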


\begin{proof}
Without loss of generality, we can assume that $\eta=1$.
Let $S \in \Mats_2(\F_3)$ be a symmetric matrix.
Consider the matrix
$$U_S:=\begin{bmatrix}
I_2 & S \\
0 & I_2
\end{bmatrix}.$$
If $S$ is invertible, we note that $U_S$ represents, in a symplectic basis of $(V,s)$, a symplectic transformation $v$ with two invariant factors, all
equal to $(t-1)^2$, and $-S$ as a Gram matrix for the Wall invariant $(s,v)_{t-1,2}$.
In particular, if $S$ is hyperbolic then $v$ is $2$-reflectional in $\Sp(s)$;
and if $S$ is non-hyperbolic then $U_S$ represents $u$ in a well-chosen symplectic basis of $(V,s)$.
Noting that $U_{S+T}=U_S U_T$ for all $S,T$ in $\Mats_2(\F)$, it will suffice to find two hyperbolic matrices $S,T$ in $\Mats_2(\F_3)$
such that $S+T$ is invertible yet non-hyperbolic (for then both $U_S$ and $U_T$ are $2$-reflectional in $\Sp_4(\F_3)$, and hence
$U_{S+T}$ is $4$-reflectional in $\Sp_4(\F_3)$).
We conclude by noting that $S:=\begin{bmatrix}
1 & 0 \\
0 & -1
\end{bmatrix}$ and $T:=\begin{bmatrix}
0 & 1 \\
1 & 0
\end{bmatrix}$ are hyperbolic but their sum $\begin{bmatrix}
1 & 1 \\
1 & -1
\end{bmatrix}$ has determinant $1$ and hence is invertible but non-hyperbolic.
\end{proof}

\subsection{Elements with reflectional length $5$}\label{}

We are left with two last types of elements, and we will prove that they have reflectional length $5$.

\begin{prop}
Let $u \in \Sp(s)$ have minimal polynomial $(t^2+1)(t-\eta)^2$ for some $\eta=\pm 1$.
Then $u$ has reflectional length $5$.
\end{prop}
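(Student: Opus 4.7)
By Lemma \ref{lemma:F3rappel}, $u$ is already known not to be $4$-reflectional, so it will suffice to prove that $u$ is $5$-reflectional, which I plan to do by exhibiting a symplectic involution $i$ such that $iu$ is $4$-reflectional. Since $-\id \in \Sp(s)$ is an involution, $u$ and $-u$ have the same reflectional length, and $-u$ has minimal polynomial $(t^2+1)(t+\eta)^2$; replacing $u$ by $-u$ if necessary, I may therefore assume $\eta = 1$. Because $t^2+1$ and $(t-1)^2$ are coprime palindromials, the primary decomposition $V = V_1 \botoplus V_2$ with $V_1 := \Ker(u^2+\id)$ and $V_2 := \Ker(u-\id)^2$ is $s$-orthogonal; $u$ restricts to a quarter turn $u_1$ on the $2$-dimensional $s$-regular subspace $V_1$, and to a symplectic transvection $u_2$ on the $2$-dimensional $s$-regular subspace $V_2$. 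I fix a symplectic basis $(e_1,f_1)$ of $V_1$ with $u(e_1) = f_1$ and $u(f_1) = -e_1$, and a symplectic basis $(e_2,f_2)$ of $V_2$ with $u(e_2) = e_2 + \lambda f_2$ and $u(f_2) = f_2$, where $\lambda \in \{1,-1\}$ records the Wall invariant class at $(t-1,2)$.

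I then define $i \in \End(V)$ on that basis by $i(e_1) = e_1 - e_2$, $i(f_1) = f_1$, $i(e_2) = -e_2$, $i(f_2) = -f_1 - f_2$. A direct check on the six pairs of basis vectors gives $i^2 = \id$ and $s(i(x),i(y)) = s(x,y)$, so $i \in \Sp(s)$. Computing the matrix of $iu$ in $(e_1,f_1,e_2,f_2)$ and expanding the resulting $4 \times 4$ characteristic determinant will give $(t-1)^4$ when $\lambda = 1$, and $t^4-t^3+t^2-t+1$ when $\lambda = -1$. In the first case, a quick rank computation shows that $iu - \id$ has rank $3$, so $iu$ is cyclic with minimal polynomial $(t-1)^4$: it is an indecomposable s-pair of type V with dimension $4$, and is $4$-reflectional by Lemma \ref{lemma:typeI-III-V}. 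In the second case, the polynomial $t^4-t^3+t^2-t+1$ has no root in $\F_3$ and, by a short inspection of $bd = 1$, does not factor as a product of two monic quadratics over $\F_3$; hence it is irreducible. Being an even-degree palindromial distinct from $t \pm 1$, $iu$ is then an indecomposable s-pair of type II with dimension $4$, and is $4$-reflectional by Lemma \ref{lemma:typeII}. Either way, $u = i \cdot (iu)$ is $5$-reflectional, which together with Lemma \ref{lemma:F3rappel} pins the reflectional length down to $5$.

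The main obstacle is guessing the right involution $i$. Most natural candidates respecting or swapping $V_1$ and $V_2$ (for instance the swap $e_1 \leftrightarrow e_2,\ f_1 \leftrightarrow f_2$) produce $iu$ with minimal polynomial $(t^2-1)^2$ and with inequivalent Wall invariants at $(t-1,2)$ and $(t+1,2)$; such an $iu$ is itself of reflectional length $5$ by Lemmas \ref{lemma:F3(t^2-1)^2pas3} and \ref{lemma:F3(t2-1)de3a4}, and so yields only that $u$ is $6$-reflectional, which is useless. The involution $i$ chosen above is specifically engineered to break both the block decomposition $V_1 \oplus V_2$ and the palindromial symmetry of $u$, landing $iu$ inside a genuinely $4$-reflectional indecomposable conjugacy class regardless of which of the two Wall invariant classes the original $u$ belongs to.
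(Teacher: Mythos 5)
Your proof is correct, and it takes a genuinely different route from the paper's. The paper first replaces $u$ with $u^{-1}$ so as to normalize the quadratic Wall invariant to one specific class (the Wall invariant of $u^{-1}$ is the opposite of that of $u$, and $u,u^{-1}$ have the same reflectional length), and then uses the simple block-swap involution $e_1 \leftrightarrow e_2$, $f_1 \leftrightarrow f_2$, which lands $iu$ in the characteristic polynomial $(t^2+1)^2$ and hence in an at-most-$4$-reflectional class. Your involution $i$ is more intricate (it mixes the two orthogonal summands $V_1$ and $V_2$ non-trivially), but in exchange it handles both Wall invariant classes at once, producing $\calC\bigl((t-1)^4\bigr)$ in one case and $\calC(t^4-t^3+t^2-t+1)$ in the other, both of which are indecomposable cells covered by Lemmas \ref{lemma:typeI-III-V} and \ref{lemma:typeII}. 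I verified your claims: $i^2=\id$, $i$ is $s$-symplectic, the characteristic polynomial of $iu$ over $\F_3$ is $(t-1)^4$ (with $\rk(iu-\id)=3$) for $\lambda=1$, and $\Phi_{10}=t^4-t^3+t^2-t+1$ (irreducible over $\F_3$, since $3$ has order $4$ mod $10$) for $\lambda=-1$. One small inaccuracy in your commentary, not in the proof itself: the swap $e_1 \leftrightarrow e_2$, $f_1 \leftrightarrow f_2$ is \emph{not} always useless. In your convention it produces $iu$ with characteristic polynomial $t^4+\lambda t^2+1$, which is $(t^2-1)^2$ only when $\lambda=1$; for $\lambda=-1$ it gives $(t^2+1)^2$, which is exactly what the paper exploits after normalizing $\lambda$ via $u\mapsto u^{-1}$. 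So the paper's clever move is the reduction, and yours is the bespoke involution; both are valid.
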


\begin{proof}
For some symplectic basis $(e_1,e_2,f_1,f_2)$ of $V$ and some $\varepsilon=\pm 1$,
the matrix of $u$ in $(e_1,f_1,e_2,f_2)$ equals
$K \oplus T$ where $K:=\begin{bmatrix}
0 & -1 \\
1 & 0
\end{bmatrix}$ and
$T:=\begin{bmatrix}
\eta & \varepsilon \\
0 & \eta
\end{bmatrix}$.
Replacing $u$ with $u^{-1}$ if necessary, we can assume that $\varepsilon=1$.
Then one sees that the endomorphism $i$ of $V$ whose matrix in  $(e_1,f_1,e_2,f_2)$ equals
$\begin{bmatrix}
0_2 & I_2 \\
I_2 & 0_2
\end{bmatrix}$ is an $s$-symplectic involution, and then as the matrix of $iu$ in the same basis
equals
$\begin{bmatrix}
0_2 & T \\
K & 0_2
\end{bmatrix}$ we gather that its characteristic polynomial equals $t^4-\tr(KT)\,t^2+1=t^4-t^2+1=(t^2+1)^2$.
Hence $iu$ is $4$-reflectional in $\Sp(s)$, and we conclude that $u$ is $5$-reflectional in $\Sp(s)$.
Besides, by Proposition \ref{lemma:F3rappel} we know that $u$ is not $4$-reflectional in $\Sp(s)$.
\end{proof}

\begin{prop}
Let $u \in \Sp(s)$ have minimal polynomial $(t^2-1)^2$, with
$(s,u)_{t-1,2} \not\simeq (s,u)_{t+1,2}$. Then $u$ has reflectional length $5$.
\end{prop}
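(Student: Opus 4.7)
The plan splits into establishing the lower bound $\rl(u)\ge 5$ by invoking existing lemmas, and constructing an explicit symplectic involution $i$ for which $iu$ turns out to be $4$-reflectional, yielding $\rl(u)\le 5$.

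For the lower bound, I would first observe that over $\F_3$ the nondegenerate rank-$1$ symmetric bilinear forms split into exactly two isometry classes $\langle 1\rangle$ and $\langle -1\rangle$ (the only non-zero square in $\F_3$ being $1$), and that multiplication by $-1$ swaps these two classes. Hence the assumption $(s,u)_{t-1,2}\not\simeq (s,u)_{t+1,2}$ is equivalent to $(s,u)_{t-1,2}\simeq -(s,u)_{t+1,2}$. Lemma \ref{lemma:F3(t^2-1)^2pas3} then forbids $u$ from being $3$-reflectional, and Lemma \ref{lemma:F3(t2-1)de3a4} further forbids it from being $4$-reflectional; thus $\rl(u)\ge 5$.

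For the upper bound, I would use the $s$-orthogonal primary decomposition $V=V_1\botoplus V_{-1}$ with $V_\eta:=\Ker(u-\eta)^2$ (orthogonal because the distinct eigenvalues $1$ and $-1$ have product $-1\ne 1$). Fixing symplectic bases $(e_\eta,f_\eta)$ of each $V_\eta$, the restriction $u_{V_\eta}$ admits a matrix $\begin{bmatrix}\eta & c_\eta\\ 0 & \eta\end{bmatrix}$ with $c_\eta\in \F_3^\times$, and a routine Wall invariant computation identifies $(s,u)_{t-\eta,2}$ with $\langle -c_\eta\rangle$; the hypothesis then translates into $\{c_1,c_{-1}\}=\{1,-1\}$. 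I would then define $i$ as the linear involution of $V$ that swaps $e_1\leftrightarrow e_{-1}$ and $f_1\leftrightarrow f_{-1}$, which is straightforwardly seen to belong to $\Sp(s)$.

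Computing $(iu)^2$ in the basis $(e_1,f_1,e_{-1},f_{-1})$ yields $(iu)^2=M\oplus M$ with $M=\begin{bmatrix}-1 & c_{-1}-c_1\\ 0 & -1\end{bmatrix}$; as $c_{-1}-c_1\ne 0$, one has $((iu)^2+\id)^2=0$ while $(iu)^2+\id\ne 0$, so $iu$ is cyclic with minimal polynomial $(t^2+1)^2$. Lemma \ref{lemma:from(t2+1)2tot^4+1} then yields a symplectic involution $j$ such that $jiu$ is $\calC(t^4+1)$, and Proposition \ref{prop:F3refllength3} ensures that such a transformation is $3$-reflectional. Hence $iu$ is $4$-reflectional and $u=i\cdot(iu)$ is $5$-reflectional, completing the proof. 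The main technical point is that the hypothesis $c_1\ne c_{-1}$ is precisely what forces $(iu)^2+\id$ to be non-zero: had the two Wall invariants been equivalent, the swap involution would instead make $iu$ a quarter turn (hence $2$-reflectional), recovering the fact recorded in Proposition \ref{prop:F3refllength3} that $u$ would then be merely $3$-reflectional rather than of reflectional length $5$.
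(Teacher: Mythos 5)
Your proof is correct, and your construction of the i-adjacent companion differs from the paper's. The paper observes that $\calL:=\Ker(u^2-\id)$ is a $u$-stable Lagrangian with $u_\calL$ cyclic of minimal polynomial $t^2-1$, invokes Lemma~\ref{lemma:cyclicfitplain} to find an involution $i\in\GL(\calL)$ with $iu_\calL$ a $\calC(t^2+1)$-automorphism, and then uses the symplectic extension $s_{\calL'}(i)$ of $i$ as the pre-multiplying involution; since $s_{\calL'}(i)u$ stabilizes $\calL$, its characteristic polynomial is forced to be $(t^2+1)^2$, so it is $4$-reflectional regardless of its minimal polynomial. You instead work on the primary decomposition $V_1\botoplus V_{-1}$, normalize both blocks in symplectic bases, take the explicit swap involution, and compute $(iu)^2=M\oplus M$ directly. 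This is more concrete, and it has the pedagogical bonus of exhibiting exactly where the Wall-invariant hypothesis intervenes: when $c_1=c_{-1}$ (equivalent invariants) the swap produces a quarter turn and hence only reflectional length~$3$, whereas $c_1\ne c_{-1}$ forces minimal polynomial $(t^2+1)^2$. One small simplification you could make: rather than routing through Lemma~\ref{lemma:from(t2+1)2tot^4+1} and Proposition~\ref{prop:F3refllength3}, you can simply cite that $\calC((t^2+1)^2)$-transformations are $4$-reflectional, which is already recorded (via Lemma~\ref{lemma:typeI-III-V}) in the classification of reflectional-length-$4$ elements. Also note that the upper bound $\rl(u)\le 5$ does not actually require the inequivalence of the Wall invariants — your argument yields it in both cases, since in the equivalent case $iu$ is a quarter turn, hence $2$-reflectional, hence a fortiori $4$-reflectional.
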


\begin{proof}
It has been seen in the previous section that $u$ is not $4$-reflectional. Hence it will suffice to prove that it is $5$-reflectional.
The subspace $\calL:=\Ker(u^2-\id)$ is an $s$-Lagrangian.
The restriction $u_{\calL}$ is $\calC(t^2-1)$. We can choose a Lagrangian $\calL'$ that is transverse to $\calL$,
and by Lemma \ref{lemma:cyclicfitplain} we can choose an involution $i \in \GL(\calL)$ such that $iu_\calL$ is $\calC(t^2+1)$.
Then $s_{\calL'}(i) u$ has characteristic polynomial $(t^2+1)^2$, and hence it is $4$-reflectional in $\Sp(s)$.
As $s_{\calL'}(i)$ is a symplectic involution of $\Sp(s)$, we conclude that $u$ is $5$-reflectional in $\Sp(s)$.
\end{proof}

In particular, we have proved that every element of $\Sp(s)$ is $5$-reflectional, thereby completing the proof of Theorem
\ref{theo:F3}.

\subsection{Conclusion}\label{section:table}

We gather the previous results in the following table, leaving aside the elements with reflectional length $0$ or $1$.
In this table, $s$ is a symplectic form of rank $4$ over $\F_3$. It is useful to note that the sole monic irreducible palindromials
of degree $4$ over $\F_3$ are $t^4+t^3+t^2+t+1$ and $t^4-t^3+t^2-t+1$.

\begin{table}[H]
\caption{The reflectional length of any $u \in \Sp(s)$ that is not an involution}
\label{figure2}
\begin{center}
\begin{tabular}{| c | c | c |}
\hline
Invariant factors & Constraint on Wall invariants & Reflectional length  \\
\hline
\hline
$t^2+1,t^2+1$ &  & $2$ \\
\hline
$(t-\eta)^2,(t-\eta)^2$ & $(s,u)_{t-\eta,2}$ hyperbolic & $2$ \\
for some $\eta=\pm 1$ & & \\
\hline
$t^4+1$ &  & 3\\
\hline
$(t+\eta)^2(t-\eta),t-\eta$ &  & $3$\\
for some $\eta=\pm 1$ & & \\
\hline
$(t^2-1)^2$ & $(s,u)_{t-1,2} \simeq (s,u)_{t+1,2}$ & $3$ \\
\hline
$t^4+t^3+t^2+t+1$ &  & $4$ \\
\hline
$t^4-t^3+t^2-t+1$ &  & $4$ \\
\hline
$(t-\eta)^4$ &  & $4$ \\
for some $\eta=\pm 1$ & & \\
\hline
$(t^2+1)^2$ &  & $4$ \\
\hline
$(t-\eta)^2,(t-\eta)^2$ & $(s,u)_{t-\eta,2}$ non-hyperbolic & $4$ \\
for some $\eta=\pm 1$ & &  \\
\hline
$(t^2-1)^2$ & $(s,u)_{t-1,2} \simeq -(s,u)_{t+1,2}$ & $5$ \\
\hline
$(t^2+1)(t-\eta)^2$ &  & $5$ \\
for some $\eta=\pm 1$ & & \\
\hline
\end{tabular}
 \end{center}
\end{table}

\section{Dimensions that are not multiples of $4$}\label{section:dim2mod4}

In this section, we complete the proof of Theorem \ref{theo:main} by proving point (b).
The proof relies on the solution of the $6$-dimensional case, which was almost entirely solved in \cite{dSPinvol4Symp},
and on point (a) and (c) of Theorem \ref{theo:main}, which solve the case of dimensions that are multiples of $4$.
One of the main ideas indeed is to reduce the situation to the case of dimensions that are multiples of $4$,
thanks to what we call the \emph{plane fixing technique}.

The plane fixing technique works as follows. By ``plane", we mean a $2$-dimensional linear subspace.
Let $(s,u)$ be an s-pair with dimension $n \geq 10$ which is not a multiple of $4$.
Assume that $u$ is i-adjacent to some $u' \in \Sp(s)$ that fixes every vector of some \emph{$s$-regular} plane $P$. Then we split $u'=\id_P \botoplus v$,
where the underlying vector space of $v$ is $P^{\bot_s}$, which has dimension $n-2$. As $n-2$ is a multiple of $4$ and is greater than $4$,
point (a) of Theorem \ref{theo:main} yields that $v$ is $4$-reflectional in the corresponding symplectic group, and hence so is
$u'$. And it follows that $u$ is $5$-reflectional in $\Sp(s)$.

It remains to see how to find $u'$ in practice. We will search for an $s$-regular plane $P$ such that $P \bot_s u(P)$, and
we will say that such a plane is \textbf{adapted} to $u$.
Then we can construct $i$ as the linear involution of the underlying vector space $V$ of $(s,u)$ such that
$i(x)=u(x)$ for all $x \in P$, $i(x)=u^{-1}(x)$ for all $x \in u(P)$, and $i(x)=x$ for all $x \in (P \oplus u(P))^{\bot_s}$.
It is easily checked that $i$ is an involution and that it belongs to $\Sp(s)$. And from the definition we readily obtain that
$(iu)(x)=x$ for all $x \in P$.
In practice, finding such planes $P$ is difficult, but we will manage to do it in most situations of indecomposable cells.

The remainder of this section is laid out as follows: in Section \ref{section:dim6}, we recall
the results that we proved in \cite{dSPinvol4Symp}, and we complete the $6$-dimensional case.
In Section \ref{section:planes}, we obtain the existence of an adapted plane
for most indecomposable cells of type I to IV. In Section \ref{section:typeV}, we tackle indecomposable cells of type V:
there, we do not obtain an adapted plane, but we still manage to find an involution $i \in \Sp(s)$
such that $iu$ fixes every vector of an $s$-regular plane.
Finally, in Section \ref{section:final}, we wrap the proof up, which requires that we care
about a special case for fields with more than $3$ elements (related to some specific indecomposable cells for which the results of Section \ref{section:planes} cannot be adapted).

\subsection{The $6$-dimensional case}\label{section:dim6}

This case has been almost entirely completed in \cite{dSPinvol4Symp}, even for finite fields, so we will be very brief.
In \cite{dSPinvol4Symp}, we have proved the following result, which holds over an arbitrary field (with characteristic not $2$):

\begin{prop}[Corollary 6.4 of \cite{dSPinvol4Symp}]\label{prop:dim6}
Let $(s,u)$ be an s-pair of dimension $6$ with no Jordan cell of odd size attached to $1$ or $-1$.
Then $u$ is $5$-reflectional in $\Sp(s)$.
\end{prop}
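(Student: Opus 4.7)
The plan is to apply the space-pullback technique with a $2$-dimensional totally $s$-singular subspace, splitting $u$ (up to an i-adjacency) into a $4$-dimensional piece handled by Proposition \ref{prop:even3refl} together with a $2$-dimensional residual piece.

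First, by Proposition \ref{prop:existlagrangian}, the hypothesis on Jordan cells yields an $s$-Lagrangian $\calL$ of dimension $3$ with $\calL \cap u(\calL) = \{0\}$ and $s_{u,\calL}$ symmetric and nondegenerate. One cannot apply space-pullback directly to $\calL$: any $s_{u,\calL}$-skew-self-adjoint endomorphism of a $3$-dimensional space has characteristic polynomial of the form $t^3 + at$, vanishing at $0$ and hence not invertible. Instead, I would pick a $2$-dimensional subspace $W \subset \calL$ on which $s_{u,\calL}$ restricts to a nondegenerate symmetric form; such $W$ exist in any nondegenerate $3$-dimensional quadratic space over $\F$. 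Then $W$ is totally $s$-singular with $W \cap u(W) = \{0\}$ and $s_{u,W}$ symmetric nondegenerate.

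Next, using Lemma \ref{lemma:adaptlemmairr} or \ref{lemma:adaptlemmasplit}, I would choose an $s_{u,W}$-skew-self-adjoint $\calC(p)$-automorphism $v$ of $W$ with $p = t^2 + c$, $c \neq 0$, $c^2 \neq 1$, so that $p$ is coprime to its reciprocal $p^\sharp$. Applying space-pullback with $W$, $v$, and a residual involution $i'$ on the $2$-dimensional complement $(W \oplus u(W))^{\bot_s}$ produces an involution $\tilde{i} \in \Sp(s)$ such that $\tilde{i}u$ stabilizes $W$ with restriction $v$. Provided the characteristic polynomial of $i'u'$ (where $u'$ denotes the residual endomorphism) is coprime to $pp^\sharp$, Lemma \ref{lemma:recog} yields a splitting $\tilde{i}u = u_1 \botoplus u_2$, where $u_1$ is a $4$-dimensional symplectic extension of $v$ and $u_2$ is a $2$-dimensional symplectic transformation determined by $i'u'$. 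Proposition \ref{prop:even3refl} then shows that $u_1$ is $3$-reflectional.

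The main obstacle is the $2$-dimensional residual $u_2$: ideally we would choose $i'$ so that $u_2 = \pm \id$, making $\tilde{i}u$ a $4$-reflectional element and hence $u$ a $5$-reflectional element. This requires $u'$ itself to be an involution of the $2$-dimensional residual space, which is a nontrivial condition on $W$. When a direct choice of $W$ fails to achieve this, I would first apply the Cyclic Adaptation Lemma (Lemma \ref{lemma:cyclicfit}) to i-adjacent $u$ to a more favorable form before space-pullbacking, which costs one extra involution factor---this is precisely what yields the bound $5$ rather than $4$. A case analysis on the indecomposable cell decomposition of $u$ (of types I--V in dimensions $2$, $4$, or $6$, as no cell of type VI may occur) shows this construction can be carried out in every situation, the finiteness of $\F$ entering only through the existence of suitable polynomials via Lemma \ref{lemma:evenirrpolynomial}.
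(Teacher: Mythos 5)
This proposition is not proven in the present paper: it is quoted verbatim as Corollary~6.4 of the companion paper \cite{dSPinvol4Symp}, where the statement is established over an \emph{arbitrary} field of characteristic other than $2$. So your attempt is a reconstruction, and the comparison must be against its internal soundness.

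Your setup (find a Lagrangian $\calL$ via Proposition~\ref{prop:existlagrangian}, observe that odd-dimensional skew-selfadjoint maps are singular so one cannot pullback with $\calL$ itself, and instead pullback with a nondegenerate plane $W\subset\calL$) is reasonable and in the spirit of the technique. But the last and decisive step is missing. After the pullback and the splitting from Lemma~\ref{lemma:recog}, the $2$-dimensional residual piece $u_2$ is symplectically similar to $i'u'$, where $u'$ is the residual endomorphism of $(W\oplus u(W))^{\bot_s}$ and $i'$ is the residual involution. Since $(W\oplus u(W))^{\bot_s}$ has dimension $2$ and $\Sp_2(\F)=\SL_2(\F)$ has \emph{only} $\pm\id$ as involutions, the choice of $i'$ is restricted to $\pm\id$, so $u_2=\pm u'$; moreover $u_2$ must equal $\pm\id$ for the whole argument to close, because any other element of $\Sp_2(\F)$ is not a product of involutions at all. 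Thus you need to prove that $\calL$ and $W\subset\calL$ can be chosen so that $\pi\circ u$ acts as $\pm\id$ on $(W\oplus u(W))^{\bot_s}$, i.e.\ $(u\mp\id)\bigl((W\oplus u(W))^{\bot_s}\bigr)\subseteq u(W)$. No argument for this is given; it is compressed into ``a case analysis shows this construction can be carried out,'' and that is exactly where the real difficulty lies.

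The proposed fix (``apply the Cyclic Adaptation Lemma first, costing one extra involution'') does not address this: Lemma~\ref{lemma:cyclicfit} modifies the minimal polynomial of a cyclic piece of $u$, but it does not control the residual endomorphism $u'$ of a space-pullback, which is determined by the geometry of $u$ relative to $W$. There is also a secondary mismatch: your argument invokes Lemma~\ref{lemma:evenirrpolynomial}, which is stated only for finite $\F$, whereas the proposition you are reconstructing holds over every field of characteristic other than $2$. So even if the gap on $u_2$ were filled, the proof would be narrower than the quoted statement.
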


From there, completing the $6$-dimensional case is easy.
Let $(s,u)$ be an s-pair of dimension $6$. Thanks to Proposition \ref{prop:dim6}, it suffices to deal with the case where $u$ has Jordan cells of odd size attached to $1$ or $-1$, and now we assume that this case holds.
By the classification of indecomposable cells, it follows that there are two possibilities:
\begin{itemize}
\item Either $(s,u)$ is an indecomposable cell of type VI; in that case $u$ is $2$-reflectional in $\Sp(s)$;
\item Or $u=\eta \id_P \botoplus v$ for some plane $P$, some $\eta =\pm 1$ and some symplectic transformation $v$ of $P^{\bot_s}$;
in that case points (a) and (c) of Theorem \ref{theo:main} yield that $v$ is $5$-reflectional in the corresponding symplectic group,
and hence $u$ is $5$-reflectional in $\Sp(s)$.
\end{itemize}
Hence, in any case $u$ is $5$-reflectional in $\Sp(s)$.

\subsection{Finding adapted planes}\label{section:planes}

\begin{lemma}\label{lemma:typeItoIVfixplanen=1}
Assume that $\F$ is finite.
Let $(s,u)$ be an s-pair. Assume that $u$ is $\calC(p)$ for some polynomial $p$
of degree $d \geq 6$ which is either an irreducible palindromial or the product $qq^\sharp$ for some $q \in \Irr(\F)$ such that $q^\sharp \neq q$.
Then there exists an $s$-regular plane $P$ such that $u(P) \bot_s P$.
\end{lemma}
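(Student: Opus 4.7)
The plan is to set up algebraic models for $(s,u)$ in the two cases and then exhibit the plane $P=\Vect(x_1,x_2)$ by solving a handful of explicit trace equations. In Case~2 ($p=qq^\sharp$, $q\neq q^\sharp$), the subspaces $\Ker q(u)$ and $\Ker q^\sharp(u)$ are mutually $s$-orthogonal Lagrangians, so taking $\K:=\F[t]/(q)$ with $\lambda$ the class of $t$, I would realize $(s,u)$ as $V=\K\oplus\K$, $u(x,y)=(\lambda x,\lambda^{-1}y)$, and $s((x_1,y_1),(x_2,y_2))=\Tr_{\K/\F}(x_1y_2-x_2y_1)$. In Case~1 ($p$ an irreducible palindromial), $p$ provides the non-identity involution $x\mapsto x^\bullet$ of $\K:=\F[t]/(p)$ taking $\lambda$ to $\lambda^{-1}$, and Wall's theorem for finite fields (skew-Hermitian forms being classified by rank) yields the isometric model $V=\K$, $u\colon x\mapsto\lambda x$, $s(x,y)=\Tr_{\K/\F}(hx^\bullet y)$ for a fixed nonzero skew-Hermitian $h\in\K$.

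In both cases the search reduces to finding $\F$-linearly independent $x_1,x_2\in V$ with $s(x_1,x_2)\neq 0$ and $s(x_i,u(x_j))=0$ for every $i,j\in\{1,2\}$. The cornerstone is that $1,\lambda,\lambda^{-1}$ are $\F$-linearly independent in $\K$: otherwise $\lambda$ would satisfy a polynomial of degree at most $2$ over $\F$, contradicting $[\F[\lambda]:\F]\geq 3$ (which holds since $d\geq 6$ in Case~1 and $\deg q\geq 3$ in Case~2). In Case~2 I would try $x_1=(1,0)$ and $x_2=(c,b)$; the five conditions collapse to $\Tr(b)\neq 0$, $\Tr(\lambda b)=\Tr(\lambda^{-1}b)=0$ (on $b$), and $\Tr((\lambda^{-1}-\lambda)bc)=0$ (on $c$). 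The linear forms $\mu\mapsto\Tr(\lambda^\varepsilon\mu)$ for $\varepsilon\in\{-1,0,1\}$ being independent on $\K$ by the above, a suitable $b$ exists; then the last condition is a single linear equation on $c$.

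In Case~1 I would set $\beta(x,y):=s(x,u(y))$ and consider the quadratic form $Q(x):=\beta(x,x)$. Its polar form is $(x,y)\mapsto s(x,(u-u^{-1})(y))$, which is nondegenerate because $u-u^{-1}$ is invertible (as $\pm 1$ are not roots of $p$), so $Q$ is a nondegenerate quadratic form in $d\geq 6$ variables over $\F$. Chevalley--Warning then provides an $x_1\neq 0$ with $Q(x_1)=0$; setting $X:=\{y\in V:\beta(x_1,y)=\beta(y,x_1)=0\}$, the three linear forms $y\mapsto\Tr(h\lambda^\varepsilon x_1^\bullet y)$ ($\varepsilon\in\{-1,0,1\}$) are $\F$-independent by the same argument, which yields $\dim X=d-2\geq 4$ and forces $s(x_1,\cdot)=\Tr(hx_1^\bullet\cdot)$ to restrict to a nonzero linear form on $X$. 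I would then search for $x_2\in X$ with $Q(x_2)=0$ and $s(x_1,x_2)\neq 0$. The main obstacle is that $Q|_X$ need not be nondegenerate, ruling out a direct Witt-style argument; but a simple point count of the zero locus of $Q|_X$ (in $\geq 4$ variables over a finite field) versus its intersection with the codimension-one hyperplane $\{s(x_1,\cdot)=0\}\cap X$ yields the required $x_2$ even in the tightest case $|\F|=3$ and $d=6$.
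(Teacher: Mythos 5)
Your Case~2 (the product model $\K\oplus\K$) is correct; it amounts to the paper's model $R=\F[t]/(p)$ after the Chinese Remainder Theorem, and your explicit construction of $b$ (from the independence of $1,\lambda,\lambda^{-1}$ when $[\K:\F]\geq 3$) and then $c$ works. Your Case~1, however, diverges from the paper and has a gap.

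For comparison, the paper treats both cases uniformly in the single ring $R=\F[t]/(p)$ (noting that even when $R$ is a product of two fields, the trace form is nondegenerate and every nonzero skew-Hermitian element is invertible). It \emph{fixes} the candidate plane $P=\Vect(1,\lambda^2)$ and then adjusts the skew-Hermitian $h$ defining $s$ so that $\Tr(h\lambda)=\Tr(h\lambda^3)=0$ and $\Tr(h\lambda^2)\neq 0$; if no such $h$ existed, duality would make $\lambda^2-\alpha\lambda-\beta\lambda^3$ Hermitian for some $\alpha,\beta\in\F$, producing a nonzero monic annihilating polynomial of $\lambda$ of degree at most $6$ with constant term $-1$, contradicting $\deg p\geq 6$ and $p(0)=1$. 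This is a purely linear argument, no quadratic-form machinery.

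You instead fix $h$ and look for the plane, via Chevalley--Warning for $x_1$ and a ``simple point count'' for $x_2$. The count does not close the argument. The polar form of $Q$ is $b(x,y)=\Tr(h(\lambda-\lambda^{-1})x^\bullet y)$, and the $b$-orthogonal of $X$ in $V$ is the $2$-plane $\Vect(\lambda x_1,\lambda^{-1}x_1)$. When $\alpha:=\Tr(h\lambda^2 x_1^\bullet x_1)=0$, that $2$-plane is the radical $r$ of $Q|_X$, and a direct computation (using $Q(x_1)=0$ and that $\Tr$ kills skew-Hermitian elements) shows $r\subseteq H:=X\cap\Ker(s(x_1,\cdot))$. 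If moreover a nondegenerate complement of $r$ in $X$ is rank-$2$ anisotropic --- a priori possible when $d=6$, since all one knows is $\rk(Q|_X)\geq d-4$ --- then the zero locus $Z$ of $Q|_X$ is exactly $r\subseteq H$, so $|Z|=|Z\cap H|$ and no cardinality comparison can produce $x_2$. The configuration can in fact be excluded (for $d=6$, $\alpha=0$ forces $\Tr(h\lambda^3 x_1^\bullet x_1)=0$ too, and the three resulting independent linear conditions annihilate the nonzero skew-Hermitian element $hx_1^\bullet x_1$ in the $3$-dimensional space of skew-Hermitian elements, a contradiction; for $d\geq 8$ the rank of $Q|_X$ is at least $4$ and its isotropic cone spans $X$), but your proposal does not supply this argument, and the appeal to a point count alone is not sound.
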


\begin{proof}
Note in any case that $p(0)=1$.

We use an algebraic model for $(s,u)$.
We consider the quotient $\F$-algebra $R:=\F[t]/(p)$, and we equip it with the non-identity involution
$x \mapsto x^\bullet$ that takes the coset $\lambda$ of $t$ to $\lambda^{-1}$. We write $\tr$ short for $\Tr_{R/\F}$, and
we say that an element $x \in R$ is Hermitian if $x^\bullet=x$, and skew-Hermitian if $x^\bullet =-x$.
We also set
$$S:=\{x \in R : \; x^\bullet=-x\}.$$

It is crucial to stress that either $R$ is a separable field extension of $\F$ or it is isomorphic to the direct product of two such extensions.
Hence, in any case $(x,y)\in R^2 \mapsto \tr(xy)$ is a nondegenerate symmetric bilinear form on the $\F$-vector space $R$.
Moreover, even if $R$ is not a field, every nonzero skew-Hermitian element is invertible.
Assume indeed that we have a non-invertible skew-Hermitian element $x \in R$ and $p=qq^\sharp$ for some $q \in \Irr(\F)$ such that $q^\sharp \neq q$. Then $r(\lambda)\,x=0$ for some $r \in \{q,q^\sharp\}$, and hence by applying the involution we also obtain
$-r(\lambda^{-1})\,x=0$, yielding $r^\sharp(\lambda)\,x=0$. By B\'ezout's theorem, this yields $x=0$.

Next, we choose an arbitrary non-zero skew-Hermitian element $h$ of $R$, to be adjusted
later, and we take the symplectic form
$$s' : (x,y) \in R^2 \mapsto \tr(hx^\bullet y)$$
and the automorphism $u' : x \mapsto \lambda x$ of the $\F$-vector space $R$, which is a $\calC(p)$-transformation in $\Sp(s')$.
As $\F$ is finite, Wall's theorem shows that $(s,u)$ is isometric to $(s',u')$, and hence no generality is lost in assuming that $(s,u)=(s',u')$, which we will now do.

We try to adjust $h$ so that $P:=\Vect(1,\lambda^2)$ is $s$-regular and $s$-orthogonal to $u(P)=\Vect(\lambda,\lambda^3)$.
The first condition is satisfied if and only if $\tr(h \lambda^2) \neq 0$, and the second one if and only if
$\tr(h \lambda)=\tr(h\lambda^{-1})=\tr(h\lambda^3)=0$, which reduces to $\tr(h \lambda)=\tr(h \lambda^3)=0$ because
$\tr(h \lambda^{-1})=\tr((h\lambda^{-1})^\bullet)=\tr(h^\bullet \lambda)=-\tr(h \lambda)$.

Now, assume that no $h \in S$ satisfies $\tr(h \lambda^2) \neq 0$ and $\tr(h \lambda)=\tr(h\lambda^3)=0$.
Then the kernel of the $\F$-linear map $f : h \in S \mapsto \tr(h \lambda^2)$ includes the intersection of those
of $f_1 : h \in S \mapsto \tr(h \lambda)$ and $f_2 : h \in S \mapsto \tr(h \lambda^3)$.
By duality theory, $f$ is a linear combination of $f_1$ and $f_2$, yielding $(\alpha,\beta)\in \F^2$ such that
$\lambda^2-\alpha \lambda-\beta \lambda^3$ is $\tr$-orthogonal to $S$. In turn, this means that
$\lambda^2-\alpha \lambda-\beta \lambda^3$ is Hermitian, yielding
$$\lambda^2-\alpha \lambda-\beta \lambda^3=\lambda^{-2}-\alpha \lambda^{-1}-\beta \lambda^{-3}.$$
But in any case this would yield a non-zero monic annihilating polynomial of degree at most $6$ for $\lambda$, with constant coefficient $-1$.
Yet $p$ is the minimal polynomial of $\lambda$, with degree at least $6$ and $p(0)=1$. This is a contradiction.

We conclude that $h$ can be chosen in $S$ so that $\tr(h \lambda^2) \neq 0$ and $\tr(h \lambda)=\tr(h\lambda^3)=0$, and in that case it is clearly non-zero.

With the specific choice we have just made, $P:=\Vect(1,\lambda^2)$ is an $s$-regular plane and it is $s$-orthogonal to $u(P)=\Vect(\lambda,\lambda^3)$.
\end{proof}

\begin{lemma}\label{lemma:typeItoIVfixplanen>1}
Assume that $\F$ is finite.
Let $n \geq 2$ be an integer. Let $(s,u)$ be an s-pair in which $u$ is cyclic with minimal polynomial
$p^n$ for some $p$ which is either an irreducible monic palindromial or the product $qq^\sharp$ for some $q \in \Irr(\F)$ such that $q^\sharp \neq q$.
Assume further that either $\deg(p)>2$, or $p=t^2+1$ and $n>2$.
Then there exists an $s$-regular plane $P$ such that $u(P) \bot_s P$.
\end{lemma}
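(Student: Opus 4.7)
The plan is to extend the algebraic model of Lemma~\ref{lemma:typeItoIVfixplanen=1} to the ring $R := \F[t]/(p^n)$, equipped with the involution $x \mapsto x^\bullet$ sending the class $\lambda$ of $t$ to $\lambda^{-1}$; this is well defined because $p^n$ is a palindromial in both cases considered. Set $\nu := \lambda - \lambda^{-1}$, which is invertible in $R$ since $p$ is coprime to $t \pm 1$. Multiplication by $\nu$ then swaps the Hermitian subring $H$ with the skew-Hermitian subspace $S$, so $\dim_\F H = \dim_\F S = nd/2$ with $d := \deg p$. Writing $p(\lambda) = \lambda^{d/2}\tilde p(\mu)$ for $\mu := \lambda + \lambda^{-1}$ with $\tilde p$ monic of degree $d/2$, one identifies $H$ with $\F[u]/\tilde p(u)^n$, a commutative Frobenius $\F$-algebra. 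Fixing a Frobenius trace $\tr_H$ on $H$ and defining $\tr(h_1 + \nu h_2) := \tr_H(h_1)$ for $h_1, h_2 \in H$ yields a nondegenerate involution-invariant symmetric form $B(x,y) = \tr(xy)$ on $R$ (nondegeneracy follows from that of $\tr_H$ together with the invertibility of $\nu^2 = \mu^2 - 4 \in H$).

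For any invertible $h \in S$ the pair defined by $s_h(x,y) := \tr(hx^\bullet y)$ and $u'(x) := \lambda x$ is an s-pair on $R$ in which $u'$ is a $\calC(p^n)$-symplectic transformation. Since $\F$ is finite and the only relevant Wall invariant (the Hermitian one when $p$ is palindromial) is determined by its rank, Wall's theorem gives $(R, s_h, u') \simeq (s, u)$, so I may work inside the algebraic model. Following the template of Lemma~\ref{lemma:typeItoIVfixplanen=1}, I would try the plane $P := \Vect(1, \lambda^2)$; adaptedness reduces to finding an invertible $h \in S$ with $\tr(h\lambda^2) \neq 0$, $\tr(h\lambda) = 0$ and $\tr(h\lambda^3) = 0$.

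Existence of a (possibly non-invertible) skew-Hermitian $h$ satisfying the three conditions follows by the same duality step as before: failure would place $\lambda^2 - \alpha\lambda - \beta\lambda^3 \in H$ for some $(\alpha,\beta) \in \F^2$, and applying $\bullet$ and clearing $\lambda^3$ produces a polynomial $Q(t) := \beta t^6 - t^5 + \alpha t^4 - \alpha t^2 + t - \beta$ of degree at most $6$ annihilated by $\lambda$. Since the coefficient of $t^5$ in $Q$ is $-1 \neq 0$, we have $Q \neq 0$, so $p^n \mid Q$ forces $n \deg p \leq 6$; but $Q$ is antipalindromial ($t^6 Q(1/t) = -Q(t)$) whereas $p^n$ is palindromial, so the boundary case $n\deg p = 6$ (which occurs only for $p = t^2+1$ and $n = 3$) would give $Q = c\,p^n$ both palindromial and antipalindromial, hence $Q = 0$ in characteristic $\neq 2$; in the remaining case $n\deg p \geq 8$, one gets $Q = 0$ directly from $\deg Q \leq 6 < \deg p^n$; both alternatives contradict $Q \neq 0$.

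The main obstacle is ensuring that the $h$ so obtained can be taken invertible, i.e.\ outside the radical ideal $(p(\lambda))$. A $\nu$-symmetry argument gives $\dim_\F(S \cap (p(\lambda))) = (n-1)d/2$, while $V_0 := \{h \in S : \tr(h\lambda) = \tr(h\lambda^3) = 0\}$ has $\F$-dimension $nd/2 - 2$ (the two functionals being linearly independent on $S$ by a shorter analogous polynomial identity). For $d \geq 4$ an identical antipalindromial-degree argument, applied this time modulo $(p^{n-1})$, shows that $\tr(\cdot\,\lambda)$ and $\tr(\cdot\,\lambda^3)$ remain independent when restricted to $S \cap (p(\lambda))$, whence $V_0 \not\subseteq (p(\lambda))$ and an invertible $h$ exists. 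The genuinely delicate case is $p = t^2+1$ with $n = 3$, at the boundary of every inequality above: here the rank of $(\tr(\cdot\,\lambda), \tr(\cdot\,\lambda^3))$ on $S \cap (p(\lambda))$ can equal $1$, so $P = \Vect(1, \lambda^2)$ may force $h$ to be non-invertible. The remedy is to substitute the plane $P := \Vect(1, \lambda^3)$, whose adaptedness amounts to a fresh system of three trace conditions ($\tr(h\lambda) = \tr(h\lambda^2) = \tr(h\lambda^4) = 0$ and $\tr(h\lambda^3) \neq 0$); a direct calculation in the $6$-dimensional algebra $R = \F[t]/(t^2+1)^3$ produces an invertible solution of the form $h = \nu(1 + \mu^2)$ (invertible since $1 + \mu^2$ reduces to $1$ modulo the maximal ideal $(p(\lambda)) = (\mu)$), completing the proof in this remaining case.
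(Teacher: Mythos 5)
Your approach is genuinely different from the paper's. The paper works in $\F[t,t^{-1}]/(p^n)$, constructs an explicit grading $\bigoplus_k A_k$ with $A_k=m(t+t^{-1})^kA_0$, defines the symplectic form by projection onto the top graded piece, and takes $P$ to be spanned by $1_R$ together with an element sitting at the \emph{top} of the filtration (the coset of $m(t+t^{-1})^{n-1}\varphi^{-1}(h_1)$). With that choice, regularity and adaptedness of $P$ collapse to two trace identities in the residue field $\L$ ($\tr(h_0h_1)\neq 0$, $\tr(h_0h_1z)=0$), and the analysis is uniform in $p$ and $n$. You keep the plainer model $\F[t]/(p^n)$ and reuse the low-degree plane $\Vect(1,\lambda^2)$ from the $n=1$ lemma. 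This is tidier algebraically, but it pushes all the difficulty into showing that the skew-Hermitian $h$ cutting out the form can be taken \emph{invertible}, and two of your steps there do not hold up as written.

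First, the ``identical antipalindromial-degree argument modulo $(p^{n-1})$'' does not give joint independence of $\tr(\cdot\,\lambda)$ and $\tr(\cdot\,\lambda^3)$ on $S\cap(p(\lambda))$ when $d:=\deg p=4$ and $n=2$. Dependence forces $p^{n-1}=p$ to divide $(t^2-1)\bigl(\gamma t^4+(\alpha+\gamma)t^2+\gamma\bigr)$, hence $p\mid\gamma t^4+(\alpha+\gamma)t^2+\gamma$, which for $\gamma\neq 0$ merely says $p=t^4+at^2+1$ with $a=(\alpha+\gamma)/\gamma$. Such $p$ occur in the lemma's scope (for instance $t^4+1=(t^2-t-1)(t^2+t-1)$ over $\F_3$), and for them the restricted rank really does drop to $1$, not $2$. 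The conclusion $V_0\not\subseteq S\cap(p(\lambda))$ survives, but by a different route: $\tfrac{1}{2}(\lambda-\lambda^{-1})$ is a \emph{unit} of $R$ whereas $S\cap(p^{n-1})$ contains no units for $n\geq 2$, so $\tr(\cdot\,\lambda)$ alone cannot vanish on $S\cap(p(\lambda))$; thus the rank is always $\geq 1$, and $(n-1)d/2-1<nd/2-2$ whenever $d>2$. You need to replace the claimed argument by this one. Second, the explicit witness $h=\nu(1+\mu^2)$ for $p=t^2+1$, $n=3$ does \emph{not} satisfy $\tr(h\lambda)=0$ under the natural normalization $\tr_H(\mu^2)=1$, $\tr_H(\mu)=\tr_H(1)=0$: the Hermitian part of $h\lambda$ is $h\nu/2=\tfrac{1}{2}(\mu^2-4)(1+\mu^2)\equiv\tfrac{1}{2}(-4-3\mu^2)\pmod{\mu^3}$, so $\tr(h\lambda)=-\tfrac{3}{2}$, which is nonzero unless $|\F|$ is a power of $3$. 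The element that works with that normalization is $h=\nu(4+\mu^2)$ (equivalently one must twist $\tr_H$ by a suitable unit); the coefficient $4$ is essential, and the ``direct calculation'' as presented only verifies the subcase $|\F|=3$, not the general finite field the lemma asserts.
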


\begin{proof}
We will resort again to an algebraic model of $(s,u)$, but with a slightly more complicated form than in the proof of the preceding lemma.
We consider the ring $\F[t,t^{-1}]$ of Laurent polynomials in one variable $t$, and the quotient $\F$-algebra $R:=\F[t,t^{-1}]/(p^n)$.
We also write $p=t^d m(t+t^{-1})$ where $\deg(p)=2d$. Then $m$ has degree $d$ and is irreducible. We write $\lambda$ for the coset of $t$ in $R$, and we set $u' : x \in R \mapsto \lambda x$. Note that $u'$ is $\calC(p^n)$.

This time around, we denote by $\tr$ the trace of the residue field $\L:=\F[t,t^{-1}]/(m(t+t^{-1}))$ over $\F$.
Again, $\L$ is equipped with the involution $x \mapsto x^\bullet$ that takes the coset of $t$ to its inverse, and we note that
the symmetric bilinear form
$$b : (x,y) \in \L^2 \mapsto \tr(xy)$$
is nondegenerate and that $\tr$ is invariant under conjugation.
The $\F$-linear subspaces
$$H:=\{x \in \L : x^\bullet=x\} \quad \text{and} \quad S:=\{x \in \L : x^\bullet=-x\}$$
are $b$-regular for this bilinear form, and are $b$-orthogonal to one another.

Now, we introduce a nice symplectic structure on the $\F$-algebra $R$ for which $u'$ is a symplectic automorphism.
We consider the linear subspace
$$A_0:=\Vect(t^k)_{-d <k <d} \oplus \F (t^d-t^{-d})$$
(which is stable under the canonical involution of $\F[t,t^{-1}]$)
and the graded decomposition
$$\F[t,t^{-1}]=\underset{k \in \N}{\bigoplus} \underbrace{m^k(t+t^{-1}) A_0}_{A_k}\; ,$$
and we denote by $\pi_k$ the projection onto $A_k$ that is associated with this decomposition.
The decomposition is valid as, by using the identity $t^d+t^{-d}=m(t+t^{-1})$ mod $\Vect(t^k)_{-d<k<d}$,
one checks by induction that
$$\forall N \in \N, \quad \underset{k=0}{\overset{N}{\sum}} A_k=\Vect(t^k)_{-(N+1)d < k < (N+1)d} \oplus \F \bigl(t^{(N+1)d}-t^{-(N+1)d}\bigr).$$
Each $A_k$ is stable under the canonical involution, and hence each $\pi_{k}$ commutes with it.
Finally, $\sum_{k \geq N} A_k$ is the ideal generated by $m^N(t+t^{-1})$, for all $N \geq 0$.

The projection onto the residue field $\L:=\F[t,t^{-1}]/(m(t+t^{-1}))$ induces an $\F$-vector space isomorphism $\varphi : A_0 \overset{\simeq}{\rightarrow} \L$, which we will use to identify the elements of $A_0$ with elements of $\L$.

We can now define the relevant symplectic form on $R$. We start by fixing an arbitrary $h_0 \in S \setminus \{0\}$
(to be adjusted later). Then, for $x,y$ in $\F[t,t^{-1}]$,
we extract $\pi_{n-1}(x^\bullet y)$, which is an element of $A_{n-1}$, then map it back to $A_0$ by dividing by $m^{n-1}(t+t^{-1})$,
and finally we compute the trace
$$\tr\Bigl(h_0\,\varphi(((m^{n-1}(t+t^{-1}))^{-1}\pi_{n-1}(x^\bullet y))\Bigr) \in \F.$$
The resulting mapping  is $\F$-bilinear and it vanishes whenever $x$ or $y$ is a multiple of $m(t+t^{-1})^n$
(because then the product $x^\bullet y$ belongs to $\underset{k \geq n}{\oplus} A_k$ and hence its projection on $A_{n-1}$ is zero).
It follows that, by identifying $A_0$ with the residue field $\L$
thanks to $\varphi$, the previous mapping induces an $\F$-bilinear mapping
$s'$ on $R^2$. And because $\lambda^\bullet \lambda=1$ we see that $u'$ is an $s'$-isometry.
It is then classical to check that $s'$ is actually a symplectic form on the $\F$-vector space $R$:
\begin{itemize}
\item First of all it is clear that $s'$ is $\F$-bilinear;
\item Next, $s'$ is alternating because, since
$\pi_{n-1}$ commutes with the involution $x \mapsto x^\bullet$, we find that
$\pi_{n-1}(x^\bullet x)$ is Hermitian, and hence $h_0\,\varphi(((m^{n-1}(t+t^{-1}))^{-1}\pi_{n-1}(x^\bullet x))$ is skew-Hermitian, to the effect that its trace is zero;
\item Finally, let us prove that $s'$ is nondegenerate: let $x \in R \setminus \{0\}$, which we write $x=m(t+t^{-1})^k a^\bullet+m(t+t^{-1})^{k+1} a'$ for some
integer $k \in \lcro 0,n-1\rcro$, some $a \in A_0 \setminus \{0\}$ and some $a' \in \F[t,t^{-1}]$. Let us take an arbitrary $w \in A_0$, and set $y:=m(t+t^{-1})^{n-1-k} w$.
then $s'(x,y)=\tr(h_0 \varphi(\pi_0(aw)))=\tr(h_0 \overline{aw})$, where $\overline{aw}$ stands for the coset of $aw$ modulo $m(t+t^{-1})\F[t,t^{-1}]$ (in $\L$).
Now, if $x$ belonged to the radical of $s'$ we would obtain $b(h_0\overline{a},\overline{w})=\tr(h_0 \overline{a} \overline{w})=0$ for all $w \in A_0$,
thereby contradicting the non-degeneracy of $b$. Hence $s'(x,-) \neq 0$, and we conclude that $s'$ is nondegenerate.
\end{itemize}
Remember now that $u'$ is $\calC(p^n)$. As $\F$ is finite and $p$ has no root in $\{1,-1\}$, there is no Wall invariant to consider,
and we deduce that $(s,u)$ is isometric to $(s',u')$. Hence no generality is lost in assuming that $(s,u)=(s',u')$, an assumption we will now make.

It remains to construct the plane $P$.
To do so, we fix an arbitrary non-zero skew-Hermitian element $h_1 \in \L \setminus \{0\}$, and we take
$x$ as the coset of $1$ in $R$ and $y$ as the coset of $m(t+t^{-1})^{n-1}\varphi^{-1}(h_1)$ in $R$.
Obviously $x,y$ are linearly independent over $\F$ because $n>1$.
We take
$$P:=\Vect(x,y).$$
Then $x^\bullet x=1_R$ and hence $s(x,x)=0$ because $\pi_{n-1}(1_R)=0$,
while $y^\bullet y=0$ (because $n\geq 2$) and hence $s(y,y)=0$.
Finally $s(x,y)=\tr(h_0 h_1)$, and hence $P$ is $s$-regular if and only if $\tr(h_0 h_1) \neq 0$.

Next, $x^\bullet (\lambda x)$ is the coset of $t$ in $R$. Yet $t \in A_0$ if $\deg p>2$;
otherwise $m(t+t^{-1})=t+t^{-1}$, $t=\frac{1}{2}(t-t^{-1})+\frac{1}{2}(t+t^{-1}) \in A_0+A_1$, and the assumptions require that $n>2$.
Hence in any case $\pi_{n-1}(t)=0$, and hence $s(x,\lambda x)=0$.
Moreover $s(y,\lambda y)=0$ because $y^\bullet(\lambda y)$ is the coset of an element of the ideal $\underset{k \geq 2(n-1)}{\sum} A_k$,
with projection zero on $A_{n-1}$.

Finally, we note that $s(y,\lambda x)=-s(\lambda x,y)=-s(x,\lambda^{-1} y)$ and
$s(x,\lambda y)-s(x,\lambda^{-1}y)=s(x,(\lambda-\lambda^{-1})y)$.
Noting that $\lambda-\lambda^{-1}$ projects modulo $m(t+t^{-1})$ to a skew-Hermitian element $h_2$ of $\L$,
we find $s(x,(\lambda-\lambda^{-1})y)=\tr(h_0h_2h_1)=0$ because $h_0h_2h_1$ is skew-Hermitian.
It follows that
$$s(x,\lambda y)=s(x,\lambda^{-1}y)=\frac{1}{2}\,s(x,(\lambda+\lambda^{-1})y).$$
Hence
$$P \bot_s u(P) \; \Leftrightarrow \; s(x,(\lambda+\lambda^{-1})y)=0.$$

Hence, it remains to prove that $h_0$ and $h_1$ can be chosen in $S$ so that
$\tr(h_0h_1) \neq 0$ and $\tr(h_0h_1z)=0$, where $z$ stands for the coset of $t+t^{-1}$ in $\L$.
If $p=t^2+1$ then $z=0$. So, it suffices to choose $h_0$ and $h_1$
so that $\tr(h_0h_1) \neq 0$, which is possible because $S$ is non-zero and $b$-regular.

Assume finally that $\deg p>2$. Then $z$ is invertible in $\L$.
Assume that there is no pair $(h_0,h_1)\in S^2$ such that $\tr(h_0h_1)\neq 0$ and $\tr(h_0h_1z)=0$.
Then, for all $h_0,h_1$ in $S$, $\tr(h_0h_1z)=0 \Rightarrow \tr(h_0h_1)=0$.
Noting that $\psi : h \mapsto z^{-1}h$ is an endomorphism of $S$, and that $S$ is $b$-regular, this yields that
$\psi$ leaves invariant the $\tr$-orthogonal complement in $S$ of every $1$-dimensional subspace, and hence it leaves invariant
every linear hyperplane of $S$; classically this is enough to see that $\psi$ leaves invariant every $1$-dimensional subspace of $S$,
and in particular $z^{-1}(\lambda-\lambda^{-1})=\alpha (\lambda-\lambda^{-1})$ for some $\alpha \in \F$.
But then $\alpha (t+t^{-1})=1$ mod $m(t+t^{-1})$, contradicting the fact that $\deg p>2$.

Hence in any case there exists a pair $(h_0,h_1)\in S^2$ such that $\tr(h_0h_1)\neq 0$ and $\tr(h_0h_1z)=0$,
and with this choice we obtain that $P=\Vect(x,y)$ is an $s$-regular plane and that $P \bot_s u(P)$
(note that the condition $\tr(h_0h_1) \neq 0$ ensures that both $h_0$ and $h_1$ are non-zero).
\end{proof}

\subsection{Tackling cells of type V}\label{section:typeV}

\begin{lemma}\label{lemma:typeVfixplane}
Let $(s,u)$ be an s-pair that is an indecomposable cell of type V and dimension at least $4$. Then there exists an involution $i \in \Sp(s)$
together with an $s$-regular plane $P$ such that $iu$ fixes every vector of $P$.
\end{lemma}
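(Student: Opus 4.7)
The plan is to construct the involution $i$ explicitly via its $\pm 1$-eigenspace decomposition, thereby reducing the lemma to a short existence problem for a single vector $w$. Write $\dim V = 2n$ with $n \geq 2$, and let $v_0$ span the $1$-dimensional line $\Ker(u - \eta\,\id_V)$. I claim that it suffices to produce $w \in V$ such that $s(v_0,w) \neq 0$ and $s(w,u(w)) = 0$. Given such $w$, I set $V^\eta := \Vect(v_0,\, w + \eta\,u(w))$. Using $u^{-1}(v_0) = \eta v_0$, a short computation yields $s(v_0, w + \eta\,u(w)) = 2\,s(v_0,w) \neq 0$, so $V^\eta$ is an $s$-regular plane, $V = V^\eta \botoplus V^{-\eta}$ with $V^{-\eta} := (V^\eta)^{\bot_s}$, and the $\F$-linear map $i$ acting as $\eta\,\id$ on $V^\eta$ and as $-\eta\,\id$ on $V^{-\eta}$ is a well-defined involution of $\Sp(s)$. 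The identity $u(w) + \eta w = \eta\,(w + \eta\, u(w))$ places $u(w) + \eta w$ in $V^\eta$ automatically, while the two conditions on $w$ imply that $u(w) - \eta w$ is $s$-orthogonal to both $v_0$ and $w + \eta\, u(w)$, hence lies in $V^{-\eta}$. Reading off the components of $u(w)$ along $V^\eta \oplus V^{-\eta}$ from these two memberships yields $iu(w) = w$, while $iu(v_0) = v_0$ is immediate; thus the $s$-regular plane $P := \Vect(v_0,w)$ is pointwise fixed by $iu$.

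To produce such a $w$, I analyse the quadratic form $Q_u(x) := s(x,u(x))$ on $V$, whose polarization is the symmetric form $B(x,y) = s(x,(u - u^{-1})(y))$. Writing $u = \eta\,\id + N$ with $N$ cyclic nilpotent of index $2n$, expanding $u^{-1}$ gives $u - u^{-1} = N\,(2\,\id + M)$ for some nilpotent $M$; and $2\,\id + M$ is invertible because $\chi(\F) \neq 2$. Hence $\im(u - u^{-1}) = \im N$ has codimension $1$, the radical of $B$ equals $(\im N)^{\bot_s}$ and has dimension $1$, and a direct check using $u(v_0) = \eta v_0$ places $v_0$ in that radical. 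Consequently the radical is exactly $\F v_0$, and $Q_u$ descends to a nondegenerate quadratic form $\overline{Q_u}$ on $V/\F v_0$ of rank $2n - 1 \geq 3$.

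Since $\F$ is finite and $\overline{Q_u}$ is a nondegenerate quadratic form in at least three variables, the isotropic vectors of $\overline{Q_u}$ span $V/\F v_0$ (by a classical point-counting argument, or equivalently because a nondegenerate quadric of rank $\geq 3$ is never contained in a hyperplane); in particular they cannot all lie in the hyperplane $\bar H$ that is the image of $v_0^{\bot_s}$ in $V/\F v_0$. Lifting any isotropic vector of $\overline{Q_u}$ outside $\bar H$ back to $V$ furnishes a $w$ with $s(v_0,w) \neq 0$ and $Q_u(w) = 0$, closing the argument. The main obstacle is really to hit upon the two conditions on $w$ and the formula for $V^\eta$ that make the construction of $i$ proceed uniformly in $\eta = \pm 1$; once this template is identified, everything else reduces to routine symplectic bookkeeping together with the well-known existence of large families of isotropic vectors for nondegenerate quadratic forms of rank $\geq 3$ over finite fields.
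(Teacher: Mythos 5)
Your proof is correct and it takes a genuinely different route from the paper's. The paper works backwards: it writes down explicit block-triangular matrices for a cell $u'$ of type V extended by $-\id$ on a plane $P$ and for an involution $i$, computes the quadratic Wall invariant $(s,iu')_{t-1,2n}$ directly in coordinates, matches it to $(s,u)_{t-1,2n}$ by adjusting a free parameter $\alpha$, and invokes the conjugacy classification to identify $iu'$ with (a conjugate of) $u$. You instead build $i$ intrinsically: given any $w$ with $s(v_0,w)\neq 0$ and $Q_u(w)=s(w,u(w))=0$, the involution acting as $\eta\,\id$ on the $s$-regular plane $\Vect(v_0,w+\eta u(w))$ and $-\eta\,\id$ on its $s$-orthocomplement visibly fixes $v_0$ and $w$ after composing with $u$. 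The existence of $w$ is then a statement about the quadratic form $Q_u$, and your radical computation (using $(u-u^{-1})(v_0)=0$ for symmetry, $u-u^{-1}=N(2\id+M)$ for the rank count, and $Q_u(v_0)=0$ for descent) correctly shows $Q_u$ induces a nondegenerate form of rank $2n-1\geq 3$ on $V/\F v_0$; over a finite field such a form is isotropic and its isotropic vectors span, so one of them lies off the hyperplane $\bar H$. This is in the spirit of the paper's own space-pullback philosophy and completely bypasses any coordinate computation of Wall invariants, which is a real simplification. The trade-off is that your existence step uses the finiteness of $\F$ (an anisotropic rank-$3$ form over $\R$ would block the argument), whereas the paper's more laborious proof goes through over an arbitrary field of characteristic not $2$; in the present article only the finite case is used, so for the purpose at hand your proof is a clean alternative.
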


\begin{proof}
We will work backwards.
We start with general results on cells of type V and with eigenvalue $1$.
Let $(V,s)$ be an arbitrary symplectic space of dimension $2n>0$, and
$(e_1,\dots,e_n,f_1,\dots,f_n)$ be an $s$-symplectic basis of $V$.
Let us consider a strictly upper-triangular matrix $N=(n_{i,j}) \in \Mat_n(\F)$ and an arbitrary matrix $B=(b_{i,j}) \in \Mat_n(\F)$.
The automorphism $u$ of $V$ with matrix in the basis $(e_1,\dots,e_n,f_1,\dots,f_n)$ equal to
$$\begin{bmatrix}
I_n+N & B \\
0 & (I_n+N)^\sharp
\end{bmatrix}$$
is $s$-symplectic if and only if $(I_n+N)^{-1}B$ is symmetric. Assuming that such is the case,
it is easily checked that $u$ has characteristic polynomial $(t-1)^{2n}$, and it is $\calC((t-1)^{2n})$ if and only if $b_{n,n} \neq 0$ and $I_n+N$ is cyclic (i.e.\ $n_{i,i+1} \neq 0$ for all $i \in \lcro 1,n-1\rcro$).
In that case one checks that
$$(u-\id)^{2n-1}(f_1)=(-1)^{n-1} b_{n,n} \prod_{k=1}^{n-1}(n_{k,k+1})^2\,e_1.$$
Since $(u+\id)u^{-(n-1)}(e_1)=2e_1$ we obtain
$$s\left(f_1,(u-u^{-1})(u+u^{-1}-2\id)^{n-1}(f_1)\right)=2 (-1)^{n} b_{n,n} \left(\prod_{k=1}^{n-1} n_{k,k+1}\right)^2,$$
and hence the quadratic form attached to the Wall invariant $(s,u)_{t-1,2n}$ takes the value $2(-1)^{n} b_{n,n}$.

Now, let us go back to an arbitrary indecomposable s-pair $(s,u)$ of type V and dimension $2n \geq 4$. Without loss of generality,
we can assume that the corresponding eigenvalue is $1$, and we can choose $\alpha \in \F \setminus \{0\}$
in the range of the quadratic form that is attached to the Wall invariant $(s,u)_{t-1,2n}$.
We denote by $V$ the underlying vector space of $(s,u)$.
We choose an arbitrary $s$-regular plane $P$ of $V$ together with a symplectic basis $(e_2,\dots,e_n,f_2,\dots,f_n)$ of $P^{\bot_s}$,
as well as a symplectic basis $(e_1,f_1)$ of $P$. We take the upper-triangular Jordan matrix $N:=(\delta_{i+1,j})_{1 \leq i,j \leq n-1} \in \Mat_{n-1}(\F)$
and the symmetric matrix $C \in \Mats_{n-1}(\F)$ with exactly one non-zero entry, which equals $1$ and is located at the $(n-1,n-1)$-spot.
Finally, we take an arbitrary scalar $\alpha \in \F \setminus \{0\}$. Then we take $B:=\frac{(-1)^{n}}{2}\alpha (I_{n-1}+N)C$, and we note that
$b_{n-1,n-1}=\frac{(-1)^{n}}{2}\,\alpha$.
We define $u'$ as the symplectic automorphism of $P^{\bot_s}$ with matrix
$$\begin{bmatrix}
I_{n-1}+N & B \\
0 & (I_{n-1}+N)^\sharp
\end{bmatrix}$$
in the basis $(e_2,\dots,e_n,f_2,\dots,f_n)$, and we extend it to a symplectic automorphism of $V$ by taking
$u'(x)=-x$ for all $x \in P$.
Finally, we take
$$D:=\begin{bmatrix}
-1 & L  \\
0 & I_{n-1}
\end{bmatrix} \in \Mat_n(\F), \quad
\text{where} \; L:=\begin{bmatrix}
1 & 0 & \cdots & 0
\end{bmatrix},$$
we note that $D^2=I_n$, and we define $i$ as the linear involution of $V$ with matrix
$D \oplus D^\sharp$ in the basis $(e_1,\dots,e_n,f_1,\dots,f_n)$. Note that $i \in \Sp(s)$.
Then the matrix of $iu'$ in that basis reads
$$\begin{bmatrix}
I_n+N' & B' \\
0 & (I_n+N')^\sharp
\end{bmatrix}$$
where $b'_{n,n}=b_{n-1,n-1}$ and $N'$ is upper-triangular with $n'_{k,k+1} \neq 0$ for all $k \in \lcro 1,n-1\rcro$.
It follows from the early considerations in this proof that $iu'$ is an indecomposable cell of type V with eigenvalue $1$
and that $\alpha$ is in the range of the quadratic form that is attached to $(s,iu')_{t-1,2n}$.
Hence $(s,iu')_{t-1,2n} \simeq (s,u)_{t-1,2n}$, and we deduce that
$iu'$ is conjugated to $u$ in $\Sp(s)$. Hence $u$ is i-adjacent to a conjugate of $-u'$ in $\Sp(s)$, which yields the claimed result.
\end{proof}

\subsection{Completing the proof}\label{section:final}

We are almost ready to complete the proof of point (b) of Theorem \ref{theo:main}.
So, we let $(s,u)$ be an s-pair with dimension $n \geq 6$ such that $n=2$ mod $4$,
and we seek to prove that $u$ is $5$-reflectional in $\Sp(s)$.
We proceed by induction on $n$.
If $n=6$, this is already known from Section \ref{section:dim6}, so now we assume that $n \geq 10$.

If $u$ has a Jordan cell of odd size for an eigenvalue in $\{\pm 1\}$, then
we can find a decomposition $u=u_1 \botoplus u_2$ where $u_2$ is an indecomposable cell of type VI,
and then the rank of $u_1$ is a multiple of $4$. Then $u_2$ is $2$-reflectional in the corresponding symplectic group, whereas by points (a) and
(c) of Theorem \ref{theo:main} (or trivially if $u_1$ has rank $0$) we see that $u_1$ is $5$-reflectional in the corresponding symplectic group.
Hence $u$ is $5$-reflectional in $\Sp(s)$.

Assume now that $u$ has no Jordan cell of odd size for an eigenvalue in $\{\pm 1\}$.
Let us then consider a decomposition $u=u_1 \botoplus \cdots \botoplus u_r$ into indecomposable cells, by non-increasing order
of rank.
\begin{itemize}
\item If $r \geq 2$ and $\rk u_{r-1}=2$ then we regroup $u=(u_1\botoplus \cdots \botoplus u_{r-2}) \oplus (u_{r-1} \botoplus u_r)$,
and we note that $u_{r-1} \botoplus u_r$ is $5$-reflectional in the corresponding symplectic group, whereas by induction so is
$u_1\botoplus \cdots \botoplus u_{r-2}$ (because it has rank $n-4$).
\item If $r \geq 3$ and $\rk u_{r-1}>2$, or if $r=2$ and $\rk u_r>2$, then we regroup $u=u_1 \botoplus (u_2\botoplus \cdots \botoplus u_r)$, and again
we obtain the result by induction and by using points (a) and (c) of Theorem \ref{theo:main}.
\end{itemize}
Hence, only two cases remain.

\noindent
\textbf{Case 1:} $(s,u)$ is an indecomposable cell of type I to V, with dimension at least $10$.\\
\textbf{Case 2:} $(s,u)$ is the orthogonal direct sum of an indecomposable cell of type I to V (and dimension at least 8)
with an indecomposable cell of dimension 2.

\vskip 3mm
Let us start with the special case of $\F_3$.
The monic polynomials with degree $2$ and constant coefficient $1$ over $\F_3$ are $(t-1)^2$, $(t+1)^2$ and $t^2+1$.
Hence, by combining Lemmas \ref{lemma:typeItoIVfixplanen=1}, \ref{lemma:typeItoIVfixplanen>1} and
\ref{lemma:typeVfixplane}, we obtain that for every indecomposable cell $(s',u')$ of type I to V and dimension at least $8$,
there exists an involution $i \in \Sp(s')$ such that $iu'$ fixes every vector of a regular $s'$-plane.
Hence, in any of the above two cases, there exists an involution $i \in \Sp(s)$
such that $iu$ fixes every vector of some regular $s$-plane $P$. Then, as explained at the beginning of Section \ref{section:dim2mod4},
applying point (a) of Theorem \ref{theo:main} to $(iu)_{P^\bot}$ yields that $iu$ is $4$-reflectional in $\Sp(s)$,
and we conclude that $u$ is $5$-reflectional in $\Sp(s)$.

The same method works if $\F$ is finite and $|\F|>3$, with the exception of the following two special cases, where
we set $m:=\frac{n-2}{4}\cdot$

\noindent \textbf{Special case 1:} $(s,u)$ is an indecomposable cell of type II or IV, with minimal polynomial $p^{2m+1}$
for some monic $p \in \F[t]$ with degree $2$, no root in $\{\pm 1\}$ and $p(0)=1$.

\noindent \textbf{Special case 2:} $(s,u)$ is the orthogonal direct sum of two indecomposable cells,
one of which is of type I or III with minimal polynomial $p^{2m}$
for some monic $p \in \F[t]$ with degree $2$, no root in $\{\pm 1\}$ and $p(0)=1$, and the other of which has dimension $2$.

In both cases, we can find a totally $s$-singular subspace $W$ of $V$ that is stable under $u$,
has dimension $2(m-1)$, and such that $u_{W}$ is cyclic with minimal polynomial $q$ such that $q(0)=1$. Then, the result is obtained thanks to our last lemma:

\begin{lemma}\label{lemma:last}
Assume that $\F$ is finite with $|\F|>3$.
Let $n=4m+2 \geq 10$ where $m$ is an integer. Let $(s,u)$ be an s-pair. Assume that there exists
a totally $s$-singular subspace $W$ that is stable under $u$,
has dimension $2(m-1)$, and is such that $u_W$ is cyclic with minimal polynomial
$q$ such that $q(0)=1$. Then $u$ is $5$-reflectional in $\Sp(s)$.
\end{lemma}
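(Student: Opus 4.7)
The plan is to cyclic-fit the restriction $u_W$ via a symplectic involution $i_1\in\Sp(s)$ so that $(i_1u)_W$ becomes $\calC(r)$ for a carefully chosen $r$, then apply Lemma \ref{lemma:recog} to split off a symplectic-extension summand and leave a $6$-dimensional residual that will be handled by Proposition \ref{prop:dim6}. First, using Lemma \ref{lemma:polynomp(0)-1}, I choose a monic irreducible $r\in \F[t]$ of degree $2(m-1)$ with $r(0)=-1$ (legitimate since $2(m-1)\geq 2$); automatically $r$ is not a palindromial and so $r$ is coprime with $r^\sharp$. The hypothesis $|\F|>3$ provides enough such polynomials to additionally require $r$ coprime with the characteristic polynomial of the induced transformation $\bar u$ on $W^{\bot_s}/W$ (which has degree $6$).

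Next, I would decompose $V=W\oplus U\oplus F$ where $U$ is an $s$-regular complement of $W$ inside $W^{\bot_s}$ (of dimension $6$) and $F$ is a totally $s$-singular subspace of $V$ dual to $W$ under $s$. By Lemma \ref{lemma:cyclicfitplain} applied in $\GL(W)$ (the cyclic fit condition $r(0)=-q(0)=-1$ being satisfied since $\deg r$ is even), there exists an involution $j\in\GL(W)$ such that $ju_W$ is $\calC(r)$. Extend $j$ to $i_1\in\Sp(s)$ by $i_1|_U=\id_U$ and letting $i_1|_F$ be the unique map making $i_1$ symplectic; one checks $i_1$ is a symplectic involution. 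Then $(i_1u)_W$ is $\calC(r)$ while $\overline{i_1u}=\bar u$ on $W^{\bot_s}/W$ (as $i_1$ fixes $U$ pointwise), so the coprimality conditions on $r$ make Lemma \ref{lemma:recog} applicable, yielding $(s,i_1u)=(s_0,u_0)\botoplus(s_1,u_1)$ where $u_0$ is a symplectic extension of a $\calC(r)$-automorphism and $(s_1,u_1)$ is isometric to $(\bar s,\bar u)$ of dimension $6$. By Proposition \ref{prop:sympextension3refl} (since $r(0)=\pm 1$), $u_0$ is $3$-reflectional in $\Sp(s_0)$.

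The main obstacle is bounding the reflectional length of $u_1$: Proposition \ref{prop:dim6} gives only $5$-reflectional (the Jordan-cell hypothesis being met in both special cases triggering this lemma), so the interleaving trick would merely make $i_1u$ five-reflectional and $u$ six-reflectional. The hard part is to upgrade $u_1$ to $4$-reflectional; I expect this to come from a case analysis of the two possible structures of $\bar u$ arising from Special cases $1$ and $2$. In Special case 2, where $\bar u$ orthogonally splits as a $\calC(p^2)$-piece on dimension $4$ together with the original $2$-dimensional cell, one first shows the $\calC(p^2)$-piece is $3$-reflectional in $\Sp_4$ by a matrix construction analogous to Lemma \ref{lemma:(t^2+1)2} (using that $|\F|>3$), then combines it with the $2$-dimensional cell by the ideas of Lemma \ref{lemma:cyclic2+typeVI}; in Special case 1, where $\bar u$ is the indecomposable type II cell $\calC(p^3)$, a further cyclic-fit step on the $2$-dimensional totally singular subspace $\Ker p(\bar u)\subset V_1$ produces a symplectic extension of a cyclic automorphism with constant term $\pm 1$, giving $u_1$ as $4$-reflectional by Proposition \ref{prop:sympextension3refl}. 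Once $u_1$ is $4$-reflectional, $i_1u=u_0\botoplus u_1$ is $4$-reflectional by interleaving, and hence $u$ is $5$-reflectional.
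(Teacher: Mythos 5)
Your setup matches the paper's (cyclic-fit the Lagrangian part via Lemma \ref{lemma:cyclicfitplain} and Lemma \ref{lemma:polynomp(0)-1}, then split with Lemma \ref{lemma:recog}), but the crucial move is different in a way that leaves a genuine gap. You let your involution $i_1$ act as the identity on the $6$-dimensional piece, so the residual summand $u_1$ is isometric to $\overline{u}$ itself; to conclude that $i_1u$ is $4$-reflectional you then need $\overline{u}$ to be $4$-reflectional in $\Sp_6(\F)$. But Section \ref{section:dim6} only establishes that $\overline{u}$ is $5$-reflectional, and there is no result in the paper (or any obvious argument) that upgrades, say, a $\calC(p^3)$ type II cell of dimension $6$ with $p(0)=1$ and $p$ having no root in $\{\pm 1\}$ to $4$-reflectionality; Lemma \ref{lemma:typeII} requires dimension divisible by $4$, and Propositions \ref{prop:sympextension3refl} and \ref{prop:even3refl} do not apply since the relevant constant term is not $\pm 1$ and the relevant polynomial is not even. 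Your proposed case analysis is exactly where the proof would have to do real work, and it is sketchy enough that it does not close this hole.

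The paper avoids the issue entirely by \emph{building the $6$-dimensional involution into $i$ from the start}: it picks an involutory $B_1\in\Sp_6(\F)$ with $B_1B$ $4$-reflectional (this only uses that $B=\overline{u}$ is $5$-reflectional, which is what Section \ref{section:dim6} gives), and sets $i$ to be $A_1\oplus B_1\oplus A_1^\sharp$ in the block basis. After Lemma \ref{lemma:recog}, the residual summand $u'_2$ is represented by $B_1B$, so it is $4$-reflectional by construction, while $u'_1$ is $4$-reflectional by Theorem \ref{theo:main}(a). Both summands being $4$-reflectional gives $iu$ $4$-reflectional, hence $u$ $5$-reflectional. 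A side effect is that $r$ must then be chosen coprime with $\chi_{B_1B}$, not with $\chi_{\overline{u}}$, and the paper devotes the second half of the proof to arranging this (including a special argument for $m=2$ where $\deg r=2$). You should also note that Lemma \ref{lemma:polynomp(0)-1} on its own only produces one $r$; ensuring coprimality with a fixed degree-$6$ polynomial in the low-degree case requires a separate counting or substitution argument, which the paper supplies and your proposal asserts without proof.
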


\begin{proof}
Denote by $V$ the underlying vector space of $(s,u)$.
Let us take a complementary subspace $W'$ of $W^\bot$ in $V$. Let us take a symplectic basis
$(e_1,\dots,e_{2m-2},f_1,\dots,f_{2m-2})$ of $W\oplus W'$ in which
$(e_1,\dots,e_{2m-2})$ is a basis of $W$ and
$(f_1,\dots,f_{2m-2})$ is a basis of $W'$. Let us take a symplectic basis $(g_1,\dots,g_6)$ of $(W \oplus W')^\bot$.
Then $\bfB:=(e_1,\dots,e_{2m-2},g_1,\dots,g_6,f_1,\dots,f_{2m-2})$ is a basis of $V$ and the matrix of $u$ in that basis
reads
$$\begin{bmatrix}
A & ? & ? \\
0 & B & ? \\
0 & 0 & A^\sharp
\end{bmatrix}$$
where $A \in \GL_{2m-2}(\F)$ is cyclic with minimal polynomial $q$,
and $B \in \Sp_6(\F)$.
As seen in Section \ref{section:dim6}, we can find an involutory matrix $B_1 \in \Sp_6(\F)$
such that $B_1B$ is $4$-reflectional in $\Sp_6(\F)$.

By Lemma \ref{lemma:polynomp(0)-1}, we can find an $r \in \Irr(\F)$ with degree $2m-2$
such that $r(0)=-1$.
Next, by Lemma \ref{lemma:cyclicfitplain}, we can find an involutory $A_1 \in \GL_{2m-2}(\F)$ such that
$A_1A$ is cyclic with minimal polynomial $r$.
Denote by $i \in \Sp(s)$ the symplectic involution whose matrix in $\bfB$ equals
$A_1 \oplus B_1 \oplus A_1^\sharp$. Then the matrix of $iu$ in the same basis has the form
$$\begin{bmatrix}
A_1A & ? & ? \\
0 & B_1B & ? \\
0 & 0 & (A_1A)^\sharp
\end{bmatrix}.$$
Note that $r$ is not a palindromial (being of even degree and irreducible with $r(0) \neq 1$).
Assume further that $r$ is relatively prime with the characteristic polynomial of $B_1B$. Then by Lemma \ref{lemma:recog}
we find a decomposition $iu =u'_1 \botoplus u'_2$ where $u'_1$ has rank $4m-4$ and $u'_2$
is represented by $B_1B$ in some symplectic basis. Then $u'_2$ is $4$-reflectional in the corresponding symplectic group,
whereas $u'_1$ is $4$-reflectional by point (a) of Theorem \ref{theo:main}.

It remains to prove that a correct choice of $r$ can be made. If $2m-2 \geq 4$, this is straightforward because
if $r$ were not relatively prime with the characteristic polynomial $\chi_{B_1B}$ then the latter (which is a palindromial) would be divided by $rr^\sharp$, which has degree at least $8$.

It remains to consider the case where $m=2$. In that one, we slightly change the choice of $r$: indeed it is not necessary to have an irreducible
polynomial (nor one that is relatively prime with $r^\sharp$), rather we just need a polynomial such that $rr^\sharp$ is relatively prime with $\chi_{B_1B}$
(and, of course, $r$ is monic with degree $2$ and $r(0)=-1$).

Now suppose that our initial choice of $r$ fails. Then as $rr^\sharp$ divides $\chi_{B_1B}$ and both polynomials have constant coefficient $1$,
$\chi_{B_1B}$ has at most two roots in $\F \setminus \{0\}$, counted with multiplicities, and for each such root $\alpha$
the roots of $\chi_{B_1B}$ are actually $\alpha$ and $\alpha^{-1}$, counted with multiplicities.
Then, either $\chi_{B_1B}$ has no root in $\{1,-1\}$, in which case we replace $r$ with $t^2-1$ to get the desired result,
or all the roots of $\chi_{B_1B}$ in $\F \setminus \{0\}$ belong to $\{1,-1\}$, in which case we replace
$r$ with $(t-\alpha)(t+\alpha^{-1})$, where $\alpha$ is chosen in $\F \setminus \{0,1,-1\}$. In any case, we see
that $rr^\sharp$ is relatively prime with $\chi_{B_1B}$, which yields the desired conclusion.
\end{proof}

The proof of point (b) of Theorem \ref{theo:main} is now completed.


\begin{thebibliography}{1}
\bibitem{Awa}
D. Awa, R.J. de La Cruz,
{Each real symplectic matrix is a product of symplectic involutions.}
Linear Algebra Appl.
{\bf 589} (2020) 85--95.


\bibitem{Bungerthesis}
F. B\"unger, Involutionen als Erzeugende in unit\"aren Gruppen.
\newblock{PhD thesis,}
\newblock{Universit\"at zu Kiel,}
\newblock{1997.}


\bibitem{EllersVilla}
E. Ellers, O. Villa,
{Generation of the symplectic group by involutions.}
Linear Algebra Appl.
{\bf 591} (2020) 154--159.

\bibitem{Frobenius}
G. Frobenius,
{\"{U}ber die mit einer Matrix vertauschbaren Matrizen.}
Sitzungsber. Preuss. Akad. Wiss.
(1910) 3--15.

\bibitem{Gow}
R. Gow,
{Products of two involutions in classical groups of characteristic $2$.}
J. Algebra
{\bf 71} (1981) 583--591.

\bibitem{dLC}
R.J. de La Cruz,
{Each symplectic matrix is a product of four symplectic involutions.}
Linear Algebra Appl.
{\bf 466} (2015) 382--400.


\bibitem{dSP2LC}
C. de Seguins Pazzis,
{On linear combinations of two idempotent matrices over an arbitrary field.}
Linear Algebra Appl.
{\bf 433-3} (2010) 625--636.


\bibitem{dSP3invol}
C. de Seguins Pazzis,
{Products of involutions in the stable general linear group.}
J. Algebra
{\bf 530} (2019) 235--289.


\bibitem{dSP2invol}
C. de Seguins Pazzis,
{Products of two involutions in orthogonal and symplectic groups.}
Geom. Dedicata
{\bf 218-2} (2024).

\bibitem{dSPinvol4Symp}
C. de Seguins Pazzis,
{Products of involutions in symplectic groups over general fields (I).}
Linear Algebra Appl., in press, doi: 10.1016/j.laa.2024.04.022

\bibitem{Springer}
T.A. Springer,
{\"Uber Symplectische Transformatie.}
PhD Thesis, University of Leiden, 1951.

\bibitem{Wall}
G.E. Wall,
{On the conjugacy classes in orthogonal, symplectic and unitary groups.}
J. Austral. Math. Soc.
{\bf 3-1} (1963) 1--62.

\bibitem{Wonenburger}
M.J. Wonenburger,
{Transformations which are products of two involutions.}
J. Math. Mech.
{\bf 16} (1966) 327--338.

\end{thebibliography}
\end{document}